\theoremstyle{plain}
\newtheorem{thm}{Theorem}
\newtheorem{cor}{Corollary}
\newtheorem{lem}[cor]{Lemma}
\newtheorem{prop}[cor]{Proposition}
\theoremstyle{definition}
\newtheorem{remark}[cor]{Remark}
\numberwithin{cor}{section}
\numberwithin{equation}{section}
\DeclareMathOperator{\tr}{tr}
\DeclareMathOperator{\diam}{diam}
\DeclareMathOperator{\dist}{dist}
\DeclareMathOperator*{\osc}{osc}
\DeclareMathOperator*{\supt}{supp}
\DeclareMathOperator*{\esssup}{ess\,sup}
\DeclareMathOperator*{\essinf}{ess\,inf}
\DeclareMathOperator{\USC}{USC}
\DeclareMathOperator{\LSC}{LSC}
\DeclareMathOperator{\BUC}{BUC}
\DeclareMathOperator{\Var}{Var}
\renewcommand{\d}{d}
\newcommand{\ep}{\varepsilon}
\newcommand{\R}{\ensuremath{\mathds{R}}}
\newcommand{\Q}{\ensuremath{\mathds{Q}}}
\newcommand{\Rd}{\ensuremath{\mathds{R}^\d}}
\newcommand{\V}{\ensuremath{V}}
\newcommand{\N}{\ensuremath{\mathds{N}}}
\newcommand{\M}{\ensuremath{\mathcal{M}}}
\newcommand{\Sy}{\ensuremath{\mathcal{S}^\d}}
\newcommand{\Prob}{\ensuremath{\mathds{P}}}
\newcommand{\E}{\ensuremath{\mathds{E}}}
\newcommand{\iden}{I_\d}
\newcommand{\ms}{{m}}
\tikzset{
  dots/.style={mark=*,only marks,mark size=.1pt,mark options={draw=#1!60,fill=#1!40}},
  dots/.default=blue
}
\newcommand{\drawaxes}{
  \draw (0,0) rectangle (1cm,1cm)
}
 \numberwithin{equation}{section}
\begin{document}

%

\title[Stochastic homogenization of Hamilton-Jacobi equations]{Stochastic homogenization of Hamilton-Jacobi \\ and degenerate Bellman equations in \\ unbounded environments}

\author{Scott N. Armstrong}
\address{Department of Mathematics\\ The University of Chicago\\ 5734 S. University Avenue
Chicago, Illinois 60637.}
\email{armstrong@math.uchicago.edu}
\author{Panagiotis E. Souganidis}
\address{Department of Mathematics\\ The University of Chicago\\ 5734 S. University Avenue
Chicago, Illinois 60637.}
\email{souganidis@math.uchicago.edu}
\date{\today}
\keywords{stochastic homogenization, viscous Hamilton-Jacobi equation}
\subjclass[2010]{35B27, 60K37}

\begin{abstract}
We consider the homogenization of Hamilton-Jacobi equations and degenerate Bellman equations in stationary, ergodic, unbounded environments. We prove that, as the microscopic scale tends to zero, the equation averages to a deterministic Hamilton-Jacobi equation and study some properties of the effective Hamiltonian. We discover a connection between the effective Hamiltonian and an eikonal-type equation in exterior domains. In particular, we obtain a new formula for the effective Hamiltonian. To prove the results we introduce a new strategy to obtain almost sure homogenization, completing a program proposed by Lions and Souganidis that previously yielded homogenization in probability. The class of problems we study is strongly motivated by Sznitman's study of the quenched large deviations of Brownian motion interacting with a Poissonian potential, but applies to a general class of problems which are not amenable to probabilistic tools.
\end{abstract}

\maketitle

\section{Introduction}
\label{IN}

The primary goal of this paper is the study of the behavior, as $\ep \to 0$, of the solution $u^\ep=u^\ep(x,t,\omega)$ of the initial value problem
\begin{equation} \label{HJ}
\left\{ \begin{aligned}
&u^\ep_t - \ep \tr \!\left( A(\tfrac x\ep, \omega) D^2u^\ep \right) + H(Du^\ep, \tfrac x\ep, \omega) = 0 & \mbox{in} & \ \Rd \times \R_+, \\
& u^\ep = u_0 & \mbox{on} & \ \Rd \times \{ 0 \}.
\end{aligned} \right.
\end{equation}
Here $A=A(y,\omega)$ and $H=H(p,y,\omega)$ are random processes as they depend on $\omega$, an element of an underlying probability space $(\Omega, \mathcal F,\Prob)$, and the initial datum $u_0$ is a bounded, uniformly continuous function on $\Rd$. Precise notation and hypotheses on the coefficients are given in the following section, but we mention here that the diffusion matrix $A$ is degenerate elliptic and the Hamiltonian $H$ is convex in $p$ and, in general, unbounded from below, both in $y$ for fixed $(p,\omega)$ and in $\omega$ for fixed $(p,y)$. The random coefficients $A$ and $H$ are required to be stationary and ergodic, and, furthermore, the unboundedness of $H$ is controlled by a random variable which is assumed to be strongly mixing.

Model equations satisfying our hypotheses include the viscous Hamilton-Jacobi equation
\begin{equation} \label{PCP}
u_t^\ep - \ep \Delta u^\ep + |Du^\ep|^\gamma - V(\tfrac x\ep, \omega) = 0,
\end{equation}
and the first-order Hamilton-Jacobi equation
\begin{equation} \label{PCPnv}
u_t^\ep + |Du^\ep|^\gamma - V(\tfrac x\ep, \omega) = 0,
\end{equation}
where $\gamma > 1$ and $V$ is, for example, a Poissonian potential.

\medskip

The first of the main results (Theorem~\ref{MAIN}, which is stated in Section~\ref{MR}) of the paper is the identification of a deterministic effective nonlinearity $\overline H=\overline H(p)$ combined with the assertion that, as $\ep \to 0$, the solutions $u^\ep$ of \eqref{HJ} converge, locally uniformly in $(x,t)$ and almost surely in $\omega$, to a deterministic function $u=u(x,t)$, the unique solution of the initial-value problem
\begin{equation} \label{HJ-eff}
\left\{ \begin{aligned}
&u_t  +\overline H(Du) = 0 & \mbox{in} & \ \Rd \times \R_+, \\
& u = u_0 & \mbox{on} & \ \Rd \times \{ 0 \}.
\end{aligned} \right.
\end{equation}
To obtain almost sure convergence, we study the asymptotic behavior of the special problem
\begin{equation} \label{meteqn}
\left\{ \begin{aligned}
& -\ep \tr\!\left(A(\tfrac x{\ep},\omega)D^2m_\mu^\ep\right) + H(p+Dm_\mu^\ep,\tfrac x{\ep},\omega) = \mu & \mbox{in} & \ \Rd \! \setminus \! D_\ep, \\
& m_\mu^\ep = 0 & \mbox{on} & \ \partial D_\ep, \\
& \lefteqn{0 \leq \liminf_{|x|\to\infty} |x|^{-1} m_\mu^\ep(x,\omega)}
\end{aligned} \right.
\end{equation}
where, depending on some parameters in the equation (e.g., the growth of $H$), either $D_\ep=\{ 0\}$ or $D_\ep = B_\ep$. The problem \eqref{meteqn}, which we call the \emph{metric problem}, is of independent interest. We prove that it has a unique solution for each $\mu > \overline H(p)$ and no solution for $\mu < \overline H(p)$, thereby characterizing the effective Hamiltonian $\overline H$ in a new way. We also provide a new characterization of the $\min \overline H$. The functions $m^\ep_\mu$ possess a subadditivity property sufficient to apply the subadditive ergodic theorem and obtain, in the limit $\ep \to 0$, the almost sure convergence of $m_\mu^\ep$ to the unique solution $\overline m_\mu$ of
\begin{equation} \label{meteqnbar}
\left\{ \begin{aligned}
& \overline H(p+D\overline m_\mu) = \mu  \quad \mbox{in}  \ \Rd \! \setminus \! \{ 0 \}, \\
& \overline m_\mu(0) = 0, \\
& 0 \leq \liminf_{|y|\to\infty} |y|^{-1} \overline m_\mu(y).
\end{aligned} \right.
\end{equation}
It turns out that the almost sure behavior of the solutions $u^\ep$ of \eqref{HJ} are controlled by $m^\ep_\mu$ and the characterization of $\min \overline H$, and this allows us to deduce the convergence of $u^\ep$ almost surely.

\medskip

There has been much recent interest in the homogenization of partial differential equations in stationary ergodic random environments. While the linear case was settled long ago by Papanicolaou and Varadhan \cite{PV1,PV2} and Kozlov \cite{K}, and general variational problems were studied by Dal Maso and Modica \cite{DM2,DM1} (see also Zhikov, Kozlov, and Ole{\u\i}nik \cite{ZKO}), it was only relatively recently that nonlinear problems were considered (in bounded environments). Results for stochastic homogenization of Hamilton-Jacobi equations were first obtained by Souganidis \cite{S} (see also Rezakhanlou and Tarver \cite{RT}), and for viscous Hamilton-Jacobi equations by Lions and Souganidis \cite{LS2} and Kosygina, Rezakhanlou, and Varadhan \cite{KRV}. The homogenization of these equations in spatio-temporal media was studied by Kosygina and Varadhan \cite{KV} and Schwab \cite{Sch}. Davini and Siconolfi~\cite{DS} also proved, using some connections to Mather theory, some interesting results for Hamilton-Jacobi equations in bounded environments. We also mention the work of Caffarelli, Souganidis and Wang \cite{CSW} on the stochastic homogenization of uniformly elliptic equations of second-order, Caffarelli and Souganidis \cite{CS} who obtained a rate of convergence for the latter in strongly mixing environments, and Schwab~\cite{Sch2} on the homogenization of nonlocal equations. A new proof of the results of \cite{KRV,LS2,RT,S}, which yields only convergence in probability but does not rely on formulae, has recently been found by Lions and Souganidis \cite{LS3}. We adapt the approach of \cite{LS3} to our setting and upgrade the convergence to almost sure, using the ``metric problem'' introduced in Section~\ref{Q}.

\medskip

As far as unbounded environments are concerned, Sznitman \cite{Sz1,Sz2,Sz3, Szb} studied the behavior of Brownian motions in the presence of Poissonian obstacles and obtained very elegant and complete results concerning the asymptotic behavior and large deviations of such processes, both in the quenched and annealed settings 
Although the results of Sznitman were not stated in terms of homogenization theory, some of them may be formulated in terms of the stochastic homogenization of \eqref{PCP} for $\gamma =2$. Part of our work can be seen as an extension of Sznitman's results to more general equations, including degenerate Bellman equations and Hamilton-Jacobi equations, as well as to more general random environments. In particular, we consider equations which cannot be written as a linear Schr\"odinger equation via the Hopf-Cole transformation, and general mixing rather than i.i.d. environments.

More recently, and after this paper was accepted, Rassoul-Agha, Sepp\"al\"ainen and Yilmaz posted a paper \cite{RSY} to the arXiv which considers quenched free energy and large deviations for a random walk in a random (unbounded) potential. This corresponds to a ``discrete'' version of the work of Sznitman, but with more general potentials, and to ours, in the case $\gamma=2$. The potential in \cite{RSY} is assumed to satisfy a strong mixing condition in order to compensate for its unboundedness, which is very similar (although not identical) to some of our hypotheses below.

\medskip

We remark that problems depending on the macroscopic variable~$x$ such as
\begin{equation} \label{HJx}
\left\{ \begin{aligned}
&u^\ep_t - \ep \tr \!\left( A(x,\tfrac x\ep, \omega) D^2u^\ep \right) + H(Du^\ep,x, \tfrac x\ep, \omega) = 0 & \mbox{in} & \ \Rd \times \R_+, \\
& u^\ep = u_0 & \mbox{on} & \ \Rd \times \{ 0 \},
\end{aligned} \right.
\end{equation}
can be handled by the methods in this paper, leading to an effective equation depending also on $x$. This requires proving a continuous dependence estimate (which is more sophisticated than Proposition~\ref{CDE-noxd}), so that we can work with a countable number of subsets of $\Omega$. This can be achieved by combining standard viscosity theoretic arguments (c.f. \cite{CIL}) with our Lemma~\ref{uppbndosc}. However, for clarity, we confine our attention in this paper to \eqref{HJ}, and pursue such generalizations in a future work.

\medskip

While in this work we focus on equations such as \eqref{PCP} and \eqref{PCPnv} with $V$ unbounded, our proof strategy can be applied to viscous and non-viscous Hamilton-Jacobi equations with other kinds of ``degenerate coercivity." For example, with minor modifications, our techniques can handle equations such as
\begin{equation} \label{perc}
u^\ep_t - \ep \delta \Delta u^\ep + a\big( \tfrac x\ep, \omega) |Du^\ep|^\gamma = 1,
\end{equation}
where $\delta\in \{ 0,1\}$, and the function $a(y,\omega) > 0$ is stationary ergodic, but not necessarily bounded below. This lack of coercivity can be compensated for by adding a strong mixing hypothesis to $a$ as well as the assumption that a large moment of $a^{-1}$ is bounded, hypotheses which are similar to those imposed on $V$ in our model equation \eqref{PCP} and \eqref{PCPnv}. The problem \eqref{perc} with $\delta = 0$ corresponds to a continuum model for \emph{first passage percolation}, where the medium imposes a cost $a^{-1}(y,\omega)$ to travel near $y$, which may be arbitrarily large. See Kesten~\cite{Ke}.

\medskip

Finally we note that our proofs also yield the homogenization of of time independent versions of \eqref{HJ} and \eqref{HJx}, i.e., to equations of the general form
\begin{equation}\label{hjx}
u^\ep - \ep \tr \!\left( A(x,\tfrac x\ep, \omega) D^2u^\ep \right) + H(Du^\ep,x, \tfrac x\ep, \omega) = 0  \quad \mbox{in} \ U\end{equation}
for a domain $U$ with appropriate boundary conditions.

\medskip

The paper is organized as follows. In the next section, we review the notation, introduce the precise assumptions, and consider some motivating examples. In Section~\ref{MR} we state the main result and give an overview of its proof, which is completed in Section~\ref{PH}. A key ingredient of the proof is the study of the macroscopic (cell) problem, which is the content of Section~\ref{AMP}. The effective Hamiltonian is identified and its basic properties are studied in Section~\ref{EH}. In Section~\ref{Q} we consider the metric problem \eqref{meteqn} and obtain almost sure homogenization with the help of the subadditive ergodic theorem. In Section~\ref{P} we compare our results for \eqref{PCP} and $\gamma=2$ with those of Sznitman, explain some of the connections to probability theory, and give a new characterization of $\overline H$ in terms of the solvability metric problem. Several auxiliary lemmata are recorded in Appendix~\ref{A}.

\section{Preliminaries}
\label{PL}

We briefly review the notation and state precisely the assumptions on \eqref{HJ} before discussing some motivating examples.

\subsection{Notation and conventions}

The symbols $C$ and $c$ denote positive constants, which may vary from line to line and, unless otherwise indicated, do not depend on $\omega$. We work in the $\d$-dimensional Euclidean space $\Rd$ with $\d \geq 1$, and we write $\R_+:=(0,\infty)$. The sets of rational numbers and positive integers are denoted respectively by $\Q$ and $\N$, while  $\M^{m\times \d}$, and $\Sy \subseteq \M^{\d\times \d}$ are respectively the sets of $m$-by-$\d$ matrices and  $\d$-by-$\d$ symmetric matrices. If $v,w\in \Rd$, then $v\otimes w \in \Sy$ is the symmetric tensor product which is the matrix with entries $\frac12(v_iw_j+ v_jw_i)$. For $y \in \Rd$, we denote the Euclidean norm of $y$ by $|y|$, while if $M\in \M^{d\times \d}$, $M^t$ is the transpose of $M$, $\tr(M)$ is the trace of $M$, and we write $|M| := \tr(M^t M)^{1/2}$. The identity matrix is $\iden$. If $U \subseteq\Rd$, then $|U|$ is the Lebesgue measure of $U$. Open balls are written $B(y,r): = \{ x\in \Rd : |x-y| < r\}$, and we set $B_r : = B(0,r)$. The distance between two subsets $U,V\subseteq \Rd$ is denoi ted by $\dist(U,V) = \inf\{ |x-y|: x\in U, \, y\in V\}$. If $U\subseteq \Rd$ is open, then $\USC(U)$, $\LSC(U)$ and $\BUC(U)$ are the sets of upper semicontinuous, lower semicontinuous and bounded and uniformly continuous functions $U\to \R$, respectively. If $f:U\to \R$ is integrable, then $\fint_U f \, dy$ is  the average of $f$ over $U$. If $f:U \to \R$ is measurable, then we set $\osc_U f:= \esssup_U f - \essinf_U f$. The Borel $\sigma$-field on $\Rd$ is denoted by $\mathcal{B}(\Rd)$. If $s,t\in \R$, we write $s\wedge t : = \min\{ s,t\}$.

\medskip

We emphasize that, throughout this paper, all differential inequalities involving functions not known to be smooth are assumed to be satisfied in the viscosity sense. Wherever we refer to ``standard viscosity solution theory" in support of a claim, the details can always be found in the standard reference \cite{CIL}.

We abbreviate the phrase \emph{almost surely in} $\omega$ by ``a.s. in $\omega$." To keep the bookkeeping simple, $\Omega_1\supseteq\Omega_2 \supseteq \Omega_3 \supseteq \cdots$ is a decreasing sequence of subsets of $\Omega$ of full probability used to keep track of almost sure statements. Roughly, $\Omega_j$ is the subset of $\Omega$ of full probability on which all the almost sure statements appearing in the paper prior to the introduction of $\Omega_j$ hold.

\subsection{The random medium}
The random environment is described by a probability space $(\Omega, \mathcal F, \mathds P)$. A particular ``medium" is an element $\omega \in\Omega$. The probability space is endowed with an ergodic group $(\tau_y)_{y\in \Rd}$ of $\mathcal F$-measurable, measure-preserving transformations $\tau_y:\Omega\to \Omega$. Here \emph{ergodic} means that, if $D\subseteq \Omega$ is such that $\tau_z(D) = D$ for every $z\in \Rd$, then either $\Prob[D] = 0$ or $\Prob[D] = 1$. An $\mathcal F$-measurable process $f$ on $\Rd \times \Omega$ is said to be \emph{stationary} if the law of $f(y,\cdot)$ is independent of $y$. This is quantified in terms of $\tau$ by the requirement that
\begin{equation*}
f(y,\tau_z \omega) = f(y+z,\omega) \quad \mbox{for every} \ y,z\in \Rd.
\end{equation*}
Notice that if $\phi:\Omega \to S$ is a random process, then $\tilde \phi(y,\omega) : = \phi(\tau_y\omega)$ is stationary. Conversely, if $f$ is a stationary function on $\Rd \times \Omega$, then $f(y,\omega) = f(0,\tau_y\omega)$.

The expectation of a random variable $f$ with respect to $\mathds P$ is written $\E f$, and we denote the variance of $f$ by $\Var(f): = \E(f^2) - (\E f)^2$. If $E \in \mathcal F$, then $\mathds{1}_E$ is the indicator random variable for $E$; i.e., $\mathds{1}_E(\omega) = 1$ if $\omega\in E$, and $\mathds{1}_E(\omega) = 0$ otherwise.

\medskip

Many times in this paper, we rely on the following multiparameter ergodic theorem, a proof of which can be found in Becker~\cite{B}.

\begin{prop} \label{ergthm}
Suppose that $f:\Rd \times \Omega \to \R$ is stationary and $\E |f(0,\cdot)| < \infty$. Then there is a subset $\widetilde \Omega \subseteq \Omega$ such that ${\mathds P}[\widetilde \Omega] =1$ and , for each bounded domain $V \subseteq \Rd$ and $\omega \in \widetilde \Omega$,
\begin{equation*}
\lim_{t\to \infty} \fint_{tV} f(y,\omega) \, dy = \E f.
\end{equation*}
\end{prop}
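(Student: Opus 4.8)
The plan is to reduce the statement to the classical multiparameter pointwise ergodic theorem and then to arrange that a single exceptional null set serves \emph{every} bounded domain $V$ at once. Since $f$ is stationary we may write $f(y,\omega)=F(\tau_y\omega)$ with $F:=f(0,\cdot)\in L^1(\Omega,\Prob)$, so that $\fint_{tV}f(y,\omega)\,dy=\fint_{tV}F(\tau_y\omega)\,dy$ and $\E f=\E F$; thus it suffices to produce a set $\widetilde\Omega$ of full probability on which $\fint_{tV}F(\tau_y\omega)\,dy\to\E F$ as $t\to\infty$, for \emph{every} bounded domain $V$ simultaneously. The argument has two parts: the pointwise limit along a fixed box, and an approximation step handling general $V$ with one null set.

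For a fixed box I would discretize. With $Q:=[0,1)^\d$ and $g(\omega):=\int_Q F(\tau_y\omega)\,dy$, Fubini and stationarity give $g\in L^1(\Omega)$, $\E g=\E F$, and $g(\tau_k\omega)=\int_{k+Q}F(\tau_y\omega)\,dy$ for $k\in\mathds{Z}^\d$. The pointwise multiparameter ergodic theorem for the $\mathds{Z}^\d$-action $(\tau_k)_{k\in\mathds{Z}^\d}$, applied to $g$ and to the analogue built from $|F|$, gives off a null set that $N^{-\d}\int_{[0,N)^\d}F(\tau_y\omega)\,dy$ converges, along integers $N$, to the conditional expectation $G(\omega)$ of $g$ with respect to the $\sigma$-field of $\mathds{Z}^\d$-invariant events, while $N^{-\d}\int_{[0,N)^\d}|F(\tau_y\omega)|\,dy$ stays bounded. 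Squeezing $[0,t)^\d$ between $[0,\lfloor t\rfloor)^\d$ and $[0,\lfloor t\rfloor+1)^\d$ and using the latter bound to absorb the $O(t^{\d-1})$-thick boundary layer promotes this to $t^{-\d}\int_{[0,t)^\d}F(\tau_y\omega)\,dy\to G(\omega)$. Finally $G$ is $\mathds{Z}^\d$-invariant by construction, and for each fixed $z$ the difference between the cube averages over $z+[0,N)^\d$ and $[0,N)^\d$ is at most $N^{-\d}$ times the integral of $|F(\tau_\cdot\omega)|$ over a slab of $O(N^{\d-1})$ unit cells, hence $O(1/N)$ times a quantity bounded in $N$; so $G$ is a.s.\ invariant under every $\tau_z$, and ergodicity of $(\tau_y)_{y\in\Rd}$ forces $G\equiv\E F$. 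The same conclusion for an arbitrary box $R=2^{-n}(z+[0,1)^\d)$, $n\in\N$, $z\in\mathds{Z}^\d$, follows from the same reasoning or, more directly, from the continuous (Wiener--Tempelman) multiparameter ergodic theorem applied to the dilations $tR$; a careful proof is in Becker~\cite{B}.

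Intersecting, over the countable family of all such dyadic boxes $R$ and over $F$ and $|F|$ together, the null sets just obtained produces $\widetilde\Omega$ with $\Prob[\widetilde\Omega]=1$ such that, for $\omega\in\widetilde\Omega$,
\[ \fint_{tR}f(y,\omega)\,dy\longrightarrow\E f\quad\text{and}\quad\fint_{tR}|f(y,\omega)|\,dy\longrightarrow\E|f|\qquad(t\to\infty) \]
for every dyadic box $R$, and hence for every finite union of dyadic boxes. Now fix $\omega\in\widetilde\Omega$ and a bounded domain $V$; assume first $|\partial V|=0$, which is the case in every application of the proposition in this paper. Pick finite unions of dyadic boxes $V_n^-\subseteq V\subseteq V_n^+$ with $|V_n^-|\uparrow|V|$ and $|V_n^+|\downarrow|V|$. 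Splitting $\int_{tV}f=\int_{tV_n^-}f+\int_{t(V\setminus V_n^-)}f$, bounding $\big|\int_{t(V\setminus V_n^-)}f\big|\le\int_{t(V_n^+\setminus V_n^-)}|f|$, dividing by $|tV|=t^\d|V|$, and invoking the two limits above gives
\[ \limsup_{t\to\infty}\Big|\fint_{tV}f(y,\omega)\,dy-\tfrac{|V_n^-|}{|V|}\,\E f\Big|\le \tfrac{|V_n^+|-|V_n^-|}{|V|}\,\E|f|; \]
letting $n\to\infty$ (so $|V_n^-|\to|V|$ and $|V_n^+|-|V_n^-|\to0$) yields $\fint_{tV}f(y,\omega)\,dy\to\E f$. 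For a general bounded open $V$ one would instead control $\limsup_t\fint_{tV}|f|$ by a maximal ergodic inequality.

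The two genuine obstacles are: obtaining one null set that works for every $V$ at once, which is what forces the detour through a countable family of boxes and an approximation argument; and the fact that $f$ is merely integrable and not bounded, which makes the boundary-layer terms genuinely present and dictates the use of the $L^1$ ergodic theorem for $|f|$ --- both when passing from integer to real dilations and when identifying the limit with the deterministic constant $\E f$ rather than with a merely $\mathds{Z}^\d$-invariant conditional expectation.
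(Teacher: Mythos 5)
The paper does not prove this proposition at all: it is quoted as a known result, with the proof delegated to Becker~\cite{B}. Your argument is therefore not a variant of the paper's proof but a self-contained substitute, and in substance it is the standard derivation (discretize to the $\mathds{Z}^\d$-action on unit cells, pass from integer to real dilations by a boundary-layer estimate, show the limit is a.s.\ invariant under every real translation and invoke ergodicity of $(\tau_y)_{y\in\Rd}$, then intersect over a countable family of dyadic boxes so that a single null set serves many $V$ at once). This is a perfectly reasonable way to make the citation self-contained, and you are right that every actual use of the proposition in the paper involves only balls, for which your argument as written suffices. Two small points of rigor: in the boundary-layer and slab estimates you should use the \emph{convergence} (not mere boundedness) of the cube averages of $|f|$ --- the integral of $|f(\cdot,\omega)|$ over a layer of $O(N^{\d-1})$ unit cells is controlled as a difference of two converging cube averages and is $o(N^\d)$, whereas boundedness alone does not rule out concentration of mass near the boundary; and when you conclude $G\equiv \E f$, note that you have only shown $G\circ\tau_z=G$ a.s.\ for each fixed $z$, while the paper's ergodicity hypothesis is phrased for strictly invariant sets, so a sentence on the standard Fubini argument (a.e.\ invariance in $(z,\omega)$ yields a strictly invariant modification) is needed.

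The one genuine gap relative to the literal statement is the case of a bounded domain $V$ with $|\partial V|>0$, which is possible for $\d\ge 2$. Your inner/outer dyadic sandwich then leaves an error of order $(|\partial V|/|V|)\,\E|f|$, and the closing appeal to a maximal ergodic inequality does not by itself repair this: a maximal inequality enters the proof of a.e.\ convergence for a \emph{fixed} averaging family and a priori produces a null set depending on $V$, whereas the proposition demands one null set for all $V$. The clean completion within your framework is a truncation: apply your dyadic-box result to the countably many stationary fields $f\,\mathds{1}_{\{|f|\le M\}}$ and $|f|\,\mathds{1}_{\{|f|>M\}}$, $M\in\N$. For the bounded field the approximation of an arbitrary bounded measurable $V$ by finite unions $U_n$ of dyadic boxes only needs $|V\,\triangle\, U_n|\to 0$ (always available, no condition on $\partial V$), at cost $CM\,|V\triangle U_n|/|V|$, while the truncation error is bounded by a constant times $\E\big[|f|\,\mathds{1}_{\{|f|>M\}}\big]$, which vanishes as $M\to\infty$. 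This yields the stated conclusion for every bounded measurable $V$ of positive measure, with a single null set, and matches the strength of the result cited from \cite{B}.
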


We also make use of a subadditive ergodic theorem, the statement of which requires some additional notation. Let $\mathcal{I}$ denote the class of subsets of $[0,\infty)$ which consist of finite unions of intervals of the form $[a,b)$. Let $\{\sigma_t\}_{t\geq 0}$ be a semigroup of measure-preserving transformations on $\Omega$. A \emph{continuous subadditive process} on $(\Omega, \mathcal F, \Prob)$ with respect to $\sigma_y$ is a map
\begin{equation*}
Q: \mathcal I \rightarrow L^1(\Omega,\Prob)
\end{equation*}
which satisfies the following three conditions:
\begin{enumerate}
\item[(i)] $Q(I)(\sigma_t\omega) = Q( y+I)(\omega)$ for each $t>0$, $I \in \mathcal I$ and a.s. in $\omega$.
\item[(ii)] There exists a constant $C> 0$ such that, for each $I\in \mathcal I$,
\begin{equation*}
\E \big| Q(I) \big| \leq C |I|.
\end{equation*}
\item[(iii)] If $I_1,\ldots I_k \in \mathcal I$ are disjoint and $I=\cup_j I_j$, then 
\begin{equation*}
Q(I) \leq \sum_{j=1}^k Q(I_j).
\end{equation*}
\end{enumerate}
A proof of the following version of the subadditive ergodic theorem can be found in Akcoglu and Krengel~\cite{AK}.

\begin{prop} \label{SAergthm}
Suppose that $Q$ is a continuous subadditive process. Then there is a random variable $a(\omega)$ such that
\begin{equation*} 
\frac1t Q([0,t))(\omega) \rightarrow a(\omega) \quad \mbox{a.s. in} \ \omega.
\end{equation*}
If, in addition, $\{ \sigma_t \}_{t>0}$ is ergodic, then $a$ is constant. 
\end{prop}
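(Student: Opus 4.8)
This is the continuous‑parameter subadditive ergodic theorem of Akcoglu and Krengel, and the plan is to reduce it to the classical discrete subadditive ergodic theorem of Kingman by sampling $Q$ on intervals with integer endpoints. Write $\sigma := \sigma_1$ and, for integers $0 \le m < n$, set $X_{m,n} := Q([m,n))$. First I would check that $(X_{m,n})$ is a discrete subadditive process for $\sigma$: property (iii) applied to the partition $[0,n) = [0,m)\cup[m,n)$ gives $X_{0,n} \le X_{0,m} + X_{m,n}$; property (i) gives $X_{m,n} = Q([0,n-m))\circ\sigma^{m}$, so the array is stationary, being generated by the single measure‑preserving map $\sigma$; and property (ii) gives $\E|X_{0,1}| \le C$ and $\E X_{0,n} \ge -\E|X_{0,n}| \ge -Cn$, so $\inf_{n}\tfrac1n\E X_{0,n} > -\infty$. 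Kingman's theorem then yields an integrable, $\sigma$‑invariant random variable $a$ with $\tfrac1n Q([0,n)) \to a$ a.s.\ and in $L^{1}$.

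Next I would upgrade the convergence from integer to real $t \to \infty$. For $t \ge 1$ put $n := \lfloor t\rfloor$; applying (iii) to $[0,t) = [0,n)\cup[n,t)$ and to $[0,n+1) = [0,t)\cup[t,n+1)$ gives
\[
Q([0,n+1)) - Q([t,n+1)) \ \le\ Q([0,t)) \ \le\ Q([0,n)) + Q([n,t)).
\]
After dividing by $t$, the bulk terms $\tfrac1t Q([0,n))$ and $\tfrac1t Q([0,n+1))$ converge to $a$ by the previous step, so it suffices to show that the boundary terms $\tfrac1t Q([n,t))$ and $\tfrac1t Q([t,n+1))$ vanish a.s. By (i) each equals $\tfrac1t Q([0,s))\circ\sigma^{n}$ for some $s\in[0,1)$, hence is bounded in absolute value by $\tfrac1n\, Y\circ\sigma^{n}$ with $Y := \sup_{0\le s<1}|Q([0,s))|$. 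Since $Y$ is integrable — here one uses the regularity of $s \mapsto Q([0,s))$ built into the notion of a \emph{continuous} subadditive process, together with (ii) — the estimate $\sum_{n}\Prob[Y\circ\sigma^{n} > \ep n] = \sum_{n}\Prob[Y > \ep n] \le \ep^{-1}\E Y < \infty$ and Borel--Cantelli give $\tfrac1n\, Y\circ\sigma^{n} \to 0$ a.s.; intersecting the exceptional sets over $\ep = 1/k$, $k\in\N$, we obtain $\tfrac1t Q([0,t)) \to a$ a.s.\ along the full real parameter.

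It remains to see that $a$ is constant under the standing ergodicity hypothesis. Fixing $s>0$ and using (i), $a\circ\sigma_{s} = \lim_{t\to\infty}\tfrac1t Q([s,s+t))$ a.s.; since $Q([s,s+t)) \ge Q([0,s+t)) - Q([0,s))$ by (iii), dividing by $t$ and applying the previous step at $\omega$ gives $a\circ\sigma_{s} \ge a$ a.s. Because $a$ is integrable and $\sigma_{s}$ is measure preserving, $\E(a\circ\sigma_{s}) = \E a$, forcing $a\circ\sigma_{s} = a$ a.s.\ for every $s>0$. Thus each level set $\{a>c\}$ is, modulo $\Prob$‑null sets, invariant under the whole semigroup $\{\sigma_{t}\}_{t>0}$, and ergodicity forces $a$ to be a.s.\ constant.

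The discrete heart of the matter — Kingman's subadditive ergodic theorem — I would simply invoke, and for the full continuous‑parameter statement one may also appeal directly to \cite{AK}. The genuinely delicate point is the continuous‑parameter bookkeeping in the second step: controlling the fractional‑part contributions $Q([n,t))$ uniformly in $t\in[n,n+1)$ and almost surely. That is exactly where one must be precise about which regularity of $t\mapsto Q([0,t))$ is available (enough to make $Y$ measurable and integrable) and about choosing a single exceptional null set off which all of the limits above hold simultaneously.
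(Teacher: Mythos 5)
The paper itself offers no proof of this proposition at all: it is quoted verbatim from Akcoglu and Krengel \cite{AK}. So your reduction to Kingman's discrete theorem is not "the paper's route," but it is the natural one, and two of your three steps are sound: the array $X_{m,n}=Q([m,n))$ is subadditive and stationary under $\sigma=\sigma_1$ with $\E X_{0,n}\ge -Cn$ by (ii), so Kingman applies; and the ergodicity step ($a\circ\sigma_s\ge a$ a.s.\ from (i) and (iii), then equality from equal expectations, then invariance of level sets and ergodicity of the semigroup) is fine.

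The genuine gap is exactly the point you flag and then dismiss: the control of the fractional pieces. You need $Y:=\sup_{0\le s<1}|Q([0,s))|$ (and, after correcting the small slip that $Q([t,n+1))=Q([s,1))\circ\sigma_n$ with $s=t-n$, not $Q([0,s'))\circ\sigma^n$, also $\sup_{0\le s<1}|Q([s,1))|$) to be measurable and integrable, and you attribute this to "regularity built into the notion of a \emph{continuous} subadditive process." But the paper's definition contains no such regularity: "continuous" refers only to the continuous parameter set $\mathcal I$ of finite unions of intervals $[a,b)$, and hypotheses (i)--(iii) give only $\E|Q(I)|\le C|I|$ for each \emph{fixed} $I$. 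Subadditivity bounds $Q$ of a union by its pieces, never a piece by the whole, so there is no pointwise domination of $Q([n,t))$ by integer-indexed variables; and since each $Q(I)$ is merely an element of $L^1(\Omega,\Prob)$, the supremum over uncountably many $s$ need not even be measurable — indeed, by altering each $Q([0,t))$ on a $t$-dependent null set one can make $\tfrac1t Q([0,t))(\omega)$ fail to converge for a.e.\ $\omega$ even for the trivial process $Q(I)=|I|$, so some version/separability assumption is unavoidable for the pointwise statement along real $t$. Thus, as written, your second step does not follow from (i)--(iii): you must either add the hypothesis that the supremum of $|Q|$ over subintervals of $[0,1)$ is integrable (which does hold in the paper's application, where $\hat m_\mu$ has a locally Lipschitz, integrable envelope $c(y,\omega)$, so your argument would suffice there), or invoke the Akcoglu--Krengel proof itself, which is designed to handle the continuous parameter.
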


\subsection{Assumptions on the coefficients}
The following hypotheses are in force throughout this article. The environment is given by a probability space $(\Omega, \Prob,\mathcal F)$, and
\begin{equation} \label{erghyp}
\tau_y : \Omega \to \Omega \mbox{is an ergodic group of measure-preserving transformations}
\end{equation}
as described above. 

The following hypotheses we impose on the matrix~$A$ and the Hamiltonian~$H$ are taken to hold for \emph{every} $\omega \in \Omega$, rather than almost surely in $\omega$, since we lose no generality by initially removing an event of probability zero. We require $A$ and $H$ to be functions
\begin{equation*}
A:\Rd\times\Omega \to \R \qquad \mbox{and} \qquad H:\Rd\times\Rd\times\Omega\to\R,
\end{equation*}
and we require that, for every $p \in \Rd$,
\begin{equation} \label{AHstat}
(y,\omega) \mapsto A(y,\omega) \quad \mbox{and} \quad (y,\omega) \mapsto H(p,y,\omega) \quad \mbox{are stationary.}
\end{equation}
We assume that, for each $\omega \in \Omega$,
\begin{equation} \label{AC11}
A(\cdot,\omega)  \in C^{1,1}_\mathrm{loc}(\Rd;\Sy),
\end{equation}
and that $A$ has the form $A  = \Sigma \Sigma^t$, where
\begin{equation}\label{Alip}
\Sigma(y, \omega) \in \M^{\d \times m} \quad \mbox{satisfies} \quad \| \Sigma(\cdot,\omega) \|_{C^{0,1}(\Rd)} \leq C.
\end{equation}
As far as the Hamiltonian $H$ is concerned, we assume that, for each $(y,\omega)\in \Rd\times \Omega$,
\begin{equation} \label{Hconv}
p \mapsto H(p,y,\omega) \quad \mbox{is convex,}
\end{equation}
and that there exists $1 < \gamma < \infty$ such that, for every $(p,y,\omega) \in\Rd\times\Rd\times\Omega$,
\begin{equation} \label{Hbound}
H(p,y,\omega) \leq C \left(1+|p|^\gamma \right).
\end{equation}
The coercivity and Lipschitz regularity in $y$ of the Hamiltonian $H$ are governed by a nonnegative stationary process $V=V(y,\omega)$, as follows. We assume that there exist constants $c_0, C_0>0$ such that, for every $(p,y,\omega) \in \Rd\times \Rd \times \Omega$,
\begin{equation} \label{Hcoer}
H(p,y,\omega) \geq c_0 |p|^\gamma - V(y,\omega) - C_0,
\end{equation}
and we require that, for every $(p_1,y_1), (p_2, y_2) \in \Rd \times \Rd$ and $\omega \in \Omega$,
\begin{multline} \label{Hlip}
\left| H(p_1,y_1,\omega) - H(p_2,y_2,\omega) \right|\\  \leq C \Big( \left(1+ |p_1|+|p_2| \right)^{\gamma-1} |p_1-p_2| + \Big( \sup_{[y_1,y_2]} V(\cdot,\omega)\Big) |y_1-y_2| \Big).
\end{multline}
For fixed positive constants $\alpha, \beta > 0$, satisfying
\begin{equation} \label{alphabeta}
\alpha > \d  \qquad \mbox{and} \qquad \beta > \d + \frac{4\d^2}{\gamma\alpha},
\end{equation}
the process $V=V(y,\omega) \geq 0$ is assumed to possess the following two properties:
\begin{equation} \label{Vmoment}
\E[\sup_{y\in B_1} V(y,\cdot)^\alpha] < \infty,
\end{equation}
and
\begin{equation} \label{Vmix}
 V \ \mbox{is strongly mixing with respect to} \ \tau \ \mbox{with an algebraic rate of} \  \beta.
\end{equation}
The mixing condition is an assumption on the rate of decay of the correlation between the random variables $V(y,\omega)$ and $V(z,\omega)$ as $|y-z|$ becomes large. To state this precisely, for each open set $U\subseteq \Rd$, we denote by $\mathcal{G}(U)$ the $\sigma$-algebra generated by the sets $\{ \omega : V(\omega,y) \in [0,a)\}$ ranging over $y\in U$ and $a>0$. The assumption \eqref{Vmix} is then written precisely as
\begin{equation} \label{mixing}
\sup\left\{ \left| \Prob(E\cap F) - \Prob(E)\Prob(F) \right| :\ \dist(U,V) \geq r, \ E\in \mathcal{G}(U), \ F\in \mathcal{G}(V) \right\} \leq Cr^\beta.
\end{equation}

Observe that our hypotheses in the case of the model equations \eqref{PCP} and \eqref{PCPnv} allow the potential $V$ to be unbounded from above, but require it to be bounded from below. There is a very good reason for this; see Remark~\ref{invispot-easy} below.

\medskip

We emphasize that the assumptions \eqref{erghyp}, \eqref{AHstat}, \eqref{AC11}, \eqref{Alip}, \eqref{Hconv}, \eqref{Hbound}, \eqref{Hcoer}, \eqref{Hlip}, \eqref{alphabeta}, \eqref{Vmoment}, and \eqref{mixing} are in force throughout this article.

\medskip

As usual, measurability is an burdensome issue. To avoid an overly pedantic presentation, we are going to suppress demonstrations of the measurability of the various functions and processes we encounter in this paper. For the benefit of the concerned reader, however, we sketch here a proof that all of our random variables are measurable. First, the functions of which we need to check the measurability solve equations with random coefficients for which we have uniqueness. In fact, these solutions depend continuously on the coefficients, and the coefficients are measurable functions of $\omega$ combined with the other variables. Therefore, as continuous functions of measurable functions, they are measurable.

\subsection{Cloud point potentials}

We present a simple example of a model equation that fits our framework. Take $H$ to be a Hamiltonian of the form \begin{equation*}H(p,y,\omega) = \widetilde H(p) - V(y,\omega)\end{equation*} where $\widetilde H$ is an appropriate deterministic function, for instance $\widetilde H(p) =  |p|^\gamma$ with $\gamma \geq 1$, and $V$ is a stationary potential satisfying \eqref{Vmoment} and \eqref{Vmix}.

\medskip

We construct some examples of unbounded potentials by first specifying a random process which generates a cloud of points (see Figure~1), and then attach to each point a smooth nonnegative function of compact support. We first briefly recall the notion of a point process on $\Rd$, referring to the books \cite{DVJ1,DVJ2} for more details.

\begin{figure}\label{cloudpic}
\centering

\subfigure[A Poisson cloud] {
  \label{poissonfig}

  \begin{tikzpicture}[scale=6]
    \drawaxes;

    \draw plot[dots=black] file{poisson.dat};
  \end{tikzpicture}
}
\subfigure[A cluster point cloud] {
  \label{clusterfig}

  \begin{tikzpicture}[scale=6]
    \drawaxes;

    \draw plot[dots=black] file{cluster.dat};
  \end{tikzpicture}
}

\caption{Point processes}
\end{figure}
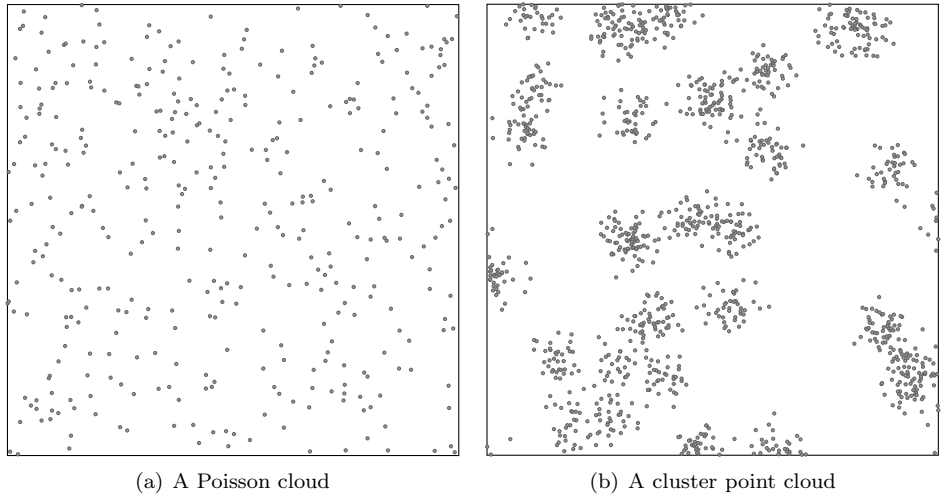

Consider the set $\Omega$ of locally finite, simple pure point measures on $\Rd$. An element of $\Omega$ has the form
\begin{equation} \label{omeform}
\omega = \sum_{j=1}^\infty \updelta_{y_j}
\end{equation}
where the points $y_j$ are distinct and the set $\{ y_j\} \cap B_R$ is finite for each $R> 0$. Here $\updelta_y$ denotes the Dirac measure at $y \in \Rd$. Let $\mathcal F$ be the $\sigma$-field generated by the maps $\omega \mapsto \omega(U)$ ranging over all $U \in \mathcal{B}(\Rd)$. It follows that $E + y :=\{ \omega(\cdot -y) : \omega\in E \}\in \mathcal F$ for any $E\in \mathcal F$.

Fix a smooth $W \in C(\Rd ;{\bar \R}_+)$ with compact support such that $W(0)=1$ and define $V(y,\omega)$, for each $\omega \in \Omega$ of the form \eqref{omeform}, by
\begin{equation} \label{Vexam}
V(y,\omega) : = \int_{\Rd} W(y-z) \, d\omega(z) = \sum_{j=1}^\infty W(y-y_j).
\end{equation}
The law of $V$ is inherited from the probability measure $\Prob$ we attach to the measurable space $(\Omega,\mathcal F)$. The canonical choice of the probability measure $\Prob$ is to take it to be the Poisson law $\Prob_\nu$
which is characterized uniquely by the properties:
\begin{enumerate}
\item[(i)] for every $U \in \mathcal B(\Rd)$, $\E_\nu[\omega(U) ] = \nu |U|$,
\item[(ii)] if $k\in \N$ and $U_1, \ldots, U_k \in \mathcal{B}(\Rd)$ are disjoint, then the random variables $\omega \mapsto \omega(U_j)$ are independent, and
\item[(iii)] $\Prob_\nu[ E ] = \Prob_\nu[ E+y]$ for each $E \in \mathcal F$.
\end{enumerate}
Under $\Prob_\nu$, the canonical process $\omega$ is called a \emph{Poisson point process with intensity} $\nu$. It is easy to see that, with respect to $\Prob_\nu$, the potential $V$ given by \eqref{Vexam} is unbounded in $\omega$ but satisfies \eqref{Vmoment} and \eqref{Vmix} for any $\alpha,\beta > 0$. In fact, it follows from (ii) that $V(y,\cdot)$ and $V(z,\cdot)$ are independent, provided that $|y-z| > \diam (\supt(W))$.

There are many examples of stationary point processes one may consider giving rise to potentials $V$ satisfying our hypotheses, and the reader is invited to consult \cite{DVJ1,DVJ2} for many more. We mention one other type, a \emph{cluster process}, in which \emph{cluster center points} are chosen according to a Poisson law and around each cluster center point a certain (possibly random) number of points are placed according to a predetermined distribution. For instance, in Figure~\ref{clusterfig} we have placed a Poisson number of points around each cluster center point according to a Gaussian distribution. If the clusters are not uniformly bounded, that is, points may land arbitrarily far from their cluster center with positive probability, then the process will not be i.i.d. but may be strongly mixing. While mixing conditions are difficult to check in practice, sufficient conditions for a cluster point process to satisfy a mixing condition such as \eqref{mixing} can be found in Laslett \cite{L}. Likewise, if the function $W$ in \eqref{Vexam} has unbounded support, then the potential $V$ is not i.i.d. but may nonetheless satisfy strong mixing hypothesis, depending on the rate of decay of $W(y)$ for large $|y|$.

\section{The main results and proof overview}
\label{MR}

With the hypotheses stated in the previous section in force, we now state the two main results.

\begin{thm} \label{MAIN}
There exist $\overline H:\Rd\to \R$ and $\Omega_0 \subseteq \Omega$ of full probability such that, for each $\omega \in \Omega_0$ and $u_0\in \BUC(\Rd)$, the unique solution $u^\ep = u^\ep(\cdot,\omega)$ of \eqref{HJ} given in Proposition~\ref{epWP} converges locally uniformly in $\Rd \times \R_+$, as $\ep \to 0$, to the unique solution $u$ of \eqref{HJ-eff} which belongs to $\BUC(\Rd\times [0,T])$ for every $T > 0$.
\end{thm}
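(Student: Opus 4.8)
## Proof proposal for Theorem~\ref{MAIN}

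The plan is to follow the perturbed test function method of Evans, combined with the construction of approximate correctors via the metric problem, and then upgrade convergence from probability to almost sure using the subadditive structure. First I would establish the well-posedness and basic a priori estimates: for fixed $\ep$ and $\omega$, the solution $u^\ep$ of \eqref{HJ} exists, is unique in the appropriate viscosity class, and is bounded and Lipschitz in $x$ uniformly in $\ep$ once $u_0 \in \BUC(\Rd)$ (using \eqref{Hbound}, \eqref{Hcoer}, and the coercivity coming from $c_0|p|^\gamma - V$). Here the unboundedness of $V$ forces us to be careful: the Lipschitz bound on $u^\ep$ will depend on spatial averages of $V$ rather than its sup, which is exactly where the ergodic theorem (Proposition~\ref{ergthm}) and the moment bound \eqref{Vmoment} enter. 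From these estimates, the Arzel\`a--Ascoli theorem gives, along subsequences, a locally uniform limit $u = u(x,t,\omega)$, which a priori may depend on $\omega$ and on the subsequence.

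Next I would identify the limit. The key object is the effective Hamiltonian $\overline H(p)$, which by the results of Sections~\ref{EH} and \ref{Q} is characterized through the solvability of the metric problem \eqref{meteqn}: for $\mu > \overline H(p)$ the problem \eqref{meteqn} has a solution $m^\ep_\mu(\cdot,\omega)$, these functions are subadditive, and by Proposition~\ref{SAergthm} they converge a.s.\ (along an $\omega$-set $\Omega_0$ of full probability, for $p\in\Qd$ and $\mu\in\Q$ with $\mu>\overline H(p)$) to the deterministic solution $\overline m_\mu$ of \eqref{meteqnbar}. From $\overline m_\mu$ one builds approximate correctors: $\overline m_\mu(y) - p\cdot y$ plays the role of a subsolution corrector for the level $\mu$, and together with the analogous supersolution construction (and the new characterization of $\min\overline H$) one obtains, for each fixed $p$ and each $\delta > 0$, functions that serve as the corrector in the perturbed test function argument up to error $\delta$. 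Plugging $\phi(x,t) + \ep\, \overline m_\mu(x/\ep)$ into \eqref{HJ} as a test function against a smooth function $\phi$ touching $u$ from above/below, and using the viscosity inequalities together with the locally uniform convergence $\ep\, m^\ep_\mu(x/\ep) \to 0$, shows that any subsequential limit $u$ is a viscosity solution of \eqref{HJ-eff}; uniqueness for \eqref{HJ-eff} (standard, since $\overline H$ is continuous and the comparison principle holds, cf.\ \cite{CIL}) then forces $u$ to be the full limit and to be independent of $\omega$ on $\Omega_0$.

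The passage to almost sure convergence is the main obstacle, and it is precisely the point of introducing the metric problem. In probability, one could extract the limit via a diagonal/subsequence argument, but to get a \emph{single} full-probability event $\Omega_0$ that works simultaneously for all $p$, all rational levels $\mu$, and all initial data, one needs (i) the a.s.\ convergence of the $m^\ep_\mu$ from the subadditive ergodic theorem applied along a countable dense set of parameters, (ii) a continuous dependence estimate (Proposition~\ref{CDE-noxd}) allowing one to pass from rational $p$ to all $p$ and from a countable dense set of initial data to all $u_0\in\BUC(\Rd)$, and (iii) the oscillation bound of Lemma~\ref{uppbndosc} to control the unbounded potential uniformly. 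The delicate issue throughout is that the corrector estimates, the Lipschitz bounds, and the error terms in the perturbed test function inequalities all involve $V$, which is only controlled in an averaged (ergodic) sense; one must check that every ``$o(1)$'' term genuinely tends to zero on the fixed event $\Omega_0$, uniformly on compact sets of $(x,t)$. Once this bookkeeping is done — stabilizing $\Omega_0$ as the intersection of the countably many full-probability events $\Omega_j$ from the earlier sections — the theorem follows.
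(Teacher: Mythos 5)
There is a genuine gap in your identification step. Your plan never constructs or uses the solutions $v^\delta$ of the auxiliary macroscopic problem \eqref{aux}, which in the paper both \emph{define} $\overline H$ (Proposition~\ref{mainstep}) and serve as the correctors in the final perturbed test function argument (the test function is $\varphi + \lambda\,\ep\, v^\ep(\cdot/\ep,\omega;p_0)$ with $\lambda$ close to $1$, the convexity trick replacing the missing uniform Lipschitz bounds). Your proposed substitute --- plugging $\phi(x,t)+\ep\,\overline m_\mu(x/\ep)$ into \eqref{HJ} and invoking ``$\ep\,m^\ep_\mu(x/\ep)\to 0$'' --- does not work as stated: $\overline m_\mu$ is positively $1$-homogeneous, so $\ep\,\overline m_\mu(x/\ep)=\overline m_\mu(x)$ carries no microscopic oscillation at all, while $\ep\, m_\mu(x/\ep,0,\omega)=m^\ep_\mu(x)\to\overline m_\mu(x)\neq 0$, so the correction does not vanish but changes the gradient of the test function by a cone. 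Moreover the metric problem only yields one-sided (maximal subsolution) information at levels $\mu>\overline H(p)$; even a properly centered version of this argument does not deliver the supersolution inequality for the limit, and it says nothing at the level $\min\overline H$, where the paper needs the separate characterization of Proposition~\ref{charmH}. There is also a circularity in taking ``solvability of the metric problem'' as the definition of $\overline H$: the comparison principle (Proposition~\ref{metcomparison}), the existence result (Proposition~\ref{metexistence}) and the subadditivity all rely on the constant $\overline H(p)$ and the strictly sublinear subcorrector $w$ produced by the auxiliary-problem analysis, so that machinery cannot be skipped.

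In the paper the logic runs the other way: the auxiliary problem gives $\overline H$, a subcorrector, and the $L^1$ convergence \eqref{inprob}; the metric problem, homogenized a.s.\ by the subadditive ergodic theorem, is then used through the \emph{reverse} perturbed test function argument of Proposition~\ref{bigstep} (together with Proposition~\ref{charmH} for the flat part) solely to upgrade \eqref{inprob} to the almost sure, locally uniform limit \eqref{cplim} of $\delta v^\delta$; only after that does the classical perturbed test function argument prove Theorem~\ref{MAIN}. Your bookkeeping of full-probability events and the use of Lemma~\ref{CDE-noxd} to pass from rational to all $p$ match the paper, but note also that uniform-in-$\ep$ Lipschitz bounds on $u^\ep$ are \emph{not} available in the unbounded setting (this is exactly why the $\lambda<1$ convexity device is introduced); compactness comes from the weaker oscillation estimate \eqref{eposcbd}.
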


\begin{thm}\label{MAIN1}
There exists a subset $\Omega_0 \subseteq \Omega$ of full probability such that, for each $\omega \in \Omega_0$ and $\mu > {\overline H}(p)$, with  ${\overline H}:\Rd\to \R$ as in Theorem~\ref{MAIN}, the unique solution $m^\ep_\mu=m^\ep_\mu(\cdot,\omega)$ of \eqref{meteqn} given in Proposition~\ref{metexistence}
converges, as $\ep \to 0$, locally uniformly in $\Rd\setminus\{0\}$ to the unique solution ${\overline m}_\mu$ of \eqref{meteqnbar}. Moreover, ${\overline H}(p)$ is the unique constant such that  \eqref{meteqnbar} has a unique solution for all $\mu > {\overline H}(p)$ and no solution if
 $\mu < {\overline H}(p)$.
\end{thm}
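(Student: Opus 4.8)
\emph{Proof strategy.} The plan is to obtain the convergence in the first assertion from the subadditive ergodic theorem applied to the metric problem, and then to read off the characterization of $\overline H(p)$ from the growth condition in \eqref{meteqnbar}. Fix $p\in\Rd$ and $\mu>\overline H(p)$, and let $m_\mu(\cdot,y,\omega)$ denote the solution of the $\ep=1$ version of \eqref{meteqn} with the pole region recentred at $y$ (existence and uniqueness, together with a comparison principle, being part of Proposition~\ref{metexistence}). Uniqueness gives the scaling identity $m^\ep_\mu(x,\omega)=\ep\,m_\mu(x/\ep,0,\omega)$ and the stationarity relation $m_\mu(x,y,\tau_v\omega)=m_\mu(x+v,y+v,\omega)$, while the comparison principle gives the triangle inequality $m_\mu(x,y,\omega)\le m_\mu(x,z,\omega)+m_\mu(z,y,\omega)$, obtained by comparing the solution with pole at $y$ against the supersolution $m_\mu(\cdot,z,\omega)+m_\mu(z,y,\omega)$. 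Hence, for each fixed unit vector $e$, the map $\bigcup_j[\alpha_j,\beta_j)\mapsto\sum_j m_\mu(\beta_je,\alpha_je,\omega)$ (sum over connected components) is a continuous subadditive process with respect to $\sigma_t=\tau_{te}$: condition (i) is the stationarity relation, (iii) is the iterated triangle inequality, and the integrability bound (ii) follows from the a priori bounds on $m_\mu$ (with a harmless modification on short intervals when the pole region is a ball), the moment estimate $\E[\sup_{B_1}V]<\infty$ — finite since $\alpha>\d$ — and Proposition~\ref{ergthm}. Proposition~\ref{SAergthm} then yields $s^{-1}m_\mu(se,0,\omega)\to a_\mu(p,e)$ a.s.\ as $s\to\infty$; this limit is $\tau$-invariant by subadditivity and stationarity (passing through small perturbations of $e$ using the equicontinuity below), hence a.s.\ a deterministic constant by ergodicity of the full group.

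\emph{Identifying the limit.} Writing $m^\ep_\mu(x,\omega)=|x|\cdot\big(s^{-1}m_\mu(s\,x/|x|,0,\omega)\big)$ with $s=|x|/\ep\to\infty$, the previous step gives $m^\ep_\mu(x,\omega)\to|x|\,a_\mu(p,x/|x|)=:\overline m_\mu(x)$ for each $x\ne0$ and each $x/|x|$ in a countable dense set of directions, a.s.; upgrading this to locally uniform convergence on $\Rd\setminus\{0\}$ requires the a.s.\ local equicontinuity and boundedness of the family $\{m^\ep_\mu(\cdot,\omega)\}_{\ep>0}$, on which more below. Since $m^\ep_\mu$ solves $-\ep\tr(A(\cdot/\ep,\omega)D^2m^\ep_\mu)+H(p+Dm^\ep_\mu,\cdot/\ep,\omega)=\mu$ in $\Rd\setminus D_\ep$, I would pass to the limit by the perturbed test function method using the correctors from Section~\ref{AMP}, exactly as in the proof of Theorem~\ref{MAIN} carried out in Section~\ref{PH} but now on the punctured space, to conclude that $\overline m_\mu$ is a viscosity solution of $\overline H(p+D\overline m_\mu)=\mu$ in $\Rd\setminus\{0\}$ with the same $\overline H$ as in Theorem~\ref{MAIN}; the condition $\overline m_\mu(0)=0$ follows because $m^\ep_\mu$ vanishes on $D_\ep$, which shrinks to $\{0\}$, and is $O(\ep)$ there, and $0\le\liminf_{|y|\to\infty}|y|^{-1}\overline m_\mu(y)$ follows from $m^\ep_\mu\ge0$. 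Uniqueness of the solution of \eqref{meteqnbar} when $\mu>\overline H(p)$ I would get from the comparison principle for \eqref{meteqnbar} — the effective counterpart of the one in Proposition~\ref{metexistence} — which is available precisely because $\mu>\overline H(p)$ places $0$ in the interior of the convex sublevel set $K_\mu:=\{q:\overline H(p+q)\le\mu\}$. To obtain a single full-probability set $\Omega_0$ valid for all $\mu>\overline H(p)$ at once, one runs the argument for rational $\mu$ and interpolates using the monotonicity of $\mu\mapsto m^\ep_\mu$ and the equicontinuity.

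\emph{The threshold characterization.} That \eqref{meteqnbar} has a unique solution for every $\mu>\overline H(p)$ is the content of the first part. For non-solvability when $\mu<\overline H(p)$: if $w$ solved \eqref{meteqnbar} for such a $\mu$, then $w$ would be a viscosity subsolution of the convex eikonal $\overline H(p+Dw)\le\mu$ on $\Rd\setminus\{0\}$; since $\overline H$ is convex and coercive (Section~\ref{EH}), $K_\mu$ is compact, and $\mu<\overline H(p)$ forces $0\notin K_\mu$, so there is a unit vector $e_0$ with $\sigma(e_0):=\sup_{q\in K_\mu}q\cdot e_0<0$. Viscosity subsolutions of this eikonal obey $w(x)-w(y)\le\sigma(x-y)$ whenever the segment $[y,x]$ avoids $0$ — and, perturbing and using continuity of $\sigma$, for all $x,y\ne0$ when $\d\ge2$, the case $\d=1$ being handled on each ray — so, taking $y=0$ (using $w(0)=0$) and $x=Re_0$, we get $w(Re_0)\le R\,\sigma(e_0)<0$ and therefore $\liminf_{|y|\to\infty}|y|^{-1}w(y)\le\sigma(e_0)<0$, contradicting the third line of \eqref{meteqnbar}. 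Thus $\overline H(p)$ has the asserted property, and it is the only constant with that property: if $c$ did as well with $c<\overline H(p)$, then for $\mu\in(c,\overline H(p))$ equation \eqref{meteqnbar} would be both solvable (property of $c$) and unsolvable (just shown); the case $c>\overline H(p)$ is excluded symmetrically via the first part.

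\emph{Main obstacle.} I expect the crux to be the a.s.\ local equicontinuity and boundedness of $\{m^\ep_\mu(\cdot,\omega)\}_{\ep>0}$ on $\Rd\setminus\{0\}$ and, relatedly, the construction of the well-behaved correctors needed for the perturbed test function step — this is exactly where the unboundedness of $V$ is felt. The natural gradient estimates for the metric problem degenerate like a power of the local supremum of $V$, so after rescaling one must show, using the moment hypothesis \eqref{Vmoment}, the mixing hypothesis \eqref{Vmix}, the ergodic theorem (Proposition~\ref{ergthm}), and oscillation bounds in the spirit of Lemma~\ref{uppbndosc}, that the contribution of $V$ is, for a.e.\ $\omega$, locally bounded away from the pole and produces only an $\omega$-dependent modulus of continuity rather than destroying compactness. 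Everything else is either soft (the subadditive and ergodic theorems, the support-function computation) or a variant of arguments already used in the proof of Theorem~\ref{MAIN}.
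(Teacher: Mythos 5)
Your route is the paper's own: a subadditive ergodic theorem along rays for the metric functions, identification of the deterministic limit with $\overline m_\mu$ by a perturbed test function argument built on the approximate correctors $v^\delta$, uniqueness from the comparison principle, and non-solvability for $\mu<\overline H(p)$ by showing that any candidate solution of \eqref{meteqnbar} must decay linearly in some direction (your support-function computation is, in substance, the paper's radial barrier argument in the proposition containing \eqref{Hformula}). The genuine gap is in the subadditivity step, which you present as an immediate consequence of comparison. To compare the solution with pole at $y$ against $m_\mu(\cdot,z,\omega)+m_\mu(z,y,\omega)$ you must exclude the pole at $z$ as well, and then verify the ordering on the boundary of the pole set at $y$; there this amounts to $0\le m_\mu(x,z,\omega)+m_\mu(z,x,\omega)$, which is not obvious because $m_\mu$ is \emph{not} known to be nonnegative (your later assertion that $m^\ep_\mu\ge0$ is likewise unjustified; the paper obtains nonnegativity of the limit only from positive homogeneity together with the liminf condition). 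The paper supplies the missing ingredient via the subcorrector $w$ of Proposition~\ref{mainstep}: Proposition~\ref{metcomparison} gives $w(y)-w(x)\le m_\mu(y,x,\omega)$, and symmetrizing yields \eqref{subaddpr}. More seriously, in the case $\gamma\le2$, $A\not\equiv0$ (pole $=B_1$) the exact triangle inequality you state can fail when points come within unit distance of an intermediate pole; the paper replaces $m_\mu$ by the modified function $\widetilde m_\mu$ with boundary data given by $w$ (Remark~\ref{tildemetp}) and adds the stationary, integrable correction $c(y,\omega)\sim C\big(1+\sup_{B(y,2)}V\big)^{1/\gamma}$ to get the genuinely subadditive $\hat m_\mu$ (Lemma~\ref{subadditivity}); the sublinearity and integrability of $c$, coming from \eqref{Vmoment} and Lemma~\ref{potests}, are also what furnish your condition (ii) and guarantee the corrected process has the same a.s.\ limit. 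Your ``harmless modification on short intervals'' is exactly where this work lives, and it is not harmless to skip: the correction is needed for every intermediate point, not only on short intervals.

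A second, smaller caution concerns the identification step: you cannot import ``the proof of Theorem~\ref{MAIN} in Section~\ref{PH}'' wholesale, because Proposition~\ref{bigstep} and hence Theorem~\ref{MAIN} sit logically downstream of the homogenization of the metric problem, so doing so would be circular. The paper's Proposition~\ref{homogMP} runs the perturbed test function argument only along the subsequence $\delta_j$ of Remark~\ref{assubseq}, for which almost sure convergence of $\delta_j v^{\delta_j}$ is already available from Proposition~\ref{mainstep}, and it compensates for the lack of uniform Lipschitz bounds with the convexity device of Lemma~\ref{movep} (the parameter $\lambda$). Provided you restrict your ``corrector'' input to exactly that, and obtain local equicontinuity from \eqref{MPLB} together with Lemma~\ref{potests} as you anticipate, the rest of your outline matches the paper's proof.
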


The effective nonlinearity $\overline H$ is identified in Proposition~\ref{mainstep} below. Some of its basic properties are summarized in Proposition~\ref{effHam}. We remark that  these properties yield that \eqref{HJ-eff} has a unique bounded uniformly continuous solution on $\Rd\times [0,T]$ for any $T> 0$. The well-posedness of \eqref{HJ} is discussed briefly in Section~\ref{PH}.

\medskip

As the proofs of Theorems~\ref{MAIN} and~\ref{MAIN1} are rather lengthy and involved, we devote the remainder of this section to summarizing the primary obstacles and main ideas needed to overcome them. To simplify our exposition, we consider the time-independent problem
\begin{equation} \label{stat}
u^\ep - \ep \tr \!\left( A(\tfrac x\ep, \omega) D^2u^\ep \right) + H(Du^\ep, \tfrac x\ep, \omega) = 0 \quad \mbox{in}  \ \Rd. \\
\end{equation}
Assume that $u^\ep$ admits the asymptotic expansion
\begin{equation} \label{asyexp}
u^\ep(x,\omega) = u(x) + \ep w(\tfrac x\ep, \omega) + O(\ep^2).
\end{equation}
Inserting this into \eqref{stat} and performing a formal computation, we arrive at
\begin{equation}\label{inserhr}
u - \tr \!\left( A (y, \omega) D^2_y w \right) + H(D_xu + D_yw,y, \omega) = 0.
\end{equation}
If the expression on the left of \eqref{inserhr} is independent of $(y,\omega)$, then we obtain an equation for~ $u$. That is, we suppose that, for each $p\in \Rd$, there exists a constant $\overline H = \overline H(p)$ and a function $w=w(y,\omega)$ such that
\begin{equation} \label{cell}
- \tr \!\left( A(y, \omega) D^2_y w \right) + H(p + D_yw,y, \omega) = \overline H(p) \quad \mbox{a.s. in} \ \omega.
\end{equation}
Substituting into \eqref{inserhr}, we thereby obtain the effective equation
\begin{equation} \label{effeq}
u + \overline H(Du) = 0 \quad \mbox{in} \ \Rd.
\end{equation}
Returning to \eqref{asyexp}, we see that the convergence $u^\ep \to u$ locally uniformly in $\Rd$, as $\ep \to 0$, is formally equivalent to $\ep w(\tfrac x\ep, \omega) \to0$, that is, $w$ must be strictly sublinear at infinity:
\begin{equation} \label{sslatinf}
|y|^{-1} w(y,\omega) \rightarrow 0 \quad \mbox{as} \ |y| \to \infty \quad \mbox{a.s. in} \ \omega.
\end{equation}

The PDE approach to homogenization lies in reversing this analysis: it is hoped that by studying the \emph{macroscopic problem} \eqref{cell}, one can justify the convergence of $u^\ep$ to the solution $u$ of the effective equation \eqref{effeq}. A solution $w$ of \eqref{cell} satisfying \eqref{sslatinf} is typically called a \emph{corrector}, since it permits one to remove the influence of microscopic oscillations in the analysis of \eqref{stat}, as predicted by the asymptotic expansion \eqref{asyexp}. The effective Hamiltonian $\overline H$ is thereby identified via a compatibility condition, namely, the solvability of \eqref{cell} subject to condition \eqref{sslatinf}.

\medskip

In the periodic setting, \eqref{cell} is typically referred to as the \emph{cell problem} since it suffices to solve the equation on the unit cube (a single cell) with periodic boundary conditions. In this case, the condition \eqref{sslatinf} is obviously redundant since periodic functions are bounded.

\medskip

Solving the macroscopic problem requires considering an approximate problem, which typically is
\begin{equation} \label{aux}
\delta v^\delta - \tr\!\left( A(y,\omega) D^2_yv^\delta \right) + H(p+D_yv^\delta,  y, \omega) = 0.
\end{equation}
Here $\delta > 0$ and $p\in \Rd$ are fixed. The auxiliary macroscopic problem \eqref{aux} is not merely an \emph{ad hoc} approximation to \eqref{cell}. Indeed, rescaling the latter by setting $u_\ep(y) : = \ep^{-1} u^\ep(\ep y) - p\cdot y$ and substituting $y=x/\ep$ for the fast variable in \eqref{cell}, one obtains \eqref{stat} with $\delta=\ep$.

In the case that $V$ is bounded, the fact that \eqref{aux} is \emph{proper} (i.e., strictly increasing in its dependence on $v^\delta$) permits one to obtain, from standard viscosity solution theory, a unique bounded solution $v^\delta\in \BUC(\Rd)$ satisfying
\begin{equation} \label{locestout}
 \| \delta v^\delta \|_{L^\infty(\Rd)} + \| Dv^\delta \|_{L^\infty(\Rd)} \leq C.
\end{equation}
The stationarity of $v^\delta$ is immediate from uniqueness. We then define $w^\delta (y,\omega) : = v^\delta(y,\omega) - v^\delta(0,\omega)$ and attempt to pass to the limits $-\delta v^\delta(0,\omega) \rightarrow \overline H$ and $w^\delta(y,\omega) \rightarrow w(y,\omega)$ as $\delta \to 0$, in the hope of obtaining \eqref{cell} from \eqref{aux}. The uniqueness of $\overline H$ then follows from an application of the comparison principle.

\medskip

A periodic environment possesses sufficient compactness to make this argument rigorous, and, as previously mentioned,  \eqref{sslatinf} comes for free since the limit function $w$ is periodic and thus bounded.

To avoid messy measurability issues, in the random setting we must pass to limits in the variables $(y,\omega)$ together. Unfortunately we possess insufficient compactness in $\omega$ to accomplish this without further ado. What is more, unlike the periodic case, we cannot obtain \eqref{sslatinf} from \eqref{locestout}. These are not merely technical issues. In fact, Lions and Souganidis \cite{LS1} have shown that correctors do not exist in the general stationary ergodic case.

\medskip

Hence in the random setting we must concede our attempt to solve the macroscopic problem \eqref{cell} and instead return to the auxiliary problem \eqref{aux}. To obtain the homogenization result, it turns out to be sufficient to find a (necessarily unique) deterministic constant $\overline H$ such that, almost surely, the sequence $\delta v^\delta$ converges to $-\overline H$ uniformly in balls of radius $\sim1/\delta$ as $\delta \to 0$. In Proposition~\ref{bigstep} we show that, for every $r> 0$,
\begin{equation} \label{cplim}
\limsup_{\delta \to 0} \sup_{y\in B_{r/\delta}} \left| \delta v^\delta(y,\omega) + \overline H \right| = 0 \quad \mbox{a.s. in} \ \omega.
\end{equation}

An important idea in the proof of \eqref{cplim}, recently introduced in \cite{LS3}, is the observation that to homogenize it is nearly enough to construct a \emph{subcorrector} $w$ of \eqref{inserhr}, i.e., a subsolution of \eqref{inserhr} which is strictly sublinear at infinity. The subcorrector can be constructed by passing to weak limits in \eqref{aux} along a subsequence $\delta_j \to 0$ and using the convexity of $H$. It then follows from the ergodic theorem and the stationarity of the gradients $Dv^\delta$ and their weak limit that $w$ satisfies \eqref{sslatinf}. The comparison principle permits us to compare $w$ with the full sequence $v^\delta$. Combined with some elementary measure theory this yields that
\begin{equation} \label{inprob}
\E \big| \delta v^\delta(0,\cdot) + \overline H \big| \rightarrow 0 \quad \mbox{as} \ \delta \to 0.
\end{equation}
Of course, this yields almost sure convergence along some subsequence-- and eventually homogenization almost surely-- but only along this particular subsequence. 

Obtaining \eqref{cplim} from \eqref{inprob} along the full sequence $\delta \to 0$, without relying on explicit formulae, is nontrivial and requires some additional estimates and, more importantly, some new ideas.

We begin with the estimates. The first controls the oscillation of $\delta v^\delta$ in balls of radius $\sim1/\delta$. We show that there exists a deterministic constant $C> 0$, depending on $|p|$, such that, for each $y\in \Rd$ and $r > 0$,
\begin{equation} \label{preosc}
\limsup_{\delta \to 0} \osc_{B(y/\delta,r/\delta)} \delta v^\delta(\cdot,\omega) \leq Cr \quad \mbox{a.s. in} \ \omega.
\end{equation}
The second controls the size of $\delta v^\delta$ on balls of radius $\sim 1/\delta$, so that for each $y\in \Rd$,
\begin{equation}\label{uppbndosc1}
\sup_{R>0}\limsup_{\delta \to 0} \osc_{B(y/\delta,R/\delta)} \delta v^\delta(\cdot,\omega) \leq C  \quad \mbox{a.s. in} \ \omega.
\end{equation}
The estimate \eqref{preosc} is useful for $r> 0$ small, while \eqref{uppbndosc1} is typically applied for $R>0$ very large. In the case of bounded $V$, we have global Lipschitz estimates on $v^\delta$ and $L^\infty$-bounds on the $\delta v^\delta$. Therefore \eqref{preosc} and \eqref{uppbndosc1} are immediate. In the unbounded setting, as we will see, proving \eqref{preosc} and \eqref{uppbndosc1} is a more delicate matter. 

The next step to obtain \eqref{cplim} is to prove that
\begin{equation} \label{cplim0}
\delta v^\delta(0,\omega) \rightarrow -\overline H \quad \mbox{a.s. in} \ \omega.
\end{equation}
Once this is done, and we describe how it is proved below, we can get the convergence in balls of radius $\sim 1/\delta$ instead of just at the origin by relying again on \eqref{preosc}, a second application of the ergodic theorem and some elementary measure theory.

\medskip

Proving \eqref{cplim0} requires studying the behavior, as $\ep \to 0$, of the solutions $m_\mu^\ep$ of the metric problem \eqref{meteqn}. Incidentally, that $\mu > \overline H(p)$ is necessary and sufficient for the well-posedness of \eqref{meteqn} is itself another new result in the theory of viscosity solutions. As mentioned in the introduction, we use the subadditive ergodic theorem to conclude, after some work, that the functions $m^\ep_\mu$ converge a.s. in $\omega$ and locally uniformly in $\Rd$ to solutions $\overline m_\mu$ of \eqref{meteqnbar}, for every $\mu > \overline H(p)$. With this in place, we can prove \eqref{cplim0}, using a new \emph{reverse} perturbed test function argument: if \eqref{cplim0} does not hold along a subsequence, then the convergence of the $m^\ep_\mu$ must fail for every $p \in \Rd$ for which $\overline H(p) = \mu$. This argument works for all $p$ for which $H(p) > \min \overline H$, since we need $\mu > \overline H(q)$ for some $q\in \Rd$ for the existence of the functions $m^\ep_\mu$. To conclude we need a separate argument for the case $\overline H(p) = \min \overline H$. For this we prove a new characterization of $\min \overline H$ which involves constructing subsolutions of \eqref{aux} which are permitted to grow linearly instead of strictly sublinearly at infinity.

\medskip

Several additional difficulties arise in unbounded environments. Their resolution has new and interesting implications for the theory of viscosity solutions. Firstly, obtaining stationary solutions $v^\delta$ of the auxiliary problem \eqref{aux} as well as the ``metric problem'' is nontrivial, as we see later. In particular, we cannot expect \eqref{aux} to have bounded solutions, and we must also prove a comparison principle which as far as we know is new in the context of viscosity solutions. Secondly, and this is a more serious problem, it is necessary, as already explained earlier, to have an independent of $\delta$ control over the modulus of continuity of $v^\delta$. We show in Lemma~\ref{impest} that
the mixing hypothesis on $V$ yields control of the oscillation of $\delta v^\delta$ on balls of radius $\sim 1/\delta$.
Finally, the absence of uniform Lipschitz estimates on $v^\delta$ causes an additional difficulty in the proof of \eqref{inprob}, using the comparison principle. This is overcome by relying again on the convexity of $H$.

\section{The auxiliary macroscopic problem}
\label{AMP}

We study here the auxiliary macroscopic problem \eqref{aux} setting the stage for the definition of $\overline H$ in the next section. The main goal is to show that \eqref{aux} has a unique (and therefore stationary) solution $v^\delta$, and that the function $x\mapsto \delta v^\delta(x/\delta)$ is uniformly Lipschitz on scales of order $O(1)$ as $\delta \to 0$. If $V$ is bounded, this is straightforward: a unique bounded solution $v^\delta$ exists for each $\delta > 0$, and we can easily obtain global Lipschitz estimates which are uniform in $\delta$; see \cite{LS2}.

The unbounded setting presents difficulties requiring us to utilize hypotheses \eqref{alphabeta}, \eqref{Vmoment} and \eqref{Vmix} on the potential $V$. Firstly, we cannot expect solutions $v^\delta$ of \eqref{aux} to be bounded from above. This complicates the well-posedness of \eqref{aux}, and, in particular, leads to difficulties with uniqueness, which we handle by using the convexity of $H$.

Secondly, the unboundedness of $V$ means that global Lipschitz estimates on $v^\delta$ and $L^\infty$ bounds on $\delta v^\delta$ cannot hold, and therefore obtaining \eqref{preosc} and \eqref{uppbndosc1} is a nontrivial matter. To deal with this difficulty, we use the mixing condition \eqref{Vmix} and some probability. We proceed by obtaining a local estimate on $|Dv^\delta|$ in terms of $V$ using Bernstein's method. By Morrey's inequality, this reduces the issue to controlling the average of a power of $V$ on large balls. The latter is precisely what a mixing condition allows us to estimate.

Finally, since we may only make countable intersects of subsets of $\Omega$, the proof of the homogenization theorem requires an estimate for the dependence of $v^\delta$ on the parameter $p$ in \eqref{aux}. This is dealt with in Lemma~\ref{CDE-noxd}, a by-product of which is the continuity of effective Hamiltonian $\overline H$, as we will see in Section~\ref{EH}. In the case of general $x$-dependent problems like \eqref{HJx}, this issue becomes much more complicated to resolve in the unbounded environment and requires a strengthening of the hypothesis on $V$; in particular we must take $\alpha$ and $\beta$ to be much larger than in \eqref{alphabeta}.

\subsection{The well-posedness of the auxiliary macroscopic problem} \label{AMP-wp}

We prove that, a.s. in $\omega$, \eqref{aux} has a unique bounded from below solution on $\Rd$. The first issue is to obtain strictly sublinear decay on subsolutions of \eqref{aux}.

The following lemma provides a local upper bound (depending on $\omega$) for subsolutions of \eqref{aux}. The proof is based on  a barrier construction, following \cite{LL,LS2}.

\begin{lem} \label{locest}
Fix $\delta > 0$ and $(p,\omega) \in \Rd\times \Omega$. There exists a deterministic constant $C> 0$, independent of $\delta$, such that if $v\in \USC(\bar B_1)$ is a subsolution of \eqref{aux} in $B_1$, then
\begin{equation} \label{locesteq1}
\sup_{y\in B_{1/2}} \delta v(y) \leq \sup_{y\in B_1} V(y,\omega) + C(1+\delta).
\end{equation}
\end{lem}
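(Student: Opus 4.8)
The plan is to establish the bound by a barrier argument localized near an arbitrary point $y_0 \in B_{1/2}$, following the scheme of \cite{LL,LS2} but keeping careful track of how the coercivity constant and the potential $V$ enter. First I would fix $\delta > 0$, $(p,\omega)$, and a subsolution $v \in \USC(\bar B_1)$ of \eqref{aux}. Since the estimate is local and we are content with a constant depending on $\omega$, the key observation is that on $B_1$ the equation \eqref{aux} together with the coercivity hypothesis \eqref{Hcoer} forces $v$ to satisfy, in the viscosity sense,
\begin{equation*}
\delta v - \tr\!\left(A(y,\omega)D^2 v\right) + c_0|p + Dv|^\gamma \leq V(y,\omega) + C_0 \leq \sup_{B_1} V(\cdot,\omega) + C_0 \quad \mbox{in } B_1,
\end{equation*}
so that, writing $M_\omega := \sup_{B_1} V(\cdot,\omega) + C_0$, the function $v$ is a subsolution of a \emph{deterministic} degenerate elliptic inequality with coercive gradient term. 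Because $\gamma > 1$ and $c_0 > 0$, by Young's inequality $c_0|p+Dv|^\gamma \geq \tfrac{c_0}{2}|Dv|^\gamma - C(|p|)$, so in fact $v$ subsolves
\begin{equation*}
\delta v - \tr\!\left(A(y,\omega)D^2 v\right) + \tfrac{c_0}{2}|Dv|^\gamma \leq M_\omega + C(|p|) \quad \mbox{in } B_1.
\end{equation*}

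Next I would build a barrier. Fix $y_0 \in B_{1/2}$ and set, for $\lambda > 0$ large to be chosen,
\begin{equation*}
\varphi(y) := K + \frac{\lambda}{\delta}\left(\frac14 - |y - y_0|^2\right)^{-s}
\end{equation*}
for a suitable exponent $s > 0$ (or, alternatively, a logarithmic-type barrier — the precise form is a routine choice), so that $\varphi \to +\infty$ as $y \to \partial B(y_0, \tfrac12)$ and $\varphi$ is smooth in $B(y_0,\tfrac12)$. Using \eqref{Alip} (hence $|A(y,\omega)| \leq C$ uniformly, with a deterministic $C$) to bound the second-order term and the coercivity of the gradient term — the point being that $|D\varphi|^\gamma$ grows like a large power of the distance to $\partial B(y_0,\tfrac12)$ while $|D^2\varphi|$ grows like a smaller power — one checks that for $\lambda$ chosen depending only on $\gamma$, $d$, $\sup|A|$, and hence deterministically, $\varphi$ is a (classical, hence viscosity) supersolution of the displayed inequality in $B(y_0,\tfrac12)$ provided $K \geq c(M_\omega + C(|p|))/\delta$ for a deterministic $c$; the contribution of the $\delta v$ term is handled by absorbing $\delta K$ into the right side. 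Since $\varphi = +\infty \geq v$ on $\partial B(y_0,\tfrac12)$ and $v \in \USC(\bar B_1)$ is bounded above there, the comparison principle for this deterministic degenerate equation with coercive first-order term (standard, since the gradient term provides strict monotonicity in the relevant sense; see \cite{CIL} or \cite{LS2}) gives $v \leq \varphi$ on $B(y_0,\tfrac12)$. Evaluating at $y = y_0$ yields
\begin{equation*}
v(y_0) \leq K + \frac{\lambda}{\delta}\left(\tfrac14\right)^{-s} \leq \frac{C}{\delta}\left(\sup_{B_1} V(\cdot,\omega) + 1 + \delta\right)
\end{equation*}
with $C$ deterministic and depending on $|p|$ through $C(|p|)$; multiplying by $\delta$ and taking the supremum over $y_0 \in B_{1/2}$ gives \eqref{locesteq1}. (One should double-check that the $|p|$-dependence can indeed be folded into the stated form; since $C(|p|)$ is just an additive constant, it is absorbed into the "$C(1+\delta)$" after noting $C(|p|) \leq C(1+\delta)\cdot$const is false in general — more honestly, the constant $C$ in the statement is allowed to depend on $|p|$ as the hypothesis says, so there is no issue.)

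The main obstacle I anticipate is not the barrier computation itself but making the comparison step fully rigorous for the degenerate second-order operator: $A$ is only degenerate elliptic, so one cannot invoke uniformly elliptic regularity, and one must use the viscosity comparison principle between the \USC{} subsolution $v$ and the smooth supersolution $\varphi$ on the annular-type region $B(y_0,\tfrac12)$, being careful that the coercive gradient term $|Dv|^\gamma$ (superlinear, $\gamma>1$) is what compensates for the lack of strict monotonicity in $v$ — this is exactly the kind of argument in \cite{LS2}, and the degeneracy of $A$ is harmless because $-\tr(A D^2\cdot)$ is still a proper (degenerate elliptic) operator and the doubling-of-variables argument goes through. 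A secondary technical point is verifying that $\varphi$ really is a supersolution all the way up to the singular boundary with a \emph{$\delta$-independent} choice of $\lambda$; this is why the barrier is scaled by $1/\delta$ and why the $\delta v$ term must be handled by the crude bound $\delta\varphi \leq$ (something already on the right side near $y_0$), plus $\delta v \le \delta\sup_{B_1}|v|$ is \emph{not} available a priori, so instead one keeps $\delta v$ on the left and uses that the supersolution inequality for $\varphi$ only needs $\delta \varphi \geq 0$ together with the rest — i.e. one drops $\delta v \le \delta \varphi$ in the right direction. Modulo these standard-but-delicate viscosity-theoretic checks, the proof is the barrier argument sketched above.
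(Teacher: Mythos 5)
Your overall strategy is the paper's: reduce via \eqref{Hcoer} to a coercive model inequality and compare $v$ against an explicit barrier blowing up at the boundary of a ball, as in \cite{LL,LS2}. The comparison step you worry about is in fact trivial here: since the barrier is smooth and $\delta>0$ makes the zeroth-order term strictly monotone, if $\{v>\varphi\}$ were nonempty then $v-\varphi$ would attain an interior positive maximum (the barrier blows up at the boundary), and the definition of viscosity subsolution applied with the smooth test function $\varphi$, together with the classical supersolution inequality for $\varphi$ and $\delta(v-\varphi)>0$ at that point, gives an immediate contradiction; no doubling of variables is needed, and the properness comes from the $\delta v$ term, not from the coercive gradient term.

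There is, however, a concrete quantitative error in your barrier normalization which, as written, does not yield \eqref{locesteq1}. You scale the singular part by $1/\delta$, taking $\varphi = K + \tfrac{\lambda}{\delta}\left(\tfrac14-|y-y_0|^2\right)^{-s}$ with $\lambda$ independent of $\delta$ and $K\sim\big(M_\omega+C(|p|)\big)/\delta$, where $M_\omega:=\sup_{B_1}V(\cdot,\omega)+C_0$. Near the center of $B(y_0,\tfrac12)$ the gradient of $\varphi$ vanishes, so the supersolution inequality there forces $\delta K \geq M_\omega + C(|p|) + \left|\tr\!\left(A\,D^2\varphi\right)\right|$, and at such points $\left|\tr\!\left(A\,D^2\varphi\right)\right|\sim \lambda/\delta$; with your stated $K$ this fails as $\delta\to0$, and if you enlarge $K$ to repair it you end up with $\delta v(y_0)\leq \sup_{B_1}V+C(|p|)+C\lambda/\delta+\lambda 4^{s}$, which is worse than \eqref{locesteq1} by $C/\delta$ — precisely in the regime $\delta\to0$ where the lemma is used (e.g.\ in Corollary~\ref{globest}, Lemma~\ref{locestgrad} and Lemma~\ref{uppbndosc}). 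The correct normalization is the paper's: the coefficient of the singular part must be a $\delta$-independent constant $b$, and only the additive constant is of size $\big(\sup_{B_1}V+C\big)/\delta$; the gradient-dominates-Hessian step near the boundary then requires only $b^{\gamma-1}\geq C$ with $C$ deterministic and independent of $\delta$, while near the center the Hessian contribution is $O(b)$ and is absorbed into $\delta K$, so evaluating at the center gives exactly $\delta v\leq \sup_{B_1}V+C(1+\delta)$. Equivalently, in your notation one must take $\lambda\sim\delta$, cancelling the $1/\delta$; your remark that the $1/\delta$ scaling is ``why'' a $\delta$-independent $\lambda$ suffices is exactly backwards. With this normalization fixed, the rest of your argument goes through and coincides with the paper's proof (which, incidentally, first reduces to $\gamma\leq2$ so that the exponent $\eta=(2-\gamma)/(\gamma-1)$ is admissible, corresponding to your constraint $\gamma(s+1)\geq s+2$ on $s$).
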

\begin{proof}
We construct a simple barrier. To the extent that we rely on \eqref{Hcoer}, we may assume that $\gamma \leq 2$. Indeed, if \eqref{Hcoer} holds for $\gamma > 2$, then, for every $(p,y,\omega)$,
\begin{equation*}
H(p,y,\omega) \geq c_0|p|^2 - V(y,\omega) - (c_1+c_0).
\end{equation*}
With $a,b\geq 0$ to be selected below and $\eta := \frac{2-\gamma}{\gamma-1}$, consider the smooth function $w \in C^\infty(B_1)$ defined by
\begin{equation} \label{barr}
w(y) : = \begin{cases}
a + b(1-|y|^2 )^{-\eta} & \mbox{if} \ \gamma < 2, \\
a - b\log(1-|y|^2) & \mbox{if} \ \gamma=2.
\end{cases}
\end{equation}
It follows that
\begin{equation*}
|Dw(y)|^\gamma \geq c b^\gamma |y|^\gamma \!\left(1-|y|^2\right)^{-(\eta+2)},
\end{equation*}
and
\begin{equation*}
\tr\!\left( A(y,\omega) D^2w \right) \leq Cb \!\left( 1 - |y|^2\right)^{-(\eta+2)} \leq Cb \left(  |y|^\gamma \!\left( 1 - |y|^2\right)^{-(\eta+2)} + C\right).
\end{equation*}
Inserting $w$ into the left side of \eqref{aux} yields
\begin{multline*}
\delta w - \tr\!\left( A(y,\omega) D^2w\right) + H(p+Dw, y, \omega)  \\
\geq (a \delta -C(b+1) - V(y,\omega)) + (cb^\gamma - Cb)  |y|^\gamma \!\left( 1 - |y|^2\right)^{-(\eta+2)} \geq 0 \quad \mbox{in} \ B_1,
\end{multline*}
provided that $b> 0$ is chosen sufficiently large in terms of the constants in \eqref{Alip} and \eqref{Hcoer}, and that $a$ is taken to be the random variable
\begin{equation*}
a(\omega): =  \frac1\delta\! \left( \sup_{y\in B_1} V(y,\omega) + C(b+1) \right).
\end{equation*}
Since $w$ is smooth, it follows from the definition of viscosity subsolution that $v-w$ cannot have a local maximum at any point in the set $\{ v> w\}$. Since $w(y) \to +\infty$ as $|y|\to 1$, we deduce that $\{ v > w \}$ is empty and, therefore, $v \leq w$ in $B_{1}$. The bound \eqref{locesteq1} now follows.
\end{proof}

The hypothesis $\alpha > \d$ is needed to show that $V(\cdot,\omega)$ is strictly sublinear at infinity, a.s. in $\omega$. In light of Lemma~\ref{locest}, this ensures that any subsolution of \eqref{aux} is bounded from above, a.s. in $\omega$, by a function growing strictly sublinearly at infinity. The latter fact is needed to obtain a comparison principle for \eqref{aux}.

\begin{lem} \label{potests}
There exist a set $\Omega_1\subseteq \Omega$ of full probability and constants $\sigma < 1$ and $C_1> 0$, such that, for all $\omega \in \Omega_1$,
\begin{equation} \label{sublin}
\limsup_{R \to \infty} R^{-\sigma} \sup_{y\in B_R} V(y,\omega) = 0.
\end{equation}
\end{lem}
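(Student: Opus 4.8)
The goal is to show that the stationary potential $V \geq 0$, which satisfies the moment bound \eqref{Vmoment} with $\alpha > \d$, is almost surely strictly sublinear at infinity. The plan is to reduce the sup of $V$ over a ball to a countable union of events and apply Borel-Cantelli, exploiting that $\alpha > \d$ leaves room in the exponent.

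\begin{proof}[Proof sketch]
First I would cover $\Rd$ by unit balls centered at lattice points $z \in \Z^\d$, so that $\sup_{B_R} V(\cdot,\omega) \leq \max_{|z| \leq R+1, \, z \in \Z^\d} M(z,\omega)$, where $M(z,\omega) := \sup_{y \in B_1} V(y+z,\omega)$. By stationarity, each $M(z,\cdot)$ has the same law as $M(0,\cdot)$, and by \eqref{Vmoment} we have $\E[M(0,\cdot)^\alpha] < \infty$. Fix $\sigma$ with $\d/\alpha < \sigma < 1$, which is possible precisely because $\alpha > \d$. For a threshold parameter $\lambda > 0$ and integer $R$, Chebyshev's inequality gives
\begin{equation*}
\Prob\Big[ \max_{|z| \leq R+1} M(z,\cdot) > \lambda R^\sigma \Big] \leq C R^\d \, \Prob\big[ M(0,\cdot) > \lambda R^\sigma \big] \leq C R^\d \lambda^{-\alpha} R^{-\sigma \alpha} \E[M(0,\cdot)^\alpha].
\end{equation*}
Since $\sigma \alpha > \d$, the exponent $\d - \sigma\alpha$ is strictly negative; however, to make this summable in $R$ one needs $\sigma\alpha > \d + 1$, which may fail. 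I would instead sum over a geometric sequence $R_k = 2^k$: then $\sum_k R_k^{\d - \sigma\alpha} < \infty$, and the left-hand events are summable for each fixed $\lambda$. By Borel-Cantelli, for each fixed $\lambda > 0$ there is a full-probability set on which $\max_{|z| \leq R_k+1} M(z,\omega) \leq \lambda R_k^\sigma$ for all large $k$; interpolating over dyadic scales (any $R \in [R_k, R_{k+1}]$ satisfies $R^{-\sigma}\sup_{B_R} V \leq R_k^{-\sigma} \max_{|z|\leq R_{k+1}+1} M(z,\omega) \leq \lambda \, 2^\sigma$) and then intersecting over a sequence $\lambda_j \downarrow 0$ yields a single full-probability set $\Omega_1$ on which \eqref{sublin} holds, after relabeling $\sigma 2^\sigma$-type constants into the constant $C_1$ (or simply noting the limsup is $0$ so no constant is needed).

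The one subtlety is ensuring $\Omega_1$ is independent of $\lambda$: I would take $\Omega_1 = \bigcap_{j} \Omega^{(1/j)}$, where $\Omega^{(\lambda)}$ is the Borel-Cantelli full-probability set for threshold $\lambda$. On $\Omega_1$, for every $j$ we get $\limsup_{R\to\infty} R^{-\sigma}\sup_{B_R} V(\cdot,\omega) \leq 2^\sigma/j$, hence the limsup is zero, which is even stronger than the stated inequality with a constant $C_1$.

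The main obstacle is simply bookkeeping the exponents: one must choose $\sigma$ strictly between $\d/\alpha$ and $1$ (using $\alpha > \d$) and then restrict to a sparse (dyadic) sequence of radii so that the Chebyshev tail bounds become summable; the passage from the dyadic sequence back to all radii $R$ is routine monotonicity since $V \geq 0$ makes $R \mapsto \sup_{B_R} V$ nondecreasing. No mixing or independence is needed here — only stationarity and the moment bound \eqref{Vmoment}.
\end{proof}
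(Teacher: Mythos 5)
Your proof is correct and follows essentially the same route as the paper: a union/covering bound over unit balls via stationarity and the moment hypothesis \eqref{Vmoment}, a Chebyshev-type tail estimate, restriction to the dyadic scales $R=2^k$ to make the probabilities summable, Borel--Cantelli, interpolation back to all radii by monotonicity, and finally intersecting over a sequence of thresholds to get the limsup equal to zero on a single full-probability set.
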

\begin{proof}
Using the stationarity of $V$ and \eqref{Vmoment} and by covering $B_R$ with at most $C R^\d$ balls of radius 1, we see that, for any $\mu > 0$,
\begin{equation} \label{crudecov}
\Prob\left[ \sup_{y\in B_R} V(y,\cdot) \geq \mu \right] \leq C R^\d \mu^{-\alpha}.
\end{equation}
Taking $R=2^k$ and $\mu = R^\sigma\ep$ for fixed $\ep > 0$ and $\d/\alpha < \sigma < 1$, we find
\begin{equation} \label{crudity}
\Prob\left[ \sup_{y\in B_{2^k}} V(y,\cdot) \geq 2^{\sigma k}\ep \right] \leq C_\ep 2^{(\d-\sigma\alpha) k}.
\end{equation}
Since $d-\sigma\alpha < 0$, we obtain
\begin{equation*}
\sum_{k=1}^\infty \Prob\left[  \sup_{y\in B_{2^k}} V(y,\cdot) \geq 2^{\sigma k}\ep \right] \leq C_\ep \sum_{k=1}^\infty 2^{(\d-\sigma\alpha) k} < \infty.
\end{equation*}
Applying the Borel-Cantelli lemma, we get
\begin{equation*}
\limsup_{R\to \infty} R^{-\sigma} \sup_{y\in B_{R}} V(y,\omega) \leq 2^\sigma \limsup_{k\to \infty} \, 2^{-\sigma k} \sup_{y\in B_{2^k}} V(y,\omega)\leq 2^\sigma \ep \quad \mbox{a.s. in} \ \omega.
\end{equation*}
Disposing of $\ep >0$ yields \eqref{sublin} for all $\omega$ in a set of full probability.
\end{proof}
The following is an immediate consequence of Lemmata~\ref{locest} and~\ref{potests}.

\begin{cor} \label{globest}
Fix $\delta > 0$, $p\in\Rd$, $\omega\in \Omega_1$, and $\d / \alpha < \sigma < 1$. If $v\in \USC(\Rd)$ is a subsolution of \eqref{aux} in $\Rd$, then
\begin{equation} \label{globesteq1}
\limsup_{|y|\to\infty} |y|^{-\sigma} v(y) \leq 0.
\end{equation}
\end{cor}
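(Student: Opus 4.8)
The plan is to combine the local upper bound from Lemma~\ref{locest} with the strict sublinearity of the potential established in Lemma~\ref{potests}, applied on a sequence of balls whose radii tend to infinity. First I would rescale: fix $\delta > 0$, $p \in \Rd$, $\omega \in \Omega_1$, and a subsolution $v \in \USC(\Rd)$ of \eqref{aux} on all of $\Rd$. For $R > 0$ and $y_0 \in \Rd$ with $|y_0| = R$, consider the rescaled function $\tilde v(z) := R^{-1} v(y_0 + R z)$ on $\bar B_1$; a direct substitution shows that $\tilde v$ is a subsolution of an equation of the same form as \eqref{aux} but with $A$, $H$, $\delta$, $p$ replaced by rescaled versions, and crucially the coercivity constant $c_0$ and the growth exponent $\gamma$ are unchanged, while the potential becomes $V(y_0 + R\,\cdot\,,\omega)/R^{\gamma-1}$ (the power of $R$ coming from how $|Dw|^\gamma$ scales against $\tr(A D^2 w)$ in the barrier computation). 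Actually, rather than track this scaling delicately, I would instead apply Lemma~\ref{locest} directly on the ball $B(y_0, 1)$: the lemma as stated is for the unit ball centered at the origin, but the proof only uses the barrier construction, which is translation-invariant since the constants in \eqref{Alip}, \eqref{Hcoer} are uniform in $y$. So the cleaner route is to restate Lemma~\ref{locest} for an arbitrary unit ball $B(y_0,1)$, obtaining
\begin{equation*}
\sup_{y \in B(y_0, 1/2)} \delta v(y) \leq \sup_{y \in B(y_0,1)} V(y,\omega) + C(1+\delta).
\end{equation*}

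Next I would observe that, since $\omega \in \Omega_1$, Lemma~\ref{potests} gives a $\sigma < 1$ with $\limsup_{R\to\infty} R^{-\sigma} \sup_{B_R} V(\cdot,\omega) = 0$. For $y$ with $|y|$ large, the ball $B(y,1)$ is contained in $B_{|y|+1}$, so
\begin{equation*}
\sup_{B(y,1)} V(\cdot,\omega) \leq \sup_{B_{|y|+1}} V(\cdot,\omega) = o\big((|y|+1)^\sigma\big) = o(|y|^\sigma) \quad \text{as } |y| \to \infty.
\end{equation*}
Dividing the translated Lemma~\ref{locest} bound by $|y|^\sigma$ and letting $|y|\to\infty$ then yields $\limsup_{|y|\to\infty} |y|^{-\sigma} v(y) \leq 0$, which is exactly \eqref{globesteq1}. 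Note that $\delta$ is fixed throughout, so the $\delta$-dependent term $C(1+\delta)/|y|^\sigma$ and the factor $1/\delta$ in front of the potential term are harmless — they are bounded constants that vanish after division by $|y|^\sigma \to \infty$.

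I do not expect a serious obstacle here; the statement is labeled a corollary precisely because it is a routine synthesis of the two preceding lemmas. The only point requiring a word of care is verifying that Lemma~\ref{locest} holds with the unit ball replaced by any translate $B(y_0,1)$ — but this is immediate from inspecting its proof, since the barrier $w$ in \eqref{barr} can be recentered at $y_0$ and all the constants invoked (from \eqref{Alip}, \eqref{Hcoer}) are independent of the base point by stationarity. One should also make sure the chosen $\sigma$ is consistent between the two invocations, which is why the hypothesis already fixes $\d/\alpha < \sigma < 1$, matching the range in Lemma~\ref{potests}.
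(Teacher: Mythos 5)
Your argument is correct and is exactly what the paper intends: the corollary is stated as an immediate consequence of Lemma~\ref{locest} (applied on translated unit balls, which is legitimate since the barrier construction and the constants in \eqref{Alip}, \eqref{Hcoer} are independent of the base point, or equivalently by stationarity since the lemma holds for every $\omega$) together with the sublinear bound on $V$ from Lemma~\ref{potests}. The rescaling digression at the start is unnecessary, but you correctly abandon it, and the remaining synthesis matches the paper's reasoning.
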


\medskip

Due to the unboundedness of $H$, we cannot apply standard comparison results from the theory of viscosity solutions to \eqref{aux}. However, the previous corollary, the convexity of $H$ and the one-sided bound \eqref{Hbound} suffice to prove the following comparison principle.

\begin{lem} \label{comparison}
Fix $\delta > 0$, $p\in \Rd$ and $\omega \in \Omega_1$. Suppose that $u\in \USC(\Rd)$ and $v\in \LSC(\Rd)$ are, respectively, a subsolution and supersolution of \eqref{aux} in $\Rd$. Assume also that
\begin{equation}\label{compsublin}
\liminf_{|y| \to \infty} \frac{v(y)}{|y|} \geq 0.
\end{equation}
Then $u \leq v$ in $\Rd$.
\end{lem}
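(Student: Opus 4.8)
The plan is to run a standard doubling-of-variables argument, but with the twist that we must first reduce to the case where the subsolution $u$ grows strictly sublinearly so that the penalization actually localizes the maximum. Suppose toward a contradiction that $\sup_{\Rd}(u-v) =: M > 0$. The first step is to invoke the convexity of $H$ to replace $u$ by a slightly better subsolution: for $\theta \in (0,1)$ close to $1$, set $u_\theta := \theta u + (1-\theta)\phi$, where $\phi$ is a fixed smooth strict subsolution of \eqref{aux} chosen with controlled growth — for instance $\phi(y) := -c_0^{-1/(\gamma-1)}(1+|y|^2)^{1/2} \cdot \kappa$ type barrier, or more simply, using \eqref{Hbound} and \eqref{Alip}, a function of the form $\phi(y) = -K(1+|y|)$ for $K$ large, which satisfies $\delta \phi - \tr(AD^2\phi) + H(p+D\phi,y,\omega) \le 0$ wherever $V$ is controlled. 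Convexity of $p\mapsto H(p,y,\omega)$ and linearity of the other terms give that $u_\theta$ is a subsolution of \eqref{aux}, and $\sup(u_\theta - v) \to M$ as $\theta \to 1$, so for $\theta$ close to $1$ we still have $\sup(u_\theta - v) \ge M/2 > 0$. The point of this replacement is that, by Corollary~\ref{globest}, $u$ satisfies $\limsup_{|y|\to\infty}|y|^{-\sigma}u(y)\le 0$, and the admixture of $\phi$ makes $u_\theta$ \emph{strictly} negative and at least linearly decaying at infinity; combined with \eqref{compsublin} on $v$, this forces $u_\theta - v$ to decay at infinity, so $\sup(u_\theta-v)$ is attained at a point in a bounded region.

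The second step is the classical penalization. For $\eta > 0$ consider
\begin{equation*}
\Phi_\eta(x,y) := u_\theta(x) - v(y) - \frac{1}{2\eta}|x-y|^2,
\end{equation*}
which, by the growth/decay just established, attains a maximum at some $(x_\eta,y_\eta)$ in a fixed compact set (uniformly in $\eta$), with $|x_\eta - y_\eta|^2/\eta \to 0$ and $\Phi_\eta(x_\eta,y_\eta) \to \sup(u_\theta - v) \ge M/2$. Applying the Crandall–Ishii lemma from standard viscosity theory \cite{CIL} produces symmetric matrices $X,Y$ with $X \le Y$ (in the appropriate sense, with the usual $-\frac{C}{\eta}I \le \mathrm{diag}(X,-Y) \le \frac{C}{\eta}(\text{matrix of } D^2(\tfrac{|x-y|^2}{2\eta}))$ control), such that writing $q_\eta := \eta^{-1}(x_\eta - y_\eta)$ we have the subsolution and supersolution inequalities
\begin{align*}
\delta u_\theta(x_\eta) - \tr\!\left(A(x_\eta,\omega)X\right) + H(p+q_\eta, x_\eta,\omega) &\le 0, \\
\delta v(y_\eta) - \tr\!\left(A(y_\eta,\omega)Y\right) + H(p+q_\eta, y_\eta,\omega) &\ge 0.
\end{align*}
Subtracting, $\delta(u_\theta(x_\eta) - v(y_\eta)) \le \tr(A(x_\eta,\omega)X) - \tr(A(y_\eta,\omega)Y) + H(p+q_\eta,y_\eta,\omega) - H(p+q_\eta,x_\eta,\omega)$.

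The third step is to show the right-hand side tends to (at most) $0$ as $\eta\to 0$, which contradicts $\delta(u_\theta(x_\eta)-v(y_\eta)) \to \delta\sup(u_\theta-v) \ge \delta M/2 > 0$. For the second-order terms, we use $A = \Sigma\Sigma^t$ with $\Sigma$ Lipschitz \eqref{Alip}: the matrix inequality gives the usual bound $\tr(A(x_\eta,\omega)X) - \tr(A(y_\eta,\omega)Y) \le \frac{C}{\eta}|\Sigma(x_\eta,\omega) - \Sigma(y_\eta,\omega)|^2 \le \frac{C}{\eta}|x_\eta-y_\eta|^2 \to 0$. For the Hamiltonian difference we use the Lipschitz estimate \eqref{Hlip} in the $y$-variable only (the $p$-arguments agree): $|H(p+q_\eta,y_\eta,\omega) - H(p+q_\eta,x_\eta,\omega)| \le C\big(\sup_{[x_\eta,y_\eta]}V(\cdot,\omega)\big)|x_\eta - y_\eta|$; since $x_\eta,y_\eta$ stay in a fixed compact set $K_0$, $\sup_{K_0}V(\cdot,\omega) < \infty$, so this is $\le C(K_0,\omega)|x_\eta-y_\eta| \to 0$. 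Both error terms vanish, yielding the contradiction, hence $M \le 0$, i.e. $u \le v$.

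The main obstacle is the first step — ensuring the maximum in the penalization localizes despite the absence of an a priori upper bound for $v$. This is exactly where we cannot quote off-the-shelf comparison: one-sided growth control comes from Corollary~\ref{globest} for $u$ (hence $u_\theta$), and the hypothesis \eqref{compsublin} for $v$, but one must check carefully that $u_\theta - v$ is bounded above and decays, so that the standard argument that $(x_\eta,y_\eta)$ lies in a compact set goes through; the convexity-based replacement $u \rightsquigarrow u_\theta$ is the device that supplies the needed strict decay on the subsolution side while \eqref{Hbound} guarantees the linear barrier $\phi$ is genuinely a subsolution off the (sublinear, by Lemma~\ref{potests}) bad set of $V$. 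Everything else is bookkeeping with \cite{CIL} and the structural hypotheses \eqref{Alip}, \eqref{Hlip}.
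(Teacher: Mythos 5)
Your overall architecture is the same as the paper's: perturb the subsolution by a convex combination with a smooth, linearly decaying global subsolution (so that Corollary~\ref{globest} on $u$ plus \eqref{compsublin} on $v$ forces linear separation at infinity), and then conclude by comparison --- the only difference being that you unpack the final comparison step via doubling of variables and the Crandall--Ishii lemma, whereas the paper simply quotes the standard comparison principle from \cite{CIL}. That part of your write-up (localization, the $\tfrac{C}{\eta}|\Sigma(x_\eta)-\Sigma(y_\eta)|^2$ trace estimate, and the use of \eqref{Hlip} with equal gradient arguments so that only the locally finite quantity $\sup_{K_0}V(\cdot,\omega)$ enters) is correct and routine.

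The genuine problem is your barrier $\phi$. First, $\phi(y)=-K(1+|y|)$ is not smooth at $y=0$: it has a concave kink there, its second-order superjet contains matrices $X$ that are arbitrarily negative, and hence it fails the viscosity subsolution test for \eqref{aux} at the origin whenever $A\not\equiv 0$. Second, and more importantly, taking the slope $K$ ``large'' cannot make any linear-in-$|y|$ function a subsolution near the origin: to verify the subsolution inequality you may only use the upper bound \eqref{Hbound}, which gives $H(p+D\phi,y,\omega)\le C(1+|p|+K)^\gamma$ with $\gamma>1$, so the superlinear growth in the slope beats the linear gain $\delta K(1+|y|)$ for $|y|$ bounded; no choice of slope (large or small) works without an additive constant. (Your remarks that the subsolution property holds ``wherever $V$ is controlled'' are also off the mark: \eqref{Hbound} is uniform in $(y,\omega)$ and does not involve $V$ at all, so no bad set of $V$ is relevant here.) The correct construction is the paper's: take $\varphi(y)=-(1+|y|^2)^{1/2}$, whose first and second derivatives are bounded so that \eqref{phieqbnd} holds with a constant depending only on $|p|$, and then use $\varphi-k$ with $\delta k\ge C(|p|)$; the term $-\delta k$ absorbs the Hamiltonian and diffusion contributions, while the slope-one linear decay of $\varphi$ is all that is needed, since after mixing, $u_\theta:=\theta u+(1-\theta)(\varphi-k)$ still decays at rate $(1-\theta)$ and your localization argument goes through verbatim. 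With this replacement (and Lemma~\ref{convtrick}, or the smoothness of $\varphi-k$, to justify that $u_\theta$ is a subsolution), your proof is a correct rendering of the paper's argument.
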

\begin{proof}
It follows from Corollary~\ref{globest} that, for all $\omega \in \Omega_1$,
\begin{equation*}
\limsup_{|y| \to \infty} \frac{u(y)}{|y|} \leq 0.
\end{equation*}
Since $(p,\omega)$ play no further role in the argument, we omit them for the rest of the proof. Define the auxiliary function
\begin{equation} \label{linearphi}
\varphi(y): = -\left( 1 + |y|^2 \right)^{\frac12}.
\end{equation}
it is immediate that  $|D\varphi| + |D^2\varphi| \leq C$, hence, using \eqref{Alip} and \eqref{Hbound}, we have
\begin{equation} \label{phieqbnd}
\left| \tr( A(y,\omega) D^2\varphi) \right| + H(p+D\varphi,y,\omega) \leq C.
\end{equation}
Fix $\ep>0$ and define the function
\begin{equation*}
\hat u_\ep (y): = (1-\ep) u(y) + \ep (\varphi(y) - k),
\end{equation*}
where $k> 0$ is taken sufficiently large (depending on $\delta$) that $\varphi-k$ is a subsolution of \eqref{aux} in $\Rd$. Formally, using the convexity of $H$, we see that the function $\hat u_\ep$ is a subsolution of \eqref{aux}. This is made rigorous in the viscosity sense by appealing to Lemma~\ref{convtrick}, or by a more direct argument using that $\varphi-k$ is smooth. Owing to \eqref{globesteq1}, \eqref{compsublin}, and the definition of $\varphi$, we have
\begin{equation*}
\liminf_{|y| \to \infty} \frac{v(y) - \hat u_\ep(y)}{|y|} \geq \ep,
\end{equation*}
and therefore we may apply the standard comparison principle (see \cite{CIL}), yielding
\begin{equation*}
\hat u_\ep \leq v \quad \mbox{in} \ \Rd.
\end{equation*}
We obtain the result upon sending $\ep \to 0$.
\end{proof}

We next demonstrate the well-posedness of the auxiliary macroscopic problem \eqref{aux}, following the general Perron method outlined, for example, in \cite{CIL}.

\begin{prop} \label{auxsolve}
For each fixed $p\in \Rd$ and $\omega \in \Omega_1$, there exists a unique solution $v^\delta = v^\delta(\cdot,\omega; p) \in C(\Rd)$ of \eqref{aux}, which is stationary and such that, for some $C> 0$,
\begin{equation}\label{delvestbel}
\delta v^\delta(\cdot,\omega;p) \geq -C\left(1+|p|^\gamma \right).
\end{equation}
\end{prop}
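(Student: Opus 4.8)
The plan is to construct $v^\delta$ by Perron's method, the two nonstandard inputs being the local upper bound of Lemma~\ref{locest}, which substitutes for the (nonexistent) bounded supersolution, and the comparison principle of Lemma~\ref{comparison}. Fix $p\in\Rd$ and $\omega\in\Omega_1$. First I would record a subsolution that simultaneously yields the bound \eqref{delvestbel}: with $C_p:=C(1+|p|^\gamma)$ and $C$ the constant of \eqref{Hbound}, the constant function $\underline u\equiv-C_p/\delta$ is a classical --- hence viscosity --- subsolution of \eqref{aux}, since $\delta\underline u+H(p,y,\omega)=-C_p+H(p,y,\omega)\le0$ by \eqref{Hbound}; and any subsolution $v$ of \eqref{aux} with $v\ge\underline u$ then satisfies $\delta v\ge\delta\underline u=-C_p$, which is \eqref{delvestbel}.

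Next I would set up the Perron envelope. Applying Lemma~\ref{locest} with $B(y_0,1)$ in place of $B_1$ --- its barrier is translation invariant and its constant depends only on the bounds in \eqref{Alip} and \eqref{Hcoer} --- every $v\in\USC(\Rd)$ that is a subsolution of \eqref{aux} on $\Rd$ obeys $\sup_{B(y_0,1/2)}\delta v\le\sup_{B(y_0,1)}V(\cdot,\omega)+C(1+\delta)$ for each $y_0\in\Rd$. Since $V(\cdot,\omega)$ is locally bounded, the family $\mathcal S$ of subsolutions $v$ of \eqref{aux} on $\Rd$ with $v\ge\underline u$ is nonempty and locally uniformly bounded above, so $v^\delta:=\sup\{v:v\in\mathcal S\}$ is well defined and locally bounded; by the standard Perron arguments (\cite{CIL}) its upper semicontinuous envelope $(v^\delta)^*$ is a subsolution, its lower semicontinuous envelope $(v^\delta)_*$ is a supersolution, and $\underline u\le(v^\delta)_*\le v^\delta\le(v^\delta)^*$. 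Because $(v^\delta)_*\ge\underline u$ is bounded below, $\liminf_{|y|\to\infty}(v^\delta)_*(y)/|y|\ge0$, so Lemma~\ref{comparison} applied to the subsolution $(v^\delta)^*$ and the supersolution $(v^\delta)_*$ gives $(v^\delta)^*\le(v^\delta)_*$; combined with the reverse inequality this yields $v^\delta=(v^\delta)^*=(v^\delta)_*\in C(\Rd)$, a solution of \eqref{aux} satisfying \eqref{delvestbel}. (Alternatively one could exhibit a genuine supersolution of superlinear growth, built from $(1+|y|^2)^{(\gamma+1)/(2\gamma)}$ together with the sublinearity of $V$ from Lemma~\ref{potests}, and invoke the textbook Perron scheme verbatim.)

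For uniqueness: if $\tilde v\in C(\Rd)$ is any solution of \eqref{aux} that is bounded from below, then $\tilde v$ and $v^\delta$ both satisfy the hypothesis \eqref{compsublin}, so two applications of Lemma~\ref{comparison}, once with $(u,v)=(v^\delta,\tilde v)$ and once with $(u,v)=(\tilde v,v^\delta)$, force $v^\delta=\tilde v$. Stationarity then follows from uniqueness: by \eqref{AHstat}, for each $z\in\Rd$ the function $y\mapsto v^\delta(y+z,\omega;p)$ is a bounded-from-below solution of \eqref{aux} with $\omega$ replaced by $\tau_z\omega$, and (enlarging $\Omega_1$ to a $\tau$-invariant full-measure set if necessary) uniqueness at $\tau_z\omega$ forces $v^\delta(y+z,\omega;p)=v^\delta(y,\tau_z\omega;p)$ for all $y,z$.

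I expect the main obstacle to be exactly the absence of a bounded --- indeed of any constant --- supersolution of \eqref{aux}, which is forced by the unboundedness of $V$: Perron's method cannot be run from a sub/supersolution pair in the usual way, and one must instead use the $\omega$-dependent local bound of Lemma~\ref{locest} to make the envelope well defined and then rely on the comparison principle of Lemma~\ref{comparison}, whose sublinearity hypothesis \eqref{compsublin} is precisely what the constant lower barrier $\underline u$ provides, to upgrade to a continuous solution. A related subtlety is that uniqueness can hold only in the class of solutions bounded from below, which is why that restriction appears both in the statement and in the argument for stationarity.
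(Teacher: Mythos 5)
Your proposal is correct and follows essentially the same route as the paper: Perron's method seeded by the constant subsolution $-C\delta^{-1}(1+|p|^\gamma)$ coming from \eqref{Hbound}, with Lemma~\ref{comparison} (whose hypothesis \eqref{compsublin} the constant lower barrier supplies) yielding $(v^\delta)^*\le(v^\delta)_*$, continuity, uniqueness in the bounded-below class, and stationarity from uniqueness. The only cosmetic difference is in how finiteness of the Perron envelope is secured: the paper compares against the explicit supersolution $(1+|y|^2)^{1/2}+k$ (built using Lemma~\ref{potests}) via Lemma~\ref{comparison}, while you invoke the translated local bound of Lemma~\ref{locest}; both work, and you note the paper's variant parenthetically.
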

\begin{proof}
Observe that by \eqref{Hbound}, the constant function $-C\delta^{-1}(1+|p|)^\gamma$ is a subsolution of \eqref{aux}. It is easy to check using \eqref{sublin} that, for sufficiently large $k$ depending on $\delta$, the function $(1+|y|^2)^{1/2} + k$ is a supersolution of \eqref{aux}. Define
\begin{equation*}
v^\delta(y,\omega) : = \sup\left\{ w(y) : w\in \USC(\Rd) \ \mbox{is a subsolution of} \ \eqref{aux} \ \mbox{in} \ \Rd \right\}.
\end{equation*}
Since $\omega\in \Omega_1$, Lemma~\ref{comparison} yields that $v^\delta \leq (1+|y|^2)^{1/2} + k$ in $\Rd$. Thus $v^\delta(y,\omega)$ is well-defined and finite. It follows (see \cite[Lemma 4.2]{CIL}) that $(v^\delta)^*$ is a viscosity subsolution of \eqref{aux}, where $(v^\delta)^*$ denotes the upper semicontinuous envelope of $v^\delta$. Since $\omega \in \Omega_1$, we deduce that $(v^\delta)^*(y)$
satisfies \eqref{compsublin}.
\begin{equation*}
\limsup_{|y| \to \infty} \frac{(v^\delta)^*(y)}{|y|} \leq 0.
\end{equation*}

If the  lower semicontinuous envelope $(v^\delta)_*$ of $v^\delta$ failed to be a supersolution of \eqref{aux}, then this would violate the definition of $v^\delta$, see \cite[Lemma 4.4]{CIL}.
Clearly \eqref{delvestbel} holds by definition, and therefore we have
Applying Lemma~\ref{comparison}, we conclude that $(v^\delta)^* \leq (v^\delta)_*$, and therefore $v^\delta = (v^\delta)^* = (v^\delta)_*$ and so $v^\delta \in C(\Rd)$ is a solution of \eqref{aux}. Uniqueness is immediate from Lemma~\ref{comparison}, and stationarity follows from uniqueness.
\end{proof}

We conclude this subsection with a continuous dependence estimate, asserting that, a.s. in $\omega$, if $p_1$ and $p_2$ are close, then $\delta v^\delta(\cdot,\omega;p_1)$ and $\delta v^\delta(\cdot,\omega;p_2)$ are close in an appropriate sense. Once we have homogenization, this is equivalent to showing that $\overline H$ is continuous. We address this point now rather than later due to technical difficulties we encounter in the homogenization proof. In particular, we must obtain a single subset of full probability on which \eqref{cplim} holds for all $p\in\Rd$. To accomplish this, we first obtain \eqref{cplim}, a.s. in $\omega$, for each rational $p$ and then intersect the respective subsets of~$\Omega$. This yields a subset of $\Omega$ of full probability on which the limit \eqref{cplim} holds for all rational $p$. To argue that, in fact, \eqref{cplim} holds for all $\omega$ in this subset and all $p \in \Rd$ requires such a continuous dependence estimate. For exactly the same reason, we also need the following result in the next subsection, where we obtain a single set of full probability on which the estimate \eqref{preosc} holds for all $p$.

\medskip

The continuous dependence estimate is Lemma~\ref{CDE-noxd}, below. It is based on the following preliminary lemma, which will also be useful to us later.

\begin{lem} \label{movep}
Fix $\lambda > 0$ and $p\in \Rd$, and define $w^\delta(y,\omega): = \lambda v^\delta(y,\omega;p)$. If $\lambda < 1$, then $w^\delta$ satisfies, for any $q\in \Rd$,
\begin{equation}\label{movepdn}
\delta w^\delta -\tr\!\left( A(y,\omega)D^2w^\delta\right) + H( q + Dw^\delta ,y,\omega) \leq (1-\lambda) H\left( \frac{q-\lambda p}{1-\lambda},y,\omega\right) \quad \mbox{in} \ \Rd.
\end{equation}
Likewise, if $\lambda > 1$, then for any $q\in \Rd$,
\begin{equation}\label{movepup}
\delta w^\delta -\tr\!\left( A(y,\omega)D^2w^\delta\right) + H( q + Dw^\delta ,y,\omega) \geq (1-\lambda) H\left( \frac{q-\lambda p}{1-\lambda},y,\omega\right) \quad \mbox{in} \ \Rd.
\end{equation}
\end{lem}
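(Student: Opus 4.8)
The plan is to verify the two differential inequalities directly by testing $w^\delta$ against smooth functions, using that $v^\delta = v^\delta(\cdot,\omega;p)$ is a viscosity solution of \eqref{aux} and that $H$ is convex in $p$. Consider the case $\lambda < 1$. Suppose $\phi \in C^2$ touches $w^\delta$ from above at a point $y_0$; then $\lambda^{-1}\phi$ touches $v^\delta = \lambda^{-1} w^\delta$ from above at $y_0$, so since $v^\delta$ is a (sub)solution of \eqref{aux},
\begin{equation*}
\delta v^\delta(y_0) - \tr\!\left(A(y_0,\omega) D^2(\lambda^{-1}\phi)(y_0)\right) + H\!\left(p + \lambda^{-1} D\phi(y_0), y_0,\omega\right) \leq 0.
\end{equation*}
Multiplying through by $\lambda > 0$ and using $w^\delta = \lambda v^\delta$ gives
\begin{equation*}
\delta w^\delta(y_0) - \tr\!\left(A(y_0,\omega) D^2\phi(y_0)\right) + \lambda H\!\left(p + \lambda^{-1} D\phi(y_0), y_0,\omega\right) \leq 0.
\end{equation*}
It remains to bound the last term from below by the quantity we want, namely $H(q + D\phi(y_0),y,\omega) - (1-\lambda) H\!\left(\tfrac{q-\lambda p}{1-\lambda},y,\omega\right)$.

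The key algebraic observation is the convex-combination identity: writing $r := D\phi(y_0)$, we have
\begin{equation*}
q + r = \lambda\!\left(p + \lambda^{-1} r\right) + (1-\lambda)\cdot \frac{q - \lambda p}{1-\lambda},
\end{equation*}
which is a genuine convex combination of the two points since $\lambda \in (0,1)$. Applying the convexity hypothesis \eqref{Hconv} in the $p$-variable (at fixed $(y_0,\omega)$) yields
\begin{equation*}
H(q + r, y_0,\omega) \leq \lambda\, H\!\left(p + \lambda^{-1} r, y_0,\omega\right) + (1-\lambda)\, H\!\left(\frac{q-\lambda p}{1-\lambda}, y_0,\omega\right),
\end{equation*}
hence $\lambda H(p + \lambda^{-1} r, y_0,\omega) \geq H(q+r,y_0,\omega) - (1-\lambda) H\!\left(\tfrac{q-\lambda p}{1-\lambda},y_0,\omega\right)$. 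Substituting into the displayed inequality above produces exactly \eqref{movepdn} in the viscosity sense. The case $\lambda > 1$ is handled symmetrically: now $\lambda^{-1} \in (0,1)$, one tests $v^\delta$ from below via a function touching $w^\delta$ from below, and the same convex-combination identity (with the roles rearranged, treating $p + \lambda^{-1} r$ as the convex combination of $q+r$ and $\tfrac{q-\lambda p}{1-\lambda}$, noting $\lambda > 1$ flips the relevant sign) together with convexity of $H$ reverses the inequality, giving \eqref{movepup}.

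I do not expect any serious obstacle here — the statement is essentially a bookkeeping exercise once the convex-combination identity is spotted. The only point requiring a little care is the sign tracking when $\lambda > 1$: here $1 - \lambda < 0$, so multiplying inequalities by $1-\lambda$ reverses them, which is precisely why the conclusion \eqref{movepup} is a supersolution inequality rather than a subsolution one; one should double-check that $\tfrac{q-\lambda p}{1-\lambda}$ is still the correct affine combination and that the convexity of $H$ is applied at a legitimate convex combination (it is, since in the $\lambda>1$ case $p+\lambda^{-1}r = \lambda^{-1}(q+r) + (1-\lambda^{-1})\tfrac{\lambda p - q}{\lambda - 1}$ with coefficients in $(0,1)$). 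All the manipulations are purely pointwise in $(y,\omega)$, so no regularity of $v^\delta$ beyond its being a continuous viscosity solution (Proposition~\ref{auxsolve}) is needed, and the smoothness of the test function $\phi$ makes the $\tr(A D^2\cdot)$ term transform linearly under scaling without difficulty.
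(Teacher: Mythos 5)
Your proposal is correct and follows essentially the same route as the paper: the paper also derives \eqref{movepdn} and \eqref{movepup} from the convex-combination identity $q+\lambda Dv^\delta=\lambda(p+Dv^\delta)+(1-\lambda)\tfrac{q-\lambda p}{1-\lambda}$ (and its rearrangement for $\lambda>1$), merely stating the formal computation and remarking that it is made rigorous with smooth test functions, which is exactly the verification you carry out explicitly. Your sign-checking in the $\lambda>1$ case and the identity $p+\lambda^{-1}r=\lambda^{-1}(q+r)+(1-\lambda^{-1})\tfrac{q-\lambda p}{1-\lambda}$ are correct.
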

\begin{proof}
Writing $w^\delta$ in the form
\begin{equation*}
w^\delta(y) = \lambda v^\delta(y,\omega;p) = \lambda \big( (q-p)\cdot y + v^\delta(y,\omega;p) \big) + (1-\lambda)\left( \frac{\lambda(q-p)}{1-\lambda} \cdot y \right)
\end{equation*}
and using the convexity of $H$, we find that, formally,
\begin{equation*}
H(q+Dw^\delta(y),y,\omega) \leq \lambda H(p+Dv^\delta(y,\omega;p) ) + (1-\lambda) H\left( \frac{q-\lambda p}{1-\lambda},y,\omega\right).
\end{equation*}
Therefore, formally we have
\begin{multline*}
\delta w^\delta -\tr\!\left( A(y,\omega)D^2w^\delta\right) + H( q + Dw^\delta ,y,\omega) -(1-\lambda) H\left( \frac{q-\lambda p}{1-\lambda},y,\omega\right) \\
 \leq \lambda \left( \delta v^\delta(y,\omega;p) - \tr\!\left( A(y,\omega)D^2v^\delta(y,\omega;p) \right) + H( p + Dv^\delta(y,\omega;p) ,y,\omega) \right) = 0.
\end{multline*}
This inequality is easy to confirm in the viscosity sense by performing an analogous calculation with smooth test functions. We have proved \eqref{movepdn}.

The proof of \eqref{movepup} is similar. Expressing $v^\delta(\cdot; p)$ in terms of $w^\delta$ as
\begin{equation*}
v^\delta(y,\omega;p) = \lambda^{-1} \big( (p-q)\cdot y + w^\delta(y) \big) + (1-\lambda^{-1}) \left( \frac{\lambda(q-p)}{1-\lambda} \cdot y\right),
\end{equation*}
we use again the convexity of $H$ to find that, formally,
\begin{equation*}
H(p+Dv^\delta(y,\omega;p) ) \leq \lambda^{-1} H(q+Dw^\delta(y)) + (1-\lambda^{-1}) H\left( \frac{q-\lambda p}{1-\lambda},y,\omega\right).
\end{equation*}
From this we formally obtain \eqref{movepup}. The derivation is once again made rigorous with smooth test functions. 
\end{proof}

\begin{lem} \label{CDE-noxd}
There exists $C > 0$ such that, for each $\delta > 0$, $p_1,p_2\in \Rd$ and $\omega \in \Omega_1$,
\begin{equation} \label{CDE-noxdeq}
\sup_{y\in \Rd} \left| \delta v^\delta(y,\omega; p_1) - \delta v^\delta(y,\omega; p_2) \right| \leq C (1+|p_1| + |p_2|)^{\gamma-1}|p_1-p_2|.
\end{equation}
\end{lem}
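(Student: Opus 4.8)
The plan is to compare suitable $\lambda$-rescalings of $v^\delta(\cdot,\omega;p_1)$ against $v^\delta(\cdot,\omega;p_2)$ (and conversely) using Lemma~\ref{movep} and the comparison principle, and then to optimize over $\lambda$. By the symmetry of the claimed bound in $p_1,p_2$, it suffices to estimate $\delta v^\delta(\cdot;p_1)-\delta v^\delta(\cdot;p_2)$ from above. Fix $\lambda\in(0,1)$ and set $w:=\lambda v^\delta(\cdot,\omega;p_1)$; Lemma~\ref{movep} with $q=p_2$ shows that $w$ is a subsolution of \eqref{aux} for the parameter $p_2$ but with right-hand side $(1-\lambda)H\big(\tfrac{p_2-\lambda p_1}{1-\lambda},y,\omega\big)$. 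The crucial point is that, by the one-sided growth bound \eqref{Hbound}, this right-hand side is at most the \emph{deterministic} constant $\delta\kappa:=(1-\lambda)C\big(1+\big|\tfrac{p_2-\lambda p_1}{1-\lambda}\big|^{\gamma}\big)$: it is precisely the convexity of $H$ (built into Lemma~\ref{movep}) that replaces the dangerous quantity $H(p_2+Dw,y,\omega)$ --- which would drag in the unbounded $|Dv^\delta|$ and $V$ --- by a term evaluated at a fixed momentum. Hence $w-\kappa$ is a genuine subsolution of \eqref{aux} for the parameter $p_2$.

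Next I would invoke Lemma~\ref{comparison}: since $v^\delta(\cdot,\omega;p_2)$ is bounded below (Proposition~\ref{auxsolve}, \eqref{delvestbel}), it satisfies \eqref{compsublin}, so $w-\kappa\le v^\delta(\cdot,\omega;p_2)$, i.e. $\lambda\,\delta v^\delta(y;p_1)-\delta\kappa\le\delta v^\delta(y;p_2)$ for all $y\in\Rd$. Writing $\lambda=1-\theta$ and running the three companion arguments (interchanging $p_1\leftrightarrow p_2$, and repeating with $\lambda>1$ via \eqref{movepup}) yields a two-sided comparison. Using $\big|\tfrac{p_2-(1-\theta)p_1}{\theta}\big|\le|p_1|+\theta^{-1}|p_1-p_2|$ one gets $\delta\kappa\le C\theta(1+|p_1|^{\gamma}+|p_2|^{\gamma})+C|p_1-p_2|^{\gamma}\theta^{1-\gamma}$, and choosing $\theta\simeq|p_1-p_2|/(1+|p_1|+|p_2|)$ --- which is $<1$ because $|p_1-p_2|\le|p_1|+|p_2|$ --- bounds both terms by $C(1+|p_1|+|p_2|)^{\gamma-1}|p_1-p_2|$, which is exactly the target right-hand side.

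The main obstacle is the residual term introduced by the rescaling: the inequality rearranges to $\delta v^\delta(y;p_1)-\delta v^\delta(y;p_2)\le\theta\,\delta v^\delta(y;p_1)+\delta\kappa$, and in the unbounded environment $\delta v^\delta$ has no deterministic upper bound, so $\theta\,\delta v^\delta(y;p_1)$ is not controlled for free. Combining all four inequalities one obtains $\big|\delta v^\delta(y;p_1)-\delta v^\delta(y;p_2)\big|\le\theta\,\big(\min(\delta v^\delta(y;p_1),\delta v^\delta(y;p_2))\big)_{+}+C(1+|p_1|+|p_2|)^{\gamma-1}|p_1-p_2|$, so at every $y$ where this minimum is $\lesssim(1+|p_1|+|p_2|)^{\gamma}$ the claimed estimate already follows with the above $\theta$. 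The remaining points are exactly those near the spikes of $V$, where, by Lemma~\ref{locest}, both $\delta v^\delta(\cdot;p_1)$ and $\delta v^\delta(\cdot;p_2)$ are large but are driven by the \emph{same}, parameter-independent, local behaviour of $V$, so that their difference should still be $O\big((1+|p_1|+|p_2|)^{\gamma-1}|p_1-p_2|\big)$; turning this formal cancellation into a rigorous bound --- e.g. by re-running the comparison after subtracting $\delta^{-1}\sup_{B_1(y)}V$, or by a direct doubling-of-variables argument exploiting the convexity of $H$ as in the proof of Lemma~\ref{comparison} --- is the technical heart of the lemma.
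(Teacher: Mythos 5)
Your first two paragraphs follow the paper's proof essentially verbatim: rescale one solution by $\lambda$, use Lemma~\ref{movep} together with \eqref{Hbound} to turn the right-hand side into the deterministic constant $(1-\lambda)H\big(\tfrac{p_1-\lambda p_2}{1-\lambda},y,\omega\big)\le C(1+|p_1|+|p_2|)^{\gamma-1}|p_1-p_2|$ for the choice $1-\lambda=(1+|p_1|+|p_2|)^{-1}|p_1-p_2|$, subtract $\delta^{-1}$ times this constant to get a genuine subsolution of \eqref{aux}, and apply Lemma~\ref{comparison}; the paper then simply symmetrizes in $p_1,p_2$ rather than optimizing over $\theta$, which is cosmetic. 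The divergence is in the endgame. The paper attempts no cancellation near the spikes of $V$: from $\lambda v^\delta_2-\delta^{-1}K\le v^\delta_1$, with $K:=C(1+|p_1|+|p_2|)^{\gamma-1}|p_1-p_2|$, it rearranges and disposes of the leftover term involving $v^\delta_2$ by citing the lower bound \eqref{delvestbel} together with the choice of $\lambda$. Judged as a proof of \eqref{CDE-noxdeq}, your proposal is therefore incomplete: your last paragraph explicitly leaves open the case where $\min\big(\delta v^\delta(y;p_1),\delta v^\delta(y;p_2)\big)$ is large, and neither suggested repair is carried out (a doubling-of-variables argument reintroduces $H$ evaluated at the unbounded gradients that the scaling trick was designed to avoid, and subtracting $\delta^{-1}\sup_{B_1(y)}V$ destroys the subsolution property away from $y$). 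That unfinished case is the gap.

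That said, your diagnosis of the obstruction is accurate, and you should know that the printed proof does not contain a device you overlooked. Writing $\widetilde w^\delta=\lambda v^\delta_2-\delta^{-1}K$, the first line of the paper's concluding display is an identity only if $(\lambda-1)v^\delta_2$ is read as $(1-\lambda)v^\delta_2$, and then the passage to the next line requires an upper bound of the form $\delta v^\delta_2\le C(1+|p_2|^\gamma)$ --- exactly what is unavailable in an unbounded environment, since Lemma~\ref{locest} bounds $\delta v^\delta$ only by $\sup_{B_1}V+C$ and $\delta v^\delta$ genuinely reaches that order on large high excursions of $V$; the cited bound \eqref{delvestbel} is a lower bound and controls only the opposite sign. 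So the residual term $\theta\,\delta v^\delta$ that you isolate is precisely the soft spot of the argument, and it is far from clear that the pointwise-in-$\Rd$ statement can be rescued by the cancellation you hope for. Note, however, that everywhere Lemma~\ref{CDE-noxd} is invoked later (continuity of $\overline H$, and transferring the limits \eqref{oscest}, \eqref{mainstepeq}, \eqref{bigstepeq} from rational $p$ to all $p$) one only needs the estimate on balls $B_{R/\delta}$ in the limit $\delta\to0$, where asymptotic upper bounds on $\delta v^\delta$ are available (compare Lemma~\ref{uppbndosc}, whose proof for a fixed $p$ does not rely on the present lemma); completing your argument for that weaker statement is the realistic target.
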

\begin{proof}
Fix $\delta> 0$, $\omega \in \Omega_1$, and, for $i=1,2$, write $v_i^\delta(y,\omega) := v^\delta(y,\omega;p_i)$. For $0 < \lambda < 1$ to be selected below define $w^\delta(y) : = \lambda v_2^\delta(y,\omega)$. According to Lemma~\ref{movep}, $w^\delta$ satisfies the inequality
\begin{equation*}
\delta w^\delta -\tr\!\left( A(y,\omega)D^2w^\delta\right) + H( p_1 + Dw^\delta ,y,\omega) \leq (1-\lambda) H\left( \frac{p_1-\lambda p_2}{1-\lambda},y,\omega\right) \quad \mbox{in} \ \Rd.
\end{equation*}
Set $\lambda : = 1 - (1+|p_1|+|p_2|)^{-1} |p_1-p_2|$. It follows that
\begin{equation*}
\frac{|p_1-\lambda p_2|}{1-\lambda} = \frac{\big| (1+|p_1|+|p_2|) (p_1 -p_2) + |p_1-p_2|p_2 \big| }{|p_1-p_2|} \leq 1+|p_1|+2|p_2|.
\end{equation*}
Thus by \eqref{Hbound},
\begin{align}
(1-\lambda) H\left( \frac{p_1-\lambda p_2}{1-\lambda},y,\omega\right) & \leq C \frac{|p_1-p_2|}{1+|p_1|+|p_2|} \left(1+|p_1|+|p_2| \right)^{\gamma} \nonumber \\ & = C  \left(1+|p_1|+|p_2| \right)^{\gamma-1} |p_1-p_2|. \label{convHbar2}
\end{align}
Therefore, $\widetilde w^\delta(y) : = w^\delta(y) - \delta^{-1} C  \left(1+|p_1|+|p_2| \right)^{\gamma-1} |p_1-p_2|$ is a subsolution of
\begin{equation*}
\delta \widetilde w^\delta - \tr\!\left( A(y,\omega)D^2 \widetilde  w^\delta\right) + H( p_1 + D \widetilde w^\delta ,y,\omega) \leq 0 \quad \mbox{in} \ \Rd.
\end{equation*}
Applying Lemma~\ref{comparison}, we obtain
\begin{equation*}
\widetilde w^\delta \leq v_1^\delta \quad \mbox{in} \ \Rd.
\end{equation*}
Rearranging some terms and using the definition of $\lambda$ and \eqref{delvestbel} yields that, in $\Rd$,
\begin{align*}
v^\delta_2 - v^\delta_1 & = (\lambda - 1) v^\delta_2 + \widetilde w^\delta - v^\delta_1 + \delta^{-1} C\left(1+|p_1|+|p_2| \right)^{\gamma-1} |p_1-p_2| \\
& \leq \delta^{-1} C (1+|p_2|^\gamma) (1-\lambda) + \delta^{-1} C\left(1+|p_1|+|p_2| \right)^{\gamma-1} |p_1-p_2| \\
& \leq \delta^{-1} C\left(1+|p_1|+|p_2| \right)^{\gamma-1} |p_1-p_2|.
\end{align*}
Now we repeat the argument reversing the roles of $p_1$ and $p_2$ to obtain \eqref{CDE-noxdeq}.
\end{proof}

\subsection{Estimates on the oscillation of $\delta v^\delta$ in balls of radius $\sim 1/\delta$.}

We obtain important estimates controlling the oscillation of $\delta v^\delta$ in balls of radius $\sim1/\delta$, which plays a critical role in the proof of \eqref{cplim}. If $V$ is bounded uniformly in $\omega$, it is easy to obtain estimates which are independent of $\delta$. Indeed, Lemma~\ref{locest} and Proposition~\ref{auxsolve} provide an $L^\infty$ bound for $\delta v^\delta$ while Lemma~\ref{locestgrad} below provides uniform, global Lipschitz estimates for $v^\delta$, and there is nothing more to show. In the unbounded setting, the situation is subtle. The mixing hypothesis together with Morrey's inequality and the ergodic theorem is needed to obtain the required estimate.

\medskip

The idea is as follows. First, we use Bernstein's method to get a local Lipschitz estimate on $v^\delta$ in terms of the nearby behavior of $V$. Next, we use the mixing hypothesis to control the average of an appropriate power $V$ over large balls, thereby providing us with some control over the average of $|Dv^\delta|^q$ on large balls, for some $q> \d$. We then apply Morrey's inequality, which yields an estimate on the oscillation of $\delta v^\delta$ on balls of radius $\sim 1/\delta$, centered at the origin. A supplementary argument using Egoroff's theorem combined with the ergodic theorem upgrades the latter estimate to the one we need, which is contained in Lemma~\ref{impest}, below.

\medskip

We begin with the local Lipschitz estimate on $v^\delta$.

\begin{lem} \label{locestgrad}
Fix $\delta > 0$ and $(p,\omega) \in \Rd\times \Omega$. There exists, an independent of $\delta$, $C=C(|p|)> 0$ such that any solution $v \in C(\bar B_1)$ of \eqref{aux} in $B_1$ is Lipschitz on $B_{1/2}$, and
\begin{equation} \label{locesteq2}
\esssup_{y\in B_{1/2}} |Dv(y)|^\gamma \leq C\left( 1 + \sup_{y\in B_1} V(y,\omega)  \right).
\end{equation}
\end{lem}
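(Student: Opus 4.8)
The plan is to use Bernstein's method in its standard viscosity-theoretic formulation. Fix a smooth cutoff $\zeta\in C_c^\infty(B_1)$ with $0\le\zeta\le 1$, $\zeta\equiv 1$ on $B_{1/2}$, and $|D\zeta|+|D^2\zeta|\le C$, normalized so that $|D\zeta|^2\le C\zeta$ (this last requirement is why one typically takes $\zeta=\rho^4$ for a simple bump $\rho$, ensuring the gradient of $\zeta$ does not blow up relative to $\zeta$ near $\partial B_{1/2}$). By Lemma~\ref{locest} and the $C^{1,1}_{\mathrm{loc}}$ regularity of $A$ together with the $C^{0,1}$ bound on $\Sigma$, together with the Lipschitz bound \eqref{Hlip} on $H$, the solution $v$ is locally semiconcave and hence twice differentiable a.e.\ in $B_1$; alternatively one argues purely with test functions. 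First I would introduce the auxiliary function
\begin{equation*}
\Phi(y):=\zeta(y)\,|Dv(y)|^2 + \Lambda\, v(y),
\end{equation*}
for a large constant $\Lambda=\Lambda(|p|)>0$ to be chosen, and examine a point $y_0\in \overline{B_1}$ where $\Phi$ attains its maximum. Since $\zeta$ vanishes on $\partial B_1$ and $v$ is bounded above on $\overline{B_1}$ by Lemma~\ref{locest}, the maximum is attained at an interior point $y_0\in B_1$, and (unless $Dv(y_0)=0$, a trivial case) $v$ is differentiable there.

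Next I would differentiate the equation \eqref{aux}. The cleanest route is: at $y_0$, the function $y\mapsto \Phi(y)$ has a max, so $D\Phi(y_0)=0$ and $D^2\Phi(y_0)\le 0$; feeding these into the equation obtained by (formally) differentiating \eqref{aux} in the direction $e_k$ and summing against $Dv$, one gets, writing $P:=p+Dv(y_0)$ and using \eqref{Hcoer} in the form $c_0|P|^\gamma\le H(P,y_0,\omega)+V(y_0,\omega)+C_0$ and the Lipschitz estimate \eqref{Hlip} to bound $|D_yH|\le C(\sup_{B_1}V)(1+|P|)^{\gamma-1}$ and $|D_pH|\le C(1+|P|)^{\gamma-1}$, an inequality of the shape
\begin{equation*}
c\,\zeta(y_0)\,|Dv(y_0)|^{\gamma+2} \le C\Big(1+\sup_{B_1}V(\cdot,\omega)\Big)\Big(1+\zeta(y_0)|Dv(y_0)|^2 + |Dv(y_0)|^{\gamma+1}\Big) + \Lambda\cdot(\text{l.o.t.}),
\end{equation*}
where the good term on the left comes from the coercivity contribution $c_0|Dv|^\gamma$ inside $H$ hitting the $\zeta|Dv|^2$ piece, the $A D^2v$ terms are absorbed using $D^2\Phi(y_0)\le 0$ and $|D\zeta|^2\le C\zeta$ and the ellipticity $A=\Sigma\Sigma^t\ge 0$ with $\|\Sigma\|_{C^{0,1}}\le C$, and the term $\Lambda v$ is used precisely to kill the contribution of $D_y H$ that is linear in $V$ — choosing $\Lambda$ comparable to $C(1+\sup_{B_1}V)$ would reintroduce $V$-dependence, so instead one uses the coercivity term already present. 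Once this scalar inequality is in hand, Young's inequality absorbs all the terms of degree $\le \gamma+1$ in $|Dv(y_0)|$ into the degree-$(\gamma+2)$ term on the left, leaving
\begin{equation*}
\zeta(y_0)^{?}\,|Dv(y_0)|^{\gamma+2}\ \le\ C\Big(1+\sup_{B_1}V(\cdot,\omega)\Big)^{\,\theta}
\end{equation*}
for suitable exponents, which after dividing by powers of $\zeta(y_0)$ and using $\Phi(y)\le \Phi(y_0)$ yields $\sup_{B_{1/2}}|Dv|^\gamma\le C(1+\sup_{B_1}V)$ after rebalancing exponents; a little care with which power of $\zeta$ survives is the reason for taking $\zeta$ to a high power.

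The rigorous version, to avoid assuming $v\in C^2$, is to run the maximum-principle argument on the function $\Phi$ using the viscosity definitions: either invoke that $v$ is semiconcave (hence $D^2v$ exists a.e.\ and the equation holds pointwise a.e.), use Jensen's lemma / the theorem on sums to get approximate second-order sub/superjets at a near-maximum of $\Phi$, or — most expediently — smooth the equation by sup-convolution, prove the estimate for the smoothed equation where everything is $C^2$, and pass to the limit. I expect the \textbf{main obstacle} to be bookkeeping: tracking the $V$-dependence through the differentiated equation so that the final bound is \emph{linear} in $\sup_{B_1}V$ (as stated) rather than, say, quadratic, and verifying that the constant $C$ depends only on $|p|$ and the structural constants and not on $\delta$ — the $\delta v$ term in \eqref{aux} only helps (it contributes $\delta\Lambda\,|Dv|^2\zeta\ge 0$ after differentiation, or can simply be discarded since $v$ need not be bounded below uniformly, using instead that $\delta v\ge -C(1+|p|^\gamma)$ from \eqref{delvestbel} only enters through $\Lambda v$, which we handle). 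The convexity \eqref{Hconv} is not needed here; only the one-sided bounds \eqref{Hbound}, \eqref{Hcoer} and the Lipschitz bound \eqref{Hlip}, plus \eqref{AC11}--\eqref{Alip}, enter.
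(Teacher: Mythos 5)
Your overall frame (Bernstein's method with a cutoff, arguing at a maximum point, regularizing to justify the computation) is the same as the paper's, but there is a genuine gap at the heart of your argument: the coercive ``good term'' $c\,\zeta(y_0)|Dv(y_0)|^{\gamma+2}$ on the left of your key inequality has no identified source. After differentiating \eqref{aux} and testing with $\zeta Dv$, only $D_pH$ and $D_yH$ appear; $H$ itself --- the only place where the coercivity \eqref{Hcoer} lives --- is never multiplied by $\zeta|Dv|^2$. The correction $\Lambda v$ with a \emph{constant} $\Lambda=\Lambda(|p|)$ only injects the equation (equivalently, via convexity, $D_pH\cdot Dv\gtrsim |Dv|^\gamma-V-C$) with weight $\Lambda$, so the best good term your construction produces is of order $\Lambda|Dv|^\gamma$. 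That is too weak on two counts: the cutoff term $C|Dv|^{\gamma+1}|D\zeta|$ coming from $D_pH\cdot D\zeta$ has degree larger than $\gamma$ and cannot be absorbed at all, and absorbing the $D_yH$ term $C\zeta|Dv|\sup_{B_1}V$ by Young against $\Lambda|Dv|^\gamma$ yields $(\sup_{B_1}V)^{\gamma/(\gamma-1)}$, not the linear dependence asserted in \eqref{locesteq2}, which the later exponent bookkeeping (see \eqref{alphabeta} and Lemmas~\ref{potestsD} and~\ref{impest}) genuinely requires. (Also, \eqref{Hlip} gives $|D_yH|\le C\sup_{B_1}V$ with no extra factor $(1+|P|)^{\gamma-1}$.)

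The paper closes exactly this gap by a different mechanism: it keeps the second-order good term $\varphi\,\tr\!\left(A(D^2v)^2\right)$ from the differentiated equation, controls the differentiated diffusion via $(\tr(D_yA\,M))^2\le C\tr(AM^2)$ and $(\tr(AM))^2\le C|A|\tr(AM^2)$ --- this is where $A=\Sigma\Sigma^t$ with $\Sigma$ Lipschitz is used, essential since $A$ is degenerate --- and then \emph{squares the equation}: substituting $\tr(AD^2v)=\delta v+H(p+Dv,y,\omega)$ turns $\tr(A(D^2v)^2)$ into a lower bound by $c(\delta v+H)^2$, which via \eqref{Hcoer} and the bound on $\delta v$ from Lemma~\ref{locest} becomes the true good term $c\varphi|Dv|^{2\gamma}$. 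Since $2\gamma>\gamma+1$ and $2\gamma/(2\gamma-1)<2$, Young's inequality then absorbs both problematic terms and gives exactly $|Dv|^\gamma\le C\big(1+\sup_{B_1}V\big)$; the power-adjusted cutoff with $|D\varphi|^2\le C\varphi^{1+\theta}$, $|D^2\varphi|\le C\varphi^\theta$ and $\theta=1/\gamma$ (rather than $|D\zeta|^2\le C\zeta$) is what makes this exponent arithmetic come out. Without the squaring step, or some substitute producing a good term of degree strictly greater than $\gamma+1$ carrying the correct $V$-weight, your argument does not close.
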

\begin{proof}
The estimate follows by the Bernstein method. By performing a routine regularization, smoothing the coefficients and adding, if necessary, a small viscosity term, we may assume that $v$ is smooth. Next we adapt here the arguments of \cite{LL,LS2}.

Let $0 < \theta <1$ be chosen below and select a cutoff function $\varphi\in C^\infty(B_1)$ satisfying
\begin{equation}
0 \leq \varphi \leq 1, \ \ \varphi \equiv 1 \ \mbox{on} \ B_{\frac12}, \ \ \varphi \equiv 0 \ \mbox{in} \ B_1 \setminus B_{\frac34}, \ \ |D\varphi|^2 \leq C\varphi^{1+\theta} \ \ \mbox{and} \ \ \left| D^2 \varphi \right| \leq C\varphi^{\theta}. \label{varphiest}
\end{equation}
For example, we may take $\varphi = \xi^{2/(1-\theta)}$ where $\xi \in C^\infty(B_1)$ is any smooth function satisfying the first three condition of \eqref{varphiest}.

Define $z : = |Dv|^2$ and $\xi : = \varphi |Dv|^2 = \varphi z$, and compute
\begin{equation} \label{easyderv}
D\xi = z D\varphi + 2\varphi D^2v Dv,  \quad D^2\xi = z D^2\varphi + 2 D\varphi \! \otimes\! (D^2v Dv) + \varphi (D^3v Dv + D^2v D^2v).
\end{equation}
Differentiating \eqref{aux} with respect to $y$ and multiplying by $\varphi Dv$ yields
\begin{multline} \label{diffauxle}
\delta \xi - \varphi Dv \cdot \tr \!\left( D_yA(y,\omega) D^2v + A(y,\omega) D_yD^2v \right) + \varphi D_pH(p+Dv,y,\omega) \cdot D^2vDv \\ + \varphi Dv\cdot D_yH(p+Dv,y,\omega) = 0.
\end{multline}
Combining \eqref{easyderv} and \eqref{diffauxle} and performing some computation, we find that, at any point $y_0$ at which $\xi$ achieves a positive local maximum,
\begin{multline} \label{diffauxhell}
2\delta \xi - 2\varphi \tr \!\left( D_yA D^2v\right) \cdot Dv + \tr\!\left( A (zD^2\varphi + 2 \varphi^{-1} z D\varphi\otimes D\varphi + 2\varphi D^2vD^2v) \right) \\
- z D_pH(p+Dv,y_0) \cdot D\varphi +2 \varphi D_yH(p+Dv,y_0) \cdot Dv \leq 0.
\end{multline}
Writing $M := D^2v(y_0)$ and $q: =Dv(y_0)$ and using \eqref{Hlip}, we obtain
\begin{multline} \label{diffauxhell2}
\varphi\tr(AM^2) \leq \varphi |q| | \tr(D_yAM)|  + C|A| |q|^2 |D^2\varphi| + C |A| |D\varphi|^2 \varphi^{-1} |q|^2  \\ + C|q|^2 |p+q|^{\nu-1} |D\varphi| + C |q| \varphi V(y_0,\omega).
\end{multline}
The Cauchy-Schwarz inequality in the form
\begin{equation} \label{CSineq}
(\tr(AM))^2\leq C |A| \tr(AM^2)
\end{equation}
and \eqref{Alip} imply that
\begin{equation*}
\left( \tr(D_yA M) \right)^2 = \left( 2\tr( D_y\Sigma\Sigma^t M) \right)^2 \leq 4 |D_y\Sigma|^2 \tr( A M^2 ) \leq C \tr(AM^2).
\end{equation*}
This inequality, \eqref{varphiest}, another use of \eqref{CSineq} and some elementary inequalites yield
\begin{equation} \label{diffauxhell3}
\varphi (\tr(AM))^2\leq C \varphi \tr(AM^2) \leq C\varphi^\theta |q|^2 + C|q|^2 |p+q|^{\nu-1} \varphi^{(1+\theta)/2} + C |q| \varphi V(y_0,\omega).
\end{equation}
By squaring the equation \eqref{aux} and using \eqref{diffauxhell3}, we obtain, at $y=y_0$,
\begin{equation*}
\varphi\left( \delta v + H(p+q,y_0) \right)^2 \leq C\varphi^\theta |q|^2 + C|q|^2 |p+q|^{\nu-1} \varphi^{(1+\theta)/2} + C |q| \varphi \sup_{y\in B_{1}} V(y,\omega).
\end{equation*}
The inequality above, \eqref{Hcoer} and \eqref{locesteq1} yield
\begin{equation*}
\varphi |q|^{2\gamma} \leq C\varphi \left( 1 + \sup_{y\in B_1} \V(y,\omega) \right)^2 + C\varphi^\theta |q|^2 + C|q|^{\gamma+1} \varphi^{(1+\theta)/2},
\end{equation*}
for a constant $C> 0$ depending as well on an upper bound for $|p|$. Setting $\theta := 1 / \gamma$, we find
\begin{equation*}
(\varphi^\theta |q|^{2})^\gamma \leq C\varphi \left( 1 + \sup_{y\in B_1} \V(y,\omega) \right)^2 + C\varphi^\theta |q|^2 + C(\varphi^\theta |q|^2)^{(1+\gamma)/2}.
\end{equation*}
Therefore,
\begin{equation*}
\xi^\gamma = (\varphi |q|^2)^\gamma \leq C + C\left( 1 + \sup_{y\in B_1} \V(y,\omega) \right)^2.
\end{equation*}
The bound \eqref{locesteq2} follows.
\end{proof}

From Lemma~\ref{locestgrad} we see immediately that $v^\delta(\cdot,\omega;p) \in W^{1,\infty}_{\mathrm{loc}}(\Rd)$ for each $\omega\in \Omega_1$ and $p \in \Rd$, with $|Dv^\delta|$ controlled locally by $V$ and an upper bound for $|p|$.

\medskip

The purpose of the mixing hypothesis is to prove the following estimate which, in light of Lemma~\ref{locest}, give some control over the average of $|Dv^\delta|^q$ in large balls.

\begin{lem} \label{potestsD}
There exist a set $\Omega_2 \subseteq \Omega_1$ of full probability and constants $q> \d$ and $C_2 > 0$ such that, for every $\omega \in \Omega_2$,
\begin{equation} \label{bigmix}
\limsup_{R\to \infty} \bigg( \fint_{B_R} \sup_{B(y,1)}V(\cdot,\omega)^{q/\gamma} \, dy \bigg)^{1/q} \leq C_2.
\end{equation}
\end{lem}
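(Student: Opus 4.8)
The plan is to recognize the integrand in \eqref{bigmix} as a \emph{stationary} process of finite mean and to apply the multiparameter ergodic theorem, Proposition~\ref{ergthm}; the mixing hypothesis \eqref{Vmix} is not needed for this qualitative bound and enters only in the quantitative refinement discussed at the end. Set $h(y,\omega) := \sup_{z\in B(y,1)} V(z,\omega)$. Since $s\mapsto s^{q/\gamma}$ is nondecreasing on $[0,\infty)$, the integrand of \eqref{bigmix} is $g(y,\omega):= h(y,\omega)^{q/\gamma}$, and $g$ is stationary: by the stationarity of $V$, $g(y,\tau_w\omega) = \big(\sup_{z\in B(y,1)} V(z+w,\omega)\big)^{q/\gamma} = \big(\sup_{z\in B(y+w,1)} V(z,\omega)\big)^{q/\gamma} = g(y+w,\omega)$.

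The only genuine choice is that of $q$, which is pinned between two competing requirements. We need $q>\d$, because \eqref{bigmix} will later be combined with Lemma~\ref{locestgrad} (which controls $|Dv^\delta|^\gamma$ by $C(1+\sup_{B_1}V)$, hence $|Dv^\delta|^q$ by $C(1+(\sup_{B_1}V)^{q/\gamma})$) and fed into Morrey's inequality, which needs a power strictly above the dimension. We also need $\E[g(0,\cdot)]<\infty$, which holds as soon as $q\le\gamma\alpha$: then $q/\gamma\le\alpha$, so the elementary inequality $t^{q/\gamma}\le 1+t^\alpha$ ($t\ge 0$) together with \eqref{Vmoment} gives $\E[g(0,\cdot)]\le 1+\E[\sup_{y\in B_1}V(y,\cdot)^\alpha]<\infty$. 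Since $\gamma>1$ and $\alpha>\d$, we have $\gamma\alpha>\alpha>\d$, so the interval $(\d,\gamma\alpha]$ is nonempty; we fix any $q$ in it, and this is the $q$ of the statement.

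With $q$ so chosen, Proposition~\ref{ergthm} applied to the stationary process $g$ and the bounded domain $B_1$ gives a set $\widetilde\Omega$ of full probability on which $\fint_{B_R} g(y,\omega)\,dy\to \E[g(0,\cdot)]$ as $R\to\infty$. Setting $\Omega_2:=\Omega_1\cap\widetilde\Omega$ (of full probability, contained in $\Omega_1$) and $C_2:=\big(\E[\sup_{y\in B_1}V(y,\cdot)^{q/\gamma}]\big)^{1/q}<\infty$, continuity of $x\mapsto x^{1/q}$ yields, for every $\omega\in\Omega_2$, $\limsup_{R\to\infty}\big(\fint_{B_R} g(y,\omega)\,dy\big)^{1/q}=C_2$, which is \eqref{bigmix}.

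A remark on where \eqref{Vmix} and \eqref{alphabeta} actually bite. In the next step one must control $\osc_{B(y_0/\delta,R/\delta)}\delta v^\delta$ with a bound independent of $\delta$ and uniform in $y_0$, and for this the bare limit above is not enough --- one wants a convergence rate in \eqref{bigmix}. Such a rate can be obtained by the Borel--Cantelli scheme of Lemma~\ref{potests}: take $R=2^k$, partition $B_{2^k}$ into $\sim 2^{k\d}$ unit cells, and estimate $\Var\big(\fint_{B_{2^k}} g\big)$ through the covariances of (shifts of) the cell variables, which decay in the separation by combining the mixing bound \eqref{mixing} with the moment bound \eqref{Vmoment} via a Davydov-type covariance inequality, after truncating the unbounded $g$. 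The main obstacle along that route is exactly this covariance estimate for the unbounded variables, and the condition $\beta>\d+4\d^2/(\gamma\alpha)$ in \eqref{alphabeta} is what makes the resulting sum of covariances summable in $k$, so that Borel--Cantelli applies.
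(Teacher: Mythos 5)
Your proof is correct, and it takes a genuinely different route from the paper. You observe that $g(y,\omega):=\big(\sup_{B(y,1)}V(\cdot,\omega)\big)^{q/\gamma}$ is stationary and, for any $q\in(\d,\gamma\alpha]$ (a nonempty interval since $\gamma>1$ and $\alpha>\d$), has finite mean by \eqref{Vmoment}, so the multiparameter ergodic theorem (Proposition~\ref{ergthm} with the domain $B_1$) gives the almost sure limit $\fint_{B_R}g\,dy\to\E g(0,\cdot)$, hence \eqref{bigmix} with $C_2=(\E g(0,\cdot))^{1/q}$; this uses neither \eqref{Vmix} nor \eqref{alphabeta} beyond $\alpha>\d$. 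The paper instead proves the lemma quantitatively: it partitions a large cube into well-separated blocks, truncates the unbounded variables at level $K^{q\sigma/\gamma}$, uses the mixing condition through the covariance and variance estimates of Lemmata~\ref{mix-decay} and~\ref{mix-EVest}, and concludes by Chebyshev and Borel--Cantelli along dyadic scales; the constraint \eqref{alphabeta} is exactly what makes those tail probabilities summable. What the paper's route buys is an algebraic bound on the probability of deviations at scale $K$ (a rate, in effect), while your route buys simplicity, the natural constant, and weaker hypotheses; since the statement and its later uses (Morrey's inequality in \eqref{oscatz}, which only needs some $q>\d$ and the bound at origin-centered balls) require no rate, your argument fully suffices. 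One small correction to your closing remark: the uniformity in the center $y_0$ in Lemma~\ref{impest} is obtained in the paper via Egoroff's theorem and the ergodic theorem applied to the indicator of a good event, not via a rate in \eqref{bigmix}; the place where mixing is genuinely indispensable later is the upper bound of Lemma~\ref{uppbndosc} (and, in the paper's own proof of the present lemma, it substitutes for the ergodic theorem), so your heuristic about where \eqref{Vmix} and \eqref{alphabeta} ``bite'' is slightly misplaced, though this does not affect the validity of your proof.
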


Before we present the proof of Lemma~\ref{potestsD}, we briefly describe the general idea of how the mixing condition is used to prove \eqref{bigmix}. For convenience, it is better to consider a cube $Q_K$ instead of the ball $B_R$. Here $K$ is large integer and $Q_K$ is the cube of side length $2K$ centered at the origin. The idea is then to subdivide $Q_K$ into smaller cubes, which are obtained by two successive partitions of $Q_K$. The smaller cubes are collected into groups in such a way that within each group, the cubes are sufficiently separated so that we may apply the mixing condition. This provides some decay on the probability that the average of $\sup_{B(y,1)}|V(\cdot,\omega)|^{q/\gamma}$ is large on the union of each group.

\begin{proof}[{Proof of Lemma~\ref{potestsD}}]
Let $K$ and $N$ be positive integers such that $K$ is a multiple of $N$. Partition $Q_K$ into $M : = (K/N)^\d$ subcubes $Q^1,\ldots, Q^M$ of side length $2N$. Partition each of the cubes $Q^i$ into $L:=N^\d$ subcubes $Q^{i1},\ldots, Q^{iL}$ of side length 2, in such a way that $Q^{ij}$ has the same position in $Q^{i}$ as does $Q^{1j}$ relative to $Q^1$. That is, the translation that takes $Q^i$ to $Q^1$ also takes $Q^{ij}$ to $Q^{1j}$ (the partitions are illustrated in Figure~\ref{cubepart}). If $N\geq 5$, this ensures that, for each $1 \leq i,j\leq M$ and $1 \leq k \leq L$, we have
\begin{equation} \label{dAij}
\dist(Q^{ik},Q^{jk}) \geq 2( N - \sqrt2) \geq N+2.
\end{equation}

\begin{figure} \label{cubepart}
\centering

\begin{tikzpicture}[>=stealth]
\pgftransformscale{.50}

\draw[very thick] (6,6) -- (6,-6) -- (-6,-6) -- (-6,6) -- (6,6); 
\draw[thick] (-6,-2) -- (6,-2); 
\draw[thick] (-6,2) -- (6,2);
\draw[thick] (-2,-6) -- (-2,6);
\draw[thick] (2,-6) -- (2,6);
\draw (-2,5) -- (-6,5); 
\draw (-2,4) -- (-6,4);
\draw (-2,3) -- (-6,3);
\draw (-5,2) -- (-5,6);
\draw (-4,2) -- (-4,6);
\draw (-3,2) -- (-3,6);
\draw (2,1) -- (6,1); 
\draw (2,0) -- (6,0);
\draw (2,-1) -- (6,-1);
\draw (5,2) -- (5,-2);
\draw (4,2) -- (4,-2);
\draw (3,2) -- (3,-2);
\fill[lightgray] (-4,5) -- (-3,5) -- (-3,4) -- (-4,4) -- (-4,5); 
\fill[lightgray] (4,1) -- (5,1) -- (5,0) -- (4,0) -- (4,1);
\draw[thick,->]  (3.7,2.4) node[anchor=south]{$Q^{ij}$} -- (4.5,0.5); 
\draw[thick,->]  (0.9,2.8) node[anchor=east]{$Q^{i}$} -- (2.2,1.8);
\draw[thick,->]  (-1.6,4.2) node[anchor=west]{$Q^{1{j}}$} -- (-3.5,4.5); 
\draw[thick,->]  (-0.9,1.2) node[anchor=west]{$Q^{1}$} -- (-2.2,2.2);
\end{tikzpicture}
\caption{The partitioning of $Q_K$ into smaller cubes in the proof of Lemma~\ref{potestsD}}
\end{figure}
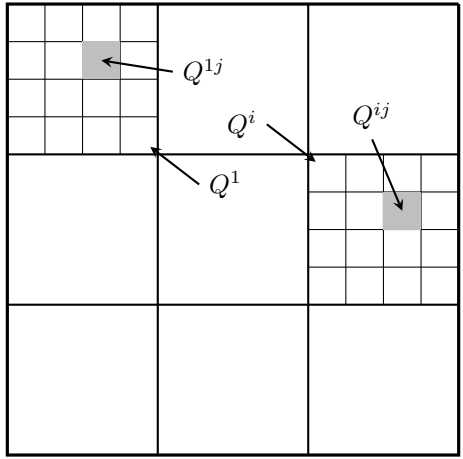

Observe that
\begin{multline} \label{breakdn}
\fint_{Q_K} \sup_{B(y,1)} |V(\cdot,\omega)|^{q/\gamma} \, dy = (2K)^{-\d} \sum_{j=1}^L \sum_{i=1}^M \int_{Q^{ij}} \sup_{B(y,1)} |V(\cdot,\omega)|^{q/\gamma} \, dy \\
\leq (2K)^{-\d} L \max_{1\leq j \leq L}  \left( \sum_{i=1}^M\sup_{\widetilde Q^{ij}} |V(\cdot,\omega)|^{q/\gamma} \right) = 2^{-\d} \max_{1\leq j \leq L} \left( \frac{1}{M} \sum_{i=1}^M \sup_{\widetilde Q^{ij}} |V(\cdot,\omega)|^{q/\gamma} \right),
\end{multline}
where $\widetilde Q^{ij} : = \{ y \in \Rd : \dist(Q^{ij},y) \leq 1\}$ is the set of points within a unit distance of $Q^{ij}$. Note that $\diam(\widetilde Q^{ij}) \leq 2+ 2\sqrt\d$, and by \eqref{dAij},
\begin{equation} \label{disteps}
\dist(\widetilde Q^{ik}, \widetilde Q^{jk} ) \geq N.
\end{equation}
Fix $1 \leq J \leq L$ and define the random variable
\begin{equation*}
g_i(\omega) : =  \sup_{\widetilde Q^{iJ}} \left( |V(\cdot,\omega)|^{q/\gamma} \wedge  K^{q\sigma/\gamma} \right),
\end{equation*}
with $\d/\alpha< \sigma < 1$ to be selected below. Lemma~\ref{mix-decay}, the mixing hypothesis \eqref{mixing} and \eqref{disteps} imply that, for each $1 \leq i,j\leq M$,
\begin{equation*}
\left| \E[g_ig_j] - \E[g_i]\,\E[g_j] \right| \leq CK^{2q\sigma/\gamma} N^{-\beta}.
\end{equation*}
Lemma~\ref{mix-EVest} yields the estimates
\begin{equation*}
\E \bigg( \frac1M \sum_{j=1}^M g_j \bigg)^2 \leq C \left(1 + K^{2q\sigma/\gamma} N^{-\beta} \right)
\end{equation*}
and
\begin{equation*}
\Var \bigg( \frac1M \sum_{j=1}^M g_j \bigg) \leq \frac1M \Var(g_j) + CK^{2q\sigma/\gamma} N^{-\beta} \leq C\left( K^{-\d} N^{\d} + K^{2q\sigma/\gamma} N^{-\beta} \right).
\end{equation*}
According to \eqref{alphabeta}, it is possible to choose $s\in \Q$ such that $0 < s < 1/2$, $q> \d$ and $\d/\alpha < \sigma < 1$ such that $s(\beta-\d) > 2\sigma q /\gamma$. Selecting $N = K^{s}$ (we show below that this is possible) and
\begin{equation*}
\ep : = \min\{ (1-s)\d, s\beta - 2\sigma q / \gamma \}> s\d > 0,
\end{equation*}
we obtain
\begin{gather*}
\E \bigg( \frac1M \sum_{j=1}^M g_j \bigg)^2 \leq C \quad \mbox{and} \quad \Var \bigg( \frac1M \sum_{j=1}^M g_j \bigg) \leq C K^{-\ep}.
\end{gather*}
It follows by Chebyshev's inequality (see Remark~\ref{remcheb}) that, for some $C> 0$,
\begin{equation*}
\Prob \bigg[  \frac1M \sum_{j=1}^M g_j > t \bigg] \leq (t^2 - C)^{-1} K^{-\ep} \leq C K^{-\ep} \quad \mbox{for all} \quad t > 2C.
\end{equation*}
Recalling \eqref{breakdn} and the choice of $N$, we see that, for each $t> 2C$,
\begin{align*}
\lefteqn{ \Prob\bigg[ \bigg(\fint_{Q_K} \sup_{B(y,1)} |V(\cdot,\omega)|^{q/\gamma} \wedge R^{q\sigma/\gamma} \, dy\bigg)^{1/q} > t \bigg]} \qquad \qquad & \\
& \leq \Prob \bigg[ \max_{1\leq j \leq L} \bigg( \frac{1}{M} \sum_{i=1}^M \sup_{\widetilde Q^{ij}} |V(\cdot,\omega)|^{q/\gamma} \wedge R^{q\sigma/\gamma} \bigg) >  Ct^q \bigg] \\
& \leq L  \Prob \left[  \frac1M \sum_{j=1}^M g_j > C t^q \right] \leq C L K^{- \ep}  \leq C K^{s\d-\ep}.
\end{align*}
Writing $h(y):= \sup_{B(y,1)} |V(\cdot,\omega)|^{q/\gamma}$, in view of \eqref{crudity}, we see that for all $t > C$,
\begin{align*}
\Prob\bigg[ \bigg( \fint_{Q_K} h(y)\, dy\bigg)^{1/q} > t \bigg] & \leq \Prob\bigg[ \bigg( \fint_{Q_K} h(y) \wedge R^{q\sigma/\gamma}\, dy \bigg)^{1/q}  > t \bigg] + \Prob\left[ \max_{y\in Q_K} h(y) > K^{q\sigma/\gamma} \right]  \\
& \leq CK^{s\d-\ep} + CK^{\d-\sigma\alpha}.
\end{align*}
Observe that the exponents $sd-\ep$ and $d-\sigma\alpha$ are negative. The above applies to any $K\in\N$ for which $K^s$ is an integer which divides $K$. In particular we can take $K = 2^m$ for any positive integer $m$ such that $ms\in\N$. Writing $s=a/b$ for $a,b \in \N$ and applying the Borel-Cantelli lemma to the estimate above, we obtain
\begin{equation*}
\limsup_{n\to \infty} \bigg( \fint_{Q_{2^{nb}}} h(y) \, dy \bigg)^{1/q} \leq C \quad \mbox{a.s. in} \ \omega.
\end{equation*}
Now \eqref{bigmix} follows, since for large $R > 0$, the positive integers $k := \lfloor \log_{2^b} (R/\sqrt\d) \rfloor $ and $\ell := \lceil \log_{2^b} R \rceil$ satisfy $Q_{2^{kb}} \subseteq B_R \subseteq Q_{2^{\ell b}}$ and $\left|Q_{2^{\ell b}} \right| \leq C \left| B_R \right| \leq C\left| Q_{2^{k b}} \right|$. Thus
\begin{equation*}
\fint_{Q_{2^{k b}}} h(y)\, dy \leq C \fint_{B_R} h(y)\,dy \leq C \fint_{Q_{2^{\ell b}}} h(y)\, dy. \qedhere
\end{equation*}
\end{proof}

In the next lemma, we prove \eqref{preosc}. The inequalities \eqref{locesteq2}, \eqref{bigmix}, and Morrey's inequality provides an estimate on the oscillation of $\delta v^\delta$ in the ball $B_{r/\delta}$. A supplementary argument combining the ergodic theorem and Egoroff's theorem extends this to the balls of the form $B(y/\delta, r/\delta)$.

\begin{lem} \label{impest}
There exist a set $\Omega_3 \subseteq \Omega_2$ of full probability and a constant $C = C(k) > 0$, such that, for each $y\in \Rd$, $r> 0$, $|p|<k$ and $\omega\in\Omega_3$,
\begin{equation} \label{oscest}
\limsup_{\delta \to 0}  \osc_{B(y/\delta,r/\delta)} \delta v^\delta(\cdot,\omega) \leq Cr.
\end{equation}
\end{lem}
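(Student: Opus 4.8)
The plan is to bound $\osc_{B(y/\delta,r/\delta)}\delta v^\delta$ by combining the interior gradient estimate of Lemma~\ref{locestgrad}, Morrey's inequality with the exponent $q>\d$ supplied by Lemma~\ref{potestsD}, and Lemma~\ref{potestsD} itself. All of this handles balls centered at the origin for free; the only new work is a measure-theoretic device that transfers the estimate to balls with arbitrary center without spoiling the linear dependence on $r$.

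\emph{Gradient bound and Morrey.} First I would apply Lemma~\ref{locestgrad} to $v^\delta(\cdot,\omega;p)$ on each unit ball $B(w_0,1)$; since by uniqueness the solution there equals $v^\delta(\cdot,\tau_{w_0}\omega;p)$ after translation, stationarity produces a deterministic $C=C(k)$ with
\begin{equation*}
|Dv^\delta(w,\omega;p)|^\gamma \le C\Big(1 + \sup_{B(w,1)}V(\cdot,\omega)\Big) \qquad \text{for a.e.\ } w\in\Rd,
\end{equation*}
for all $\omega\in\Omega_1$, $|p|<k$, $\delta>0$. Next, the scale-invariant form of Morrey's inequality gives, for $z\in\Rd$ and $R>0$, $\osc_{B(z,R)}v^\delta \le C R^{1-\d/q}\big(\int_{B(z,2R)}|Dv^\delta|^q\big)^{1/q}$. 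Inserting the gradient bound, using $|B(z,2R)|=CR^{\d}$, and multiplying by $\delta$ yields
\begin{equation*}
\osc_{B(z,R)}\big(\delta v^\delta\big) \le C(k)\,\delta R\left(1 + \fint_{B(z,2R)}\sup_{B(w,1)}V(\cdot,\omega)^{q/\gamma}\,dw\right)^{1/q}.
\end{equation*}
Choosing $z=y/\delta$, $R=r/\delta$ turns $\delta R$ into $r$, so everything reduces to controlling $\limsup_{\delta\to0}\fint_{B(y/\delta,\,2r/\delta)}\sup_{B(w,1)}V(\cdot,\omega)^{q/\gamma}\,dw$ uniformly in $y$ and $r$. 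For $y=0$ this is exactly Lemma~\ref{potestsD}, which gives the bound $C_2^q$ on $\Omega_2$, and \eqref{oscest} follows in that case.

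\emph{Arbitrary centers -- the main point.} For $y\ne0$ the averaging ball has a moving center, and since \eqref{Vmoment} need not make $\sup_{B_1}V^{q/\gamma}$ $\Prob$-integrable, one cannot simply apply the ergodic theorem to that function; this is precisely why Lemma~\ref{potestsD} (with the mixing hypothesis) is needed, yet it controls only origin-centered balls. To bridge the gap, write $\psi_R(\omega)^q:=\fint_{B_R}\sup_{B(w,1)}V(\cdot,\omega)^{q/\gamma}\,dw$, so Lemma~\ref{potestsD} says $\psi_R(\omega)\le 2C_2$ once $R\ge R_0(\omega)$ with $R_0$ measurable and a.s.\ finite. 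By continuity of measure (Egoroff) there are a deterministic $R_1<\infty$ and an event $E$ with $\Prob[E]\ge\tfrac12$ such that $\psi_R(\omega)\le 2C_2$ for every $R\ge R_1$ and every $\omega\in E$. I would then apply the multiparameter ergodic theorem (Proposition~\ref{ergthm}) to the stationary process $\mathds{1}_E(\tau_w\omega)$, obtaining a full-probability set $\widetilde\Omega$ on which $\fint_{tU}\mathds{1}_E(\tau_w\omega)\,dw\to\Prob[E]>0$ for every bounded domain $U$. Fixing $\omega\in\widetilde\Omega$, $y$, $r$ and taking $U=B(y,r)$, $t=1/\delta$, this forces $\{w\in B(y/\delta,r/\delta):\tau_w\omega\in E\}$ to be nonempty for all small $\delta$; picking $z$ in it we have $B(y/\delta,2r/\delta)\subseteq B(z,3r/\delta)$, so for $\delta$ small enough that $3r/\delta\ge R_1$,
\begin{equation*}
\fint_{B(y/\delta,\,2r/\delta)}\sup_{B(w,1)}V(\cdot,\omega)^{q/\gamma}\,dw \le \Big(\tfrac{3}{2}\Big)^{\d}\psi_{3r/\delta}(\tau_z\omega)^q \le \Big(\tfrac{3}{2}\Big)^{\d}(2C_2)^q,
\end{equation*}
a bound independent of $y$ and $r$. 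Setting $\Omega_3:=\Omega_2\cap\widetilde\Omega$ and combining with the Morrey estimate gives \eqref{oscest} with $C=C(k)$.

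\emph{Expected main obstacle.} The interior gradient estimate and Morrey's inequality are routine, and the $y=0$ case is immediate from Lemma~\ref{potestsD}. The real work is the third paragraph: getting an oscillation bound proportional to $r$ rather than to $|y|+r$, valid on one set of full probability for all centers $y$, all radii $r$, and all $|p|<k$ simultaneously. The Egoroff-plus-ergodic-theorem device is what makes this possible; a crude comparison of $B(y/\delta,2r/\delta)$ with a large ball centered at the origin would only produce a constant depending on $|y|/r$, which is not good enough.
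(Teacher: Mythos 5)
Your proof is correct, and while it uses the same ingredients as the paper (the Bernstein gradient bound of Lemma~\ref{locestgrad}, Morrey's inequality with the exponent $q>\d$ from Lemma~\ref{potestsD}, and then an Egoroff-type selection combined with the ergodic theorem and stationarity to handle moving centers), the transfer step is organized differently in a way that genuinely streamlines the argument. The paper first proves \eqref{oscest} at $y=0$ for the solution itself, then applies Egoroff to the $p$-dependent statement \eqref{oscatz} to get sets $D_\eta$ with $\Prob[D_\eta]\ge 1-\eta$, uses the ergodic theorem on $\mathds{1}_{D_\eta}(\tau_{z/\delta}\omega)$ over $B_{2|y|}$, and must take the good-point density close to $1$ so that a good point lies within $C\ep|y|$ of $y$ (otherwise the radius inflation would be of order $|y|$ rather than $r$); it then needs intersections over rational $r$, $s$ and rational $p$, the last repaired by the continuous dependence Lemma~\ref{CDE-noxd}. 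You instead apply the measure-theoretic selection to the $p$-independent quantity $\psi_R$ built from $V$ alone, and invoke the ergodic theorem with the domain $U=B(y,r)$ itself (the full-probability set in Proposition~\ref{ergthm} is uniform over bounded domains, and the paper uses it this way elsewhere), so the good point $z$ is automatically within $r/\delta$ of $y/\delta$ and the dilation factor $3/2$ is absorbed into the constant; the deterministic Bernstein--Morrey bound is then applied pointwise in $\omega$ afterwards. This buys you a single exceptional set valid for all $y$, $r$ and all $|p|<k$ at once, with no $\eta\to 0$ density bookkeeping and no appeal to Lemma~\ref{CDE-noxd} at this stage, at the cost of nothing; it also needs only one event $E$ of probability $\ge 1/2$ rather than events of probability tending to $1$.

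Two cosmetic remarks: your parenthetical claim that \eqref{Vmoment} cannot make $\sup_{B_1}V^{q/\gamma}$ integrable (offered as the reason Lemma~\ref{potestsD} is needed) is not accurate as stated, since $q$ may be taken in $(\d,\gamma\alpha)$; this does not affect your proof, which correctly uses Lemma~\ref{potestsD} as a black box. Also, centering the interior estimate of Lemma~\ref{locestgrad} at each point produces $\sup_{B(w,3/2)}V$ rather than $\sup_{B(w,1)}V$ in the pointwise gradient bound; this is harmless (cover $B(w,3/2)$ by finitely many unit balls, or rescale the estimate), and the paper's own display \eqref{oscatz} glosses over the same bookkeeping.
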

\begin{proof}
Fix $p \in \Rd$. It follows from Morrey's inequality, \eqref{locesteq2} and \eqref{bigmix} that, for each $r> 0$ and $\omega \in \Omega_2$,
\begin{multline} \label{oscatz}
\limsup_{\delta \to 0} \sup_{y\in B_{r/\delta}} \left| \delta v^\delta(y,\omega) - \delta v^\delta(0,\omega) \right| \leq C\limsup_{\delta \to 0}\, r  \bigg( \fint_{B_{r/\delta}} |Dv^\delta(y,\omega)|^{q} \, dy \bigg)^{1/q} \\ \leq Cr \limsup_{\delta\to0} \bigg( \fint_{B_{r/\delta}} \sup_{B(y,1)} V(\cdot,\omega)^{q/\gamma} \, dy \bigg)^{1/q} \leq Cr, 
\end{multline}
and \eqref{oscest} holds for $y=0$. Note that  $C$ depends also on an upper bound for $|p|$.

\medskip

To obtain the full \eqref{oscest}, we combine \eqref{oscatz}, Egoroff's theorem and the ergodic theorem. Fix $r, s> 0$. According to Egoroff's theorem, for each $\eta > 0$, there exists $D_\eta \subseteq \Omega_2$ with $\Prob[D_\eta] \geq 1-\eta$ such that, for each $\omega \in D_\eta$ and $\delta=\delta(\eta) > 0$ sufficiently small,
\begin{equation} \label{egor-lip}
\osc_{B(0,(r(1+\eta)/\delta)} \delta v^\delta(\cdot,\omega) \leq 2C(1 +2\eta) r.
\end{equation}
The ergodic theorem yields $E_\eta \subseteq \Omega$ such that $\Prob[E_\eta]=1$ and, for all $\omega \in E_\eta$,
\begin{equation} \label{erglim}
\lim_{R \to \infty} \fint_{B_R} \mathds{1}_{D_\eta}(\tau_z\omega) \, dz = \Prob[D_\eta] \geq 1-\eta.
\end{equation}
Fix $y\in \Rd$ such that $|y| \leq s$. It follows from \eqref{erglim} that, if $\delta=\delta(\eta,s) > 0$ is sufficiently small and $\omega \in E_\eta$, then
\begin{equation} \label{ergod-lip}
| \{ z \in B_{2|y|} : \tau_{z/\delta}\omega \in D_\eta \}| \geq (1-2\eta) |B_{2|y|}|.
\end{equation}
Define
\begin{equation*}
\Omega_{r,s} : = \bigcap_{j=1}^\infty \bigcup_{k=j}^\infty \left( D_{2^{-k}} \cap E_{2^{-k}} \right),
\end{equation*}
and observe that $\Prob[\Omega_{r,s}] = 1$. Fix a positive integer $j$, $\omega \in \Omega_{r,s}$ and set $\ep : = 2^{-j}  > 0$. By making $j$ larger, if necessary, we may assume that $\omega \in D_{\ep} \cap E_\ep$. For all $\delta > 0$ sufficiently small, depending on $\omega$, $k$ and $j$, \eqref{ergod-lip} holds with $\eta = \ep$. This implies that we can find $z\in B_{2|y|}$ such that  $\tau_{z/\delta}\omega \in D_\ep$ and $|z-y| \leq C\ep |y|$, due to the fact that the ball $B(y,C\ep |y|)$ is too large to lie in the complement of $\{ z \in \Rd: \tau_{z/\delta} \omega \in D_\ep \}$. Then $B(y/\delta,r/\delta) \subseteq B(z/\delta, (r+C\ep|y|)/\delta)$. By shrinking $\delta> 0$, again depending on $\omega$, $s$ and $j$, we may assume that \eqref{egor-lip} holds for $\eta = C\ep|y|$. For such $\delta$, we have
\begin{multline*}
\osc_{B(y/\delta,r/\delta)} \delta v^\delta(\cdot, \omega) \leq \osc_{B(z/\delta,r(1+C\ep|y|)/\delta)} \delta v^\delta(\cdot, \omega) \\ = \osc_{B(0,r(1+C\ep |y|)/\delta)} \delta v^\delta(\cdot, \tau_{z/\delta}\omega) \leq 2C(1+2C\ep s) r.
\end{multline*}
Sending first  $\delta \to 0$ and then $j \to \infty$ so that $\ep \to 0$, we deduce that, for every $|y| \leq s$ and $\omega \in \Omega_{r,s}$,
\begin{equation} \label{finliplim}
\limsup_{\delta\to 0} \osc_{B(y/\delta,r/\delta)} \delta v^\delta (\cdot, \omega) \leq 2Cr.
\end{equation}
Now define
\begin{equation*}
\Omega_3: = \bigcap_{r \in \Q_+} \bigcap_{s\in \N} \Omega_{r,s}.
\end{equation*}
Observe that $\Prob[\Omega_3] = 1$ and \eqref{finliplim} holds for every rational $r> 0$, $y\in \Rd$ and $\omega\in \Omega_3$. By the monotonicity of the quantity on the left side of \eqref{finliplim} in $r$, the inequality \eqref{finliplim} must then hold for all $r> 0$, $y\in \Rd$ and $\omega\in \Omega_3$.

So far, we have obtained \eqref{oscest} only for a fixed $p\in \Rd$, that is, $\Omega_3$ depends on $p$. To complete the proof, we replace $\Omega_3$ by the intersection of such sets over all $p\in \Q^\d$. An appeal to Lemma~\ref{CDE-noxd} then completes the proof.
\end{proof}

\begin{remark}
Notice that in the argument above we have proved slightly more than \eqref{oscest}, namely that, for each $p\in \Rd$, $r > 0$ and $\omega \in \Omega_3$,
\begin{equation} \label{super-oscest}
\sup_{R > 0} \limsup_{\delta\to 0} \sup_{y \in B_R} \frac1r\osc_{B(y/\delta, r/ \delta)} \delta v^\delta(\cdot,\omega)  \leq C(|p|).
\end{equation}
We can do even better by observing that
\begin{equation*}
r\mapsto \sup_{y\in B_{R-r}} \frac1r \osc_{B(y/\delta,r/\delta)} \delta v^\delta(\cdot,\omega) \quad \mbox{is decreasing,}
\end{equation*}
from which it follows that, for every $\omega\in \Omega_3$ and $p\in \Rd$,
\begin{equation} \label{uber-oscest}
\sup_{s,R > 0} \limsup_{\delta\to 0} \sup_{y \in B_R} \sup_{s\leq r \leq R}  \frac1r\osc_{B(y/\delta, r/ \delta)} \delta v^\delta(\cdot,\omega)  \leq C(|p|).
\end{equation}
\end{remark}

The estimate \eqref{oscest} will typically be applied for $r>0$ very small. In certain situations we control of the oscillation of $\delta v^\delta$ in $B_{R/\delta}$ for large $R$, but require something better than $CR$. In bounded environments, of course, the quantity $\delta v^\delta$ is (essentially) bounded in $\Rd\times \Omega$. In the next lemma we prove that, roughly speaking, this is also true in our unbounded setting \emph{in balls of radius $\sim1/\delta$ as $\delta \to 0$}. The arguments strongly uses the mixing condition (again) as well as our previous oscillation bound \eqref{uber-oscest}.

\begin{lem} \label{uppbndosc}
There exists a set $\Omega_4 \subseteq\Omega_3$ of full probability and a constant $C=C(k) > 0$, such that, for each $|p| < k$ and $\omega\in \Omega_4$,
\begin{equation} \label{oscest-large}
\sup_{R>0} \limsup_{\delta \to 0} \osc_{B_{R/\delta}} \delta v^\delta(\cdot, \omega) \leq C.
\end{equation}
\end{lem}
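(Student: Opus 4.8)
The plan is to combine the crude pointwise bound on $\delta v^\delta$ provided by Lemma~\ref{locest} with the oscillation estimate \eqref{super-oscest}. The mechanism is this: Lemma~\ref{locest} bounds $\delta v^\delta(z,\omega)$ pointwise by $\sup_{B(z,1)}V(\cdot,\omega)+C$, a quantity whose \emph{supremum} over $B_{R/\delta}$ blows up as $\delta\to0$ but whose \emph{average} over $B_{R/\delta}$ stays bounded by the ergodic theorem; the estimate \eqref{super-oscest} is precisely what lets us trade the supremum of $\delta v^\delta$ on a ball of radius $1/\delta$ for its average plus an $O(1)$ error. So the first step is to record that there is a deterministic $C>0$, independent of $\delta$, $p$, $\omega$, with
\[
\delta v^\delta(z,\omega;p)\ \le\ \sup_{B(z,1)}V(\cdot,\omega)+C\qquad\text{for all }z\in\Rd,\ 0<\delta\le1;
\]
this follows from Lemma~\ref{locest} applied, at its center, to the solution (hence subsolution) $v^\delta(\cdot+z,\omega;p)=v^\delta(\cdot,\tau_z\omega;p)$ of \eqref{aux}, using the stationarity of $v^\delta$. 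Together with $\delta v^\delta\ge-C(1+|p|^\gamma)$ from Proposition~\ref{auxsolve}, this is the only size information we will use.

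The second step is the ergodic averaging. Since $\alpha>\d\ge1$, \eqref{Vmoment} forces $\E[\sup_{B_1}V(\cdot,\cdot)]<\infty$, so the stationary field $f(z,\omega):=\sup_{B(z,1)}V(\cdot,\omega)$ belongs to $L^1$, and Proposition~\ref{ergthm} gives $\fint_{B_t}f(z,\omega)\,dz\to V_0:=\E[\sup_{B_1}V]$ as $t\to\infty$, a.s. in $\omega$. I would then upgrade this to uniformity over centers exactly as in the Egoroff-plus-ergodic-theorem argument in the proof of Lemma~\ref{impest}: there is a set $\Omega_4\subseteq\Omega_3$ of full probability such that, for every $\omega\in\Omega_4$ and every $R>0$,
\[
\limsup_{\delta\to0}\ \sup_{y\in B_R}\ \fint_{B(y/\delta,1/\delta)}f(z,\omega)\,dz\ \le\ V_0 .
\]
The twist relative to Lemma~\ref{impest} is that averages, unlike oscillations, are not monotone in the ball; but since $f\ge0$, at a ``bad'' center $y$ one may replace $B(y/\delta,1/\delta)$ by a slightly larger ball about a nearby ``good'' center, losing only a volume ratio $(1+o(1))^\d$ as the Egoroff tolerance goes to $0$. (Equivalently, one could re-run the partitioning-and-mixing argument of Lemma~\ref{potestsD} with the exponent $q/\gamma$ replaced by $1$.) This is the step I expect to be the main obstacle: getting the average bound uniformly over the $O((R/\delta)^\d)$ admissible centers in $B_{R/\delta}$ without losing a factor that grows with $R$.

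With these two steps in hand the conclusion would be immediate. Fix $|p|<k$, $\omega\in\Omega_4$ and $R>0$, and observe that every $x\in B_{R/\delta}$ lies in $B(y/\delta,1/\delta)$ with $y:=\delta x\in B_R$. Then the first step gives
\[
\sup_{B_{R/\delta}}\delta v^\delta\ \le\ \sup_{y\in B_R}\Big(\fint_{B(y/\delta,1/\delta)}f(z,\omega)\,dz\ +\ C\ +\ \osc_{B(y/\delta,1/\delta)}\delta v^\delta\Big),
\]
and taking $\limsup_{\delta\to0}$, applying \eqref{super-oscest} with $r=1$ to the oscillation term and the second step to the average term, yields $\limsup_{\delta\to0}\sup_{B_{R/\delta}}\delta v^\delta\le V_0+C+C(|p|)$. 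Combined with $\inf_{B_{R/\delta}}\delta v^\delta\ge-C(1+|p|^\gamma)$, this is \eqref{oscest-large} with a constant depending only on $k$ and, crucially, independent of $R$. Since \eqref{super-oscest} and the first step hold for every $p$, and $\Omega_4$ was constructed with no reference to $p$, no further intersection over $p$ would be required, which completes the plan.
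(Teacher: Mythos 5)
Your argument is correct, but it takes a genuinely different route from the paper's. The paper controls $\sup_{B_{R/\delta}}\delta v^\delta$ by invoking the mixing hypothesis \eqref{Vmix} a second time: it estimates $\Prob\big[\inf_{y\in B_{r/\delta}}\sup_{B(y,1)}V\geq t\big]$ for a fixed quantile-type level $t$ (chosen so that $\Prob\big[\sup_{B(y,1)}V\geq t\big]\leq\frac12$), applies Borel--Cantelli to conclude that a.s.\ every ball of radius $r/\delta$ centered in $B_{R/\delta}$ contains a point where $\sup_{B(\cdot,1)}V\leq t$, hence by \eqref{locesteq1} a point where $\delta v^\delta\leq t+C$, and then uses \eqref{uber-oscest} on scale $r/\delta$ and sends $r\to0$. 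You instead stay at scale $r=1$, replace ``find a good point'' by ``the infimum is at most the average,'' and use only stationarity, $\E[\sup_{B_1}V]<\infty$ (which indeed follows from \eqref{Vmoment} since $\alpha>\d\geq1$), the multiparameter ergodic theorem, and the Egoroff-plus-density transfer of Lemma~\ref{impest} to handle moving centers; mixing is not used at all. The step you flagged is the only delicate one, and your sketch does close it: at Egoroff tolerance $\eta$ one only finds a good center within distance $C\eta^{1/\d}R/\delta$ of $y/\delta$, so the enlarged ball has radius $(1+C\eta^{1/\d}R)/\delta$ and the resulting bound $(1+C\eta^{1/\d}R)^{\d}\big(V_0+\eta\big)$, with $V_0:=\E[\sup_{B_1}V]$, does depend on $R$ at fixed tolerance; this is harmless because the loss is multiplicative against a bounded quantity and tends to $1$ as $\eta\to0$ for each fixed $R$, so taking $\limsup_{\delta\to0}$ first and then $\eta\to0$ gives $V_0+C+C(k)$ independently of $R$, after which one intersects over integer $R$ using monotonicity --- the same order of limits as in Lemma~\ref{impest}. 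The trade-off: your proof is softer and shows \eqref{oscest-large} requires no mixing, only a first moment of $\sup_{B_1}V$, whereas the paper's quantitative argument pins the constant at a fixed quantile of $\sup_{B_1}V$ rather than its mean and sidesteps the uniform-over-centers averaging. Your observation that $\Omega_4$ is built from $V$ and $\Omega_3$ alone, with the bounds from Lemma~\ref{locest} and \eqref{delvestbel} holding for every $p$ with constants depending only on $|p|<k$, correctly explains why no further intersection over $p$ is needed, in agreement with the statement.
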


\begin{proof}
It suffices to control $\max_{B_{R/\delta}} \delta v^\delta(\cdot,\omega)$, since by \eqref{delvestbel}, $\delta v^\delta (\cdot,\omega)$ is bounded from below uniformly in $\delta$ and a.s. in $\omega$.

We first estimate the quantity $\Prob\big[ \inf_{y\in B_{r/\delta}} \sup_{z\in B(y,1)} \V(z,\cdot) \geq t \big]$ for $r >0$, where $t>0$ is selected below. Find points $y_1,\ldots, y_{k_\delta} \in B_r$ such that $k_\delta \approx |\log \delta|$ and $|y_i - y_j| > r_\delta$ for $i\neq j$, where $r_\delta \approx k_\delta^{-1/\d} r$. For any $t> 0$,
\begin{align*}
\Prob\Big[ \inf_{y\in B_{r/\delta}} \sup_{z\in B(y,1)} \V(z,\cdot) \geq t \Big] & \leq \Prob\Big[ \min_{1 \leq i \leq k_\delta} \sup_{z\in B(y_i/\delta,1)} \V(z,\cdot) \geq t \Big]
\end{align*}
By the mixing hypothesis, the quantity on the right is at most
\begin{equation*}
\Prob\Big[ \sup_{z\in B(y,1)} \V(z,\cdot) \geq t \Big]^{k_\delta} + C(k_\delta-1) \Big( \frac{r_\delta}{\delta} - 2 \Big)^{-\beta}.
\end{equation*}
Choosing $t> 0$ so that $\Prob\Big[\sup_{z\in B(y,1)} \V(z,\cdot) \geq t \Big] \leq \tfrac 12$, we obtain, for some suitably small $\kappa > 0$,
\begin{equation} \label{oscestlg1}
\Prob\Big[ \inf_{y\in B_{r/\delta}} \sup_{z\in B(y,1)} \V(z,\cdot) \geq t \Big] \leq 2^{-k_\delta} + Ck_\delta^{1+\beta/d} r^{-\beta} \delta^{\beta} \leq C(1+r^{-\beta}) \delta^{\kappa}.
\end{equation}

\medskip

Fix $0 < r < R$ and choose $z_1,\ldots,z_\ell \in B_R$ with $\ell \approx (R/r)^d$ such that $B_R \subseteq \cup_{i=1}^\ell B(z_i,r/2)$. According to \eqref{oscestlg1}, the stationarity of $\V$ and with $t> 0$ chosen as above, we have
\begin{multline*}
\Prob \Big[ \sup_{z\in B_R} \inf_{y\in B(z/\delta, r/\delta)} \sup_{x\in B(y,1)} V(x,\cdot) \geq t\Big] \leq \Prob\Big[ \sup_{1\leq i \leq \ell}\, \inf_{y\in B(z_i/\delta,r/2\delta)} \, \sup_{x\in B(y,1)} \V(x,\cdot) \geq t \Big] \\ \leq \ell\, \Prob\Big[ \inf_{y\in B_{r/\delta}}\, \sup_{z\in B(y,1)} \V(z,\cdot) \geq t \Big] \leq C \ell (1+r^{-\beta}) \delta^\kappa.
\end{multline*}
Therefore, by the Borel-Cantelli lemma, along the diadic sequence $\delta_j : = 2^{-j}$,
\begin{equation} \label{upposc-est}
\limsup_{j\to \infty} \max_{z\in B_{R/\delta_j} } \inf_{y\in B(z,r/\delta_j)} \sup_{x\in B(y,1)} V(x,\omega) \leq t \quad \mbox{a.s. in} \ \omega.
\end{equation}
Since $t> 0$ is independent of both  $R$ nor $r$, we deduce that \eqref{upposc-est} holds along the full sequence $\delta \rightarrow 0$, i.e.,
\begin{equation} \label{upposc-estfull}
\limsup_{\delta \to 0} \max_{z\in B_{R/\delta} } \inf_{y\in B(z,r/\delta)} \sup_{x\in B(y,1)} V(x,\omega) \leq t \quad \mbox{a.s. in} \ \omega.
\end{equation}
Combining this inequality and \eqref{locesteq1} we obtain, for a constant $C$ depending on the appropriate quantities,
\begin{equation*}
\limsup_{\delta \to 0} \sup_{y\in B_{R/\delta} } \inf_{y\in B(z,r/\delta)} \delta v^\delta(y,\omega) \leq C \quad \mbox{a.s. in} \ \omega.
\end{equation*}
The last inequality and \eqref{uber-oscest} imply that
\begin{equation*}
\limsup_{\delta \to 0} \sup_{y\in B_{R/\delta} } \delta v^\delta(y,\omega) \leq C(1+ C r) \quad \mbox{a.s. in} \ \omega.
\end{equation*}
We send $r \to 0$ to obtain the result.
\end{proof}

\begin{remark}  \label{moregen}
The estimate \eqref{oscest-large} permits us to generalize our homogenization result to equations of the form \eqref{HJx}, with coefficients which depend also on the macroscopic variable $x$. The difficulty in generalizing to such equations lies in obtaining a continuous dependence result stating that $\delta v^\delta(0,\omega;p_1,x_1)$ is close to $\delta v^\delta(0,\omega;p_2,x_2)$ if $| p_1-p_2|+|x_1-x_2|$ is small, at least in the limit as $\delta \to 0$ (and this must hold a.s. in $\omega$). While we do not give details here, it is precisely \eqref{oscest-large} that permits us to obtain such a continuous dependence estimate using the classical viscosity solution-theoretic comparison machinery 
\end{remark}

\section{The effective Hamiltonian}
\label{EH}

Here we define the effective Hamiltonian $\overline H$ and explore some of its basic properties. In the process, we perform much of the work for the proof of Theorem~\ref{MAIN}.

\subsection{Construction of the effective Hamiltonian $\overline H$}

The first step in the proof of Theorem~\ref{MAIN}, formulated in the following proposition, is the identification of the effective Hamiltonian $\overline H$. The argument is based on the method recently introduced in \cite{LS3}, although substantial modifications are necessary in the unbounded setting, as discussed above.

\begin{prop} \label{mainstep}
There exists $\Omega_5 \subseteq \Omega_4$ of full probability and a continuous function $\overline H:\Rd \to \R$ such that, for every $R> 0$, $p \in \Rd$, and $\omega\in \Omega_5$,
\begin{equation} \label{mainstepeq}
\lim_{\delta \to 0} \E \Big[ \, \sup_{y\in B_{R/\delta}} \left| \delta v^\delta (y,\omega;p) + \overline H(p)\right| \Big]=0
\end{equation}
and
\begin{equation} \label{oneside}
\liminf_{\delta \to 0} \delta v^\delta(0,\omega) = -\overline H(p).
\end{equation}
Moreover, for each $p \in \Rd$, there exists a function $w: \Rd\times \Omega \to \R$ such that, a.s. in $\omega$,  $w(\cdot,\omega) \in W^{1,\alpha}_{\mathrm{loc}} (\Rd)$, $Dw$ is stationary, and for every $\omega \in \Omega_5$,
\begin{equation} \label{subcorr}
\left\{
\begin{aligned}
& -\tr (A(y,\omega) D^2w ) + H(p+Dw,y,\omega) \leq \overline H(p) \quad \mbox{in} \  \ \Rd, \\
& |y|^{-1} w(y,\omega) \rightarrow 0 \quad \mbox{as}  \quad |y| \to \infty.
\end{aligned}
\right.
\end{equation}
\end{prop}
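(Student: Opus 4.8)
The plan is to carry out the weak-limit/subcorrector scheme of \cite{LS3}, using the estimates of Sections~\ref{AMP-wp} and the oscillation bounds of Lemmata~\ref{impest} and~\ref{uppbndosc} to replace the $L^\infty$ and global Lipschitz bounds available only when $V$ is bounded. Fix $p\in\Rd$. By \eqref{delvestbel}, Lemma~\ref{locest} and \eqref{Vmoment}, the family $\{\delta v^\delta(0,\cdot;p)\}_{\delta>0}$ is bounded below by a deterministic constant and bounded in $L^\alpha(\Omega)$; by Lemma~\ref{locestgrad} and \eqref{Vmoment} the gradients obey the $\delta$-independent local bound $|Dv^\delta(y,\cdot;p)|\le C(1+\sup_{B_1(y)}V)^{1/\gamma}$, which lies in $L^{\gamma\alpha}(\Omega)$. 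Given any sequence $\delta_j\downarrow0$, I pass to a subsequence along which $Dv^{\delta_j}$ converges weakly in $L^q_{\mathrm{loc}}(\Rd;L^q(\Omega))$ for a fixed $\alpha\le q<\gamma\alpha$ with $q>\d$; since each $Dv^{\delta_j}$ is stationary and curl-free, so is the limit, which therefore equals $Dw$ for a function $w=w(\cdot,\omega)$ normalized by $w(0,\omega)=0$, with $Dw$ stationary, $\E[Dw(0,\cdot)]=0$ (because $y\mapsto\E[v^\delta(y,\cdot;p)]$ is constant), and $w(\cdot,\omega)\in W^{1,q}_{\mathrm{loc}}(\Rd)\subseteq W^{1,\alpha}_{\mathrm{loc}}(\Rd)$ a.s.

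Next I pass to the limit in \eqref{aux}. Writing the equation for $\tilde v^{\delta_j}(\cdot,\omega):=v^{\delta_j}(\cdot,\omega;p)-v^{\delta_j}(0,\omega;p)$ as $-\tr(AD^2\tilde v^{\delta_j})+H(p+D\tilde v^{\delta_j},y,\omega)=-\delta_j v^{\delta_j}(y,\omega;p)$ and invoking \eqref{oscest}, which forces $\osc_{B_\rho}\delta_j v^{\delta_j}\to0$ for each fixed $\rho$, I see that $\delta_j v^{\delta_j}(\cdot,\omega;p)$ becomes spatially constant on bounded sets as $j\to\infty$; its value is $\liminf_j\delta_j v^{\delta_j}(0,\cdot;p)$, which is \emph{shift-invariant} by the same oscillation bound and hence a.s.\ equal to a deterministic constant, which I denote $-\overline H(p)$, by ergodicity. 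The stability of viscosity subsolutions then yields that $w$ is a subsolution of $-\tr(AD^2w)+H(p+Dw,y,\omega)\le\overline H(p)$ in $\Rd$, the second-order term passing trivially and the convexity of $H$ being used, together with weak lower semicontinuity, to pass to the limit in the Hamiltonian along the weakly convergent gradients (which is what permits a single subsequence valid a.s.). Strict sublinearity of $w$, the second line of \eqref{subcorr}, follows from $\E[Dw(0,\cdot)]=0$, the stationarity of $Dw$, the ergodic theorem along rays and the strict sublinearity of $V$ from Lemma~\ref{potests} (the latter upgrading directional sublinearity to the uniform statement). Thus every subsequence produces a strictly sublinear subcorrector at the deterministic level $\overline H(p)$.

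The heart of the matter — and the step I expect to be the main obstacle — is the converse comparison: a strictly sublinear subcorrector $w$ at level $\lambda$ forces $\delta v^\delta(0,\omega;p)\ge-\lambda-\eta_\delta(\omega)$ for all small $\delta$, with $\eta_\delta\to0$ a.s.\ and in $L^1(\Omega)$. Since correctors do not exist in general, $w$ is unbounded above and cannot be made a global subsolution of \eqref{aux} by subtracting a constant; instead one works in $B_{R/\delta}$, where strict sublinearity makes $w-\delta^{-1}(\lambda+\eta_\delta)$, with $\eta_\delta:=\delta\sup_{B_{R/\delta}}w\to0$, a genuine subsolution, and one controls the boundary values via \eqref{delvestbel} and the oscillation bound \eqref{oscest-large}; the convexity of $H$ is needed here — exactly as in Lemmata~\ref{movep} and~\ref{comparison} — to run the comparison in the absence of uniform Lipschitz bounds, and this is precisely the difficulty flagged in the overview. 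Granting this, Fatou's lemma (using \eqref{delvestbel}) gives $\liminf_\delta\E[\delta v^\delta(0,\cdot;p)]\ge-\lambda$ whenever a subcorrector at level $\lambda$ exists; combined with the fact that every extracted subcorrector is at the a.s.-constant level $\overline H(p)=-\liminf_j\delta_j v^{\delta_j}(0,\cdot;p)$, this shows $\overline H(p)$ is independent of the subsequence and, together with the matching upper half obtained from the same two-sided comparison, that $\E\,|\delta v^\delta(0,\cdot;p)+\overline H(p)|\to0$.

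From this $L^1$-convergence at the origin, \eqref{mainstepeq} follows by covering $B_{R/\delta}$ with $O((R/r)^\d)$ balls of radius $r/\delta$, bounding the supremum over each by its value at the center plus $Cr$ via \eqref{oscest}, using the stationarity of $\delta v^\delta$, and letting $r\to0$; and \eqref{oneside} follows since the converse comparison gives $\liminf_\delta\delta v^\delta(0,\omega;p)\ge-\overline H(p)$ for $\omega\in\Omega_5$, while $L^1$-convergence along a subsequence gives the matching $\liminf_\delta\delta v^\delta(0,\omega;p)\le-\overline H(p)$. Continuity (indeed local Lipschitzness) of $\overline H$ is immediate from Lemma~\ref{CDE-noxd} evaluated at $y=0$ in the limit $\delta\to0$. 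Finally, carrying out the construction for each rational $p$, intersecting the resulting full-probability subsets of $\Omega_4$, and appealing once more to Lemma~\ref{CDE-noxd} to pass from rational to arbitrary $p$ produces the single set $\Omega_5\subseteq\Omega_4$, together with the subcorrector $w$ of the stated regularity for every $p$. Everything except the converse comparison bound in the third paragraph is either soft — ergodic theorem, Egoroff, elementary measure theory and compactness — or already in hand from Section~\ref{AMP}.
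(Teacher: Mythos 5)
Your architecture is the same as the paper's: weak limits in $(y,\omega)$, a subcorrector via convexity of $H$, determinism of the level via the oscillation estimate and ergodicity, a comparison of the subcorrector against $v^\delta$ to pin down $\liminf_\delta \delta v^\delta(0,\omega)$, then a covering argument with \eqref{oscest} to reach balls of radius $R/\delta$, and rational $p$ plus Lemma~\ref{CDE-noxd} to get a single $\Omega_5$. The problem is that the step you yourself call the heart of the matter is not proved: you write ``granting this,'' and the mechanism you sketch for it would fail. Comparing $v^\delta$ with $w-\delta^{-1}(\lambda+\eta_\delta)$ in $B_{R/\delta}$ and matching boundary values through \eqref{delvestbel} and \eqref{oscest-large} cannot close, because on $\partial B_{R/\delta}$ those estimates only give $v^\delta \ge -C\delta^{-1}(1+|p|^\gamma)$, respectively $v^\delta \ge v^\delta(0)-C\delta^{-1}$, so the downward shift of $w$ required to lie below $v^\delta$ on the boundary is of order $\delta^{-1}$ with a constant that does \emph{not} tend to zero; after multiplying by $\delta$ you only obtain $\liminf_\delta \delta v^\delta(0,\omega)\ge -\lambda - C$, which is useless. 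The paper's proof of \eqref{cmpineq} contains the missing idea: compare $v^\delta$ not with a shifted $w$ but with the convex combination $(1-\ep)\bigl(w-(\overline H+\eta)\delta^{-1}\bigr)+\ep\varphi$, where $\varphi(y)=-(1+|y|^2)^{1/2}$ grows linearly and $\ep\sim\eta$, on the much larger ball $B_R$ with $R:=C(\delta\eta)^{-1}$. Convexity (Lemma~\ref{convtrick}) makes this a subsolution up to an error $C\ep$, which is absorbed by the margin $\eta$ in the level together with the sublinearity bound $\sup_{B_R}w\le C_\eta+\eta^3R$, while at the boundary the linear term contributes a gain $\ep R\sim C\delta^{-1}$ that is exactly what absorbs the $O(\delta^{-1})$ mismatch coming from \eqref{delvestbel}. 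Without this (or an equivalent device) the central inequality, and with it \eqref{oneside} and \eqref{mainstepeq}, is not established.

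Two smaller points. In the limit passage you assert that the right-hand side of the rescaled equation converges to $\liminf_j\delta_j v^{\delta_j}(0,\cdot;p)$; this is unjustified, since $\delta_j v^{\delta_j}(0,\cdot)$ need not converge pointwise and the weak limit is in general only sandwiched between the pointwise $\liminf$ and $\limsup$. It is repairable (a subsolution inequality at the weak-limit level implies one at the larger level $-\liminf_j\delta_j v^{\delta_j}(0,\cdot)$ by a Fatou argument using the upper bound $\delta v^\delta(0,\cdot)\le C$ hidden in \eqref{delvestbel} with signs reversed, i.e.\ the deterministic bound $-\delta v^\delta(0,\cdot)\le C(1+|p|^\gamma)$), but it must be said. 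Also, there is no ``matching upper half obtained from the same two-sided comparison'': no supercorrector is available (indeed ruling out $\limsup_\delta\delta v^\delta(0,\omega)>-\overline H(p)$ a.s.\ is exactly what requires the metric problem later, in Proposition~\ref{bigstep}). The upper direction at the level of \eqref{oneside} comes from the weak limit dominating the pointwise $\liminf$, and the upgrade from weak convergence to the $L^1$ statement \eqref{mainstepeq} at $y=0$ is precisely the content of Lemma~\ref{meastheor}; your sketch leaves this mechanism implicit.
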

\begin{proof}
The (local Lipschitz) continuity of $\overline H$ follows from Lemma~\ref{CDE-noxd}, once we have shown \eqref{mainstepeq}. Moreover, according to Lemma~\ref{CDE-noxd}, we may argue for a fixed $p\in \mathbb{Q}^d$, and then obtain $\Omega_5$ by intersecting the relevant subsets of $\Omega$ obtained for each rational $p$. We therefore fix $p$ and omit all dependence on $p$.

\medskip

The proof is divided into three steps.

\medskip

\emph{Step 1: Construction of a subcorrector which is strictly sublinear at infinity.}
For each $\delta > 0$, define
\begin{equation*}
w^\delta(y,\omega) := v^\delta(y,\omega) - v^\delta(0,\omega).
\end{equation*}
Owing to \eqref{Vmoment}, \eqref{locesteq1}, \eqref{delvestbel} and \eqref{locesteq2}, we can find a subsequence $\delta_j \to 0$, a random variable $\overline H = \overline H(p,\omega) \in \R$, a function $w\in L^\alpha_{\mathrm{loc}}(\Rd\times \Omega)$ and a field $\Phi \in L^\alpha_{\mathrm{loc}}(\Rd\times\Omega ; \Rd)$ such that, for every $R> 0$, as $j\to \infty$,
\begin{equation} \label{weaklims}
\left\{ \begin{aligned}
& -\delta_j v^{\delta_j}(0,\cdot ) \rightharpoonup \overline H(p,\cdot) \quad \mbox{weakly} \ \mbox{in} \ L^\alpha(\Omega), \\
& w^{\delta_j} \rightharpoonup w \quad \mbox{weakly} \ \mbox{in} \ L^\alpha(B_R\times\Omega), \\
& Dv^{\delta_j} \rightharpoonup \Phi \quad \mbox{weakly} \ \mbox{in}  \ L^{\alpha}(B_R\times\Omega;\Rd).
\end{aligned} \right.
\end{equation}
The stationarity of the functions $v^{\delta_j}$, the ergodicity hypothesis and Lemma~\ref{impest} imply that $\overline H$ is independent of $\omega$, i.e., $\overline H(p,\omega) = \overline H(p)$ a.s. in $\omega$. Indeed, it suffices to check that, for each $\mu \in \R$, the event $\{ \omega \in \Omega : \overline H(p,\omega) \geq \mu \}$ is invariant under $\tau_y$, which follows immediately from \eqref{oscest}.

It is clear that $\Phi$ is stationary, a property inherited from the sequence $\{ Dv^{\delta_j}\}$, and that $\Phi = (\Phi^1,\ldots,\Phi^d)$ is gradient-like in the sense that for every compactly-supported smooth test function $\psi=\psi(y)$,
\begin{equation*}
\int_{\Rd} \left( \Phi^i(y,\omega) \psi_{y_j} (y) - \Phi^j(y,\omega) \psi_{y_i}(y) \right) \, dy = 0 \quad \mbox{a.s. in} \ \omega.
\end{equation*}
It follows (c.f. Kozlov \cite[Proposition 7]{K}) that $\Phi = Dw$, a.s. in $\omega$ and in the sense of distributions. Since $\alpha > d$, the Sobolev imbedding theorem yields, without loss of generality, that $w(\cdot,\omega) \in C(\Rd)$ a.s. in $\omega$.

\medskip

The convexity hypothesis \eqref{Hconv} and the equivalence of distributional and viscosity solutions for linear inequalities (c.f. Ishii~\cite{I}) allow us to pass to weak limits in \eqref{aux}, obtaining that $w(\cdot,\omega)$ is a viscosity solution, a.s. in $\omega$, of
\begin{equation} \label{subcorreq}
 - \tr\!\left( A(y,\omega) D^2w \right) + H(p+Dw, y, \omega) \leq \overline H(p) \quad \mbox{in} \ \Rd.
\end{equation}
Since
\begin{equation*}
\E\Phi(0,\cdot) = \lim_{j\to \infty} \E Dv^{\delta_j}(0,\cdot) = 0,
\end{equation*}
it follows from Lemma~\ref{koz} that
\begin{equation} \label{sublininfty}
\lim_{|y|\to\infty} |y|^{-1}w(y,\omega) = 0 \quad \mbox{a.s. in} \ \omega.
\end{equation}
Therefore $w$ is a subcorrector which is strictly sublinear at infinity.

\medskip

\emph{Step 2: $\overline H$ characterizes the full limit of $\delta v^\delta(0,\omega)$ in $L^1(\Omega,\mathbb P)$.}
The key step consists in showing that
\begin{equation} \label{cmpineq}
-\overline H\leq \liminf_{\delta \to 0} \delta v^\delta(0,\omega) \quad \mbox{a.s. in} \ \omega,
\end{equation}
which we prove by comparing the subcorrector $w$ to $v^\delta$. Let $\widetilde\Omega$ be a subset of $\Omega$ of full probability such that $\widetilde\Omega \subseteq \Omega_4$ and, for every $\omega \in \widetilde\Omega$, we have $\overline H(p,\omega) = \overline H(p)$ as well as \eqref{subcorreq} and \eqref{sublininfty}.

\medskip

Fix $\omega\in \widetilde\Omega$. We remark that the constants we introduce immediately below depend on $\omega$. Let $\varphi$ be defined by \eqref{linearphi}, and recall from \eqref{phieqbnd} that
\begin{equation} \label{msphibnd}
\left| \tr( A(y,\omega) D^2\varphi) \right| + H(p+D\varphi,y,\omega) \leq C.
\end{equation}
For each $\delta > 0$, define the function
\begin{equation*}
w^\delta(y) : = (1-\ep) \!\left( w(y,\omega) - ( \overline H + \eta ) \delta^{-1} \right) + \ep \varphi(y),
\end{equation*}
where $\eta > 0$ is a given small constant and $\ep > 0$ will be chosen below in terms of $\eta$. The strategy for obtaining \eqref{cmpineq} lies in comparing $w^\delta$ and $v^\delta$ in the limit as $\delta \to 0$. Assuming that $w$ is smooth, in view of \eqref{subcorreq}, \eqref{msphibnd} and the convexity of $H$, we have
\begin{equation}
\delta w^\delta -\tr\!\left( A(y,\omega) D^2w^\delta \right) +H(p+Dw^\delta,y)  \leq \delta w^\delta + (1-\ep) \overline H + C \ep.
\label{delwdel}
\end{equation}
In the case $w$ is not smooth, one can verify \eqref{delwdel} in the viscosity sense by using either that $\varphi$ is smooth, or by appealing to Lemma~\ref{convtrick}. According to \eqref{sublininfty},
\begin{equation*}
\sup_{B_R} w \leq C_\eta + \eta^3 R,
\end{equation*}
and so, by choosing $\ep = \min\{ 1/4, \eta/4C\}$ with $C$ is as in \eqref{delwdel}, we estimate the right side of \eqref{delwdel} by
\begin{equation*}
\delta w^\delta + (1-\ep) \overline H + C \ep = (1-\ep) (\delta w-\eta) + C\ep \leq \delta C_{\eta} + \delta \eta^3 R - \frac12 \eta \quad \mbox{in} \ B_R.
\end{equation*}
Observe that \eqref{delvestbel} implies
\begin{equation*}
w^\delta - v^\delta \leq (1-\ep) w +C \delta^{-1} - c\eta R \quad \mbox{on} \ \partial B_R.
\end{equation*}
By selecting $R : = C(\delta\eta)^{-1}$ for a large constant $C> 0$ and taking $\delta$ to be sufficiently small, depending on both $\omega$ and $\eta$, we have
\begin{equation}
\left\{ \begin{aligned}
& \delta w^\delta -\tr \! \left( A(x,y,\omega) D^2 w^\delta \right) + H(p+Dw^\delta,x,y,\omega) \leq 0 & \mbox{in} & \ B_R, \\
& w^\delta \leq v^\delta & \mbox{on} & \ \partial B_R.
\end{aligned} \right.
\end{equation}
We may now apply the comparison principle to deduce that $w^\delta(\cdot) \leq v^\delta(\cdot,\omega)$ in $B_R$, and in particular, $w^\delta(0) \leq v^\delta(0,\omega)$. Multiplying this inequality by $\delta$ and sending $\delta \to 0$ yields
\begin{equation*}
-\overline H -\eta \leq ( 1 - C \eta)^{-1} \liminf_{\delta \to 0} \delta v^\delta (0,\omega).
\end{equation*}
Recalling that $\omega \in \widetilde\Omega$ was arbitrary with $\Prob\big[\widetilde\Omega\big] =1$ and disposing of $\eta > 0$ yields \eqref{cmpineq}.

\medskip

Since $-\overline H$ is the weak limit of the sequence $\delta_j v^{\delta_j}(0,\cdot)$, the reverse of \eqref{cmpineq} is immediate and we obtain \eqref{oneside}. It follows that the full sequence $\delta v^\delta(0,\cdot)$ converges weakly to $-\overline H$, that is, as $\delta \to 0$,
\begin{equation*}
\delta v^\delta (0,\cdot) \rightharpoonup -\overline H = \liminf_{\delta\to 0} \delta v^\delta(0,\cdot) \quad \mbox{weakly in} \ L^\alpha(\Omega).
\end{equation*}
Lemma~\ref{meastheor} yields
\begin{equation}\label{convprob}
\E \big| \delta v^\delta(0,\cdot) + \overline H \big| \rightarrow 0 \quad \mbox{as} \quad \delta \to 0.
\end{equation}

\medskip

\emph{Step 3: Improvement of \eqref{convprob} to balls of radius $\sim 1/\delta$.} We claim that, for each $R> 0$,
\begin{equation} \label{convprob1del}
\lim_{\delta\to0} \E \bigg[ \sup_{y\in B_{R/\delta}} \big| \delta v^\delta(0,\cdot) + \overline H \big| \bigg] = 0.
\end{equation}
Fix $R > 0$. Let $\rho > 0$ and select points $y_1,\ldots,y_k \in B_R$ such that
\begin{equation*}
B_R \subseteq \bigcup_{i=1}^k B(y_i,\rho) \quad \mbox{and} \quad k \leq C \bigg( \frac{R}{\rho}\bigg)^\d.
\end{equation*}
Using \eqref{uber-oscest}, we find
\begin{align*}
\lefteqn{ \limsup_{\delta \to 0} \E \bigg[ \sup_{y\in B_{R/\delta}} \big| \delta v^\delta(0,\cdot) + \overline H \big| \bigg]} \qquad & \\
& \leq \sum_{i=1}^k \limsup_{\delta \to 0} \E \big| \delta v^\delta(y_i/\delta ,\cdot) + \overline H \big| + \limsup_{\delta\to 0} \E\bigg[  \max_{1\leq i \leq k} \osc_{z \in B(y_i/\delta, \rho/\delta)} \delta v^\delta(z,\cdot) \bigg] \\
& \leq k\limsup_{\delta\to 0}\E \big| \delta v^\delta(0,\cdot) + \overline H \big| + C\rho \\
& =  C\rho.
\end{align*}
Disposing of $\rho>0$ yields \eqref{convprob1del}.
\end{proof}


\begin{remark}
It is clear that the subcorrector $w$ is locally Lipschitz in $\Rd$ with a constant controlled by $V$. Indeed, recall from \eqref{weaklims} that since $\Phi=Dw$ is the weak limit of $Dv^{\delta_j}$, we deduce from \eqref{locesteq2} that for a.e. $z\in B(y,1/2)$,
\begin{equation}
|Dw(z,\omega)|^\gamma \leq \limsup_{j\to \infty} |Dv^{\delta_j}(z,\omega)|^\gamma \leq C\Big( 1+ \sup_{B(y,1)} V(\cdot,\omega) \Big) \quad \mbox{a.s. in} \ \omega.
\end{equation}
Hence
\begin{equation} \label{wLIP}
\esssup_{B(y,1/2)} |Dw(\cdot,\omega)|^\gamma \leq C\Big( 1+ \sup_{B(y,1)} V(\cdot,\omega) \Big) \quad \mbox{a.s. in} \ \omega.
\end{equation}
\end{remark}

\begin{remark} \label{assubseq}
From \eqref{mainstepeq} we deduce the existence of a subsequence $\delta_j \to 0$ and a subset $\Omega_6 \subseteq \Omega_5$ of full probability such that, for every $\omega\in \Omega_6$,
\begin{equation}\label{assubseqeq}
\lim_{j \to \infty} \sup_{y\in B_{R/\delta_j}} \left| \delta_j v^{\delta_j} (y,\omega;p) + \overline H(p)\right| =0.
\end{equation}
By a diagonalization procedure, and by intersecting the relevant subsets of $\Omega$, we may assume that \eqref{assubseqeq} holds for every $R> 0$, rational $p$ and $\omega\in \Omega_6$. Then by Lemma~\ref{CDE-noxd} we obtain \eqref{assubseqeq} for all $p\in \Rd$ and $\omega \in \Omega_6$.
\end{remark}

\begin{remark} \label{WDlimsup}
Proposition~\ref{mainstep} is a partial result in the direction of our ultimate goal, which is \eqref{cplim}. In light of \eqref{oneside}, the key step that remains is to show that $\limsup_{\delta\to 0} \delta v^\delta(0,\omega; p) = -\overline H(p)$. While we must postpone the proof of this fact until we have studied the metric problem, let us note here the quantity $\limsup_{\delta\to 0} \delta v^\delta(0,\omega; p)$ is at least deterministic, a.s. in $\omega$. Indeed, observe that, according to Lemma~\ref{impest} and the stationarity of the $v^\delta$'s, for each $\mu \in \R$, the set
\begin{equation*}
\Big\{ \omega\in \Omega_3: \limsup_{\delta \to 0} \delta v^\delta(0,\omega) \geq \mu \Big\} = \Big\{ \tau_{y} \omega\in \Omega_3: \limsup_{\delta \to 0} \delta v^\delta(y,\omega) \geq \mu \Big\}
\end{equation*}
is invariant under $\tau_y$ for every $y\in \Rd$. Therefore, the ergodic hypothesis implies that each of these sets has probability 0 or 1. Letting $-\hat H=-\hat H(p)$ denote the supremum over $\mu$ for which the set above has full probability, we obtain
\begin{equation}\label{hatHdefeq}
\limsup_{\delta\to0} \delta v^\delta(0,\omega;p) = -\hat H(p) \quad \mbox{a.s. in}  \ \omega.
\end{equation}
Select $\Omega_7 \subseteq \Omega_6$ such that \eqref{hatHdefeq} holds for all $\omega \in \Omega_7$.
\end{remark}

\subsection{Some properties of $\overline H$} \label{Hbarprop}

The effective nonlinearity $\overline H$ inherits the convexity, coercivity and continuity of $H$. We record these elementary observations in the next proposition. We conclude this subsection with a discussion of an interesting property $\overline H$ possesses in a particular case in the random setting, which strongly contrasts with the situation in periodic and almost periodic media.

\begin{prop} \label{effHam}
The effective nonlinearity $\overline H$ has the following properties:
\begin{enumerate}
\item[(i)] $p\mapsto \overline H(p)$ is convex,
\item[(ii)] with $c_0 > 0$ as in \eqref{Hcoer} and for all $p \in \Rd$ 
\begin{equation} \label{Hbarcoer}
\frac12 c_0 |p|^\gamma - C \leq \overline H(p) \leq C\left(1+|p|^\gamma\right),
\end{equation}
\item[(iii)] for all $p_1,p_2\in\Rd$,
\begin{equation} \label{contpeq}
\left| \overline H(p_1) - \overline H(p_2) \right| \leq C \! \left(1 + |p_1|+|p_2| \right)^{\gamma-1} |p_1-p_2|.
\end{equation}
\end{enumerate}
\end{prop}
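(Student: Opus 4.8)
The plan is to establish each of the three properties by transferring the corresponding property of $H$ through the characterization of $\overline H$ furnished by Proposition~\ref{mainstep}, namely that $-\overline H(p) = \liminf_{\delta\to 0}\delta v^\delta(0,\omega;p)$ a.s.\ in $\omega$ (equivalently, that $\delta v^\delta(0,\cdot;p) \rightharpoonup -\overline H(p)$ in $L^\alpha(\Omega)$). First I would prove (iii): this is essentially immediate from the continuous dependence estimate Lemma~\ref{CDE-noxd}. Indeed, evaluating \eqref{CDE-noxdeq} at $y=0$ gives
\begin{equation*}
\big| \delta v^\delta(0,\omega;p_1) - \delta v^\delta(0,\omega;p_2) \big| \leq C(1+|p_1|+|p_2|)^{\gamma-1}|p_1-p_2|
\end{equation*}
for every $\delta>0$ and $\omega\in\Omega_1$; taking $\liminf_{\delta\to 0}$ and recalling \eqref{oneside} yields \eqref{contpeq}.

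For (ii), the upper bound follows from the lower bound \eqref{delvestbel} on $\delta v^\delta$: since $\delta v^\delta(0,\omega;p)\geq -C(1+|p|^\gamma)$ for all $\delta$, passing to the $\liminf$ gives $-\overline H(p)\geq -C(1+|p|^\gamma)$, i.e.\ $\overline H(p)\leq C(1+|p|^\gamma)$. For the lower bound $\overline H(p)\geq \tfrac12 c_0|p|^\gamma - C$, I would use the subcorrector $w$ from \eqref{subcorr}: since $w(\cdot,\omega)\in W^{1,\alpha}_{\mathrm{loc}}(\Rd)$ with $Dw$ stationary and $\E Dw(0,\cdot)=0$, and since $w$ satisfies $-\tr(A D^2 w) + H(p+Dw,y,\omega)\leq \overline H(p)$, I invoke the coercivity \eqref{Hcoer} to get $c_0|p+Dw|^\gamma - V - C_0 \leq \tr(AD^2 w) + \overline H(p)$ in the distributional/viscosity sense. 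Integrating this against a suitable sequence of normalized cutoffs over large balls (so that the $\tr(AD^2w)$ term integrates to something negligible after two integrations by parts, using $\|\Sigma\|_{C^{0,1}}\leq C$ and that $w$ is strictly sublinear) and applying the ergodic theorem to the stationary quantities $|p+Dw(y,\omega)|^\gamma$ and $V(y,\omega)$ yields $c_0\,\E|p+Dw(0,\cdot)|^\gamma - \E V(0,\cdot) - C_0 \leq \overline H(p)$; then Jensen's inequality and $\E Dw(0,\cdot)=0$ give $\E|p+Dw(0,\cdot)|^\gamma \geq |p|^\gamma$, and after absorbing $\E V$ (finite by \eqref{Vmoment}) and the constants, one obtains $\overline H(p)\geq c_0|p|^\gamma - C$, which is stronger than \eqref{Hbarcoer}. (If one prefers to avoid the integration-by-parts subtlety, an alternative is to test \eqref{subcorreq} at a point where $w-(\text{smooth concave bump})$ has a max, but the ergodic-averaging route is cleaner given that $w\in W^{1,\alpha}_{\mathrm{loc}}$.)

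For (i), convexity, I would argue directly from the $L^\alpha(\Omega)$-weak-limit characterization together with the convexity of $H$ in $p$. Given $p_1,p_2$ and $t\in[0,1]$, set $p=tp_1+(1-t)p_2$ and let $v_i^\delta := v^\delta(\cdot,\omega;p_i)$. Using the convexity of $H$ exactly as in Lemma~\ref{movep}, the function $\tilde v^\delta := t v_1^\delta + (1-t) v_2^\delta$ is a subsolution of the equation \eqref{aux} with parameter $p$ (because $H(p + D\tilde v^\delta,y,\omega) = H(t(p_1+Dv_1^\delta)+(1-t)(p_2+Dv_2^\delta),y,\omega) \leq t\,H(p_1+Dv_1^\delta,y,\omega) + (1-t)\,H(p_2+Dv_2^\delta,y,\omega)$, and the $\delta(\cdot)$ and $\tr(A D^2(\cdot))$ terms are linear); this is made rigorous in the viscosity sense via Lemma~\ref{convtrick}. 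Since $\tilde v^\delta$ satisfies the strict sublinearity bound \eqref{compsublin} (being a convex combination of functions satisfying it), the comparison principle Lemma~\ref{comparison} gives $\tilde v^\delta \leq v^\delta(\cdot,\omega;p)$ in $\Rd$. Evaluating at $0$, multiplying by $\delta$, and taking $\liminf_{\delta\to 0}$ using \eqref{oneside} yields $-t\overline H(p_1)-(1-t)\overline H(p_2) \leq -\overline H(p)$, i.e.\ $\overline H(tp_1+(1-t)p_2)\leq t\overline H(p_1)+(1-t)\overline H(p_2)$.

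The main obstacle is the lower bound in (ii): everything else is a one-line consequence of results already in hand (Lemma~\ref{CDE-noxd}, \eqref{delvestbel}, Lemma~\ref{comparison}, \eqref{oneside}), but the coercivity of $\overline H$ genuinely requires producing the subcorrector and carrying out the ergodic-averaging argument carefully, in particular controlling the second-order term $\tr(A(y,\omega)D^2 w)$ — which is only defined distributionally — after two integrations by parts against cutoffs supported on $B_R$ with $R\to\infty$, and verifying that the strict sublinearity \eqref{sublininfty} of $w$ together with $\|\Sigma\|_{C^{0,1}}\leq C$ makes the boundary/gradient-of-cutoff contributions vanish in the limit.
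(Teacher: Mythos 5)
Parts (i), (iii) and the upper bound in (ii) of your proposal are correct and essentially identical to the paper's proof: convexity via Lemma~\ref{convtrick} plus the comparison principle Lemma~\ref{comparison} and \eqref{oneside}; the Lipschitz-type bound \eqref{contpeq} by sending $\delta\to0$ in \eqref{CDE-noxdeq}; the upper bound in \eqref{Hbarcoer} from \eqref{delvestbel}. (One small slip in (i): the growth condition \eqref{compsublin} in Lemma~\ref{comparison} is required of the \emph{supersolution}, here $v^\delta(\cdot,\omega;p)$, for which it holds by \eqref{delvestbel}; it is not needed for $\tilde v^\delta$. The application is still valid.)

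The lower bound in \eqref{Hbarcoer} is where your route genuinely differs from the paper, and as written it has a gap. The paper regularizes so that $v^\delta$ is smooth, integrates the equation \eqref{aux} over the \emph{fixed} ball $B_1$, takes expectations, integrates by parts once (with a boundary term controlled via \eqref{Alip}, \eqref{locesteq2}, \eqref{Vmoment}), and uses \eqref{mainstepeq} plus Jensen and $\E Dv^\delta(0,\cdot)=0$. Your alternative — test the subcorrector inequality \eqref{subcorr} against normalized cutoffs $\phi_R$ on $B_R$ and use the ergodic theorem and Jensen with $\E Dw(0,\cdot)=0$ — is viable in principle, but your mechanism for discarding the diffusion term fails: after \emph{two} integrations by parts you pick up $\int w\,\partial_i\partial_j a_{ij}\,\phi_R\,dy$, and the hypotheses \eqref{AC11}--\eqref{Alip} give a uniform bound on $DA$ but no uniform or moment bound on $D^2A$; moreover strict sublinearity only gives $\sup_{B_R}|w|=o(R)$, which may still diverge, so $\|\Sigma\|_{C^{0,1}}\leq C$ and sublinearity of $w$ do not make this term negligible. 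The repair is to integrate by parts only once (legitimate since $Dw\in L^\infty_{\mathrm{loc}}$ by \eqref{wLIP}), writing $\tr(A D^2 w)=\mathrm{div}(A\,Dw)-(\mathrm{div}A)\cdot Dw$ with $(\mathrm{div}A)_j=\partial_i a_{ij}$: the first term tested against $\phi_R$ is $O\big(R^{-1}\fint_{B_R}|Dw|\big)\to0$ by the ergodic theorem, while the second converges to a constant bounded by $C\,\E|Dw(0,\cdot)|\leq C(1+|p|)$ thanks to \eqref{wLIP} and \eqref{Vmoment}; since $\gamma>1$ this error is absorbed by $c_0|p|^\gamma$ and yields \eqref{Hbarcoer}. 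Alternatively, follow the paper's expectation-over-$B_1$ argument, which avoids the large-scale averaging altogether.
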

\begin{proof}
Select $p_1,p_2,x\in \Rd$ and, for each $\delta > 0$, write $v_i^\delta (y,\omega) : = v^\delta(y,\omega; p_i)$. Set $p : = \frac12(p_1+p_2)$ and $\tilde v^\delta : = \frac12( v^\delta_1 + v^\delta_2)$. Applying Lemma~\ref{convtrick}, we observe that $\tilde v^\delta$ satisfies
\begin{equation*}
\delta \tilde v^\delta  - \tr \! \left( A(y,\omega) D^2 \tilde v \right) + H( p+ D\tilde v, y, \omega) \leq 0 \quad \mbox{in} \ \Rd.
\end{equation*}
Lemma~\ref{comparison} yields $\tilde v^\delta(y,\omega) \leq v^\delta(y,\omega;p)$ a.s. in $\omega$. Multiplying this inequality by $-\delta$ and sending $\delta \to 0$, we obtain
\begin{equation*}
\overline H(p) \leq \tfrac12 \overline H(p_1) + \tfrac12 \overline H(p_2).
\end{equation*}
and (i) follows.

\medskip

The upper bound in \eqref{Hbarcoer} follows immediately from \eqref{delvestbel}. The coercivity of $\overline H$ is more subtle, requiring an integration by parts. To this end, we may assume, by regularizing the coefficients and making the diffusion matrix $A$ uniformly elliptic, that $v^\delta$ is smooth. Integrating \eqref{aux} over $B_1$ and taking expectations yields
\begin{equation} \label{coerint}
\E \fint_{B_1} -\delta v^\delta(y) \, dy = \E \fint_{B_1}\left( - \tr\!\left( A(y,\omega) D^2v^\delta \right) + H(p+Dv^\delta, y, \omega)  \right) \, dy,
\end{equation}
while Proposition~\ref{mainstep} gives
\begin{equation} \label{Hcoerc1}
\lim_{\delta \to 0} \E \fint_{B_1} -\delta v^\delta \, dy = \overline H(p).
\end{equation}
Integrating by parts and using \eqref{Alip} and \eqref{locesteq2}, we can estimate the first term on the right side of \eqref{coerint} pointwise in $\omega$ by
\begin{equation*}
\left| \int_{B_1} \tr \!\left( A(y,\omega) D^2v^\delta \right) \, dy \right| \leq C \int_{B_1} |Dv^\delta(y)|\, dy + C\sup_{y\in \partial B_1} |Dv^\delta(y)| \leq C \sup_{y\in B_2} V(y,\omega).
\end{equation*}
Taking expectations yields
\begin{equation} \label{diffest}
\E \left| \fint_{B_1} \tr \!\left( A(y,\omega) D^2v^\delta \right) \, dy \right| \leq C \, \E \sup_{B_2} V(y,\omega) \leq C.
\end{equation}
The second term on the right side of \eqref{coerint} is estimated from below using Jensen's inequality, \eqref{Hcoer} and \eqref{locesteq2}, as follows:
\begin{align}
\E \fint_{B_1} H(p+Dv^\delta ,y,\cdot) \, dy & \geq \E \fint_{B_1} \left( c_0 |p+Dv^\delta|^\gamma - V(y,\cdot) - C_0 \right)  dy \nonumber \\
& \geq c_0 \left| \, p + \E \fint_{B_1} Dv^\delta(y) \, dy\,\right|^\gamma - C \nonumber \\
& \geq \frac{1}{2} c_0 |p|^\gamma - \E \sup_{y \in B_1} |Dv^\delta(y)|^\gamma \, dy - C \nonumber \\
& \geq \frac12 c_0 |p|^{\gamma} - C. \label{Hcoerc2}
\end{align}
Sending $\delta \to 0$ in \eqref{coerint} using \eqref{Hcoerc1}, \eqref{diffest} and \eqref{Hcoerc2} yields the lower bound in \eqref{Hbarcoer}.

\medskip

Finally, we note that (iii) follows from Lemma~\ref{CDE-noxd} by simply sending $\delta \to 0$ in \eqref{CDE-noxdeq}.
\end{proof}

\begin{remark} \label{effHcmp}
The properties of $\overline H$ enumerated in Proposition~\ref{effHam} imply, in view of standard viscosity solution theory, that the problem \eqref{HJ-eff} has a unique solution.
\end{remark}

We conclude this section with some observations about the effective Hamiltonian in the particular case that, for all $p,y \in \Rd$,
\begin{equation} \label{parcas}
H(p,y,\omega) \geq H(0,y,\omega) \quad \mbox{a.s. in} \ \omega.
\end{equation}
It is essentially known (at least in the bounded case, see for example \cite{LPV,LS2}) that for separated Hamiltonians, i.e., if
\begin{equation}\label{sepcase}
H(p,y,\omega) = \widetilde H(p) - V(y,\omega),
\end{equation}
where $\widetilde H(p) \geq \widetilde H(0)$ and $A\equiv 0$, we have
\begin{equation} \label{Hbarinv}
\overline H(p) \geq \overline H(0) = -\essinf_{\Omega} V(0,\cdot).
\end{equation}
Here we prove the following more general fact.

\begin{prop} \label{invispot-easy-prop}
Assume that $A\equiv 0$ and \eqref{parcas}. Then
\begin{equation} \label{aspacsas}
\overline H(p) \geq \overline H(0) = \esssup_{\omega \in \Omega} H(0,0,\omega).
\end{equation}
\end{prop}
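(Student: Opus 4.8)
The plan is to establish two facts: $\overline H(0) = \esssup_{\omega\in\Omega}H(0,0,\omega)=:\mu_0$ (which is finite by \eqref{Hbound}), and $\overline H(p)\geq\mu_0$ for every $p\in\Rd$. Since the latter at a general $p$, combined with the former, yields $\overline H(p)\geq\overline H(0)$, this proves \eqref{aspacsas}. Both the bound $\overline H(p)\ge\mu_0$ and the two inequalities comprising $\overline H(0)=\mu_0$ will be obtained by comparing $v^\delta(\cdot,\omega;p)$ with an explicit sub- or supersolution of \eqref{aux} via Lemma~\ref{comparison}, and then letting $\delta\to0$ using the characterization \eqref{oneside}.

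\emph{Lower bound $\overline H(p)\geq\mu_0$.} Fix $p\in\Rd$ and $\omega\in\Omega_5$, and set $\phi^\delta(y):=-\delta^{-1}H(0,y,\omega)$, which lies in $W^{1,\infty}_{\mathrm{loc}}(\Rd)$ by \eqref{Hlip}. The key claim is that $\phi^\delta$ is a viscosity supersolution of \eqref{aux}: since $A\equiv0$, for any $\psi\in C^2$ and any $y_0$,
\[
\delta\phi^\delta(y_0)+H(p+D\psi(y_0),y_0,\omega) = -H(0,y_0,\omega)+H(p+D\psi(y_0),y_0,\omega)\geq 0
\]
by \eqref{parcas}; a fortiori this holds when $\phi^\delta-\psi$ has a local minimum at $y_0$, which is the supersolution condition. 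This is precisely where the hypotheses $A\equiv0$ (so that no second-order term acts on the merely Lipschitz $\phi^\delta$) and \eqref{parcas} (to absorb the zeroth-order term for an arbitrary test gradient) are used. Moreover $-V(y,\omega)-C_0\leq H(0,y,\omega)\leq C$ by \eqref{Hcoer}--\eqref{Hbound}, and $V$ is strictly sublinear at infinity by Lemma~\ref{potests} (recall $\omega\in\Omega_1$), so $H(0,y,\omega)/|y|\to0$ and $\phi^\delta$ satisfies the growth requirement \eqref{compsublin}. Since $v^\delta(\cdot,\omega;p)$ is a solution of \eqref{aux}, Lemma~\ref{comparison} gives $v^\delta(\cdot,\omega;p)\leq\phi^\delta$ in $\Rd$, in particular $\delta v^\delta(0,\omega;p)\leq -H(0,0,\omega)$. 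Taking $\liminf_{\delta\to0}$ and using \eqref{oneside}, $-\overline H(p)\leq -H(0,0,\omega)$, i.e.\ $\overline H(p)\geq H(0,0,\omega)$; as this holds for a.e.\ $\omega$ and $\overline H(p)$ is deterministic, $\overline H(p)\geq\mu_0$.

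\emph{Upper bound $\overline H(0)\leq\mu_0$.} Since $H(0,y,\omega)=H(0,0,\tau_y\omega)$, a countable-union argument over $y\in\Qd$ together with the continuity of $y\mapsto H(0,y,\omega)$ (from \eqref{Hlip}) shows that, a.s.\ in $\omega$, $H(0,y,\omega)\leq\mu_0$ for all $y\in\Rd$. For such $\omega$ the constant function $y\mapsto-\mu_0/\delta$ is a smooth subsolution of \eqref{aux} with $p=0$; since $v^\delta(\cdot,\omega;0)$ is a solution bounded below by \eqref{delvestbel}, Lemma~\ref{comparison} yields $-\mu_0/\delta\leq v^\delta(0,\omega;0)$, hence $-\mu_0\leq\delta v^\delta(0,\omega;0)$, and letting $\delta\to0$ and using \eqref{oneside} gives $-\mu_0\leq-\overline H(0)$. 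Combined with the case $p=0$ of the previous step this gives $\overline H(0)=\mu_0$, whereupon the lower bound for general $p$ becomes $\overline H(p)\geq\mu_0=\overline H(0)$, which is \eqref{aspacsas}.

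\emph{Main difficulty.} The only delicate point is the verification that $\phi^\delta$ is a bona fide viscosity supersolution: because $H(0,\cdot,\omega)$ need only be Lipschitz, $\phi^\delta$ may have arbitrary downward corners, so one cannot argue through its gradient and must instead check the supersolution inequality directly against every smooth test function — which succeeds exactly because $A\equiv0$ removes the second-order term and \eqref{parcas} fixes the sign of $-H(0,y_0,\omega)+H(p+\xi,y_0,\omega)$ for \emph{every} $\xi$. Everything else is a routine application of the comparison principle and the limit \eqref{oneside} already available from Proposition~\ref{mainstep}.
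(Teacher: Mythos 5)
Your argument is correct, but it reaches the key inequality by a genuinely different route than the paper. You construct the global supersolution $\phi^\delta:=-\delta^{-1}H(0,\cdot,\omega)$ of \eqref{aux} -- valid against every test gradient precisely because $A\equiv 0$ and \eqref{parcas} -- check \eqref{compsublin} (which is even easier than you make it: $\phi^\delta\geq -C/\delta$ by \eqref{Hbound}), apply the global comparison principle Lemma~\ref{comparison} on all of $\Rd$, and then pass to the limit through the almost sure identity \eqref{oneside}; this gives the pointwise bound $\delta v^\delta(0,\omega;p)\leq -H(0,0,\omega)$ for every $\delta$, hence $\overline H(p)\geq \esssup_{\Omega}H(0,0,\cdot)$ in one stroke. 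The paper instead works on the positive-probability event $\{H(0,0,\cdot)>\esssup_\Omega H(0,0,\cdot)-\ep\}$, builds an exploding barrier on a small ball $B_r$ (with $r$ chosen via continuity of $H(0,\cdot,\omega)$ near the origin), compares only locally to bound $\delta v^\delta(0,\omega;p)$, and then uses the $L^1$-type convergence \eqref{mainstepeq} together with the positivity of the probability of that event to pin down the deterministic constant, finally sending $\ep\to 0$. Your route avoids the $\ep$-approximation, the localization, and the ``positive probability plus convergence in expectation'' step, at the cost of invoking comparison on the whole space; the paper's route needs only comparison on a bounded ball and local continuity, and it is the same barrier-plus-bare-spot pattern that is reused in Proposition~\ref{invispot} when $A\not\equiv 0$, so it is the more robust template. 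Your treatment of the upper bound $\overline H(0)\leq \esssup_\Omega H(0,0,\cdot)$ (constant subsolution, \eqref{delvestbel}, \eqref{oneside}) spells out what the paper leaves as ``clear,'' and the countable-density argument to make the bound hold for all $y$ simultaneously is the right care with null sets. One cosmetic point: the ``main difficulty'' you flag is not really one -- since $H(q,y,\omega)\geq H(0,y,\omega)$ for every $q$, the viscosity supersolution property of $\phi^\delta$ is immediate and its mere Lipschitz regularity never enters.
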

\begin{proof}
We may assume that $\esssup_\Omega H(0,0,\cdot) = 0$. Fix $p\in \Rd$ and $\ep > 0$, and define
\begin{equation*}
\widetilde\Omega_\ep : = \{ \omega \in \Omega : H(0,0,\omega) > - \ep \}.
\end{equation*}
Observe that $\Prob\big[\widetilde\Omega_\ep\big] > 0$. We claim that, for any $\delta > 0$ and $\omega \in \widetilde\Omega_\ep \cap \Omega_1$, where $\Omega_1$ is as in Lemma~\ref{comparison}, we have
\begin{equation*}
\delta v^\delta(0,\omega; p) \leq 2\ep.
\end{equation*}
To prove this, we construct a very simple barrier. In fact we may take any smooth function $w$ satisfying $w \geq w(0) = 2\delta^{-1} \ep$ in $B_r$ and $w(y)\to \infty$ as $|y| \to r$. If $r > 0$ is small enough, depending on $\omega$, then, by continuity,
\begin{equation*}
\max_{B_r} H(0,0,\omega) > - 2\ep.
\end{equation*}
It follows from this, $A\equiv 0$ and \eqref{parcas} that $w$ is a supersolution of \eqref{aux} in $B_r$. Thus the comparison principle gives $\delta v^\delta(0,\omega;p) \leq \delta w(0) = 2\ep$.

Since $\Prob\big[\widetilde\Omega_\ep\cap \Omega_1 \big] > 0$, by \eqref{mainstepeq} we obtain $\overline H(p) \geq -2\ep$.
Sending $\ep \to 0$ yields $\overline H(p) \geq 0$. It follows that $\overline H(0) =  0$, since it is clear that
\begin{equation*}
\overline H(0) \leq \esssup_\Omega H(0,0,\cdot) = 0. \qedhere
\end{equation*}
\end{proof}

\begin{remark} \label{invispot-easy}
One observes from \eqref{Hbarinv} the reason we must require $V$ to be bounded from below, even as our assumptions allow it to be unbounded from above.
\end{remark}

The situation is different in the presence of diffusion, i.e., if $A\not\equiv 0$. For example, an easy calculation shows in the periodic setting, with $A = \iden$ and $H$ separated as in \eqref{sepcase}, that \eqref{Hbarinv} holds if and only if $V$ is constant. We nonetheless observe in the next proposition that, in the latter case, \eqref{Hbarinv} holds if $V$ has arbitrarily large ``bare spots" (defined implicitly below) near its essential infimum. This phenomenon is special to the random setting. It is true for a Poissonian potential, for instance, while in contrast, any periodic or almost periodic potential having arbitrarily large bare spots is necessarily constant.

\begin{prop} \label{invispot}
Suppose that $H$ is of the form \eqref{sepcase} and, for all $p\in \Rd$,
\begin{equation} \label{tH}
\widetilde H(p) \geq \widetilde H(0) = 0,
\end{equation}
and, for each $\mu,R> 0$,
\begin{equation} \label{barespot}
\Prob\left[ \sup_{y\in B_R} V(y,\cdot) \leq \mu \right] > 0.
\end{equation}
Then $\overline H \geq 0$, and, if, in addition, we assume that $V\geq 0$, then $\overline H(0) = 0$.
\end{prop}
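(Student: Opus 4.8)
The plan is to reduce this to the separated-Hamiltonian analog of Proposition~\ref{invispot-easy-prop}, but now with $A = \iden \neq 0$, and to compensate for the missing one-point comparison by building a barrier on a large ball rather than a small one. The point is that \eqref{Hbarinv}, i.e. $\overline H(0) = -\essinf_\Omega V(0,\cdot)$, does \emph{not} hold in general when $A \neq 0$ (as the periodic remark notes), but it does hold if $V$ has large ``bare spots'' near its essential infimum, which is exactly what \eqref{barespot} provides. First I would normalize $\essinf_\Omega V(0,\cdot) = 0$ (note $V \geq 0$ is only assumed in the second half of the statement; for the inequality $\overline H \geq 0$ one subtracts the constant $\essinf_\Omega V(0,\cdot)$, which is finite since $V$ is stationary and nonnegative-essinf, or one simply argues for the shifted potential).

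For the core estimate, fix $p \in \Rd$, $\ep > 0$, and $R > 0$ large (to be chosen), and set $\widetilde\Omega_{\ep,R} := \{\omega : \sup_{y \in B_R} V(y,\omega) \leq \ep\}$, which has positive probability by \eqref{barespot}. The claim is that for every $\delta > 0$ and $\omega \in \widetilde\Omega_{\ep,R} \cap \Omega_1$,
\begin{equation*}
\delta v^\delta(0,\omega;p) \leq \ep + C\delta R^{-2} + \text{(correction terms)},
\end{equation*}
which, after sending $R \to \infty$ appropriately, will give $\delta v^\delta(0,\omega;p) \leq 2\ep$. To prove it I would construct an explicit supersolution of \eqref{aux} on $B_R$ that equals $2\delta^{-1}\ep$ at the origin and blows up at $\partial B_R$: something like $w(y) := 2\delta^{-1}\ep + b((R^2 - |y|^2)^{-\eta} - R^{-2\eta})$ (or a logarithmic version if $\gamma = 2$), exactly the barrier of Lemma~\ref{locest} rescaled to $B_R$. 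On $B_R$ we have $V \leq \ep$, so $H(p + Dw, y, \omega) \geq \widetilde H(p+Dw) - \ep \geq -\ep$ by \eqref{tH}; one checks that $\delta w - \tr(A D^2 w) + H(p + Dw, y, \omega) \geq 2\ep - C\delta^{-1}\cdot(\text{decaying in }R) - \ep - (\text{second-order term}) \geq 0$ for $R$ large and $b$ chosen appropriately, using that the gradient term $|Dw|^\gamma$ dominates the Hessian term near $\partial B_R$ just as in Lemma~\ref{locest}. Then Lemma~\ref{comparison} (applicable since $w \to \infty$ at $\partial B_R$, so the comparison on $B_R$ is with $+\infty$ boundary data) gives $v^\delta(\cdot,\omega;p) \leq w$ on $B_R$, hence $\delta v^\delta(0,\omega;p) \leq \delta w(0) = 2\ep$.

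With the pointwise bound in hand, since $\Prob[\widetilde\Omega_{\ep,R} \cap \Omega_1] > 0$, applying \eqref{mainstepeq} of Proposition~\ref{mainstep} on this positive-probability event forces $\E[\sup_{B_{R'/\delta}} |\delta v^\delta(\cdot,\omega;p) + \overline H(p)|] \to 0$, and since $\delta v^\delta(0,\cdot;p) \leq 2\ep$ with positive probability for all small $\delta$, we get $-\overline H(p) \leq 2\ep$, i.e. $\overline H(p) \geq -2\ep$; letting $\ep \to 0$ yields $\overline H(p) \geq 0$ for all $p$, hence $\overline H \geq 0$. Finally, if $V \geq 0$ then $H(0,y,\omega) = \widetilde H(0) - V(y,\omega) = -V(y,\omega) \leq 0$, so $v^\delta(\cdot,\omega;0) \equiv 0$ is a supersolution of \eqref{aux} with $p = 0$ (here $\delta v^\delta = 0$, $Dv^\delta = 0$, $D^2 v^\delta = 0$, and $H(0,y,\omega) \leq 0$), giving $\overline H(0) \leq 0$ via \eqref{mainstepeq}; combined with $\overline H(0) \geq 0$ this gives $\overline H(0) = 0$.

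I expect the main obstacle to be the barrier estimate on the large ball $B_R$: one must verify that the barrier $w$ remains a supersolution uniformly as $R \to \infty$ while the correction $\delta w(0) - 2\delta^{-1}\ep \cdot \delta$ stays controlled, which requires tracking how the constant $b$ and the blow-up rate interact with $R$ — essentially redoing the computation of Lemma~\ref{locest} with the unit ball replaced by $B_R$ and checking the $R$-dependence of every term is favorable. A secondary subtlety is justifying the comparison on $B_R$ with infinite boundary data (the supersolution blows up at $\partial B_R$), but this is handled exactly as in the proof of Lemma~\ref{locest}: $v^\delta - w$ cannot have an interior maximum on $\{v^\delta > w\}$, and since $w \to \infty$ at $\partial B_R$ the set $\{v^\delta > w\}$ is empty.
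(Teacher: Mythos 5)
Your proposal follows essentially the same route as the paper: on the positive-probability event where $\sup_{B_R}V\leq\mu$, build the Lemma~\ref{locest}-type barrier rescaled to $B_R$ (the paper takes $b\sim R^{\eta}$, so the supersolution defect at the center is of order $R^{-2-\eta}$ and the constant part can be taken $a=\delta^{-1}(\mu+CR^{-2-\eta})$), compare to get $\delta v^\delta \leq \mu + CR^{-2-\eta}$ at the bare spot, and conclude via \eqref{mainstepeq}; the separated structure and \eqref{tH} are used exactly as you indicate to handle the region where $Dw$ is small. The only structural difference is that the paper first invokes the ergodic theorem to locate, a.s.\ in $\omega$, a bare spot $z\in B_{1/\delta}$ and estimates $\delta v^\delta(z,\omega;p)$ there, whereas you stay at the origin on the fixed positive-probability event and use $L^1$ (hence in-probability) convergence from \eqref{mainstepeq} — which is precisely the mechanism the paper itself uses in Proposition~\ref{invispot-easy-prop}, so both are fine. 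Two small corrections: (i) you cannot ``send $R\to\infty$'' at fixed $\omega$, since the event depends on $R$; fix $R=R(\ep)$ so large that $CR^{-2-\eta}\leq\ep$ (or send $R\to\infty$ only in the resulting inequality $\overline H(p)\geq-\mu-CR^{-2-\eta}$), and (ii) in the last step the zero function is a \emph{subsolution} of \eqref{aux} with $p=0$ (the inequality you verify is $\leq 0$), so comparison gives $v^\delta\geq 0$ and hence $\overline H(0)\leq 0$; calling it a supersolution would give the opposite direction.
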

\begin{proof}
Fix $\mu> 0$ and $R> 1$, and set $E: = \{ \omega \in \Omega : \sup_{B_R} V(y,\omega) \leq \mu \}$. The ergodic theorem yields an event $\widetilde\Omega\subseteq \Omega$ of full probability such that, for each $\omega \in \widetilde\Omega$,
\begin{equation} \label{eprtuoa}
\lim_{r\to \infty} \fint_{B_r} \mathds{1}_E (\tau_y\omega) \, dy = \Prob[E] > 0.
\end{equation}
Fix $\omega \in \widetilde\Omega$. According to \eqref{eprtuoa}, for every sufficiently small $\delta$, depending on $\omega$, we can find $z \in B_{1/\delta}$ for which $\tau_z\omega \in E$. This implies that
\begin{equation} \label{barespotz}
\sup_{B(z,R)} V(\cdot ,\omega) = \sup_{B(0,R)} V(\cdot, \tau_z\omega) \leq \mu.
\end{equation}
Using a barrier function similar to the one in the proof of Lemma~\ref{locest}, we estimate $\delta v^\delta (z,\omega; p)$ from above. Translating the equation, we may assume that $z=0$. Notice that the hypotheses on $H$ imply that
\begin{equation} \label{Htform}
H(p,y,\omega) \geq \left( c_0 |p|^\gamma - C_0\right)_+ - V(y,\omega).
\end{equation}
Since the calculations below rely only on \eqref{Htform}, we assume that $\gamma < 2$. With positive constants $a,b> 0$ to be chosen, define
\begin{equation*}
w(y) : = a + b \left( R^2 - |y|^2 \right)^{-\eta} -bR^{-2\eta},
\end{equation*}
where $\eta : = (2-\gamma) / (\gamma-1)$, so that $\gamma(\eta+1) = \eta+2$. Routine calculations give
\begin{equation*}
|Dw(y)|^\gamma \geq c b^\gamma |y|^\gamma \left( R^2- |y|^2 \right)^{-(\eta+2)} \quad \mbox{and} \quad \tr\!\left(AD^2w(y)) \leq C b R^2 (R^2-|y|^2 \right)^{-(\eta+2)}.
\end{equation*}
Inserting $w$ into \eqref{aux} and using \eqref{barespotz} and \eqref{Htform}, we find 
\begin{multline}
\delta w - \tr(AD^2w) + H(p+Dw,y,\omega)
\\ \geq \delta a - CbR^2 (R^2-|y|^2)^{-(\eta+2)} + \left( cb^\gamma |y|^\gamma  \left( R^2- |y|^2 \right)^{-(\eta+2)} - C_0\right)_+ - \mu \quad \mbox{in} \ B_R.
\end{multline}
Taking $b:= CR^\eta$ for a sufficiently large constant $C$, we get
\begin{equation*}
\left( cb^\gamma |y|^\gamma  \left( R^2- |y|^2 \right)^{-(\eta+2)} - C\right)_+ \geq  CbR^2  \left( R^2- |y|^2 \right)^{-(\eta+2)} \quad \mbox{for all} \quad  \frac R2 \leq |y| < R.
\end{equation*}
Of course,
\begin{equation*}
CbR^2  \left( R^2- |y|^2 \right)^{-(\eta+2)} \leq CbR^{-2(\eta+1)} \leq CR^{-2-\eta} \quad \mbox{for all} \quad |y | \leq \frac R2.
\end{equation*}
Therefore in $B_R$, we have
\begin{equation*}
\delta w - \tr(AD^2w) + H(p+Dw,y,\omega)  \geq  \delta a -\mu - CR^{-2-\eta}.
\end{equation*}
If $a := \delta^{-1}(\mu + CR^{-2-\eta})$, then $w$ is a supersolution of \eqref{aux}. Since $w(y) \rightarrow +\infty$ as $|y| \to R$, the comparison principle implies that
\begin{equation*}
\delta v^\delta(0,\omega;p) \leq \delta w(0) = \delta a = \mu + CR^{-2-\eta}.
\end{equation*}
Taking expectations and passing to the limit $\delta \to 0$, using \eqref{mainstepeq}, yields that
\begin{equation*}
\overline H(p) \geq -\mu - CR^{-2-\eta}.
\end{equation*}
Sending $\mu \to 0$ and $R\to \infty$ gives the conclusion.
\end{proof}

In the case that $\gamma=2$, $\overline H(0)$ is related to the bottom of the spectrum of random Schrodinger operators, and the previous proposition is known. See for instance Carmona and Lacroix \cite{CL} as well as Section 8 of \cite{LS2}.

Sznitman \cite{Sz2} proved more than Proposition~\ref{invispot} in the case that $A=\iden$, $\widetilde H(p) = |p|^2$ and $V$ is a Poissonian potential. In particular, he showed that $\overline H$ has a ``flat spot" near $p=0$, that is, $\overline H(p) = 0$ for small $|p|$, a result which in his language he calls the ``nondegeneracy of the quenched Lyapunov exponents" (see \cite[Proposition 5.2.9]{Szb}). See also Section~\ref{P} below for a guide to translating between our notation and that of \cite{Szb}.

\medskip

\begin{remark}\label{approximation}
It is sometimes convenient to approximate $\overline H$ by effective Hamiltonians corresponding to bounded environments. For $c_0$ as in \eqref{Hcoer}, define
\begin{equation} \label{Hk}
H_k(p,y,\omega) : = \max\big\{ H(p,y,\omega), c_0|p|^\gamma - k \big\}.
\end{equation}
It is clear that $H_k$ satisfies the same hypotheses as $H$, and, in addition, $H_k$ is uniformly coercive, i.e., $H_k(p,y,\omega) \geq c_0|p|^\gamma - k$. Obviously $H_k \geq H$ and $H_k \downarrow H$ as $k\to \infty$. In fact, by Lemma~\ref{potests}, there is a constant $\sigma < 1$ such that
\begin{equation}
0 \leq H_k(p,y,\omega) - H(p,y,\omega) \leq C_0 + V(y,\omega) - k \leq C\big(1+|y|^\sigma\big) - k \quad \mbox{a.s. in} \ \omega.
\end{equation}
To each $H_k$ corresponds an ${\overline H}_k$ and, in view of the monotonicity of the $H_k$'s with respect to $k$, it is obvious that ${\overline H}_k \downarrow \hat H$ as $k\to \infty$. For bounded environments it is easy to check using standard viscosity arguments that ${\overline H} = \hat H$. 
The argument is more complicated in the unbounded setting. We leave it up to the interested reader to fill in the details.
\end{remark}

We conclude this section by writing down the usual inf-sup formula for the effective Hamiltonian:
\begin{equation}\label{infsup}
\overline H(p) = \inf_{\Phi \in \mathcal S} \sup_{y\in \Rd} \big( -\tr A(y,\omega)D^2\Phi + H(p+D\Phi,y,\omega) \big).
\end{equation}
Here $\mathcal S$ is the set of $\Phi : \Rd \times \Omega \rightarrow \R$ such that $\Phi(\cdot,\omega)$ is locally Lipschitz a.s. in $\omega$ and $D\Phi$ is stationary with $\E [D\Phi ]= 0$. The supremum is to be understood in the viscosity sense, i.e., it is defined to be the smallest constant $k$ for which the inequality
\begin{equation*}
-\tr A(y,\omega)D^2\Phi + H(p+D\Phi,y,\omega) \leq k \quad \mbox{in} \ \Rd
\end{equation*}
holds in the viscosity sense.

The proof of \eqref{infsup} is simple. The inequality ``$\leq$" is clear from the existence of the subcorrector in Proposition~\ref{mainstep}. If this inequality were strict, however, we could repeat the argument used to obtain \eqref{cmpineq} in the proof of Proposition~\ref{mainstep}, with an appropriate choice of $\Phi$ in place of $w$, to obtain an improvement of \eqref{cmpineq}. But this would contradict \eqref{oneside}.

\section{The metric problem: another characterization of $\overline H$}

 \label{Q}

\nocite{R,KMZ}

In this section we consider the special stationary equation
\begin{equation} \label{MP}
- \tr\! \left( A(y,\omega) D^2u\right) + H(p+Du, y, \omega)  = \mu \quad \mbox{in} \ \Rd\!\setminus \! D
\end{equation}
where $D$ is a bounded, closed subset of $\Rd$ and $\mu \in \R$.  Subject to appropriate boundary and growth conditions, a solution $u$ of \eqref{MP} is related to the ``metric" (distance function) associated with the effective Hamiltonian.

\medskip

Our motivation for studying \eqref{MP} is threefold. First, as we will see, solutions of \eqref{MP} possess some subadditive structure. An application of the subadditive ergodic theorem therefore permits us to essentially homogenize a rescaled version of \eqref{MP}. Working backwards we are able to improve the convergence in $L^1$ obtained in \eqref{mainstepeq} to almost sure convergence, which is done later in Section~\ref{BS}. This is a critical step in the proof of Theorem~\ref{MAIN}. Second, we provide a characterization of $\overline H$ in terms of the solvability of \eqref{MP}, which is new even in the bounded setting. We hope that this formula will yield new information about the structure of the effective Hamiltonian, and we intend to return to this point in future work. Finally, the metric problem is natural from the probability point of view, and allows us to make a precise connection to the results of Sznitman in the case of the model equation \eqref{PCP} with $\gamma=2$ and a Poissonian potential $V$.

\medskip

We remark that, in this section, the subsets of $\Omega$ of full probability on which our ``almost sure" assertions hold depend on $p$. This is because, for the sake of brevity and because it is not required for the proof of our main theorems, we do not wish to trouble ourselves with proving a separate continuous dependence result.

\subsection{Well-posedness}

We begin by showing that \eqref{MP} is well-posed for each $\mu > \overline H(p)$, a.s. in $\omega$, subject to appropriate growth conditions at infinity and the boundary condition $u=0$ on $\partial D_1$, where $D=D_1$ is defined by
\begin{equation*}
D_1 : = \begin{cases} \{ 0 \} & \mbox{if} \ \gamma > 2 \ \mbox{or} \ A \equiv 0, \\ B_1 & \mbox{if} \ \gamma \leq 2 \ \mbox{and} \ A \not\equiv 0.\end{cases}
\end{equation*}
The reason for defining $D_1$ in this way, from the probability point of view, is that, in contrast to the case $\gamma\leq 2$, for $\gamma > 2$, it is possible to have Brownian bridges. That is, we may connect two points via a diffusion if $\gamma > 2$, while if $\gamma=2$ we may only connect a point to a small ball. From the pde point of view, this manifests itself in the kind of barrier functions we are able to build.

\medskip

In the next proposition, we prove a comparison result. Existence then follows from the Perron method once suitable barriers have been constructed. 

\medskip

We prove the next proposition with a new argument that, as far as we know, has no analogue in the literature. What makes this results different from typical comparison results on unbounded domains is that we do \emph{not} assume that the subsolution $u$ and supersolution $v$ separate only sublinearly at infinity; see \eqref{meteqcmpgc}. In the proof, we ``lower" $u$ until it has strictly sublinear separation from $v$, and then argue that, if we needed to lower $u$ at all, we could have lowered it a little less. To accomplish this, first we perturb $u$ by subtracting a term $\varphi_R$ which is negligible in balls of radius $\sim R$ but grows linearly at infinity. Then we  compare the resulting
function with $v$. We conclude sending $R \to \infty$. The fact that the parameter $\mu$ is strictly larger than $\overline H$ allows us to compensate for this perturbation by using the subcorrector.

\begin{prop} \label{metcomparison}
Fix $p\in \Rd$, $\mu > \overline H(p)$ and $\omega \in\Omega_7$. Suppose that $D\subseteq \Rd$ is closed and bounded. Assume that $u\in\USC(\overline{ \Rd\!\setminus \!D})$ and $v\in \LSC(\overline{ \Rd\!\setminus\! D})$ are, respectively, a subsolution and supersolution of \eqref{MP} such that $u \leq v$ on $\partial D$, and
\begin{equation} \label{meteqcmpgc}
\limsup_{|y| \to \infty} \frac{u(y)}{|y|} < \infty \quad \mbox{and} \quad \liminf_{|y| \to \infty} \frac{v(y)}{|y|} \geq 0.
\end{equation}
Then $u \leq v$ in $\Rd \!\setminus \! D$.
\end{prop}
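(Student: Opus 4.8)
The plan is to prove Proposition~\ref{metcomparison} by a perturbation-and-limit argument, using the strict gap $\mu > \overline H(p)$ together with the subcorrector furnished by Proposition~\ref{mainstep}. First I would record the consequence of the hypothesis on $u$: by \eqref{meteqcmpgc} there is a constant $L>0$ with $u(y)\le L(1+|y|)$ for all $y$. Next, fix the subcorrector $w=w(\cdot,\omega)$ from \eqref{subcorr}, which satisfies $-\tr(A D^2w)+H(p+Dw,y,\omega)\le \overline H(p)$ in $\Rd$ and $|y|^{-1}w(y,\omega)\to 0$ as $|y|\to\infty$. Set $\lambda:=\overline H(p)<\mu$, so $\mu-\lambda>0$. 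For $R>0$ introduce an auxiliary function $\varphi_R$ which is smooth, nonnegative, bounded by (say) $1$ on $B_R$, and grows linearly at infinity — concretely one can take $\varphi_R(y):=\big(R^2+|y|^2\big)^{1/2}-R$, which is negligible compared to $|y|$ only for $|y|\ll R$ but behaves like $|y|$ for $|y|\gg R$, and for which $|D\varphi_R|\le 1$ and $|D^2\varphi_R|\le C/R$; then by \eqref{Alip} and \eqref{Hbound} one has the crude bound $|\tr(A D^2\varphi_R)|+H(p+D\varphi_R,y,\omega)\le C$ with $C$ independent of $R$.

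The core of the argument is then a comparison on the bounded-gap level. For $\theta\in(0,1)$ and $R>0$, define
\begin{equation*}
u_{\theta,R}(y):=(1-\theta)u(y)+\theta\big(w(y,\omega)-\varphi_R(y)\big) - \theta\,c_R,
\end{equation*}
where $c_R\ge 0$ is a constant (depending on $\omega,R$) chosen below. Using the convexity of $H$ in the form of Lemma~\ref{convtrick} (combining the subsolution $u$ of \eqref{MP} with the function $w-\varphi_R$, which is, up to a controlled error, a subsolution of a shifted equation), one checks in the viscosity sense that
\begin{equation*}
-\tr(A D^2 u_{\theta,R})+H(p+Du_{\theta,R},y,\omega)\le (1-\theta)\mu+\theta\lambda+C\theta \quad\text{in }\Rd\setminus D,
\end{equation*}
where the $C\theta$ error absorbs the $\varphi_R$ contribution and is independent of $R$. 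Since $\mu-\lambda>0$, choosing $\theta$ small (independently of $R$) makes the right side $\le\mu$, so $u_{\theta,R}$ is a subsolution of \eqref{MP}. Now the key point: because $\varphi_R$ grows linearly at infinity while $w$ is sublinear and $u$ grows at most linearly, one has
\begin{equation*}
\limsup_{|y|\to\infty}\frac{u_{\theta,R}(y)}{|y|}\le (1-\theta)L-\theta<0
\end{equation*}
as soon as $\theta>(1-\theta)L$, i.e. $u_{\theta,R}$ separates strictly sublinearly (indeed with negative slope) from $v$, whose $\liminf|y|^{-1}v(y)\ge 0$. Choosing $c_R$ large enough (depending on $\omega$ and $R$) guarantees $u_{\theta,R}\le v$ on $\partial D$. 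Then the standard comparison principle on unbounded domains from \cite{CIL}, applicable precisely because of the strict separation at infinity, yields $u_{\theta,R}\le v$ in $\Rd\setminus D$.

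Finally I would let $R\to\infty$ with $\theta$ fixed: on any fixed ball $\varphi_R\to 0$ and (with a suitable bounded choice of $c_R$, or by instead scaling $\varphi_R$ so the defect on compacts goes to zero — this is where a bit of care is needed) one obtains $(1-\theta)u+\theta w\le v$ pointwise, hence $u\le v+\tfrac{\theta}{1-\theta}(v-w)$; then sending $\theta\to 0$ gives $u\le v$ in $\Rd\setminus D$. The main obstacle I expect is the bookkeeping in this last limit: one must arrange that the perturbation $\theta(w-\varphi_R-c_R)$ can be made uniformly small on compact sets while still forcing the negative slope at infinity and the boundary inequality on $\partial D$ simultaneously. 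The cleanest route is probably to keep $\theta$ fixed, choose $c_R$ just large enough for the boundary condition (using $u\le v$ on $\partial D$ and boundedness of $w,\varphi_R$ on the compact set $\partial D$, so $c_R$ can in fact be taken independent of $R$), and then in the limit $R\to\infty$ use local uniform boundedness to pass $\varphi_R\to 0$ on compacts; the residual $\tfrac{\theta}{1-\theta}$-error is killed last by $\theta\to0$. The strict inequality $\mu>\overline H(p)$ is used exactly once but crucially — to absorb both the $C\theta$ defect and to have room for the perturbation — and this is the hypothesis that makes the whole scheme work.
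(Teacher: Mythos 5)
There is a genuine gap, and it sits exactly at the point the paper flags as the novelty of this comparison result: you are \emph{not} allowed to assume that $u$ and $v$ separate sublinearly at infinity, only that $\limsup u/|y| =: L <\infty$ and $\liminf v/|y|\geq 0$. Your scheme uses a single convex parameter $\theta$ to do two incompatible jobs. For the subsolution property you need the defect of $u_{\theta,R}=(1-\theta)u+\theta(w-\varphi_R)-\theta c_R$ to be at most $\mu$; but both the gain coming from the gap, $\theta\,(\mu-\overline H(p))$, and the error produced by $\varphi_R$ scale \emph{linearly} in $\theta$, so "choosing $\theta$ small" does not help: the inequality $(1-\theta)\mu+\theta\overline H(p)+C\theta\le\mu$ is equivalent to $C\le \mu-\overline H(p)$, which is a condition on the structural constant $C$ (false in general, since the gap can be arbitrarily small), not on $\theta$. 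Worse, as written the error is not even a constant: you lump $\varphi_R$ together with $w$ inside $H$, so the perturbation of the gradient argument $p+Dw$ by $-D\varphi_R$ costs, by \eqref{Hlip}, a term of order $(1+|Dw|)^{\gamma-1}$, and $Dw$ is only locally bounded by the \emph{unbounded} potential $V$ (see \eqref{wLIP}); to get a usable bound the perturbation must occupy its own slot in the convex combination, as in the paper's use of Lemma~\ref{convtrick} together with \eqref{Ephi}. On the other side, your negativity of the slope at infinity requires $\theta>(1-\theta)L$, i.e.\ $\theta>L/(1+L)$, which is close to $1$ when $L$ is large — directly contradicting any smallness requirement on $\theta$. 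So the single-pass argument cannot close.

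The paper's proof resolves exactly this tension with two devices you are missing. First, the perturbation is decoupled into a small convex \emph{weight} $\eta$ and a large \emph{amplitude} $\theta:=1+4\limsup u/|y|$: the $\varphi_R$-term then contributes only $C\theta^\gamma\eta$ to the defect (made negligible by $\eta\to0$ against the fixed gain $(1-\lambda)(\mu-\overline H)$), while its slope $-\eta\theta$ at infinity is still large enough to beat the extra slope $O(\eta L)$ introduced by the step. Second, and crucially, there is a bootstrap: one introduces $\Lambda=\{\lambda\in[0,1]:\liminf (v-\lambda u)/|y|\ge 0\}$, lowers $u$ by the factor $\lambda=\sup\Lambda$ so that the standard comparison applies, and then shows that the comparison inequality obtained for $\lambda(1+4\eta)(1-\eta)$ forces $\sup\Lambda=1$ ("if we had to lower $u$ at all, we could have lowered it a little less"). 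Each step only improves the admissible factor multiplicatively, which is why a one-shot convex combination — no matter how you tune constants — cannot replace the iteration when $L$ is large relative to $\mu-\overline H(p)$. Your endgame ($R\to\infty$ on compacts, then $\theta\to0$) is fine as bookkeeping, but it is the middle of the argument that needs the $\Lambda$/$\overline\lambda$ mechanism and the $(\eta,\theta)$ decoupling.
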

\begin{proof}
We may assume that $\limsup_{|y| \to \infty} u(y)/|y| \geq 0$, since otherwise the result is immediate from the usual comparison principle (c.f. \cite{CIL}). We omit all dependence on $\omega$, since it plays no role in the argument. We may also assume that $p=0$. Define
\begin{equation*}
\Lambda : = \Big\{ 0 \leq \lambda \leq 1 : \liminf_{|y| \to \infty} \frac{v(y) - \lambda u(y)}{|y|} \geq 0 \Big\} \qquad \mbox{and} \qquad \overline \lambda: = \sup \Lambda.
\end{equation*}
We begin with the observation that $\Lambda = \big[0,\overline \lambda \big]$. This follows from \eqref{meteqcmpgc} and a continuity argument. The assumption \eqref{meteqcmpgc} implies $0 \in \Lambda$. To see that $\overline \lambda \in \Lambda$, select $\ep > 0$ and $\lambda \in \Lambda$ such that $\overline \lambda \leq \lambda + \ep$, and observe that, by \eqref{meteqcmpgc},
\begin{multline*}
\liminf_{|y| \to \infty} \frac{v(y) - \overline \lambda u(y)}{|y|}  \geq \frac{\overline \lambda}{\lambda+\ep} \liminf_{|y| \to \infty} \frac{v(y) - (\lambda+\ep)u(y)}{|y|} \\ \geq \frac{\overline \lambda}{\lambda+\ep} \left( -\ep \limsup_{|y| \to\infty} \frac{u(y)}{|y|}\right) \geq -C\overline \lambda \ep(\lambda+\ep)^{-1}.
\end{multline*}
Sending $\ep \to 0$ yields $\overline \lambda \in \Lambda$. If $\lambda \in \big(0,\overline \lambda\big)$, then using again \eqref{meteqcmpgc}, we have
\begin{equation*}
\liminf_{|y| \to \infty} \frac{v(y) - \lambda u(y)}{|y|}  \geq \frac{\lambda}{\overline \lambda} \liminf_{|y| \to \infty} \frac{v(y) - \overline \lambda u(y)}{|y|} \geq 0,
\end{equation*}
and, hence, the claim.

\medskip

Next we show that $\overline \lambda = 1$. Select $\lambda < \overline \lambda \leq 1$. For each $R> 1$, define the auxiliary function
\begin{equation} \label{varphiR}
\varphi_R(y): = R- (R^2+|y|^2)^{1/2},
\end{equation}
and observe that, for a constant $C > 0$ independent of $R> 1$,
\begin{equation} \label{phidcn}
|D\varphi_R(y)| + |D^2\varphi_R(y)| \leq C.
\end{equation}
We have defined $\varphi_R$ in such a way that $-\varphi_R$ grows at a linear rate at infinity, which is independent of $R$, while $\varphi_R \to 0$ as $R \to \infty$. Indeed, it is easy to check that
\begin{equation} \label{phideath}
|\varphi_R(y)| \leq |y|^2 \left( R^2 + |y|^2 \right)^{-1/2}.
\end{equation}
Fix constants $0 < \eta < 1$ and $\theta > 1$ to be selected below. By \eqref{Hcoer} and \eqref{phidcn},
\begin{equation} \label{Ephi}
\left|\theta \tr\!\left(A(y) D^2\varphi_R\right)\right| + H(-\theta D\varphi_R,y) \leq C\theta^\gamma.
\end{equation}
Define
\begin{equation*}
\hat u : = \lambda(1+4\eta) u+ (1-\lambda(1+4\eta)) w
\end{equation*}
and
\begin{align*}
\hat u_R  : = &\ (1-\eta) \hat u + \eta \theta \varphi_R \\  = &\ \lambda(1+4\eta)(1-\eta)u + (1-\eta)(1 - \lambda(1+4\eta))w + \eta \theta \varphi_R,
\end{align*}
where $w$ is the subcorrector which satisfies \eqref{subcorreq} and \eqref{sublininfty}, constructed in the proof of Proposition~\ref{mainstep}. By subtracting a constant from $w$, we may assume that $\sup_{D} w = 0$.
Since $\lambda < 1$, we may shrink $\eta$, if necessary, to ensure that
\begin{equation} \label{etachoice}
\lambda(1+4\eta) < 1 < (1+4\eta)(1-\eta).
\end{equation}
Select $\theta : = 1+4 \limsup_{|y| \to \infty} u(y) / |y|$, and observe that, by the previous inequality, \eqref{sublininfty}, $\lambda \in \Lambda$ and the definition of $\varphi_R$, we have, for every $R> 1$,
\begin{equation}
\liminf_{|y| \to \infty} \frac{v(y) - \hat u_R(y)}{|y|} \geq \liminf_{|y| \to \infty} \frac{v(y) - \lambda u(y)}{|y|} + \liminf_{|y| \to \infty} \frac{(3\eta-4\eta^2) u(y) + \eta\theta \varphi_R(y)}{|y|} > 0.
\end{equation}

\medskip

To get a differential inequality for $\hat u_R$, we apply Lemma~\ref{convtrick} twice. The first application, using \eqref{subcorreq} and that $u$ is a subsolution of \eqref{MP}, yields that
\begin{equation*}
-\tr\!\left(A(y) D^2 \hat u \right) + H(D\hat u,y) \leq \lambda(1+4\eta) \mu + (1-\lambda(1+4\eta)) \overline H(0) \quad \mbox{in} \ \Rd\! \setminus \! D.
\end{equation*}
Combining this with \eqref{Ephi}, we deduce that
\begin{equation*}
-\tr\!\left(A(y) D^2 \hat u_R \right) + H(D\hat u_R,y) \leq \widetilde\mu(\eta) \quad \mbox{in} \ \Rd\! \setminus \! D,
\end{equation*}
where the constant $\widetilde\mu(\eta)$ is given by
\begin{equation*}
\widetilde \mu(\eta) := \lambda(1+4\eta)(1-\eta)\mu + (1-\eta)(1-\lambda(1+4\eta))\overline H(0) + C\theta^\gamma \eta.
\end{equation*}
Since $\mu > \overline H(0)$, by selecting $\eta> 0$ sufficiently small we have $\widetilde\mu(\eta) < \mu$. We may therefore apply the comparison principle to $\hat u_R$ and $v$, obtaining that
\begin{equation*}
\hat u_R - v \leq \max_{\partial D} (\hat u_R - v) \quad \mbox{in} \ \Rd\!\setminus \! D.
\end{equation*}
By sending $R\to \infty$ and using the fact that $\varphi_R\rightarrow 0$ locally uniformly, we deduce that
\begin{equation} \label{hatuvinq}
(1-\eta) \hat u - v  \leq \max_{\partial D} \big( (1-\eta) \hat u - v\big) \quad \mbox{in} \ \Rd\! \setminus\! D.
\end{equation}
Since $w$ is strictly sublinear at infinity, it follows that
\begin{equation*}
\liminf_{|y| \to \infty} \frac{v(y) - \lambda(1+4\eta)(1-\eta)u(y)}{|y|} \geq 0,
\end{equation*}
and, hence, $\overline \lambda \geq \lambda(1+4\eta)(1-\eta)$. If $\overline \lambda < 1$, then we may send $\lambda \to \overline \lambda$ to obtain that $\overline \lambda \geq \overline \lambda(1+4\eta)(1-\eta)$, a contradiction to \eqref{etachoice}. It follows that $\overline \lambda =1$.

\medskip

The preceding analysis therefore applies to any $0 <\lambda < 1$. Sending $\eta \to 0$ and then $\lambda \to 1$ in \eqref{hatuvinq} completes the proof.
\end{proof}

With a comparison principle in hand, the unique solvability of the metric problem \eqref{MP} for $\mu > \overline H(p)$ (subject to the boundary and growth conditions) follows from Perron's method. The proof in the case of bounded $V$ is accomplished more smoothly. In the unbounded case we approximate by bounded potentials.

\begin{prop} \label{metexistence}
For each fixed $p,z \in\Rd$ and $\mu > \overline H(p)$, there exists a unique solution $\ms_\mu=\ms_\mu(\cdot,z,\omega) = \ms_\mu(\cdot,z,\omega;p) \in C(\overline{ \Rd \!\setminus\! D_1(z)})$ of \eqref{MP}, a.s. in $\omega$, with $D=D_1(z) : = z+D_1$, subject to the conditions
\begin{equation} \label{bngcond}
\ms_\mu(\cdot,z,\omega) = 0 \quad \mbox{on} \  \partial D_1(z) \quad \mbox{and} \quad 0 \leq \liminf_{|y|\to\infty} \frac{\ms_\mu(y,z,\omega)}{|y|}  \ \quad  \mbox{a.s. in} \ \omega.
\end{equation}
\end{prop}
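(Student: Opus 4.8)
The plan is to obtain uniqueness directly from the comparison principle of Proposition~\ref{metcomparison}, and existence from Perron's method, treating the unbounded case by approximation with bounded potentials so that the standard viscosity machinery applies at each stage.

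For \emph{uniqueness}: the Bernstein estimate of Lemma~\ref{locestgrad} applies verbatim to \eqref{MP} (the constant right-hand side $\mu$ is harmless, in fact easier than the $\delta v^\delta$ term there), giving $|Dm_\mu(y,\omega)|^\gamma \le C(1+|\mu|+\sup_{B(y,1)} V(\cdot,\omega))$. Combining this with the mixing-based bound \eqref{bigmix} and Morrey's inequality, exactly as in the proof of \eqref{oscest}, one gets
\[
\fint_{B_R} |Dm_\mu(y,\omega)|^q\,dy \le C(1+|\mu|)^{q/\gamma} + C\fint_{B_R}\sup_{B(y,1)} V(\cdot,\omega)^{q/\gamma}\,dy \le C
\]
uniformly in $R$ (with $q>\d$), hence $\osc_{B_R} m_\mu(\cdot,\omega) \le CR$, so $\limsup_{|y|\to\infty}|y|^{-1}m_\mu(y,\omega)<\infty$. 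Two solutions then satisfy the hypotheses of Proposition~\ref{metcomparison} with the roles of $u$ and $v$ interchanged, whence they coincide.

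For \emph{existence}, I would first run Perron's method in the bounded-environment setting. The comparison principle needed is Proposition~\ref{metcomparison}; the three barriers are: (a) a global subsolution given by the subcorrector $w$ of Proposition~\ref{mainstep}, normalized by an additive constant so $\max_{\partial D_1(z)}w=0$ --- it solves \eqref{subcorreq} with right-hand side $\overline H(p)<\mu$, hence is a subsolution of \eqref{MP}, and is strictly sublinear at infinity by \eqref{sublininfty}; (b) a global supersolution $\Psi(y):=a(\omega)(1+|y-z|^2)^{(1+\tau)/2}$, where $\tau>0$ is chosen with $\tau\gamma>\sigma$ ($\sigma<1$ as in Lemma~\ref{potests}) and $a(\omega)$ large, so that by $|A|\le C$ and \eqref{Hcoer} the coercive term $c_0|p+D\Psi|^\gamma\gtrsim a^\gamma|y-z|^{\tau\gamma}$ dominates both $\tr(AD^2\Psi)$ and $V(y,\omega)\lesssim 1+|y-z|^\sigma$ (when $V$ is bounded one takes $\tau=0$, the ``smoother'' case, and $\Psi\geq 0$ on $\partial D_1(z)$); and (c) a barrier at $\partial D_1(z)$ attaining the datum $0$, so that the Perron solution meets the boundary condition. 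For $A\equiv 0$ a linear barrier $\Lambda|y-z|$ suffices; for $A\not\equiv0$ and $\gamma>2$ (so $D_1=\{0\}$), a barrier $\Lambda|y-z|^{(\gamma-2)/(\gamma-1)}$ works, the exponent $(\gamma-2)/(\gamma-1)\in(0,1)$ being precisely what makes its coercive term control its second-order term as $y\to z$; no such barrier can exist for $\gamma\le 2$, which is the reason $D_1=B_1$ is used there, the sphere $\partial B_1(z)$ admitting ordinary bounded-gradient barriers.

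To handle unbounded $V$, I would apply the above to the Hamiltonians $H_k$ of Remark~\ref{approximation}: $H_k\ge H$, $H_k\downarrow H$, $H_k$ is bounded for each fixed $p$ and satisfies all our hypotheses, and $\overline H_k\downarrow\overline H$, so for $k$ large $\mu>\overline H_k(p)$ and the bounded-environment argument yields $m_\mu^{(k)}$ solving the $H_k$-metric problem with $m_\mu^{(k)}=0$ on $\partial D_1(z)$ and $\liminf_{|y|\to\infty}|y|^{-1}m_\mu^{(k)}(y)\ge0$. Increasing $k$ lowers $H_k$, hence enlarges the class of subsolutions, so $m_\mu^{(k)}$ is nondecreasing in $k$; since $\Psi$ is a supersolution of the $H$-problem and $H\le H_k$, it is also a supersolution of each $H_k$-problem, so Proposition~\ref{metcomparison} gives $m_\mu^{(k)}\le\Psi$ and $m_\mu^{(k)}\uparrow m_\mu$ pointwise to a finite limit. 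The Bernstein estimate gives $|Dm_\mu^{(k)}(y)|^\gamma\le C(1+|\mu|+\sup_{B(y,1)}V(\cdot,\omega))$ uniformly in $k$ (it uses only the coercivity \eqref{Hcoer}, which $H_k\ge H$ inherits), so the convergence is locally uniform and, by stability of viscosity solutions, $m_\mu$ solves \eqref{MP}; it still vanishes on $\partial D_1(z)$ because the obstacle barriers can be chosen independently of $k$, and satisfies the growth condition in \eqref{bngcond} because $m_\mu\ge m_\mu^{(k)}\ge$ the strictly sublinear subcorrector.

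I expect the main obstacle to be the barrier at the obstacle $\partial D_1(z)$: it is here that the threshold $\gamma=2$ genuinely enters, since a degenerate diffusion of order $\gamma\le2$ cannot be made to connect a point, forcing the two regimes for $D_1$; a secondary point requiring care is the bookkeeping that keeps the threshold $\mu>\overline H(p)$ valid along the truncated problems (i.e.\ $\overline H_k\to\overline H$, cf.\ Remark~\ref{approximation}), together with the calibration of the superlinear supersolution against the sublinear growth of $V$ from Lemma~\ref{potests}.
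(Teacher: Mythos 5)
Your overall route---comparison via Proposition~\ref{metcomparison}, Perron's method with explicit barriers in the bounded case, truncation by the Hamiltonians $H_k$ of Remark~\ref{approximation} with a monotone limit in the unbounded case, and uniqueness from the Bernstein/Morrey/mixing bound giving at most linear growth of any solution---is essentially the paper's. There is, however, one genuine gap: your treatment of unbounded $V$ rests on the assertion that $\overline H_k\downarrow\overline H$, so that $\mu>\overline H_k(p)$ for large $k$. Remark~\ref{approximation} does not provide this: it only gives $\overline H_k\downarrow\hat H$ with $\hat H\geq\overline H$, and explicitly leaves the identification $\hat H=\overline H$ in the unbounded setting to the reader. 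As written, your truncation argument therefore produces $m_\mu$ only for $\mu>\hat H(p)$, while the proposition demands it for every $\mu>\overline H(p)$. The paper's proof is structured precisely to avoid this point: in the regime $\overline H(p)<\mu\leq\hat H(p)$ it observes that a solution already constructed at any level $\hat\mu>\hat H(p)$ is itself an admissible supersolution (zero boundary data, linear growth, and a supersolution at level $\mu$ since $\hat\mu>\mu$) for the Perron definition at level $\mu$ run directly with the original unbounded $H$, and then repeats the bounded-case Perron/comparison argument verbatim. You need either this second case or an actual proof of $\hat H=\overline H$.

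A secondary, repairable point concerns your barriers when $D_1(z)=\{z\}$ (i.e.\ $A\equiv0$ or $\gamma>2$): the punctured neighborhood of $z$ lies inside the domain, and there your global supersolution $\Psi(y)=a(\omega)(1+|y-z|^2)^{(1+\tau)/2}$ fails to be a supersolution, because $D\Psi\to0$ kills the coercive term (leaving only roughly $c_0|p|^\gamma-V-C_0$) while $-\tr\!\left(A(y,\omega)D^2\Psi\right)\approx-Ca$; conversely your obstacle barrier $\Lambda|y-z|^{(\gamma-2)/(\gamma-1)}$ is not a supersolution at infinity. Both the nonemptiness of the Perron class (bounded case with $\gamma>2$, $A\not\equiv0$) and the uniform-in-$k$ bound $m^{(k)}_\mu\leq\Psi$ (unbounded case) require a single supersolution on all of $\Rd\setminus D_1(z)$, so the two regimes must be glued in one formula, as the paper does with $a\big(|y|+|y|^{(\gamma-2)/(\gamma-1)}\big)$ (with an additional superlinear term when $V$ is unbounded, or, as the paper does instead, by replacing the explicit upper bound with the uniform linear bound coming from the oscillation estimate you already derived for uniqueness).
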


\begin{proof}
Assume, without loss of generality, that $p=0$ and $z=0$. We first argue in the case that $V$ is bounded, i.e., $V(0,\omega) \leq C$ a.s. in $\omega$. According to the first step in the proof of Proposition~\ref{mainstep}, there exists a subcorrector $w$ which satisfies, a.s. in $\omega$, the inequality
\begin{equation*}
-\tr\!\left( A(y,\omega) D^2w \right) + H(Dw,y,\omega) \leq \overline H(0) \quad \mbox{in} \ \Rd,
\end{equation*}
as well as
\begin{equation} \label{wsublinpmp}
\lim_{|y|\to\infty} \frac{w(y)}{|y|} = 0.
\end{equation}
By subtracting a constant from $w$, we may assume that $\inf_{D_1} w(\cdot,\omega) = 0$. Define
\begin{multline*}
m_\mu(y) : = \inf\Big\{ v(y) : v\in \LSC(\Rd\!\setminus \!D_1), \ v \ \mbox{is a supersolution of} \ \eqref{MP} \ \mbox{in} \ \Rd\!\setminus \! D_1,\\ v \geq 0 \ \mbox{on} \ \partial D_1, \  \mbox{and} \ \liminf_{|y| \to \infty} v(y) / |y| \geq 0 \Big\}.
\end{multline*}
To demonstrate that $m_\mu$ is well-defined, we must show that the admissible set is nonempty. Since $V$ is bounded, it is easy to check that, for $a> 0$ sufficiently large,
\begin{equation*}
v(y): =
\begin{cases}
a|y| & \mbox{if} \ A \equiv 0, \\
a(|y| + |y|^\theta) & \mbox{if} \ A \not\equiv 0 \ \ \mbox{and} \ \ \gamma > 2,  \ \ \mbox{where} \ \ \theta: = (\gamma-2) / (\gamma-1),\\
a(|y|-1) & \mbox{if} \ A \not\equiv 0 \ \ \mbox{and} \ \  \gamma \leq 2,
\end{cases}
\end{equation*}
is an admissible supersolution, and moreover, $v = 0$ on $\partial D_1$ and $\limsup_{|y|\to \infty} |y|^{-1} v(y) = a < \infty$. Thus $m_\mu$ is well-defined, and we have $w \leq m_\mu \leq v$ in $\Rd \setminus D_1$. It follows from standard viscosity theory 
that $(m_\mu)^*$ is a subsolution of \eqref{MP} and $(m_\mu)_*$ is a supersolution. Since these functions satisfy the hypotheses of Proposition~\ref{metcomparison}, we obtain that $(m_\mu)^*\leq (m_\mu)_*$, and therefore $(m_\mu)^* = (m_\mu)_*=m_\mu \in C(\Rd \!\setminus \! D_1)$ is a solution of \eqref{MP}-\eqref{bngcond}.

\medskip

By repeating the argument of Lemma~\ref{locest} we find that, for any $B(y_0,r+1/2) \subseteq \Rd\!\setminus \!D_1$,
\begin{equation} \label{MPLB}
\esssup_{B(y,r)} |Dm_\mu(y)|^\gamma \leq C \Big( 1 + \sup_{B(y,r+1/2)} V(y,\omega)\Big).
\end{equation}
Next, we mimic the proof of Lemma~\ref{impest}, using Morrey's inequality and Lemma~\ref{potests}, to obtain that, a.s in $\omega$, the oscillation of any solution of \eqref{MP} in a large ball grows at most like the radius of the ball. In particular,
\begin{equation} \label{metosc}
\limsup_{r\to \infty} \frac{1}{r} \osc_{B_r \! \setminus \! D_1} m_\mu \leq C,
\end{equation}
and $m_\mu$ grows at most like $C|y|$, where $C$ depends only the constants in our assumptions on $V$.

In the unbounded case, we approximate $H$ by bounded Hamiltonians as explained in Remark~\ref{approximation}. If $\mu > \hat H$, then for sufficiently large $k$ we have $\mu > \overline H_k$. For such $k$, there is a solution $m_{\mu,k}$ of the problem \eqref{MP}--\eqref{bngcond} for $H=H_k$, which satisfies $m_{\mu,k}(y) \leq C_1|y|$ for a constant $C_1$ independent of $k$. The comparison principle (Proposition~\ref{metcomparison}) implies that $m_{\mu,k}$ is monotone in $k$, so that $m_{\mu,k} \uparrow m_\mu$ as $k\to \infty$ to some function $m_\mu$. By the local Lipschitz estimate \eqref{MPLB}, $m_{\mu,k}$ converges locally uniformly (and a.s. in $\omega$) to $m_\mu$ which is therefore a solution of \eqref{MP}. We have $m_\mu(y) \leq C|y|$ a.s. in $\omega$, and by monotonicity we have that $\liminf_{|y| \to \infty} |y|^{-1}m_\mu(y) \geq 0$. 
If $\hat H \geq \mu > \overline H$, we observe that the solution constructed above for any $\hat \mu >\hat H$ belongs to the admissible set used for the definition of $m_\mu$ at the beginning of the ongoing proof. Then we can repeat verbatim the arguments used for the bounded case to find a solution. In either case, uniqueness follows from Proposition~\ref{metcomparison}, and the proof is complete.
\end{proof}

\begin{remark} \label{MPstat}
The functions $m_\mu$ are jointly stationary in the sense that
\begin{equation*}
m_\mu(y,x,\tau_z\omega) = m_\mu(y+z,x+z,\omega).
\end{equation*}
This follows from uniqueness and the stationarity of the coefficients in the equation.
\end{remark}

\begin{remark} \label{lingrow}
It is easy to show that, for each $\mu > \overline H$,
\begin{equation*}
\essinf_{\omega\in \Omega}\liminf_{|y| \to \infty} \frac{\ms_\mu(y,0,\omega)}{|y|} > 0.
\end{equation*}
In brief, the idea is to choose $\overline H < \widetilde \mu < \mu$, add a small multiple of $\varphi_R$ (defined in \eqref{varphiR}) to $\ms_{\widetilde \mu}$, and compare it to $\ms_\mu$ using Proposition~\ref{metcomparison}. The room provided by taking $\widetilde \mu$ smaller than $\mu$ allows to compensate for the perturbation, following along the lines of the calculation in the proof of Proposition~\ref{metcomparison}. We will see in the next section that $t^{-1} \ms_\mu(ty,0,\omega)$ has a deterministic limit (a.s. in $\omega$) as $t\to \infty$, which is equivalent to the homogenization of the metric problem.

The argument above also implies that that $\ms_\mu$ is strictly increasing in $\mu$. This permits us to define
\begin{equation*}
\ms (y,x,\omega) : = \lim_{\mu \ssearrow \overline H} \ms_\mu(y,x,\omega).
\end{equation*}
\end{remark}

\begin{remark} \label{tildemetp}
By repeating the argument in the proof of Proposition~\ref{metexistence}, we obtain for each fixed $p,z \in \Rd$ and $\mu > \overline H(p)$ a unique solution $\widetilde m_\mu=\widetilde m_\mu(y,z,\omega;p)$ of the modified metric problem
\begin{equation} \label{meteqmod}
\left\{ \begin{aligned}
& -\tr\!\left( A(y , \omega) D^2\widetilde m_\mu \right) + H(p+D\widetilde m_\mu, y, \omega) = \mu \quad \mbox{in} \ \ \Rd\!\setminus \! D_1, \\
& \widetilde m_\mu(y,z,\omega;p) = w(y,\omega;p) - w(z,\omega;p) \ \ \mbox{on}  \ \partial D_1, \\
& \liminf_{|y| \to \infty} |y|^{-1} \widetilde m_\mu(y,z,\omega) \geq 0 \ \ \mbox{a.s. in} \ \omega.
\end{aligned} \right.
\end{equation}
The only difference between $\widetilde m_\mu$ and the solution $m_\mu$ constructed in Proposition~\ref{metexistence} is the boundary conditions we impose on $\partial D_1$, which differ only in the case $A\not\equiv 0$ and $\gamma \leq 2$. To prove the existence of $\widetilde m_\mu$, there is an additional difficulty in the construction of barriers, but this is straightforward since $w$ is Lipschitz and $\partial D_1$ is the unit ball. We leave this detail to the reader. The function $\widetilde m_\mu$ is sometimes more convenient to work with than $m_\mu$, as we see below.

In the case $D_1=B_1$, in which $\widetilde m_\mu(y,z,\omega)$ is not defined for $|y-z| <1$, we extend $\widetilde m_\mu(\cdot,\omega)$ to $\Rd\times\Rd$ by defining $\widetilde m_\mu (y,z,\omega) : = w(y,\omega) - w(z,\omega)$ for $|y-z| < 1$.
\end{remark}

\subsection{The subadditivity of $\ms_\mu$}
\label{sadd}

An important property of $\ms_\mu$ is that, up to a modification in the case $\gamma \leq 2$ and $A\not\equiv 0$, it is \emph{subadditive}. This is easier to see in the case that $\gamma > 2$ or $A\equiv 0$, i.e., when $D_1 = \{ 0 \}$. With $w$ denoting the subcorrector, Proposition~\ref{metcomparison} implies that, for all $x,y\in \Rd$ and a.s. in $\omega$,
\begin{equation*}
w(y,\omega) - w(x,\omega) \leq \ms_\mu (y,x,\omega)
\end{equation*}
Reversing $x$ and $y$ and adding the two inequalities together, we obtain
\begin{equation} \label{subaddpr}
0 \leq \ms_\mu (y,x,\omega) + \ms_\mu (x,y,\omega).
\end{equation}
We next claim that, for every $x,y,z\in \Rd$ and a.s. in $\omega$,
\begin{equation} \label{msubadd}
\ms_\mu(y,x,\omega) \leq \ms_\mu (y,z,\omega) + \ms_\mu (z,x,\omega).
\end{equation}
Indeed, thinking of both sides of \eqref{msubadd} as a function of $y$ with $x$ and $z$ fixed, noting that the inequality holds at both $y=x$ and $y=z$ (the former is \eqref{subaddpr} and the latter is obvious), and applying Proposition~\ref{metcomparison} with $D = \{ x,z\}$, we obtain \eqref{msubadd}. The subadditivity property \eqref{msubadd} must be modified in the case that $D_1 = B_1$, which is recorded in the next lemma.

It is convenient to state the subadditivity in terms of $\widetilde m_\mu$ from Remark~\ref{tildemetp}. To this end, observe that there exists a random variable $c(y,\omega)$ satisfying $|y|^{-1} c(y,\omega) \to 0$ a.s. in $\omega$ and
\begin{equation} \label{cthingy}
\sup_{z\in B(y,1)} \big( |w(y,\omega) - w(z,\omega)| + |\widetilde m_\mu(y,\omega) - \widetilde m_\mu(z,\omega) | \big) \leq c(y,\omega).
\end{equation}
Indeed, according to \eqref{sublin}, \eqref{wLIP} and \eqref{MPLB}, it suffices to take
\begin{equation*}
c(y,\omega): = C\Big(1 + \sup_{B(y,2)} V(\cdot,\omega) \Big)^{1/\gamma}. 
\end{equation*}
For future reference, we observe that $c$ is stationary and
\begin{equation} \label{cint}
\E c(0,\cdot) < \infty.
\end{equation}
Define the quantity
\begin{equation*}
\hat m_\mu(y,x,\omega) : = \widetilde m_\mu(y,x,\omega) + \frac12 ( c(x,\omega) + c(y,\omega) )
\end{equation*}
We claim that $\hat m_\mu$ is subadditive.

\begin{lem}\label{subadditivity}
For each $p\in \Rd$ and $\mu > \overline H(p)$, the function $\widetilde m_\mu$ satisfies, for all $x,y,z\in \Rd$, and a.s. in $\omega$,
\begin{equation} \label{msubadd2}
\hat m_\mu(y,x,\omega) \leq \hat m_\mu(y,z,\omega) + \hat m_\mu(z,x,\omega)
\end{equation}
\end{lem}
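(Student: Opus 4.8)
The plan is to deduce \eqref{msubadd2} from a single application of the comparison principle, Proposition~\ref{metcomparison}, for the function $\widetilde m_\mu(\cdot,x,\omega)$. First I would note that, after expanding the definition of $\hat m_\mu$ and cancelling the identical terms $\tfrac12 c(x,\omega)$ and $\tfrac12 c(y,\omega)$ on the two sides, the inequality \eqref{msubadd2} is equivalent to
\begin{equation*}
\widetilde m_\mu(y,x,\omega) \leq \widetilde m_\mu(y,z,\omega) + \widetilde m_\mu(z,x,\omega) + c(z,\omega) \qquad \mbox{for all } x,y,z\in\Rd.
\end{equation*}
So I would fix $x,z$ and an $\omega$ in the (full probability, $p$-dependent) set on which the subcorrector $w=w(\cdot,\omega;p)$ of Proposition~\ref{mainstep} satisfies \eqref{subcorreq} and \eqref{sublininfty}, on which the local gradient bounds \eqref{wLIP} and \eqref{MPLB} and the sublinear growth \eqref{sublin} hold, and on which Proposition~\ref{metcomparison} may be invoked. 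Writing $D_1(a):=a+D_1$ as in Proposition~\ref{metexistence}, the target is $\widetilde m_\mu(\cdot,x,\omega)\leq h$ on all of $\Rd$, where $h(y):=\widetilde m_\mu(y,z,\omega)+\kappa$ and $\kappa:=\widetilde m_\mu(z,x,\omega)+c(z,\omega)$.

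The one ingredient I would extract first is the elementary bound
\begin{equation} \label{wlemetbd}
w(b,\omega) - w(a,\omega) \leq \widetilde m_\mu(b,a,\omega) \qquad \mbox{for all } a,b\in\Rd,
\end{equation}
which holds because $\mu>\overline H(p)$ makes $w(\cdot,\omega)-w(a,\omega)$ a subsolution of \eqref{MP} agreeing with $\widetilde m_\mu(\cdot,a,\omega)$ throughout $\overline{D_1(a)}$ (by the boundary condition in \eqref{meteqmod} and the extension convention of Remark~\ref{tildemetp}), so that Proposition~\ref{metcomparison} applies. Next I would check $\widetilde m_\mu(\cdot,x,\omega)\leq h$ on $\overline{D_1(x)}\cup\overline{D_1(z)}$. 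On $\overline{D_1(x)}$ one has $\widetilde m_\mu(y,x,\omega)=w(y,\omega)-w(x,\omega)=(w(y,\omega)-w(z,\omega))+(w(z,\omega)-w(x,\omega))$, and two applications of \eqref{wlemetbd} together with $c(z,\omega)\geq 0$ give $\widetilde m_\mu(y,x,\omega)\leq h(y)$ directly. On $\overline{D_1(z)}\subseteq\overline{B(z,1)}$ one has $\widetilde m_\mu(y,z,\omega)=w(y,\omega)-w(z,\omega)$, and the local Lipschitz bounds \eqref{wLIP}, \eqref{MPLB} — which is exactly the oscillation control the choice of $c$ in \eqref{cthingy} is built to absorb — yield $|w(y,\omega)-w(z,\omega)|\leq\tfrac12 c(z,\omega)$ and $|\widetilde m_\mu(y,x,\omega)-\widetilde m_\mu(z,x,\omega)|\leq\tfrac12 c(z,\omega)$, whence $h(y)\geq\widetilde m_\mu(z,x,\omega)+\tfrac12 c(z,\omega)\geq\widetilde m_\mu(y,x,\omega)$.

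Finally I would run the comparison argument on the exterior domain $U:=\Rd\setminus(\overline{D_1(x)}\cup\overline{D_1(z)})$: there $\widetilde m_\mu(\cdot,x,\omega)$ is a subsolution of \eqref{MP} (it solves \eqref{MP} in $\Rd\setminus D_1(x)\supseteq U$), $h$ is a supersolution of \eqref{MP} (being a constant shift of the solution $\widetilde m_\mu(\cdot,z,\omega)$ of \eqref{MP} in $\Rd\setminus D_1(z)\supseteq U$), the growth condition $\liminf_{|y|\to\infty}h(y)/|y|\geq 0$ holds by Remark~\ref{lingrow} while $\limsup_{|y|\to\infty}\widetilde m_\mu(y,x,\omega)/|y|<\infty$, and $\widetilde m_\mu(\cdot,x,\omega)\leq h$ on $\partial U\subseteq\partial D_1(x)\cup\partial D_1(z)$ by the previous paragraph. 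Proposition~\ref{metcomparison}, applied with $D=\overline{D_1(x)}\cup\overline{D_1(z)}$, then gives $\widetilde m_\mu(\cdot,x,\omega)\leq h$ on $U$, and combining the three regions finishes the proof.

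I expect the only real difficulty to be bookkeeping rather than ideas: treating uniformly the two cases $D_1=\{0\}$ and $D_1=B_1$ (including the configurations where $D_1(x)$ and $D_1(z)$ overlap or coincide, where $U$ is still an exterior domain with $\partial U$ contained in the two spheres), ensuring the constant hidden in the definition of $c$ in \eqref{cthingy} genuinely dominates the oscillations appearing in the middle step, and recording a single subset of $\Omega$ of full probability (depending on $p$) on which $w$, the functions $\widetilde m_\mu(\cdot,z,\cdot)$ for rational $z$, and all the cited estimates hold simultaneously, then passing to all $z$ by continuity and stationarity (Remark~\ref{MPstat}). Everything else reduces to the subcorrector bound \eqref{wlemetbd} and Proposition~\ref{metcomparison}.
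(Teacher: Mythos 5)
Your proposal is correct and follows essentially the same route as the paper: reduce \eqref{msubadd2} to $\widetilde m_\mu(y,x,\omega)\leq\widetilde m_\mu(y,z,\omega)+\widetilde m_\mu(z,x,\omega)+c(z,\omega)$, verify it on $D_1(x)\cup D_1(z)$ using the subcorrector bound $w(b,\omega)-w(a,\omega)\leq\widetilde m_\mu(b,a,\omega)$ together with the oscillation control \eqref{cthingy}, and then conclude by Proposition~\ref{metcomparison} viewing both sides as functions of $y$. The only cosmetic difference is that you split \eqref{cthingy} into two bounds of size $\tfrac12 c(z,\omega)$ (the stated sum bound already suffices, as in the paper's computation), and you correctly note that the reduced inequality needs only $+c(z,\omega)$ rather than the $+2c(z,\omega)$ appearing in the paper's intermediate display.
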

\begin{proof}
The proof in the case  $\gamma > 2$ or $A\equiv 0$ is given in the discussion preceding the lemma. The case $\gamma \leq 2$ and $A\not\equiv 0$, i.e., $D_1=B_1$, is handled by a modification of this argument. We fix $x$ and $z$, write \eqref{msubadd2} in the form
\begin{equation} \label{msubadd3}
\widetilde m_\mu(y,x,\omega) \leq \widetilde m_\mu(y,z,\omega) +  \widetilde m_\mu(z,x,\omega) + 2c(z,\omega),
\end{equation}
and, thinking of both sides of \eqref{msubadd3} as functions of $y$, we observe that the desired inequality follows from Proposition~\ref{metcomparison} provided we can show that it holds in $B(x,1) \cup B(z,1)$.

According Proposition~\ref{metcomparison}, for every $y,z\in \Rd$, we have
\begin{equation} \label{subadds1}
w(y,\omega) - w(z,\omega) \leq \widetilde m_\mu (y,z,\omega).
\end{equation}
Using this inequality, we obtain, for all $|y-x| \leq 1$, $z\in \Rd$ and a.s. in $\omega$,
\begin{equation*}
\widetilde m_\mu(y,x,\omega) = w(y,\omega) - w(x,\omega) \leq \widetilde m_\mu(y,z,\omega) + \widetilde m_\mu(z,x,\omega).
\end{equation*}
For $|y-z| \leq 1$, we use \eqref{cthingy} to get
\begin{align*}
\widetilde m_\mu(y,x,\omega) & \leq c(z,\omega) + \widetilde m_\mu (z,x,\omega) + w(y,\omega) - w(z,\omega) \\&  = \widetilde m_\mu (y,z,\omega) + \widetilde m_\mu (z,x,\omega) + c(z,\omega).
\end{align*}
This completes the proof.
\end{proof}

\subsection{The homogenization of the metric problem}
\label{mho}

We consider, for $\mu > \overline H(p)$ and 
\begin{equation*}
D_\ep : = \begin{cases} \{ 0 \} & \mbox{if} \ \gamma > 2 \ \mbox{or} \ A \equiv 0, \\ B_\ep & \mbox{if} \ \gamma \leq 2 \ \mbox{and} \ A \not\equiv 0,\end{cases}
\end{equation*}
the boundary value problem
\begin{equation} \label{meteq}
\left\{ \begin{aligned}
& -\ep \tr\!\left( A(\tfrac x\ep , \omega) D^2m^\ep_\mu \right) + H(p+Dm^\ep_\mu, \tfrac x\ep, \omega) = \mu \quad \mbox{in} \ \ \Rd\!\setminus \! D_\ep, \\
& m^\ep_\mu = 0 \ \ \mbox{on}  \ \partial D_\ep, \quad \liminf_{|y| \to \infty} |y|^{-1} m^\ep_\mu(y,\omega) \geq 0 \ \ \mbox{a.s. in} \ \omega,
\end{aligned} \right.
\end{equation}
which is nothing but a rescaling of \eqref{MP}--\eqref{bngcond}. In particular, \eqref{meteq} has a unique solution given by
\begin{equation} \label{rescalemeteq}
\ms^\ep_\mu(x,\omega) = \ep \ms_\mu\left(\frac x\ep,0,\omega\right).
\end{equation}

\medskip

We show that the $m^\ep_\mu$'s converge, as $\ep\to 0$, a.s. in $\omega$ and in $C(\Rd \setminus \{0\})$, to the unique solution $\overline m_\mu=\overline m_\mu(y) = \overline m_\mu(y;p)$ of 
\begin{equation} \label{meteq-bar}
\left\{ \begin{aligned}
& \overline H(p+D\overline m_\mu ) = \mu \quad \mbox{in} \ \ \Rd\!\setminus \! \{ 0 \}, \\
& \overline m_\mu(0) = 0, \quad \liminf_{|y| \to \infty} |y|^{-1}\, \overline m_\mu(y) \geq 0,
\end{aligned} \right.
\end{equation}
which, in view of Proposition~\ref{metexistence}, is well-posed for every $\mu > \overline H(p)$. 

\begin{lem}
For each $\mu > \overline H(p)$, 
\begin{equation}\label{mbarform}
\overline m_\mu(y;p) = \sup \big\{ y\cdot q :  \overline H(p+q) \leq \mu \big\}.
\end{equation}
\end{lem}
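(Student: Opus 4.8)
The plan is to verify that the right-hand side of \eqref{mbarform}, call it
\begin{equation*}
\Psi(y;p) := \sup\big\{ y\cdot q : \overline H(p+q) \leq \mu \big\},
\end{equation*}
is itself the unique solution of \eqref{meteq-bar}, and then conclude by uniqueness (Proposition~\ref{metexistence} applied to the effective equation, whose well-posedness was already noted). First I would check that $\Psi$ is well-defined and finite: by the coercivity in \eqref{Hbarcoer}, the sublevel set $K_\mu := \{ q : \overline H(p+q) \leq \mu\}$ is a bounded set, and since $\mu > \overline H(p)$ it contains a neighborhood of $q=0$ (using continuity \eqref{contpeq}), so $K_\mu$ is a convex body containing the origin in its interior; hence $\Psi(\cdot;p)$ is exactly the support function of $K_\mu$, which is a finite, convex, positively homogeneous function of $y$ with $\Psi(0;p)=0$ and $\Psi(y;p) \geq c|y|$ for some $c>0$, so the growth condition $\liminf_{|y|\to\infty}|y|^{-1}\Psi(y;p)\geq 0$ holds trivially and the boundary condition at $0$ is satisfied.

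Next I would verify that $\Psi$ is a viscosity solution of $\overline H(p+D\Psi)=\mu$ in $\Rd\setminus\{0\}$. Because $\Psi$ is the support function of the convex body $K_\mu$, it is the Legendre-type transform $\Psi(y) = \sup_{q\in K_\mu} y\cdot q$, and standard convex-analysis facts apply: $\Psi$ is differentiable at $y$ whenever the supremum is attained at a unique $q=q(y)$, which happens for a.e. $y$, and at such points $D\Psi(y)=q(y)\in\partial K_\mu$, so $\overline H(p+D\Psi(y))=\mu$ since $\overline H$ restricted to $\partial K_\mu$ equals $\mu$ (here one uses that $\overline H$ is convex and coercive, so $\{q : \overline H(p+q) < \mu\}$ is open and its closure is $K_\mu$, whence $\overline H = \mu$ on $\partial K_\mu$). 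For the viscosity subsolution property at an arbitrary point $y_0\neq 0$: if $\phi$ is smooth and touches $\Psi$ from above at $y_0$, then $D\phi(y_0)\in\partial\Psi(y_0) = \operatorname{argmax}_{q\in K_\mu} y_0\cdot q \subseteq K_\mu$, so $\overline H(p+D\phi(y_0)) \leq \mu$. For the supersolution property: if $\phi$ touches $\Psi$ from below at $y_0\neq 0$, I would argue that $D\phi(y_0)$ cannot lie in the interior of $K_\mu$, because $\Psi$, being the support function of $K_\mu$ with $0\in\operatorname{int}K_\mu$, is locally genuinely "stretched" — more precisely, along the ray $t\mapsto ty_0$ one has $\Psi(ty_0)=t\Psi(y_0)$ linear, so $\frac{d}{dt}\phi(ty_0)\big|_{t=1} = \Psi(y_0) = \sup_{q\in K_\mu} y_0\cdot q$, forcing $y_0\cdot D\phi(y_0) = \sup_{q\in K_\mu} y_0\cdot q$, i.e.\ $D\phi(y_0)$ achieves the max over $K_\mu$ in direction $y_0$, hence $D\phi(y_0)\in\partial K_\mu$ and $\overline H(p+D\phi(y_0))=\mu\geq\mu$. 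This gives $\overline H(p+D\Psi)\geq\mu$ in the viscosity sense.

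Finally, having shown $\Psi$ is a solution of \eqref{meteq-bar} satisfying the boundary and growth conditions, and knowing from Proposition~\ref{metexistence} (applied with the effective Hamiltonian $\overline H$, $A\equiv 0$, which satisfies all the structural hypotheses by Proposition~\ref{effHam}) that \eqref{meteq-bar} has a unique solution, I conclude $\overline m_\mu(\cdot;p)=\Psi(\cdot;p)$, which is \eqref{mbarform}. The main obstacle I anticipate is the careful handling of the supersolution inequality at a general point $y_0\neq 0$: one must exploit the positive homogeneity of $\Psi$ together with $0\in\operatorname{int}K_\mu$ to pin down $D\phi(y_0)$ on $\partial K_\mu$ rather than merely in $K_\mu$; the homogeneity argument along the ray through $y_0$ is the cleanest way to do this, but one should double-check it handles test functions touching from below that are not themselves homogeneous. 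An alternative, perhaps safer, route for this step is to note that $\Psi$ is convex, hence its viscosity supersolution property is equivalent to $D\Psi(y_0)\in\partial\Psi(y_0)$ consisting only of points where $\overline H(p+\cdot)\geq\mu$, and then observe $\partial\Psi(y_0)$ is a face of $\partial K_\mu$; I would present whichever of these is more transparent in the write-up.
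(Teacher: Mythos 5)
Your argument is correct, but it takes a different route from the paper for half of the statement. The paper splits \eqref{mbarform} into two inequalities: the inequality ``$\geq$'' is obtained exactly as you would get it, by observing that the support function $\Psi(y;p)=\sup\{y\cdot q:\overline H(p+q)\leq\mu\}$ is a subsolution of \eqref{meteq-bar} and invoking the comparison principle (Proposition~\ref{metcomparison} with $A\equiv0$ and the deterministic Hamiltonian $\overline H$); but for ``$\leq$'' the paper does not verify the supersolution property of $\Psi$ at all. Instead it works on $\overline m_\mu$ itself: uniqueness and the scaling invariance of \eqref{meteq-bar} give positive $1$-homogeneity, Lemma~\ref{convtrick} plus comparison give convexity, and then at a point $y\neq0$ of differentiability one has $\overline m_\mu(y;p)=y\cdot D\overline m_\mu(y;p)$, so if the formula failed at such a point the equation $\overline H(p+D\overline m_\mu)=\mu$ (which holds classically at differentiability points) would be violated. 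Your route instead shows $\Psi$ is a full viscosity solution of \eqref{meteq-bar} and concludes by the uniqueness that the paper already grants (and which you correctly trace to the effective-Hamiltonian version of Propositions~\ref{metcomparison}--\ref{metexistence}, with the trivial subcorrector $w\equiv0$ and $\mu>\overline H(p)$). The trade-off: the paper avoids the supersolution verification for the support function at the cost of establishing convexity and homogeneity of $\overline m_\mu$ and a Rademacher-type argument; you avoid analyzing $\overline m_\mu$ at the cost of the supersolution check, which is the delicate step. On that step, one small imprecision: the ray argument ($t\mapsto\phi(ty_0)-t\Psi(y_0)$ maximized at $t=1$) gives $y_0\cdot D\phi(y_0)=\sup_{q\in K_\mu}y_0\cdot q$, which by itself only rules out $D\phi(y_0)\in\mathrm{int}\,K_\mu$ (a priori $D\phi(y_0)$ could lie outside $K_\mu$, not on $\partial K_\mu$); but that is all you need, since $\mathrm{int}\,K_\mu=\{q:\overline H(p+q)<\mu\}$ (convexity of $\overline H$ together with $\overline H(p)<\mu$), so $\overline H(p+D\phi(y_0))\geq\mu$ in either case. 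Alternatively, your convex-analysis remark that $D\phi(y_0)\in\partial\Psi(y_0)$, the exposed face of $K_\mu$ in direction $y_0\neq0$, does pin it to $\partial K_\mu$; either fix closes the gap and the proof stands.
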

\begin{proof}
Since the right hand side of \eqref{mbarform} is a subsolution of \eqref{meteq-bar}, the inequality ``$\geq$" in \eqref{mbarform} follows at once from Proposition~\ref{metcomparison}.

To obtain the reverse inequality, notice first that $\overline m_\mu$ is positively 1-homogeneous in $y$, i.e., for every $t \geq 0$,
\begin{equation*}
\overline m_\mu(ty;p) = t \overline m_\mu(y;p).
\end{equation*}
The scaling invariance of \eqref{meteq-bar} and the uniqueness of $\overline m_\mu$. Likewise, the convexity of $\overline H$, Lemma~\ref{convtrick} and Proposition~\ref{metcomparison} are easily combined to yield that $\overline m_\mu(y;p)$ is convex in $y$. If \eqref{mbarform} fails, we can find a point $y\neq 0$ such that $\overline m_\mu(\cdot;p)$ is differentiable at $y$ and $\overline  m_\mu(y;p) > \sup\{ y\cdot q: \overline H(p+q) \leq \mu \}$. Since $\overline m_\mu(y;p) = y \cdot D\overline m_\mu(y;p)$, it follows that $H(p+D\overline m_\mu(y;p)) > \mu$. This contradicts the elementary viscosity-theoretic fact that $\overline m_\mu$ must satisfy $H(p+D\overline m_\mu) = \mu$ at any point of differentiability.
\end{proof}

\begin{remark} \label{touch-mbar}
We can see from \eqref{meteq-bar} and the convexity of $\overline H$ that
\begin{equation*}
\overline m_\mu(y;p) = \sup\big\{ y\cdot q: \overline H(p+q) = \mu\big\}.
\end{equation*}
Moreover, if $\overline H(p+q_0) = \mu$, then $q_0$ belongs to the subdifferential $\partial \overline m_\mu(x_0;p)$ of $\overline m_\mu$ at some point $x_0\neq 0$. That is, the plane $x \cdot q_0$ must touch $\overline m_\mu$ from below at some nonzero $x_0 \in \Rd$. To see this, consider the largest value of $\lambda$ for which
\begin{equation*}
\overline m_\mu(y;p) \geq \lambda y\cdot q_0.
\end{equation*}
By the positive homogeneity of both sides, we need only check the above inequality on the unit sphere, and for the same reason, our claim follows if we can show that $\lambda =1$. But if $\lambda > 1$, then we would obtain a contradiction as follows: first, by convexity of $\overline H$ and $\lambda > 1$, we have $\overline H(p+\lambda q_0 ) > \mu$. Then, by the convexity of the sublevel sets of $\overline H$, we can choose a vector $z\in \Rd\setminus \{ 0 \}$ and $\ep > 0$ such that $z\cdot (\lambda q_0) \geq z\cdot q+\ep$ for every $\overline H(p+q)\leq \mu$. But this implies $z\cdot (\lambda q_0) > \overline m_\mu(z;p)$, a contradiction.
\end{remark}

From \eqref{rescalemeteq}, we see that the limit, as $\ep\to 0$, of the $m_\mu^\ep$'s is equivalent to studying the limit of $t^{-1} \ms_\mu(ty,0,\omega)$ as $t \to \infty$. Therefore we may state the result on the homogenization of \eqref{meteq} as follows.

\begin{prop} \label{homogMP}
For each $p,y\in \Rd$ and $\mu > \overline H(p)$,
\begin{equation} \label{homogMPeq}
\lim_{t\to \infty} \frac1t \ms_\mu(ty,0,\omega;p) = \overline m_\mu(y;p) \quad \mbox{a.s. in} \ \omega.
\end{equation}
\end{prop}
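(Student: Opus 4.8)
The plan is to deduce the homogenization of the metric problem from the subadditive ergodic theorem (Proposition~\ref{SAergthm}) applied to the function $\hat m_\mu$ introduced in Lemma~\ref{subadditivity}, and then to identify the resulting deterministic limit with $\overline m_\mu$. First I would fix a direction $e \in \Rd$ with $|e| = 1$ and define, for $I = [a,b) \in \mathcal I$, the random variable $Q_e(I)(\omega) := \hat m_\mu(be, ae, \omega)$. Using the joint stationarity of $\widetilde m_\mu$ (Remark~\ref{tildemetp}, extended as in Remark~\ref{MPstat}) and of $c$, together with the subadditivity inequality \eqref{msubadd2}, one checks that $Q_e$ is a continuous subadditive process with respect to the semigroup $\sigma_t := \tau_{te}$: property (i) is stationarity, property (iii) is exactly \eqref{msubadd2} telescoped over a partition, and property (ii) follows from the linear-growth bound $|\widetilde m_\mu(te,0,\omega)| \le C(1 + \sup_{B(te,2)} V(\cdot,\omega))^{1/\gamma}\cdot|t| + \dots$ combined with \eqref{cint} and the fact that $\E\sup_{B_1} V^\alpha < \infty$ with $\alpha > \d > 1/\gamma$, so that $\E|Q_e(I)| \le C|I|$. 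Proposition~\ref{SAergthm} then gives a random variable $a_e(\omega)$ with $t^{-1} Q_e([0,t))(\omega) = t^{-1}\hat m_\mu(te,0,\omega) \to a_e(\omega)$ a.s.; since the correction term $\tfrac12(c(te,\omega)+c(0,\omega))$ is $o(t)$ a.s. by \eqref{sublin} and stationarity, we also get $t^{-1}\widetilde m_\mu(te,0,\omega) \to a_e(\omega)$, and because $\widetilde m_\mu$ and $\ms_\mu$ differ only by a bounded-in-$\omega$ boundary adjustment on $\partial D_1$ (and by $w(y,\omega)-w(z,\omega)$ on the small ball, which is $o(t)$), the same holds for $t^{-1}\ms_\mu(te,0,\omega)$. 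The ergodicity of $\{\tau_{te}\}_{t>0}$—which can be reduced to the ergodicity of the full group $(\tau_y)$ by a standard argument, or else one simply notes that $a_e$ is $\tau_y$-invariant for all $y$ parallel to $e$ and then uses the oscillation estimate \eqref{metosc} to see it is genuinely constant—shows $a_e(\omega) = a_e$ is deterministic.

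The next step is to upgrade convergence along the ray $\{te : t > 0\}$ to the locally uniform convergence of $y \mapsto t^{-1}\ms_\mu(ty,0,\omega)$ on $\Rd$, and to promote it to a single full-probability event valid for all $y$ simultaneously. For this I would run the subadditive argument for each $e$ in a countable dense subset of the unit sphere, intersect the resulting full-probability sets, and then use the uniform Lipschitz-type control on $m^\ep_\mu$ coming from \eqref{metosc} (equivalently, the local gradient bound \eqref{MPLB} combined with Morrey's inequality and Lemma~\ref{potests}, exactly as in the proof of Lemma~\ref{impest}): this bound gives equicontinuity of the family $\{t^{-1}\ms_\mu(t\cdot,0,\omega)\}_{t}$ on compact subsets of $\Rd\setminus\{0\}$, a.s. in $\omega$, so pointwise convergence along a dense set of rays forces locally uniform convergence to a limit function $\overline m(y)$ which is then automatically positively $1$-homogeneous and (by the convexity argument of Remark~\ref{touch-mbar}, via Lemma~\ref{convtrick} and Proposition~\ref{metcomparison}) convex.

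Finally I would identify $\overline m = \overline m_\mu$. The cleanest route is stability of viscosity solutions: $m^\ep_\mu$ solves \eqref{meteq}, and by Proposition~\ref{mainstep}—specifically \eqref{mainstepeq}, \eqref{oneside}, and the homogenization machinery of Section~\ref{PH}—the operator $u \mapsto -\ep\tr(A(x/\ep,\omega)D^2u) + H(p+Du, x/\ep,\omega)$ homogenizes to $u\mapsto \overline H(p+Du)$; since the convergence $m^\ep_\mu \to \overline m$ is locally uniform in $\Rd\setminus\{0\}$ a.s., the half-relaxed limits method shows $\overline m$ is a viscosity solution of $\overline H(p+D\overline m) = \mu$ in $\Rd\setminus\{0\}$. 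The boundary condition $\overline m(0) = 0$ and the sublinearity-from-below $\liminf_{|y|\to\infty}|y|^{-1}\overline m(y) \ge 0$ pass to the limit from \eqref{meteq} (using Remark~\ref{lingrow} for the latter, which is uniform in $\ep$ after rescaling), so $\overline m$ solves \eqref{meteq-bar}; uniqueness for \eqref{meteq-bar}, guaranteed by Proposition~\ref{metexistence} applied with the constant Hamiltonian $\overline H$ (whose properties are recorded in Proposition~\ref{effHam}), gives $\overline m = \overline m_\mu$, which is \eqref{homogMPeq}.

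The main obstacle I anticipate is verifying that $Q_e$ genuinely satisfies condition (ii) of the definition of a continuous subadditive process—i.e., the $L^1(\Omega)$ bound $\E|Q_e(I)| \le C|I|$—uniformly, since $\widetilde m_\mu$ is only controlled pointwise in $\omega$ by a power of $\sup_{B} V$, and one must check the resulting expectation is finite using $\alpha > \d$ from \eqref{alphabeta} and the moment bound \eqref{Vmoment}; care is also needed in the case $D_1 = B_1$ to handle the modified boundary condition and the additive correction $c$, and in confirming that the semigroup $\{\tau_{te}\}$ inherits enough ergodicity (or bypassing this via \eqref{metosc}) so that $a_e$ is deterministic rather than merely $\tau_{te}$-invariant.
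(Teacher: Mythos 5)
Your first half (the subadditive ergodic theorem applied to $\hat m_\mu$ with $\sigma_t=\tau_{ty}$, the removal of the correction $\tfrac12(c+c)$ and of the boundary adjustment, and the argument that the limit is deterministic via invariance under the full group $(\tau_z)$) matches the paper's proof. The genuine gap is in your identification step. You propose to conclude that the limit $\overline m$ solves $\overline H(p+D\overline m)=\mu$ by ``stability/half-relaxed limits,'' invoking ``the homogenization machinery of Section~\ref{PH}.'' At the point where Proposition~\ref{homogMP} is proved, that machinery is not available: Proposition~\ref{bigstep} (the almost sure convergence $\delta v^\delta\to-\overline H$ on balls of radius $\sim1/\delta$, i.e.\ \eqref{cplim}) is proved \emph{using} Proposition~\ref{homogMP}, via the reverse perturbed test function argument in Step~3 of its proof. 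So citing Section~\ref{PH} here is circular. Moreover, half-relaxed limits alone cannot identify the limit equation: knowing that $m^\ep_\mu\to\overline m$ locally uniformly says nothing about the effective operator unless one has (approximate) correctors converging a.s.\ to insert as perturbations of the test function; ``the operator homogenizes'' is precisely the statement being established.

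What is actually available at this stage, and what the paper uses, is the subsequence $\delta_j\to0$ of Remark~\ref{assubseq} along which $\delta_j v^{\delta_j}(\cdot,\omega;q)+\overline H(q)\to0$ uniformly on $B_{R/\delta_j}$, a.s.\ in $\omega$ — a consequence of the $L^1$ convergence \eqref{mainstepeq} only. The identification must be run as a perturbed test function argument with the perturbation $\lambda\,\delta_j v^{\delta_j}(x/\delta_j,\omega;p_1)$, $p_1=p+D\varphi(x_0)$, and the inequalities \eqref{movepdn}--\eqref{movepup} of Lemma~\ref{movep}: the extra factor $\lambda\neq1$ (sent to $1$ at the end) is needed because, in the unbounded environment, there are no uniform Lipschitz bounds on $v^{\delta}$ with which to control the error $H(p+D\psi)-H(p_1+\lambda^{-1}(D\psi-D\varphi)+\dots)$ in the classical argument; convexity of $H$ absorbs it instead. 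Your sketch omits both the restriction to the a.s.\ subsequence and this convexity device, and as written the identification of $\overline m$ with $\overline m_\mu$ does not go through. (A smaller point: your suggestion to get determinicity of $a_e$ from invariance under $\tau_y$ only along the direction $e$ plus \eqref{metosc} is not enough by itself; the paper shows invariance under \emph{all} shifts $\tau_z$, using the joint stationarity of Remark~\ref{MPstat} together with \eqref{MPLB} and Lemma~\ref{potests} to see that changing the base point perturbs $t^{-1}m_\mu(ty,\cdot,\omega)$ only by $o(1)$, and then applies ergodicity of the full group.)
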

\begin{proof}
By Proposition~\ref{metcomparison}, the functions $\widetilde m(\cdot,0,\omega)$ and $m_\mu(\cdot, 0,\omega)$ differ by no more than $c(0,\omega)$. Likewise, $\hat m_\mu(y,0,\omega)$ and $\widetilde m_\mu(y,0,\omega)$ differ by at most $\frac12( c(0,\omega) + c(y,\omega))$, which is strictly sublinear in $y$, a.s. in $\omega$. In light of Lemma~\ref{subadditivity} and \eqref{cint}, and using the semigroup $\sigma_t:= \tau_{ty}$, we may apply the subadditive ergodic theorem (Proposition~\ref{SAergthm}) to obtain that, for each $y\in \Rd$ and a.s. in $\omega$,
\begin{equation} \label{preliMP}
\lim_{t\to \infty} t^{-1} m_\mu(ty,0,\omega) = \lim_{t\to \infty} t^{-1} \hat m_\mu(ty,0,\omega) = M_\mu(y,\omega),
\end{equation}
for some $M_\mu(y,\omega)$. At this point, we must allow for the possibility that $M_\mu$ is random because we do not assume $\{ \sigma_t \}$ is ergodic.

It remains to show that $M_\mu = \overline m_\mu$. We first show that $M_\mu$ is constant a.s. in $\omega$. This follows from the stationary and ergodic assumptions in the usual way. According to Lemma~\ref{potests}, \eqref{MPLB} and Remark~\ref{MPstat}, for each $\omega \in \Omega_1$ and $z\in \Rd$, we have
\begin{equation*}
\limsup_{t \to \infty} t^{-1} m_\mu(ty,0,\tau_z\omega) = \limsup_{t \to \infty} t^{-1} m_\mu(ty+z,z,\omega) = \limsup_{t \to \infty} t^{-1} m_\mu(ty,0,\omega),
\end{equation*}
i.e., the set $\{ \omega \in \Omega_1: \limsup_{t\to \infty} t^{-1} m_\mu(ty,0,\omega) \leq k \}$ is invariant under $\tau_z$, for each $k\in \R$. Since $\Omega_1$ has full probability, the ergodic hypothesis implies that 
\begin{equation*}
\limsup_{t \to \infty} t^{-1} m_\mu(ty,0,\tau_z\omega) \equiv C \quad \mbox{a.s. in} \  \omega.
\end{equation*}
Hence $M_\mu(y,\omega) = M_\mu(y)$ a.s. in $\omega$.

It is clear that $M_\mu(y)$ is positively homogeneous. According to \eqref{bngcond}, it is thus nonnegative. The estimates \eqref{oscest} and \eqref{MPLB} imply that $M_\mu(y)$ is Lipschitz in $y$.

To complete the proof that $M_\mu = \overline m_\mu$, we show that $M_\mu$ is the solution of \eqref{meteq-bar}. Suppose that $\varphi$ is a smooth function and $x_0 \neq 0$ are such that
\begin{equation} \label{Mmuslm}
x \mapsto M_\mu(x) - \varphi(x) \quad \mbox{has a strict local maximum at} \quad x=x_0.
\end{equation}
We show by a perturbed test function argument that
\begin{equation} \label{Mmuwts}
\overline H(p+D\varphi(x_0)) \leq \mu.
\end{equation}
 Arguing by contradiction, we assume that $\theta: = \overline H(p+D\varphi(x_0)) - \mu > 0$. Take $\{ \delta_j \}$ to be the subsequence described in Remark~\ref{assubseq}, along which we have \eqref{assubseqeq}. Set $p_1 : = p+D\varphi(x_0)$, let $\lambda > 1$ to be selected below, and define the perturbed test function
\begin{equation*}
\varphi_j(x) : = \varphi(x) + \lambda\Big( \delta_j v^{\delta_j} \big(\frac x{\delta_j}, \omega; p_1 \big) + \overline H(p_1)\Big).
\end{equation*}
We claim that, for all sufficiently large $j$ and sufficiently small $r> 0$, $\varphi_j$ satisfies
\begin{equation} \label{vjclmsdf}
-\delta_j \tr(A(x/\delta_j,\omega) D^2\varphi_j) + H(p+D\varphi_j,x/\delta_j,\omega) \geq \mu + \frac12 \theta \quad \mbox{in} \ B(x_0,r).
\end{equation}
Since $\varphi_j$ is not smooth in general, we verify the inequality in the viscosity sense. To this end, select a smooth function $\psi$ and a point $x_1 \in B(x_0,r)$ at which $\varphi_j - \psi$ has a local minimum. It follows that
\begin{equation*}
y\mapsto \lambda v^{\delta_j}(y,\omega;p_1) - \delta_j^{-1}\big( \psi(\delta_jy) - \varphi(\delta_jy)  \big) \quad \mbox{has a local minimum at} \quad y= x_1/\delta_j.
\end{equation*}
Using \eqref{movepup} with $q=p+ D\varphi(x_1)$, we obtain
\begin{multline} \label{ptfm1p}
\lambda \delta_j v^{\delta_j}(x_1/\delta_j,\omega;p_1) - \delta_j\tr\big( A(x_1/\delta_j,\omega) (D^2\psi(x_1) - D^2\varphi(x_1) \big) \\ + H(p+D\psi(x_1), x_1/\delta_j , \omega) \geq (1-\lambda) H\left( \frac{D\varphi(x_1) - \lambda D\varphi(x_0)}{1-\lambda},y,\omega \right).
\end{multline}
Fix $\lambda>1$ to be sufficiently close to $1$ so that, by \eqref{assubseqeq}, for all large $j$ we have
\begin{equation*}
-\lambda \delta_j v^{\delta_j} (x_1/\delta_j,\omega;p_1) \geq \overline H(p_1) - \frac{1}{8}\theta.
\end{equation*}
We may allow $r$ and $j$ to depend on $\varphi$, and so by taking $j$ larger still we may assume that
\begin{equation*}
\delta_j \big|\tr(A(x_1/\delta_j,\omega) D^2\varphi(x_1))\big| \leq \frac18 \theta.
\end{equation*}
Next, observe that by taking $r> 0$ to be small, depending on $\lambda$ and $\varphi$, then we obtain
\begin{equation*}
|\lambda-1|^{-1} \big| D\varphi(x_1) - \lambda D\varphi(x_0)\big| \leq |\lambda-1|^{-1} \big|D\varphi(x_1) - D\varphi(x_0) \big| + |D\varphi(x_0)| \leq 2 |D\varphi(x_0)|.
\end{equation*}
By shrinking $\lambda$, depending only on $\varphi$, it follows from \eqref{Hbound} that 
\begin{equation*}
(1-\lambda) H\left( \frac{D\varphi(x_1) - \lambda D\varphi(x_0)}{1-\lambda},y,\omega \right) \geq -C(1-\lambda) \big( 1+ |D\varphi(x_0)|^\gamma \big) \geq -\frac14 \theta.
\end{equation*}
The observations above yield, for small $r>0$ and large $j$,
\begin{equation*}
-\delta_j \tr(A(x_1/\delta_j,\omega) D^2\psi)  + H(p+D\psi,x_1/\delta_j,\omega) \geq \overline H(p_1) - \frac12 \theta = \mu + \frac12 \theta.
\end{equation*}
This confirms the claim \eqref{vjclmsdf} in the viscosity sense.

The comparison principle implies that $m^{\delta_j}_\mu - \varphi_j$ cannot have a local maximum in $B(x_0,r)$. Sending $j\to \infty$, we obtain a contradiction to \eqref{Mmuslm}. This completes the proof that $M_\mu$ is a subsolution of \eqref{meteq-bar}. The argument that $M_\mu$ is a supersolution of \eqref{meteq-bar} is very similar. The perturbed test function $\varphi_j$ is defined in the same way, but with $\lambda < 1$, and we use \eqref{movepdn} instead of \eqref{movepup}. We leave the details to the reader. We conclude that $M_\mu$ is a solution of \eqref{meteq-bar}. By uniqueness, $M_\mu = \overline m_\mu$.
\end{proof}

\begin{remark} \label{nomuassump}
In the proof above, the assumption that $\mu > \overline H(p)$ is only needed for the existence of $m^\ep_\mu$, i.e., it is not used in the perturbed test function argument which identifies $M_\mu$ with $\overline m_\mu$. 
\end{remark}

\begin{remark}
In the setting of Hamilton-Jacobi equations, the classical perturbed test function argument (see Evans~\cite{E}) typically requires uniform Lipschitz estimates so that the error induced in the perturbation inside the coercive $H$ can be controlled. In our setting, the unboundedness of the potential prevents us from having such Lipschitz estimates. To get around this technical glitch we have introduced a new, modified version of the perturbed test function argument which utilizes the convexity of $H$. This is the purpose of introducing the parameter $\lambda$ in the proof of Proposition~\ref{homogMP}, and this technical device will be repeated several times in this paper. 
\end{remark}

\subsection{A characterization of $\min\overline{H}$}

Using the methods developed in the previous subsections, we notice that the minimum of $\overline H$ can be characterized by the solvability of the inequality
\begin{equation} \label{charmHeqcl}
-\tr \!\left( A(y, \omega) D^2 v \right) + H(Dv,y, \omega) \leq \mu \quad \mbox{in} \ \Rd
\end{equation}
where, instead of asking for sublinear growth at infinity, we impose the growth condition
\begin{equation} \label{charmHgc}
\limsup_{|y| \to \infty} |y|^{-1} v(y) < \infty. 
\end{equation}
The observation plays an important role in the proof of Proposition~\ref{bigstep}.

\begin{prop} \label{charmH}
The following formula holds a.s. in $\omega$:
\begin{equation} 
\min_{p\in \Rd} \overline H(p) = \inf \big\{ \mu \, : \mbox{there exists} \ \ v \in C(\Rd) \ \ \mbox{satisfying} \ \ \eqref{charmHeqcl}  \ \ \mbox{and} \ \ \eqref{charmHgc} \big\}.\label{charmHeq}
\end{equation}
\end{prop}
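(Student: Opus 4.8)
The plan is to prove \eqref{charmHeq} by establishing the two inequalities. Denote by $\mu^*$ the infimum on the right-hand side of \eqref{charmHeq}. For the inequality $\mu^* \leq \min_p \overline H(p)$, I would take $p_0$ to be a minimizer of $\overline H$ (which exists by the coercivity \eqref{Hbarcoer} and continuity \eqref{contpeq}), set $\mu = \overline H(p_0) = \min \overline H$, and use the subcorrector $w = w(\cdot,\omega;p_0)$ from Proposition~\ref{mainstep}. By \eqref{subcorr}, the function $v(y) := p_0 \cdot y + w(y,\omega)$ is a solution of $-\tr(A D^2 v) + H(Dv,y,\omega) \leq \overline H(p_0) = \mu$ in $\Rd$, and since $w$ is strictly sublinear at infinity, $v$ grows at most linearly, so \eqref{charmHgc} holds. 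Hence $\mu = \min\overline H$ is admissible on the right-hand side, giving $\mu^* \leq \min\overline H$. (One must be slightly careful here: for any $\mu > \min\overline H$ the same construction with a $p$ satisfying $\overline H(p) = \mu$ works directly, and for $\mu = \min\overline H$ one uses $p_0$; either way the infimum is $\leq \min\overline H$.)

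The reverse inequality $\mu^* \geq \min\overline H$ is the substantive direction. Suppose, for contradiction, that there exist $\mu < \min\overline H$ and $v \in C(\Rd)$ satisfying \eqref{charmHeqcl} and \eqref{charmHgc}. The idea is to run the perturbed test function argument of Proposition~\ref{homogMP} backwards against such a $v$. More precisely, I would compare $v$ — after a suitable rescaling and the $\lambda$-perturbation trick using convexity of $H$ — with the solutions $m^\ep_{\mu'}$ of the metric problem for some $\mu' \in (\mu, \min\overline H)$, or, more directly, with the auxiliary solutions $\delta v^\delta(\cdot,\omega;p)$. The key point: if such a globally-defined subsolution $v$ of \eqref{charmHeqcl} with at-most-linear growth exists, then combining it with the comparison principle (Lemma~\ref{comparison}) and the structure of \eqref{aux}, one can show $\liminf_{\delta\to 0}\delta v^\delta(0,\omega;p) \geq -\mu$ for \emph{every} $p$ — which by \eqref{oneside} forces $\overline H(p) \leq \mu$ for all $p$, contradicting $\mu < \min\overline H$. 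The mechanism is: rescale $v$ by $v_\delta(y) := \delta v(y/\delta)$, note $v_\delta$ satisfies $-\delta\tr(A(y/\delta,\omega)D^2v_\delta) + H(Dv_\delta, y/\delta,\omega) \leq \mu$ (this is where \eqref{charmHgc}, giving $v_\delta$ at most linear and hence amenable to Lemma~\ref{comparison}-type arguments, is used), then for fixed $p$ apply Lemma~\ref{movep} with appropriate $\lambda$ to transfer from the $p=0$ form of the inequality to the general $p$ form, absorbing the error via \eqref{Hbound} and the linear-growth bound. Comparing the resulting subsolution with $v^\delta(\cdot,\omega;p) + (\text{linear correction})$ and using \eqref{delvestbel}, then sending $\delta\to 0$, yields $-\overline H(p) = \liminf_\delta \delta v^\delta(0,\omega;p) \geq -\mu$.

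The main obstacle I anticipate is the bookkeeping in the backward perturbed test function step: because $v$ is only assumed to grow \emph{linearly} rather than strictly sublinearly, one cannot directly feed it into the comparison principle against $v^\delta$ (whose relevant growth is controlled only on scales $\sim 1/\delta$). The fix, mirroring the proof of Proposition~\ref{metcomparison}, is to perturb $v$ by subtracting a small multiple $\eta\varphi_R$ of the function $\varphi_R$ from \eqref{varphiR} — negligible on balls of radius $\sim R$ but linearly growing — then use that $\mu < \min\overline H$ leaves room (via the subcorrector) to absorb the perturbation, compare on $B_R$, and send $R\to\infty$ followed by $\eta\to 0$. A secondary technical point is that the relevant subsets of $\Omega$ of full probability depend on $p$, as flagged in the section preamble, so one works with a fixed $p$ at a time; since \eqref{charmHeq} is a statement about $\min_p\overline H$ and we only need the conclusion for \emph{some} choices, this causes no difficulty. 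Throughout, the convexity hypothesis \eqref{Hconv} is essential — it is what makes the $\lambda$-perturbation in Lemma~\ref{movep} available in place of uniform Lipschitz estimates, which fail here because $V$ is unbounded.
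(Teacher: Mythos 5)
Your first inequality (the subcorrector from Proposition~\ref{mainstep} supplies an admissible $v$ at level $\min\overline H$, so the infimum on the right of \eqref{charmHeq} is at most $\min\overline H$) is exactly the paper's argument. The gap is in the reverse direction. The intermediate statement you aim for --- that the existence of a linear-growth subsolution of \eqref{charmHeqcl} at level $\mu$ forces $\liminf_{\delta\to0}\delta v^\delta(0,\omega;p)\ge-\mu$ for \emph{every} $p$, i.e.\ $\overline H(p)\le\mu$ for all $p$ --- is false (take $H(p,y)=|p|^2$, $v(y)=y_1$, $\mu=1$: then $\overline H(p)=|p|^2$ is unbounded), so no comparison mechanism can deliver it; all one can hope to extract is $\min\overline H\le\mu$. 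More seriously, the comparison you propose cannot be justified even for a single $p$: $v^\delta(\cdot,\omega;p)$ is bounded below and of size $O(1/\delta)$, so it does not dominate a subsolution with genuinely positive linear growth, and the relevant case is precisely a cone-like $v$ (e.g.\ $v\sim c|y|$), for which no single shift by $p\cdot y$ produces sublinear growth. Your proposed fix, subtracting a small multiple $\eta\varphi_R$ of \eqref{varphiR}, does not repair this: to neutralize the linear growth of $v$ the perturbation must have slope at least $\limsup_{|y|\to\infty} v(y)/|y|$, and since there are no uniform Lipschitz estimates the only way to insert it into the equation is the convexity trick, whose cost is $C\eta\theta^\gamma$ with $\eta\theta$ bounded below by that growth rate; this cost is then a fixed positive constant (it even blows up like $\eta^{1-\gamma}$ as $\eta\to0$) and cannot be absorbed by the fixed slack $\min\overline H-\mu$. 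The bootstrap that rescues Proposition~\ref{metcomparison} is unavailable here, because its engine is the hypothesis that the supersolution dominates $\lambda$ times the subsolution at infinity, which fails for $v^\delta$. Finally, comparing with $m^\ep_{\mu'}$ for $\mu'\in(\mu,\min\overline H)$ is not an option: those solutions exist only when $\mu'>\overline H(p)$ for some $p$, hence never for $\mu'<\min\overline H$.

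The paper's proof of the hard direction avoids any such comparison with $v^\delta$: for $\mu$ above the right-hand side of \eqref{charmHeq} it defines the \emph{maximal} subsolution $s_\mu(y,x,\omega)$ subject to \eqref{charmHeqcl}, \eqref{charmHgc} and $v\le0$ on $D_1(x)$, shows it is a solution off $D_1(x)$, jointly stationary and of at most linear growth, and then homogenizes it exactly as in Sections~\ref{sadd} and~\ref{mho} (subadditive ergodic theorem plus the perturbed-test-function identification, which by Remark~\ref{nomuassump} does not require $\mu>\overline H(p)$). The limit $\overline s_\mu$ is a viscosity solution of $\overline H(D\overline s_\mu)=\mu$ in $\Rd\setminus\{0\}$, and since no continuous function can be a viscosity subsolution of this equation when $\mu<\min\overline H$, it follows that $\mu\ge\min\overline H$. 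In other words, the missing idea in your sketch is to pass to the large-scale (homogenized) limit of the linear-growth subsolution itself, rather than to compare it with the approximate correctors.
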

\begin{proof}
Let $\widetilde H(\omega)$ denote the left side of \eqref{charmHeq}. It is easy to see, by the ergodic hypothesis and the stationarity of the coefficients, that $\widetilde H$ is constant; i.e., $\widetilde H(\omega) = \widetilde H$ a.s. in $\omega$. According to the existence of the subcorrector in Proposition~\ref{mainstep}, we have that $\widetilde H \leq \min_{p\in \Rd} \overline H(p)$. It remains to show the reverse inequality. 

Select $\mu > \widetilde H$ and define
\begin{equation*}
s_\mu(y,x,\omega) : = \sup\big\{ v(y-x) : v\in C(\Rd) \ \mbox{satisfies} \ v \leq 0 \ \ \mbox{on} \ D_1,  \ \eqref{charmHeqcl} \ \ \mbox{and} \ \ \eqref{charmHgc} \big\}. 
\end{equation*}
Then using Proposition~\ref{metcomparison} and barrier functions built in Proposition~\ref{metexistence} (we may take for example $m_\mu(y,0,\omega)$ with any $p$), $s_\mu$ is well-defined, jointly stationary in the sense of Remark~\ref{MPstat}, and growing at most linearly at infinity. By similar arguments as in the proof of Proposition~\ref{metexistence}, we see that, for each fixed $z\in \Rd$, $s_\mu(y,z,\omega)$ is a solution of the equation
\begin{equation*}
-\tr \!\left( A(y, \omega) D^2 v \right) + H(Dv,y, \omega) = \mu \quad \mbox{in} \ \Rd \setminus D_1(z).
\end{equation*}
By repeating the arguments of Sections~\ref{sadd} and~\ref{mho}, we obtain the homogenization of $s_\mu$. We first observe that $s_\mu$ has sufficient properties to apply the subadditive ergodic theorem, and we are able to obtain, a.s. in $\omega$,
\begin{equation*}
\frac1t s_\mu(ty,0,\omega) \rightarrow \overline s_\mu(y) \quad \mbox{as} \ t\to \infty.
\end{equation*}
Then, by the proof of Proposition~\ref{homogMP} (see also Remark~\ref{nomuassump}), the function $\overline s_\mu$ is a solution of the equation
\begin{equation*}
\overline H(D\overline s_\mu) = \mu \quad \mbox{in} \ \Rd\setminus \{ 0 \}. 
\end{equation*}
Therefore, $\min \overline H \leq \mu$. This holds for any $\mu > \widetilde H$, and so we conclude that $\min \overline H \leq \widetilde H$. 
\end{proof}

\section{The proof of Homogenization}



\label{PH}

In this section we complete the proof of Theorem~\ref{MAIN}.

\subsection{The almost sure homogenization of the macroscopic problem} \label{BS}

We now use Propositions~\ref{homogMP} and~\ref{charmH} to obtain an improvement of Proposition~\ref{mainstep}. Informally, the limit \eqref{bigstepeq} says that the functions $\hat v^\delta(y,\omega): = v^\delta(\cdot,\omega) - v^\delta(0,\omega)$ are ``approximate correctors on balls of radius~$\sim1/\delta$." This is what we need to prove Theorem~\ref{MAIN}.

\begin{prop} \label{bigstep}
There is a subset $\Omega_8 \subseteq \Omega_7$ of full probability such that, for each $R> 0$ and $\omega\in \Omega_8$,
\begin{equation} \label{bigstepeq}
\lim_{\delta \to 0} \sup_{B_{R/\delta}} \big| \delta v^\delta(\cdot,\omega;p) + \overline H(p) \big| = 0.
\end{equation}
\end{prop}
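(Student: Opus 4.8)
By \eqref{oneside} we already know $\liminf_{\delta\to0}\delta v^\delta(0,\omega;p)=-\overline H(p)$ for $\omega\in\Omega_7$, and by Remark~\ref{WDlimsup} the quantity $-\hat H(p):=\limsup_{\delta\to0}\delta v^\delta(0,\omega;p)$ is deterministic and satisfies $\hat H(p)\le\overline H(p)$. Thus the whole proposition reduces to proving $\hat H(p)=\overline H(p)$ for every $p$, a.s. in $\omega$; once $\delta v^\delta(0,\omega;p)\to-\overline H(p)$ a.s. is known, the passage to $\sup_{B_{R/\delta}}$ is routine and mimics Step~3 of Proposition~\ref{mainstep} together with the Egoroff/ergodic-theorem argument of Lemma~\ref{impest}: the oscillation bound \eqref{oscest} transports the convergence from the origin to all centers $y/\delta$, $y\in B_R$, uniformly. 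Finally one first carries this out for fixed rational $p$, intersects the resulting full-probability sets, and uses the continuous dependence estimate Lemma~\ref{CDE-noxd} to obtain a single $\Omega_8$ good for all $p\in\Rd$.

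\textbf{The identity $\hat H(p)=\overline H(p)$ when $\overline H(p)>\min\overline H$.} Here I would argue by contradiction using the homogenization of the metric problem. Suppose $\hat H(p)<\overline H(p)=:\mu$ for some $p$ with $\mu>\min\overline H$; then there is $q$ with $\overline H(q)<\mu$, so Proposition~\ref{homogMP} applies and $m^\delta_\mu(\cdot,\omega;q)\to\overline m_\mu(\cdot;q)$ locally uniformly in $\Rd\setminus\{0\}$, a.s. in $\omega$. Since $\overline H(q+(p-q))=\mu$, Remark~\ref{touch-mbar} produces a point $x_0\neq0$ at which the affine function $y\mapsto(p-q)\cdot y$ touches $\overline m_\mu(\cdot;q)$ from below, and by the positive $1$-homogeneity of $\overline m_\mu$ we may take $|x_0|$ arbitrarily small. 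Now run the perturbed test function argument of Proposition~\ref{homogMP} in reverse: along a subsequence $\delta_j\to0$ realizing $\delta_j v^{\delta_j}(0,\omega;p)\to-\hat H(p)$, set
\begin{equation*}
\psi_j(x):=(p-q)\cdot x-\ep|x-x_0|^2+\lambda\bigl(\delta_j v^{\delta_j}(x/\delta_j,\omega;p)+\hat H(p)\bigr),
\end{equation*}
with $\lambda$ slightly below $1$ and $\ep,|x-x_0|\le r$ small. Lemma~\ref{movep} (the convexity trick with parameter $\lambda$) converts the equation \eqref{aux} for $v^{\delta_j}$ into the bound $-\delta_j\tr(A\,D^2\psi_j)+H(q+D\psi_j,x/\delta_j,\omega)\le\lambda(-\delta_j v^{\delta_j}(x/\delta_j,\omega))+(1-\lambda)H(\cdots)$ on $B(x_0,r)$; using $-\delta_j v^{\delta_j}\le\hat H(p)+Cr+o(1)$ there (from \eqref{oscest} and $|x_0|$ small) and $\hat H(p)=\mu-\theta$ with $\theta>0$, a small choice of $r$, $|x_0|$ and $1-\lambda$ makes $\psi_j$ a \emph{strict} subsolution of the $\delta_j$-metric equation with right-hand side $\le\mu-\tfrac14\theta$. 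Comparison with the solution $m^{\delta_j}_\mu(\cdot,\omega;q)$ (which solves the equation with right-hand side $\mu$ on $B(x_0,r)\subseteq\Rd\setminus D_{\delta_j}$) forbids $m^{\delta_j}_\mu-\psi_j$ from having an interior minimum; letting $j\to\infty$ and then $\ep,r,|x_0|\to0$ this contradicts the fact that $\overline m_\mu(\cdot;q)-(p-q)\cdot x$ has a strict interior minimum at $x_0$. Since every $p$ with $\overline H(p)>\min\overline H$ arises with $\mu=\overline H(p)$, this yields $\hat H(p)=\overline H(p)$ there.

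\textbf{The identity when $\overline H(p)=\min\overline H$.} In this case the metric problem with a strictly smaller level is unavailable, so I would instead invoke Proposition~\ref{charmH}. For any $\mu>\min\overline H=\overline H(p)$ its proof provides a stationary subsolution of \eqref{aux}'s stationary analogue that is permitted to grow \emph{linearly} at infinity, together with its homogenization $\overline s_\mu$ solving $\overline H(D\overline s_\mu)=\mu$; feeding (a $\delta$-shifted multiple of) this linearly growing function into the comparison principle against $v^\delta(\cdot,\omega;p)$ on balls $B_{R/\delta}$ — with $R$ small, using \eqref{oscest-large} and the linear growth to match the two functions on $\partial B_{R/\delta}$ — forces $\limsup_{\delta\to0}\delta v^\delta(0,\omega;p)\le-\mu+o(1)$. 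Sending $\mu\downarrow\overline H(p)$ gives $\hat H(p)\ge\overline H(p)$, hence equality, completing the proof of $\hat H\equiv\overline H$ and therewith \eqref{bigstepeq}.

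\textbf{Main obstacle.} The delicate point is the reverse perturbed test function step: the perturbation $\lambda\,\delta_j v^{\delta_j}(\cdot/\delta_j;p)$ is controlled only in \emph{oscillation} (not in gradient) and only along a subsequence, so one must show it is nonetheless an admissible strict subsolution of the metric equation, carefully absorbing the $O(r)$ oscillation error from \eqref{oscest} into the strictness gap $\overline H(p)-\hat H(p)$ and exploiting the homogeneity of $\overline m_\mu$ to push the touching point toward the origin. The analogous bookkeeping — reconciling the sublinear growth of $v^\delta$ with the linear growth of the subsolution from Proposition~\ref{charmH} on balls of the right radius — is the corresponding difficulty in the $\min\overline H$ case.
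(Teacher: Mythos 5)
Your overall strategy for the case $\overline H(p)>\min\overline H$ is the paper's (a reverse perturbed test function argument against the homogenized metric problem, touching point from Remark~\ref{touch-mbar}, the $\lambda$-convexity trick from Lemma~\ref{movep}), but the contradiction step has a genuine quantitative gap tied to the order of the two steps. You postpone the origin-to-balls upgrade until after $\hat H(p)=\overline H(p)$ is known, and inside the contradiction you control $\delta_j v^{\delta_j}$ on $B(x_0,r)$ only through the oscillation estimate \eqref{oscest}, which loses an error of size $Cr$ with $C=C(|p|)$ a fixed constant. The strictness you gain on $\partial B(x_0,r)$ is only $\ep r^2$ from the quadratic penalization: by positive homogeneity, $\overline m_\mu(\cdot;q)-(p-q)\cdot x$ vanishes on the boundary along the ray through $x_0$ (this is also why the claimed ``strict interior minimum at $x_0$'' is not literally true without the penalization). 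Since the subsolution property forces $\ep r\ll 1-\lambda\ll\theta$ (the perturbed gradient enters $H$ through the term $(1-\lambda)H\bigl(p-\tfrac{2\ep(x-x_0)}{1-\lambda},\cdot\bigr)$), the margin $\ep r^2$ can never dominate the $Cr$ oscillation loss on the sphere, and the comparison with $m^{\delta_j}_\mu$ yields no contradiction. This is exactly why the paper proves \eqref{toballs1del} \emph{first}, via Egoroff's theorem plus the ergodic theorem (a density-of-good-translates argument, not just \eqref{oscest}): the identity $\limsup_{\delta\to0}\sup_{B_{R/\delta}}\delta v^\delta=-\hat H(p)$ gives the uniform lower bound $-\delta_j v^{\delta_j}(x/\delta_j)\ge\hat H(p)-o(1)$ on the boundary with no $O(r)$ loss, so the fixed margin $\eta r^2$ beats it, while the interior value at the touching point converges to $\hat H(p)$ along the chosen subsequence. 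So the ``reduction to the origin'' must be established (in both its liminf and limsup forms) \emph{before} the contradiction argument, and it is not a routine consequence of the oscillation bound.

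The case $\overline H(p)=\min\overline H$ is also not correct as sketched. The object furnished by Proposition~\ref{charmH} is a \emph{subsolution} with linear growth; comparing a subsolution against $v^\delta$ can only bound $v^\delta$ from below, i.e., control the liminf, which is already known from \eqref{oneside} — it cannot yield $\limsup_{\delta\to0}\delta v^\delta(0,\omega;p)\le-\mu$. Moreover that inequality for every $\mu>\min\overline H$ would force $\hat H(p)=+\infty$, contradicting $\hat H\le\overline H$, so the intermediate claim cannot be right even in spirit. The paper argues differently: along a subsequence realizing the limsup, it passes to a locally uniform limit of $v^\delta(\cdot,\omega)-v^\delta(0,\omega)$ to produce a solution with at most linear growth of the stationary equation at level $\hat H(p)$, so that $\widetilde H\le\hat H(p)$ by the very definition of $\widetilde H$, and then invokes the characterization $\widetilde H=\min\overline H=\overline H(p)$ of Proposition~\ref{charmH}. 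You would need to replace your comparison by this (or an equivalent) argument.
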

\begin{proof}
Lemma~\ref{CDE-noxd} reduces the work to showing that \eqref{bigstepeq} holds, a.s. in $\omega$, for each fixed $p\in \Rd$. The general result is then obtained and $\Omega_8$ is defined by intersecting the relevant subsets, say $\Omega_p$, over a countable dense subset of $p$, and appealing to \eqref{CDE-noxdeq}. Therefore we fix $p\in \Rd$.

\medskip

We split the proof into two steps. The first is to use the homogenization of the metric problem to obtain almost sure convergence of $\delta v^\delta $ to $-\overline H$ at the origin. To accomplish this we use a perturbed test function argument \emph{in reverse}, arguing in effect that, if we improperly perturb our test function, then we could not have homogenization. In the second step, we combine the ergodic and Egoroff's theorems with Lemma~\ref{impest} to improve the almost sure convergence at the origin to balls of radius $\sim1/\delta$. Unfortunately, we must perform the two steps in reverse order, since we need the result of second step to perform the first.

\medskip

\emph{Step 1. Reducing the question to convergence at the origin.}
According to Proposition~\ref{mainstep} and Remark~\ref{WDlimsup},
\begin{equation} \label{limsorg}
\liminf_{\delta \to 0} \delta v^\delta (0,\omega) = -\overline H(p) \leq -\hat H(p) : = \limsup_{\delta \to 0} \delta v^\delta(0,\omega) \quad \mbox{a.s. in} \ \omega.
\end{equation}
We claim that, for each $R> 0$,
\begin{equation} \label{toballs1del}
\liminf_{\delta \to 0} \inf_{B_{R/\delta}} \delta v^\delta (\cdot,\omega) = -\overline H(p) \quad \mbox{and} \quad \limsup_{\delta \to 0} \sup_{B_{R/\delta}} \delta v^\delta(\cdot,\omega) =  -\hat H(p) \quad \mbox{a.s. in} \ \omega.
\end{equation}
Since the arguments are nearly identical, we only prove the first identity of \eqref{toballs1del}. From \eqref{limsorg} and Egoroff's theorem, for every $\rho > 0$, there exists $\bar\delta(\rho)> 0$ and a set $E_\rho \subseteq \Omega$ such that $\Prob[E_\rho] \geq 1-\rho$ and, for every $0 < \delta \leq \bar \delta(\rho)$,
\begin{equation*}
\inf_{\omega\in E_\rho} \delta v^\delta(0,\omega) + \overline H \geq -\rho.
\end{equation*}
The ergodic theorem provides, for each $\rho > 0$, a subset $F_\rho \subseteq \Omega$ of full probability such that, for every $\omega \in F_\rho$,
\begin{equation*}
\lim_{R \to \infty} \fint_{B_R} \mathds{1}_{E_\rho}(\tau_y\omega) \, dy = \Prob[E_\rho] \geq 1-\rho.
\end{equation*}
Define $F_0 : = \cap_{j=1}^\infty (F_{2^{-j}}\cap  \Omega_3)$ with $\Omega_3$ as in Lemma~\ref{impest}. Then $\Prob[F_0] =1$. Fix $\omega \in F_0$ and $R, \rho > 0$, with $\rho = 2^{-j}$ for some $j\in \N$. It follows that, if $\delta > 0$ is sufficiently small (depending on $\omega$, $R$ and $\rho$), then
\begin{equation} \label{fillingup}
| \{ y\in B_{R/\delta} : \tau_y \omega \in E_\rho \} | \geq (1-2\rho) |B_{R/\delta}|.
\end{equation}
Using \eqref{uber-oscest} and shrinking $\delta$, depending again on $\omega$, $\rho$ and $R$, we have, for $r\in (\rho R, R)$,
\begin{equation} \label{oscestapp}
\sup_{y\in B_R} \osc_{B(y/\delta, r/\delta)} \delta v^\delta(\cdot,\omega) \leq 2C r.
\end{equation}
Select any $z\in B_{R/\delta}$. In a similar way as in the proof of Lemma~\ref{impest}, we may use \eqref{fillingup} to find a point $y\in B_{R/\delta}$ with $|y-z|\leq C\rho R\delta^{-1}$ and $\tau_y\omega \in E_\rho$. In view of  \eqref{oscestapp} and the stationarity of the $v^\delta$'s, we deduce that, for each $\delta$ sufficiently small, depending on $\omega$, $\rho$, and $R$,
\begin{align*}
\delta v^\delta(z,\omega) + \overline H & \geq -| \delta v^\delta(z,\omega) - \delta v^\delta(y,\omega)| + \delta v^\delta(y,\omega) + \overline H
\\ & \geq -2C_1(C\rho R) + \delta v^\delta(0,\tau_y\omega) + \overline H \\ & \geq -C\rho R - \rho,
\end{align*}
and, hence, for each $\omega \in F_0$ and $R>0$,
\begin{equation*}
\liminf_{\delta\to 0} \inf_{z\in B_{R/\delta}} \big( \delta v^\delta(z,\omega) + \overline H\big) \geq 0,
\end{equation*}
from which \eqref{toballs1del} follows.

\medskip

\emph{Step 2. The convergence at the origin in the case $\overline H(p) = \min \overline H$.} 
We show that $\hat H(p) \geq \widetilde H$, the latter introduced in the proof of Proposition~\ref{charmH}. For each fixed $\omega \in \Omega_7$, so that \eqref{hatHdefeq} holds, observe that for we can pass to limit as $\delta\to 0$ along a subsequence of $\hat v^\delta(y,\omega): = v^\delta(\cdot,\omega) - v^\delta(0,\omega)$ to find a solution of 
\begin{equation*}
-\tr \!\left( A(y, \omega) D^2 v \right) + H(p+Dv,y, \omega) = \hat H(p) \quad \mbox{in} \ \Rd.
\end{equation*}
It then follows from the definition of $\widetilde H$ that $\widetilde H \leq \hat H(p)$. According to our assumption and Proposition~\ref{charmH}, we have $\overline H(p) = \min_{\Rd} \overline H = \widetilde H$, and so we conclude that $\overline H(p) = \hat H(p)$. This completes the proof in the case that $p$ lies on the level set $\{ q\in \Rd : \overline H(q) = \min \overline H \}$. 

\medskip

\emph{Step 3. The convergence at the origin in the case $\overline H(p) > \min \overline H$.} To complete the proof of \eqref{bigstepeq}, it remains to show that $\overline H(p) = \hat H(p)$. We may assume that
\begin{equation*}
\min_{\Rd} \overline H  = \overline H(0) < \overline H(p).
\end{equation*}

Arguing by contradiction, we suppose that $\rho : = \overline H(p) - \hat H(p) > 0$. Fix $\omega \in\Omega_{7}$ and select a subsequence $\delta_j \to 0$, which may depend on $\omega$, such that
\begin{equation*}
\lim_{j \to \infty} {\delta_j} v^{\delta_j} (0,\omega;p) = -\hat H(p)= -\overline H(p) + \rho.
\end{equation*}
Put $\mu := \overline H(p)$. According to Remark~\ref{touch-mbar}, we can pick $x_0 \neq 0$ such that $\overline m_\mu (x_0; 0 ) = x_0 \cdot p$. With $\eta > 0$ to be selected below, define
\begin{equation*}
w_j(x):=  (x+x_0)\cdot p + \delta_j v^{\delta_j} \Big(\frac x{\delta_j},\omega;p\Big) - \eta |x|^2.
\end{equation*}
It is clear that, for $r,\eta>0$ small enough and $j$ large enough, $w_j$ is a subsolution of
\begin{equation*}
-\delta_j \tr \!\Big( A\Big(\frac x{\delta_j}, \omega\Big) D^2w_j \Big) + H\Big(Dw_j,\frac x{\delta_j}, \omega\Big) \leq \mu - \frac12\rho  \quad \mbox{in} \ B(0,r).
\end{equation*}
Denoting by $m^\ep_\mu$ the unique solution of \eqref{meteq}, we see, from the comparison principle and the above inequality, that
\begin{equation} \label{nolocmin}
\min_{x\in B(0,r)} \big( m^{\delta_j}_\mu(x,-x_0,\omega;0) - w_j(x) \big) = \min_{x\in \partial B(0,r)}\big( m^{\delta_j}_\mu(x,-x_0,\omega;0) - w_j(x) \big).
\end{equation}
Letting $j\to \infty$, we deduce a contradiction as follows. Grouping the terms as
\begin{multline*}
m^{\delta_j}_\mu(x,-x_0,\omega;0) - w_j(x) = \big( m^{\delta_j}_\mu(x,-x_0,\omega;0) - \overline m_\mu(x+x_0;0) \big) + \big( \overline m_\mu(x+x_0;0) - p\cdot (x+x_0) \big)  \\ + \big( -\delta_j v^{\delta_j}(x/\delta_j,\omega;p) + \eta|x|^2 \big),
\end{multline*}
we see that the first term converges uniformly to 0 in $B(0,r)$ as $j\to \infty$ by Proposition~\ref{homogMP}, the second term is nonnegative and vanishes at $x=0$, and the last term satisfies, by \eqref{toballs1del},
\begin{equation*}
\lim_{j\to \infty} \inf_{x \in B(0,r)} \big( -\delta_j v^{\delta_j}(x/\delta_j,\omega;p) + \eta|x|^2 \big) = \hat H(p) + \eta r^2 > \hat H(p) \geq \lim_{j\to \infty} \big( -\delta_j v^{\delta_j}(0,\omega;p) \big).
\end{equation*}
Therefore \eqref{nolocmin} is impossible for large $j$, and so we are forced to conclude that $\overline H(p) = \hat H(p)$. Now \eqref{toballs1del} yields \eqref{bigstepeq}.
\end{proof}

\subsection{The unique solvability of \eqref{HJ}}

We pause our march toward a proof of Theorem~\ref{MAIN} for a brief word regarding the well-posedness of \eqref{HJ}. It is not the main purpose of this paper to deal with such issues, and providing a complete proof of the unique solvability of \eqref{HJ} requires little more than repeating the arguments in Section~\ref{AMP-wp} or Section~\ref{AMP} and rearranging them to follow Section 6 of \cite{LS2}. For this reason, we omit the proof of the following proposition.

\begin{prop}\label{epWP}
There exists $\Omega_9 \subseteq \Omega_8$ such that, for every $u_0 \in \BUC(\Rd)$, there exists a unique solution $u^\ep=u^\ep(\cdot,\omega) \in C(\Rd\times\R_+)$ of \eqref{HJ} which is bounded below on $\Rd \times[0,T]$ for each $T>0$ and $\omega\in \Omega_9$. Moreover, there exists a constant $C >0$ such that, for each $(x_0,t_0)\in \Rd\times \R_+$, $r> 0$ and $\omega\in \Omega_9$,
\begin{equation} \label{eposcbd}
\limsup_{\ep\to 0} \osc_{B(x_0,r) \times (t_0,t_0+r)} u^\ep(\cdot,\omega) \leq C r.
\end{equation}
\end{prop}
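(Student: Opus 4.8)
The plan is to transcribe the well-posedness arguments of Section~\ref{AMP-wp} and the oscillation estimates of Section~\ref{AMP} to the time-dependent equation \eqref{HJ}, organized along the lines of Section~6 of \cite{LS2}.

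\emph{Existence and uniqueness.} First I would prove the parabolic analogue of Lemma~\ref{comparison}: if $u\in\USC(\Rd\times[0,T])$ and $v\in\LSC(\Rd\times[0,T])$ are, respectively, a sub- and a supersolution of \eqref{HJ} on $\Rd\times(0,T)$ with $u(\cdot,0)\le u_0\le v(\cdot,0)$, with $u$ growing at most linearly and $v$ at least sublinearly in $x$ uniformly on $[0,T]$, then $u\le v$. As in the proof of Lemma~\ref{comparison}, one replaces $u$ by $\hat u_\ep:=(1-\ep)u+\ep(\varphi(x)-kt)$ with $\varphi$ as in \eqref{linearphi} and $k$ large; the convexity \eqref{Hconv} and the one-sided bound \eqref{Hbound} (through the estimate \eqref{phieqbnd} and Lemma~\ref{convtrick}) make $\hat u_\ep$ a subsolution that separates linearly from $v$ in $x$, so the standard parabolic comparison theorem of \cite{CIL} applies, and one lets $\ep\to 0$. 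Equipped with this comparison principle and with barriers built as in Lemma~\ref{locest} and Corollary~\ref{globest} — a subsolution $\inf u_0-Ct$ (using \eqref{Hbound} at $p=0$), which is bounded below on each $\Rd\times[0,T]$, and a supersolution obtained by gluing a local barrier of the form of Lemma~\ref{locest} (to dominate the worst of $V(\cdot/\ep,\omega)$ via the coercivity \eqref{Hcoer}) to a spatial term growing strictly slower than linearly over the sublinear envelope \eqref{sublin} of $V(\cdot/\ep,\omega)$, valid for $\omega\in\Omega_1$, plus a term $Ct$ — Perron's method as in the proof of Proposition~\ref{auxsolve} produces a unique $u^\ep\in C(\Rd\times\R_+)$, bounded below on $\Rd\times[0,T]$; no stationarity of $u^\ep$ is needed or claimed. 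Alternatively, one may obtain $u^\ep$ as the increasing limit of solutions of the uniformly coercive approximations built from the $H_k$ of Remark~\ref{approximation}, passing to the limit with the help of the local gradient estimate below.

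\emph{The oscillation bound \eqref{eposcbd}.} Rescaling $\tilde u(y,s):=\ep^{-1}u^\ep(\ep y,\ep s)$ turns \eqref{HJ} into the autonomous equation $\tilde u_s-\tr(A(y,\omega)D^2\tilde u)+H(D\tilde u,y,\omega)=0$, and \eqref{eposcbd} becomes the assertion that, for $\omega$ in a full-probability set, the oscillation of $\tilde u$ over a parabolic cylinder of radius $\sim r/\ep$ about $(x_0/\ep,t_0/\ep)$ is at most $\sim Cr/\ep$ as $\ep\to 0$ — the time-dependent counterpart of the estimate \eqref{oscest} of Lemma~\ref{impest}. I would obtain it in three steps mirroring that development. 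Step (i): a weighted space-time Bernstein estimate, the analogue of Lemma~\ref{locestgrad}, controlling $\esssup_{Q}|D\tilde u|^\gamma$ on a cylinder $Q$ by $C(1+\sup_{Q'}V(\cdot,\omega)+\osc_{Q'}\tilde u)$ on a concentric enlarged cylinder $Q'$ lying a fixed amount of time above the slice $\{s=0\}$; this is where the positivity of $t_0$ enters (the time-derivative term supplies the regularization that makes merely BUC initial data harmless), and it is automatic here since $t_0/\ep\to\infty$, while the $\osc_{Q'}\tilde u$ term is in turn controlled, uniformly in $\ep$, by the barriers of the previous step. Step (ii): Morrey's inequality in $y$ together with the mixing estimate \eqref{bigmix} of Lemma~\ref{potestsD} controls the spatial oscillation of $\tilde u(\cdot,s)$ on large balls centered at the origin by $\sim Cr/\ep$, while the oscillation in $s$ is absorbed by integrating the equation and invoking \eqref{Hbound} and the $L^\infty$-control from Step (i). Step (iii): an Egoroff-plus-ergodic-theorem argument, applied — because $\tilde u$, unlike $v^\delta$, is not stationary — to the \emph{stationary} quantity $\fint_{B_R}\sup_{B(\cdot,1)}V(\cdot,\omega)^{q/\gamma}$ rather than to $\tilde u$ itself, transfers the bound from cylinders centered at the origin to cylinders centered at $x_0/\ep$, exactly as in the proof of Lemma~\ref{impest}. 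The set $\Omega_9$ is then the intersection of $\Omega_8$ with the countably many full-probability sets produced by the Borel--Cantelli applications in Steps (ii) and (iii).

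\emph{Main obstacle.} The only genuinely new point is the one that recurs throughout the paper: the unboundedness of $V$ precludes any $\ep$-uniform Lipschitz or $L^\infty$ bound on $u^\ep$, so \eqref{eposcbd} cannot be read off from standard parabolic regularity and must instead be synthesized from the local Bernstein estimate and the mixing hypothesis, just as \eqref{oscest} was. The additional care relative to Section~\ref{AMP} is purely bookkeeping with the extra time variable — ensuring the weighted Bernstein estimate is applied a definite amount of rescaled time above $\{s=0\}$, which costs nothing since $t_0/\ep\to\infty$ — and routing the Egoroff/ergodic transfer through $V$ rather than through $u^\ep$ to compensate for the loss of stationarity; everything else is a verbatim repetition of the arguments of Sections~\ref{AMP-wp} and~\ref{AMP}.
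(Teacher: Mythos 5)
The paper gives no actual proof of Proposition~\ref{epWP}: it only remarks that the result follows by repeating the arguments of Sections~\ref{AMP-wp} and~\ref{AMP}, rearranged to follow Section~6 of \cite{LS2}, and your proposal is a faithful, correctly organized elaboration of exactly that plan (parabolic comparison via the convexity trick with $\varphi-kt$, barriers and Perron's method for existence and the lower bound, Bernstein plus the mixing estimate \eqref{bigmix} plus Morrey for the oscillation bound). In particular you correctly identify and resolve the one genuinely new point relative to Lemma~\ref{impest} — that $u^\ep$ is not stationary, so the Egoroff/ergodic transfer to cylinders centered at $x_0/\ep$ must be routed through the stationary quantity built from $V$ rather than through the solution itself — so your argument takes essentially the same approach the paper intends.
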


\subsection{The proof of the main result}

We now assemble the ingredients developed in the previous sections into a proof of the homogenization result. The argument is based on the classical perturbed test function method 
and is also very similar to the argument in the proof of Proposition~\ref{homogMP}.

\begin{proof}[{Proof of Theorem~\ref{MAIN}}]
Fix an $\omega \in \Omega_9$ so that both \eqref{bigstepeq} and \eqref{eposcbd} hold. According to \eqref{eposcbd}, we may extract a subsequence $\ep_j \to 0$ and $u\in (C(\Rd \times\R_+) \cap \BUC(\Rd \times [0,T]))$, for every $T> 0$, such that, as $j\to \infty$,
\begin{equation*}
u^{\ep_j} \rightarrow u \quad \mbox{locally uniformly in} \ \Rd\times\R_+.
\end{equation*}
We claim that $u$ is the unique solution of the problem \eqref{HJ-eff}.

\medskip

Here we verify only that $u$ is a supersolution of \eqref{HJ-eff}, since the argument for showing $u$ is a subsolution is similar (and both are also similar to the proof Proposition~\ref{homogMP}. To this end we select a smooth test function $\varphi$ and a point $(x_0,t_0) \in \Rd\times\R_+$ such that
\begin{equation} \label{touch1}
(x,t) \mapsto (u - \varphi)(x,t) \quad \mbox{has a strict local minimum at} \ (x_0,t_0).
\end{equation}
We must show that
\begin{equation*}
\varphi_t(x_0,t_0) + \overline H(D\varphi(x_0,t_0)) \geq 0.
\end{equation*}
Suppose on the contrary that
\begin{equation*}
-\theta : = \varphi_t(x_0,t_0) + \overline H(D\varphi(x_0,t_0)) < 0.
\end{equation*}
Set $p_0 : = D\varphi(x_0,t_0)$, let $0< \lambda < 1$ be a parameter to be selected below, and define the perturbed test function
\begin{equation*}
\varphi^\ep(x,t) : = \varphi(x,t) + \lambda\ep v^\ep\!\left(\tfrac x\ep, \omega; p_0 \right)\!,
\end{equation*}
where $v^\ep$ is the solution of the auxiliary problem \eqref{aux} with $\delta = \ep$ and $p=p_0$. While we cannot expect that $\varphi^\ep$ is smooth, we claim that, for $r,\ep> 0$ sufficiently small, $\varphi^\ep$ is a viscosity solution of
\begin{equation} \label{corrclm}
\varphi^\ep_t - \ep \tr \!\left( A(\tfrac x\ep, \omega) D^2\varphi^\ep \right) + H(D\varphi^\ep,\tfrac x\ep, \omega) \leq -\frac12 \theta \quad \mbox{in} \ B(x_0,r) \times (t_0-r,t_0+r).
\end{equation}
To verify \eqref{corrclm}, select a smooth $\psi$ and a point $(x_1,t_1) \in B(x_0,r)\times (t_0-r,t_0+r)$ such that
\begin{equation*}
(x,t) \mapsto (\varphi^\ep - \psi)(x,t) \quad \mbox{has a local maximum at} \ (x_1,t_1).
\end{equation*}
We may rewrite this as
\begin{equation*}
(y,t) \mapsto \lambda v^\ep(y,\omega; p_0) - \frac1\ep \left( \psi(\ep y,t) - \varphi(\ep y,t) \right) \quad \mbox{has a local maximum at} \ \left(\tfrac{x_1}\ep,t_1\right).
\end{equation*}
In particular, $\varphi_t(x_1,t_1) = \psi_t(x_1,t_1)$. Using that $v^\ep$ is a viscosity supersolution of \eqref{movepdn} with $q= D\varphi(x_1,t_1)$, we deduce that
\begin{multline}
\lambda \ep v^\ep \left( \tfrac {x_1}\ep, \omega;p_0\right) - \ep \tr \!\left( A\left(\tfrac {x_1}\ep,\omega\right) \left(D^2\psi(x_1,t_1) - D^2\varphi(x_1,t_1) \right)\right) \\
+ H\!\left(D\psi(x_1,t_1), \tfrac {x_1}{\ep}, \omega\right) \leq (1-\lambda) H\left( \frac{D\varphi(x_1,t_1) - \lambda D\varphi(x_0,t_0)}{1-\lambda}, y, \omega \right).
\end{multline}
In a similar way as in the proof of Proposition~\ref{homogMP}, we can take $r> 0$ small, $j$ large, and $\lambda$ close to $1$, all depending on $\varphi$ but not on $\psi$, so that
\begin{equation*}
\lambda \ep v^\ep(x_1/\ep,\omega;p_0) \geq -\overline H(p_0) - \frac18 \theta,
\end{equation*}
\begin{equation*}
\big|\varphi_t(x_1,t_1) - \varphi_t(x_0,t_0)\big| + \ep \big| \tr \!\left(A\left(x_1/\ep,\omega\right) D^2\varphi(x_1,t_1) \right)\big| \leq \frac18\theta,
\end{equation*}
and
\begin{equation*}
(1-\lambda) H\left( \frac{D\varphi(x_1,t_1) - \lambda D\varphi(x_0,t_0)}{1-\lambda}, y, \omega \right) \leq \frac14 \theta
\end{equation*}
Combining the above inequalities yields 
\begin{equation*}
\psi_t(x_1,t_1) - \ep \tr \!\left( A\left(\tfrac {x_1}\ep,\omega\right) D^2\psi(x_1,t_1) \right) + H\!\left(D\psi(x_1,t_1), \tfrac {x_1}{\ep}, \omega\right) \leq - \frac12 \theta,
\end{equation*}
and hence \eqref{corrclm}.

\medskip

According to the comparison principle, the map $(x,t) \mapsto (u^\ep - \varphi^\ep)$ cannot have a local maximum in the set $B(x_0,r) \times (t_0-r,t_0+r)$. This is the contradiction to \eqref{touch1}. 

\medskip

The solution of \eqref{HJ-eff} is unique, according to Remark~\ref{effHcmp}. This implies the local uniform convergence of the full sequence $u^\ep(\cdot,\omega)$ to $u$ for each $\omega$ in a set of full probability, and completes the proof of the theorem.
\end{proof}

\section{Further properties of the metric problem} \label{P}

\subsection{A probabilistic interpretation}
The analysis of the metric problem allows us to make a precise connection to some of the results of Sznitman \cite{Sz1,Sz3, Sz2}. What we describe here is found in Chapter 5 of \cite{Szb}. For the reader's convenience, we deviate a bit from the notation in the rest of this paper in an effort to be more faithful to that of \cite{Szb}.

In what follows we denote by $Z=Z_t$ the canonical process on $\Rd$, $P_0$ is a Wiener measure, and $E_0$ is the expectation with respect to $P_0$. Sznitman studies the behavior of the Brownian motion $Z$ moving in a random environment composed of soft Poissonian obstacles. The latter are represented by a potential $V$ given by \eqref{Vexam}.
The obstacles have the interpretation of ``killing" the Brownian motion, that is, the medium is absorbing the diffusion at a rate $V$.

\medskip

Specifically, Sznitman studies the quantity
\begin{equation*}
e_\mu(y,\omega) : = E_0 \left[ \exp\left( -\int_0^{h(y)} (V(y+Z_s,\omega) + \mu) \, ds \right) : \ h(y) < \infty \right],
\end{equation*}
where $\mu \geq 0$ is a parameter and $h(y): = \inf\{ s> 0 : y+ Z_s \in B_1 \}$ is the first hitting time of the unit ball for the Brownian motion translated by $y$. The goal is to understand how $e_\mu$ behaves for very large $|y|$. According to the Feynman-Kac formula, $e_\mu$ is a solution, $\Prob$-a.s., of
\begin{equation*}
-\Delta e_\mu +\left( V(y,\omega) +\mu \right) e_\lambda= 0 \quad \mbox{in} \ \Rd\!\setminus\! B_1,
\end{equation*}
and $0 < e_\mu \leq 1$ in $\Rd \!\setminus \! B_1$ and $e_\mu = 1$ on $\partial B_1$.

\medskip

The change of variables
\begin{equation*}
a_\mu(y,\omega): = -\log e_\mu(y,\omega),
\end{equation*}
yields that $a_\mu \geq 0$ solves
\begin{equation*}
\left\{ \begin{aligned}
& -\Delta a_\mu + |Da_\mu|^2 - V(y,\omega) = \mu & \mbox{in} & \ \Rd\!\setminus \! B_1,\\
& a_\mu = 0 & \mbox{on} & \ \partial B_1, \\
& a_\mu \geq 0 & \mbox{in} & \ \Rd\!\setminus \! B_1.
\end{aligned} \right.
\end{equation*}
Recalling from Proposition~\ref{invispot} that in this case $\overline H(0) = 0$, we see that, for $\mu > 0$, $a_\mu$ is the unique solution $m_\mu$ of the metric problem given by Proposition~\ref{metexistence} and, according to Proposition~\ref{homogMP},
\begin{equation*}
\lim_{|y| \to \infty} \frac{|a_\mu(y,\omega) - \alpha_\mu(y)|}{|y|} = 0 \quad \mbox{a.s. in} \ \omega,
\end{equation*}
where $\alpha_\mu(y): = \overline m_\mu(y)$ is the solution of \eqref{meteq-bar}. Using the fact that, according to Proposition~\ref{metcomparison}, the functions $\alpha_\mu$ and $a_\mu$ are monotone in $\mu$, we deduce that, for each $\Lambda > 0$,
\begin{equation} \label{shapethm}
\lim_{|y| \to \infty} \sup_{0 < \mu \leq \Lambda} \frac{|a_\mu(y,\omega) - \alpha_\mu(y)|}{|y|} = 0 \quad \mbox{a.s. in} \ \omega.
\end{equation}
The homogenization assertion \eqref{shapethm} is what Sznitman calls the ``shape theorem," which he proves using entirely probabilitstic methods (see \cite[Theorem 2.5 in Chapter 5]{Szb}). The functions $\alpha_\mu = \overline m_\mu$ are called the \emph{quenched Lyapunov exponents}.

\medskip

The homogenization result in Proposition~\ref{homogMP} is more general than Sznitman's shape theorem, as we consider more general Hamiltonians and diffusions, and even in the special case $H = |p|^2 + V(y,\omega)$, more general potentials $V$.

\subsection{New formulae for $\overline H$}

Since $\overline m_\mu$ is positively 1-homogeneous, $D\overline m_\mu$ is constant along rays starting from the origin, and the function $\overline m_\mu$ is determined by its values on the unit sphere. It follows that $\overline m_\mu \geq 0$. Moreover, since $\mu > \overline H(p)$, it is clear that $\overline m_\mu$ cannot be touched from below by a constant function at any point of $\Rd \!\setminus \!\{0\}$, and, therefore, $\overline m_\mu > 0$ in $\Rd\! \setminus \! \{0\}$. Then by homogeneity, we have $\overline m_\mu(y) \geq c|y|$ for some $c> 0$.

\medskip

The convexity of $\overline H$ and $\mu > \overline H(p)$ imply that the level set $\{ q \in \Rd: \overline H(p+q) = \mu \}$ has empty interior. It follow that the level set above equals the closure of the image of $D\overline m_\mu$. In this way, one may recover the effective Hamiltonian $\overline H$ from the functions $\overline m_\mu$, since we may fix $p_0 \in \Rd$ with $\overline H (p_0) = \min \overline H$ and write
\begin{equation} \label{barHbarm}
\overline H(p)  = \inf\big\{ \mu > \min \overline H \, : \, \overline m_\mu(y;p_0) > (p-p_0)\cdot y\, \mbox{ for all } y\in \Rd \big\}.
\end{equation}

From the results and arguments above, we can characterize the effective Hamiltonian in terms of the solvability of the metric problem.

\begin{prop} The effective Hamiltonian is given by
\begin{equation}
\overline H(p) = \inf\!\left\{ \mu \in \R : \mbox{there exists} \ u=u(y,\omega) \ \mbox{satisfying} \ \eqref{MP}\mbox{--}\eqref{bngcond}, \ \mbox{a.s. in} \ \omega \right\}. \label{Hformula}
\end{equation}
\end{prop}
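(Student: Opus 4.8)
The plan is to prove the two inequalities in \eqref{Hformula} separately. Write $\mathcal M(p) := \inf\{\mu\in\R : \text{\eqref{MP}--\eqref{bngcond} is solvable a.s. in }\omega\}$ for the right-hand side.

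For the inequality $\mathcal M(p) \leq \overline H(p)$, the point is that Proposition~\ref{metexistence} already produces, for every $\mu > \overline H(p)$, a solution $m_\mu$ of \eqref{MP}--\eqref{bngcond} (with $D = D_1$). Hence every $\mu > \overline H(p)$ lies in the set over which we take the infimum, and $\mathcal M(p) \leq \overline H(p)$ is immediate. I would simply cite Proposition~\ref{metexistence} here.

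For the reverse inequality $\overline H(p) \leq \mathcal M(p)$, I would argue by contradiction: suppose some $\mu < \overline H(p)$ admits a solution $u = u(\cdot,\omega)$ of \eqref{MP}--\eqref{bngcond}, a.s. in $\omega$. The idea is to compare $u$ with the subcorrector $w$ from Proposition~\ref{mainstep}, which satisfies $-\tr(A D^2 w) + H(p + Dw, y, \omega) \leq \overline H(p)$ in $\Rd$ and is strictly sublinear at infinity. Since $u$ grows at most linearly (it satisfies the lower growth bound in \eqref{bngcond}, and by the argument mimicking Lemma~\ref{impest}/\eqref{metosc} from the proof of Proposition~\ref{metexistence} one gets $u(y) \le C|y|$ as well), and since $u$ solves the equation with right-hand side $\mu$ strictly below $\overline H(p)$, one can run the perturbed-subcorrector comparison in the style of Step~2 of the proof of Proposition~\ref{mainstep} (or the proof of Proposition~\ref{metcomparison}): add to $u$ a small negative multiple of a linear-at-infinity term $\varphi_R$ from \eqref{varphiR}, use the gap $\overline H(p) - \mu > 0$ to absorb the perturbation, apply the comparison principle on $B_R \setminus D_1$ with boundary data controlled on $\partial D_1$ by $\sup_{D_1} w$, and let $R \to \infty$. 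This forces $u \leq w + C$ on $\Rd \setminus D_1$; but then $u$ is bounded above modulo a strictly sublinear function, so in particular $\limsup_{|y|\to\infty}|y|^{-1} u(y) \leq 0$, which combined with the lower bound in \eqref{bngcond} shows $u$ is strictly sublinear. Now $u$ is a subcorrector at level $\mu < \overline H(p)$, and feeding this into the comparison argument against $\delta v^\delta$ exactly as in the derivation of \eqref{cmpineq}--\eqref{oneside} yields $\liminf_{\delta\to 0}\delta v^\delta(0,\omega) \geq -\mu > -\overline H(p)$, contradicting \eqref{oneside}. Alternatively, and perhaps more cleanly, one observes that the existence of such a $u$ means the event in Proposition~\ref{charmH} (or rather its analog incorporating the $p$-shift) holds at level $\mu$, contradicting Proposition~\ref{charmH} together with the fact that $\overline H(p) \geq \min\overline H$ — but one must be careful since \eqref{charmHeqcl} imposes only linear growth, not the boundary condition, so the cleanest route really is the direct comparison with $w$ and then with $v^\delta$.

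The main obstacle is the contradiction step: showing that a solution of the metric problem at a subcritical level $\mu < \overline H(p)$ cannot exist. One must verify that the solution $u$, which a priori only grows at most linearly, can be compared with the strictly sublinear subcorrector $w$ despite the linear-growth mismatch — this is exactly the delicate point handled by the ``lowering'' trick in Proposition~\ref{metcomparison}, and it crucially uses that $\mu$ is strictly below $\overline H(p)$ so that there is room to absorb the linear perturbation $\varphi_R$. Once $u$ is shown to be strictly sublinear, the remainder reduces to the comparison machinery already developed in Step~2 of the proof of Proposition~\ref{mainstep}, so the argument is essentially a repackaging of tools established earlier in the paper. I would present it in that order: first the easy direction via Proposition~\ref{metexistence}, then the hard direction via the lowering-and-comparison argument culminating in a contradiction with \eqref{oneside}.
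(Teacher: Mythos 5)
Your easy direction ($\inf \le \overline H(p)$, via Proposition~\ref{metexistence}) is fine and is what the paper uses implicitly. The hard direction, however, has a genuine gap at its first and crucial step: you cannot conclude $u \le w + C$ by comparing with the subcorrector $w$ of Proposition~\ref{mainstep}, because $w$ is only a \emph{sub}solution of $-\tr(A D^2 w)+H(p+Dw,y,\omega)\le \overline H(p)$; it is not a supersolution of anything, so no comparison principle places it above the subsolution $u$. In the proof of Proposition~\ref{metcomparison} the subcorrector is blended (via convexity) into the \emph{subsolution} side only, while the other side is a genuine supersolution at level $\mu>\overline H(p)$; in your situation the level is $\mu<\overline H(p)$ and there is no strictly sublinear supersolution available to compare against (exact correctors need not exist), so the ``lowering'' trick has nothing to push $u$ below. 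Consequently the strict sublinearity of $u$, which you need in order to run the comparison against $\delta v^\delta$ as in the derivation of \eqref{cmpineq} and \eqref{oneside}, is not established. A second obstruction: even granting sublinearity, $u$ is defined only on $\Rd\setminus D_1$ with $u=0$ on $\partial D_1$, and since $\delta v^\delta\approx -\overline H(p)/\delta< -\mu/\delta$ near $\partial D_1$ (Proposition~\ref{bigstep}), the boundary inequality required on the inner boundary of $B_R\setminus D_1$ fails; extending $u$ to a global subsolution at level $\mu$ is not automatic either (constants are only subsolutions at level $C(1+|p|^\gamma)$).

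The paper's argument avoids both issues and is the missing ingredient: given $u(\cdot,\omega)$ solving \eqref{MP}--\eqref{bngcond} at level $\mu$, rescale to $\ep_j u(\ep_j^{-1}y,\omega)$ and use the local Lipschitz bounds together with the perturbed test function argument of Proposition~\ref{homogMP} --- which, as noted in Remark~\ref{nomuassump}, does not require $\mu>\overline H(p)$ for the identification of the limit --- to produce a solution $\widetilde u$ of the \emph{effective} metric problem at level $\mu$. Then one rules out such $\widetilde u$ when $\mu<\overline H(p)$ at the effective level: comparing with the barrier $\ep\phi_R(|y|)$, which by continuity of $\overline H$ is a strict supersolution of the effective equation for small $\ep$, forces $\widetilde u(y)\le -c\ep|y|$ on $B_{R/2}$, and letting $R\to\infty$ contradicts the growth condition in \eqref{bngcond}. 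If you wish to keep your framework, you must replace your Step 1 by this rescaling/homogenization step (or an equivalent), since sublinearity of a hypothetical subcritical solution cannot be extracted by comparison with the subcorrector; your Proposition~\ref{charmH} alternative, as you note, only yields $\mu\ge\min\overline H$ and so is also insufficient.
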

\begin{proof}
We first argue that, for $\mu < \overline H(p)$, \eqref{meteq-bar} has no subsolution $u$ satisfying \eqref{meteq-bar}. Assuming to the contrary that, for some $\mu < \overline H(p)$,  \eqref{MP} has a subsolution $u$, we argue that $u$ cannot satisfy the growth condition at infinity. To this end, for each $R> 1$, consider the smooth function $\phi_R:(0,\infty) \to \R$
\begin{equation*}
\phi_R(t): = \left( 1 - (t-R)^2 \right)^{1/2} - (R^2+1)^{1/2},
\end{equation*}
which satisfies 
\begin{equation*}
\phi_R(0) = 0, \quad \phi_R(t) \leq -ct \quad \mbox{on} \ [0,R/2] \quad \mbox{and} \quad |\phi'_R(t)| \leq C \quad \mbox{on} \ [0,\infty),
\end{equation*}
for constants $C,c> 0$ independent of $R> 1$, and, finally, $t^{-1}\phi_R(t) \rightarrow 1$ as $t\to \infty$. Define $\varphi_R(y): = \ep \phi_R(|y|)$, and observe that $\varphi_R$ is smooth and that, by the continuity of $\overline H$, for sufficiently small $\ep > 0$,
\begin{equation*}
\overline H(p+D\varphi_R) > \mu.
\end{equation*}
Since $\varphi_R(0) = 0$ and $\varphi_R(y) \sim \ep |y|$ for large $|y|$, we deduce from Proposition~\ref{metcomparison} that $u < \varphi_R$ in $\Rd\setminus \{ 0 \}$. In particular, we obtain
\begin{equation*}
u(y) \leq -c\ep |y| \quad \mbox{for all} \ |y| \leq R/2.
\end{equation*}
Sending $R \to \infty$, we see that $\limsup_{|y|\to \infty} u(y) / |y| \leq -c \ep$.

Next, suppose there exists a function $u=u(y,\omega)$ satisfying \eqref{MP}--\eqref{bngcond}, a.s. in $\omega$. Then by repeating the arguments in the proof of Theorem~\ref{MAIN}, we can find an $\omega$ and a subsequence $\ep_j\to 0$ for which
\begin{equation*}
\ep_j u(\ep_j^{-1} y, \omega) \rightarrow \widetilde u \quad \mbox{locally uniformly in} \ \Rd\!\setminus \! D_1,
\end{equation*}
and such that $\widetilde u$ satisfies \eqref{MP}. Hence $\mu \geq \overline H(p)$.
\end{proof}

We conclude by noting that the statements in Theorem~\ref{MAIN1} follow from the results in Section~\ref{Q} and the proof of the above proposition.

\appendix
\section{Auxiliary lemmata} \label{A}

Recorded here are supplemental facts needed in various arguments in the main body of the paper. The first item we present is a viscosity solution theoretic lemma, and the ones that follow are elementary observations from measure theory.

\medskip

The following lemma is easy to prove if one of $u_1$ or $u_2$ is smooth (which, with an exception in the proof of Proposition~\ref{metcomparison}, is how we use it in this paper). The general case is well-known to experts. Since we cannot find a reference, we indicate a proof for the convenience of the reader.

\begin{lem} \label{convtrick}
Assume that $A=\Sigma\Sigma^t$, where $\Sigma=\Sigma(y)$ is Lipschitz, and $H=H(p,x)$ is convex in $p$ and satisfies, for all $p,x,y\in \Rd$,
\begin{equation} \label{Hcthyp}
H(p,x) - H(p,y) \leq C(1+|p|) |x-y|.
\end{equation}
Suppose that $U\subseteq \Rd$ is open and $u_1,u_2,f_1,f_2 \in \USC(U)$ satisfy, for $i=1,2$, the inequalities
\begin{equation} \label{convtrickeq}
-\tr\!\left(A (x) D^2u_i\right) + H(Du_i,x) \leq f_i \quad \mbox{in} \ U
\end{equation}
in the viscosity sense. Then, for each $0 < \lambda < 1$, $u: = \lambda u_1 + (1-\lambda)u_2$ is a viscosity solution of
\begin{equation} \label{convtrickconc}
-\tr\!\left(A (x) D^2u\right) + H(Du,x) \leq \lambda f_1 + (1-\lambda) f_2 \quad \mbox{in} \ U.
\end{equation}
\end{lem}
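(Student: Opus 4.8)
Here is my plan for proving Lemma~\ref{convtrick}.

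\medskip

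The standard device for establishing statements of this kind in the viscosity sense is \emph{sup-convolution} (equivalently, one can work with the inf-convolution of the negatives). First I would regularize: for $\sigma > 0$ set
\begin{equation*}
u_i^\sigma(x) := \sup_{z\in \Rd}\Big( u_i(z) - \frac{1}{2\sigma}|x-z|^2 \Big).
\end{equation*}
The function $u_i^\sigma$ is semiconvex, hence twice differentiable a.e. (Alexandrov), and it is a classical fact from viscosity solution theory (see \cite{CIL}, the discussion of sup-convolutions) that on any open set compactly contained in $U$, for $\sigma$ small enough $u_i^\sigma$ satisfies, in the viscosity sense (and a.e., since it is semiconvex),
\begin{equation*}
-\tr\!\left(A(x)D^2u_i^\sigma\right) + H(Du_i^\sigma, x) \leq f_i^\sigma + \rho_i(\sigma) \quad \mbox{in the interior,}
\end{equation*}
where $f_i^\sigma$ is the corresponding sup-convolution of $f_i$ and $\rho_i(\sigma) \to 0$. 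This is where the hypotheses \eqref{Hcthyp} on the $x$-dependence of $H$ and the Lipschitz regularity of $\Sigma$ (hence of $A$) enter: they control the error made when the test point is moved by the optimal displacement $x - z_\sigma(x)$, whose size is $O(\sigma^{1/2})$ times a local bound on $|Du_i^\sigma|$; since a semiconvex subsolution of a coercive-type equation is locally Lipschitz, this displacement error is $o(1)$. The key point is that at a point $x_0$ of twice-differentiability of $u_i^\sigma$, the inequality holds pointwise with the classical Hessian and gradient.

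\medskip

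With the regularized inequalities in hand, the convexity argument is immediate at points of twice-differentiability. Fix $0 < \lambda < 1$ and put $u^\sigma := \lambda u_1^\sigma + (1-\lambda) u_2^\sigma$, which is again semiconvex, hence twice differentiable a.e. At a common point $x_0$ of twice-differentiability of $u_1^\sigma$ and $u_2^\sigma$ we have $Du^\sigma(x_0) = \lambda Du_1^\sigma(x_0) + (1-\lambda)Du_2^\sigma(x_0)$ and $D^2u^\sigma(x_0) = \lambda D^2u_1^\sigma(x_0) + (1-\lambda)D^2u_2^\sigma(x_0)$; using linearity of $M \mapsto \tr(A(x_0)M)$ and convexity of $p \mapsto H(p,x_0)$ we get
\begin{equation*}
-\tr\!\left(A(x_0)D^2u^\sigma\right) + H(Du^\sigma, x_0) \leq \lambda\big(f_1^\sigma(x_0) + \rho_1(\sigma)\big) + (1-\lambda)\big(f_2^\sigma(x_0) + \rho_2(\sigma)\big).
\end{equation*}
Since this holds a.e.\ and $u^\sigma$ is semiconvex, a standard lemma (a semiconvex function satisfying a degenerate-elliptic inequality a.e.\ satisfies it in the viscosity sense — again \cite{CIL}) upgrades it to a viscosity inequality on the interior. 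Finally I would let $\sigma \to 0$: $u_i^\sigma \downarrow u_i$ and $f_i^\sigma \downarrow f_i$ locally uniformly (using upper semicontinuity), $\rho_i(\sigma)\to 0$, so $u^\sigma \to u$ locally uniformly, and the stability of viscosity subsolutions under locally uniform limits (with the right-hand sides converging) yields \eqref{convtrickconc} on all of $U$, since the compactly contained set was arbitrary.

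\medskip

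The main obstacle is the regularization step: verifying carefully that the sup-convolution $u_i^\sigma$ is a subsolution of the \emph{same} equation up to a vanishing error, and in particular controlling the perturbation of the $x$-dependent terms $\tr(A(x)D^2u)$ and $H(p,x)$ under the shift $x \mapsto z_\sigma(x)$. This requires a local Lipschitz bound on $u_i^\sigma$ independent of $\sigma$, which follows because the equation is coercive in the gradient on any subdomain where $f_i$ is locally bounded (here $f_i \in \USC$ is locally bounded above, and bounded below can be arranged locally, or one simply notes $u_i^\sigma$ is locally Lipschitz as a consequence of semiconvexity plus the one-sided bound). Everything past that point is the routine convexity-plus-linearity computation together with the two standard viscosity facts cited above; for the cases actually used in the paper, where one of $u_1, u_2$ is smooth, the regularization is unnecessary and the computation is direct.
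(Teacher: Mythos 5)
Your plan has a genuine gap at its central step: the claim that the sup-convolution $u_i^\sigma$ is a viscosity subsolution of the \emph{same} equation up to an error $\rho_i(\sigma)\to 0$. The classical ``magic property'' of sup-convolutions only gives that $u_i^\sigma$ is a subsolution of the equation with the $x$-argument shifted to the optimal point $z_\sigma(x)$; converting back to the argument $x$ produces two error terms that you cannot control under the hypotheses of the lemma. For the second-order term the error is of size $|A(x)-A(z_\sigma(x))|\,|D^2u_i^\sigma|$, and sup-convolution gives only the one-sided bound $D^2u_i^\sigma \geq -\sigma^{-1}\iden$, with no upper bound at the a.e.\ points of twice differentiability, so this term is not $o(1)$ (nor even finite, uniformly). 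For the Hamiltonian the error is $C(1+|Du_i^\sigma|)|x-z_\sigma(x)|$; your argument that this is $o(1)$ invokes a local Lipschitz bound on $u_i$ coming from coercivity, but the lemma assumes no coercivity of $H$ -- the $u_i$ are merely $\USC$ -- and without a $\sigma$-uniform Lipschitz bound one only has $|Du_i^\sigma|\sim\sigma^{-1/2}$ and $|x-z_\sigma(x)|\sim\sigma^{1/2}$, i.e.\ an $O(1)$ error. (Note also that the general, nonsmooth case is genuinely needed in the paper, e.g.\ in the proof of Proposition~\ref{metcomparison}, where neither function being combined is smooth, and that $f_i^\sigma\downarrow f_i$ holds only pointwise, not locally uniformly, for $f_i\in\USC$; the latter is fixable by half-relaxed limits, the former is not cosmetic.)

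The cancellation you are missing is exactly what the paper's proof extracts from the theorem on sums: doubling the variables with the quadratic penalty $\ep^{-1}|x-y|^2$ produces matrices $X_\ep,Y_\ep$ coupled by the inequality $\left(\begin{smallmatrix} X_\ep & 0\\ 0& Y_\ep\end{smallmatrix}\right)\leq \tfrac3\ep\left(\begin{smallmatrix} \iden & -\iden\\ -\iden& \iden\end{smallmatrix}\right)$, which together with the Lipschitz continuity of $\Sigma$ gives $\tr\!\left(A(x_\ep)X_\ep-A(y_\ep)Y_\ep\right)\leq 3\ep^{-1}|\Sigma(x_\ep)-\Sigma(y_\ep)|^2\leq C\ep^{-1}|x_\ep-y_\ep|^2\to0$, while the Hamiltonian mismatch is $C(1+\ep^{-1}|x_\ep-y_\ep|)|x_\ep-y_\ep|\to 0$ by the standard fact that $\ep^{-1}|x_\ep-y_\ep|^2\to0$; convexity of $H$ is then applied to the two gradients $\lambda^{-1}(\ep^{-1}(x_\ep-y_\ep)+D\phi(x_\ep))$ and $(1-\lambda)^{-1}\ep^{-1}(y_\ep-x_\ep)$, whose convex combination is $D\phi(x_\ep)$. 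If you want to keep a regularization flavor, you would in any case have to invoke the Jensen--Ishii machinery to treat the $x$-dependence of $A$, at which point you have reproduced the doubling argument; plain sup-convolution plus Alexandrov differentiation does not suffice here.
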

\begin{proof}
Suppose that $\phi$ is a smooth function and $x_0\in U$ such that
\begin{equation*}
x\mapsto (u-\phi)(x) \quad \mbox{has a strict local maximum at} \quad x_0.
\end{equation*}
By restricting to a neighborhood of $x_0$, we may assume that $U$ is bounded, $u_1$ and $u_2$ are bounded from above on $U$, and $u-\phi$ achieves its global maximum at $x_0$. Define, for each $\ep > 0$, the auxiliary function
\begin{equation*}
\Psi_\ep(x,y) : = \lambda u_1(x) + (1-\lambda) u_2(y) - \phi(x) - \frac{1}{\ep}|x-y|^2.
\end{equation*}
It follows that, for sufficiently small $\ep > 0$, there exists $(x_\ep,y_\ep) \in U\times U$ such that
\begin{equation*}
\Psi_\ep (x_\ep,y_\ep) = \sup_{U\times U} \Psi.
\end{equation*}
Using \cite[Lemma 3.1]{CIL}, we get
\begin{equation} \label{UGct1}
\lim_{\ep \to 0} \frac{|x_\ep-y_\ep|^2}{\ep} = 0 \quad \mbox{and} \quad \lim_{\ep\to 0}(x_\ep,y_\ep) = (x_0,x_0).
\end{equation}
According to the maximum principle for semicontinuous functions \cite[Theorem 3.2]{CIL}, there exist symmetric matrices $X_\ep$ and $Y_\ep$ such that
\begin{equation} \label{matmadd}
\left( X_\ep, \frac{x_\ep-y_\ep}{\ep} \right) \in \overline{\mathcal{J}}^{2,+} (\lambda u_1 -\phi)(x_\ep) \quad \mbox{and} \quad \left( Y_\ep, \frac{y_\ep-x_\ep}{\ep} \right) \in \overline{\mathcal{J}}^{2,+} ((1-\lambda) u_2)(y_\ep)
\end{equation}
and
\begin{equation*}
\begin{pmatrix} X_\ep & 0 \\ 0 & Y_\ep \end{pmatrix} \leq \frac{3}{\ep} \begin{pmatrix} \iden & -\iden \\ -\iden & \iden \end{pmatrix}.
\end{equation*}
We refer to \cite{CIL} for the definition of the jets $\overline{\mathcal{J}}^{2,\pm}$ and their role in viscosity solution theory. Using the inequalities \eqref{convtrickeq}, we obtain
\begin{equation} \label{plugXep}
-\tr\!\left( A(x_\ep) (X_\ep+D^2\phi(x_\ep) )\right) + \lambda H\!\left(\frac{1}{\lambda}\left(\frac{x_\ep-y_\ep}{\ep}+ D\phi(x_\ep)\right),x_\ep\right) \leq \lambda f_1(x_\ep),
\end{equation}
and
\begin{equation} \label{plugYep}
-\tr\!\left( A(y_\ep) Y_\ep \right) + (1-\lambda) H\!\left(\frac{1}{1-\lambda} \frac{y_\ep-x_\ep}{\ep},y_\ep\right) \leq (1-\lambda) f_2(x_\ep).
\end{equation}
By \eqref{Hcthyp} and \eqref{UGct1},
\begin{equation} \label{Hxeye}
H\!\left(\frac{1}{1-\lambda} \frac{y_\ep-x_\ep}{\ep},x_\ep\right) \leq H\!\left(\frac{1}{1-\lambda} \frac{y_\ep-x_\ep}{\ep},y_\ep\right) + o(1) \quad \mbox{as} \ \ep \to 0.
\end{equation}
Multiplying the matrix inequality \eqref{matmadd} on the left by the nonnegative definite matrix
\begin{equation*}
\begin{pmatrix} \Sigma(x_\ep) \\ \Sigma(y_\ep)  \end{pmatrix}\begin{pmatrix} \Sigma(x_\ep) \\ \Sigma(y_\ep)  \end{pmatrix}^T = \begin{pmatrix} \Sigma(x_\ep) \Sigma(x_\ep)^T &  \Sigma(x_\ep)\Sigma(y_\ep)^T \\ \Sigma(y_\ep)\Sigma(x_\ep)^T & \Sigma(y_\ep)\Sigma(y_\ep)^T\end{pmatrix},
\end{equation*}
taking the trace of both sides, using that $\Sigma$ is Lipschitz and \eqref{UGct1}, we find that
\begin{equation} \label{amtrineqd}
\tr\!\left(A(x_\ep)X_\ep - A(y_\ep)Y_\ep \right) \leq \frac3\ep\!\left| \Sigma(x_\ep) - \Sigma(y_\ep) \right|^2 \leq \frac{3|x_\ep-y_\ep|^2}{\ep} \rightarrow 0 \quad \mbox{as} \ \ep \to 0.
\end{equation}
Combining \eqref{plugXep}, \eqref{plugYep}, \eqref{Hxeye}, \eqref{amtrineqd} and the convexity of $H$ yields
\begin{equation*}
-\tr\!\left( A(x_\ep)D^2\phi(x_\ep) \right) + H(D\phi(x_\ep),x_\ep) \leq \lambda f_1(x_\ep) + (1-\lambda) f_2(y_\ep) + o(1) \quad \mbox{as} \ \ep \to 0.
\end{equation*}
Sending $\ep \to 0$, we obtain
\begin{equation*}
-\tr\!\left( A(x_0)D^2\phi(x_0) \right) + H(D\phi(x_0),x_0) \leq \lambda f_1(x_0) + (1-\lambda) f_2(x_0). \qedhere
\end{equation*}
\end{proof}

The following two elementary facts were used in \cite{CS} as a convenient way of using mixing properties to control averages. We put them to similar use in Section~\ref{AMP} to estimate the average of a power of $V$ in a large balls; see Lemma~\ref{potests}. The proofs are elementary and short, and so we include them for the convenience of the reader.

\begin{lem}\label{mix-decay}
Assume the mixing condition \eqref{mixing} holds, and suppose that $f$ and $g$ are random variables taking values in $[0,\mu]$, $\mu > 0$, which are measurable with respect to $\mathcal{G}(U)$ and $\mathcal{G}(V)$, respectively, where $\dist(U,V) \geq r > 0$. Then
\begin{equation}
\left| \E[fg] - \E[f] \E[g] \right| \leq C\mu^2 r^{-\beta}.
\end{equation}
\end{lem}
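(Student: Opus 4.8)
The plan is to use the layer-cake (distribution function) representation of a bounded nonnegative random variable together with the mixing hypothesis \eqref{mixing}. Since $0 \leq f \leq \mu$ and $0 \leq g \leq \mu$, I would first write
\begin{equation*}
f = \int_0^\mu \mathds{1}_{\{ f > s\}} \, ds \qquad \mbox{and} \qquad g = \int_0^\mu \mathds{1}_{\{ g > t\}} \, dt,
\end{equation*}
and then apply Tonelli's theorem (legitimate since all integrands are nonnegative and bounded) to get
\begin{equation*}
\E[fg] = \int_0^\mu \! \int_0^\mu \Prob\big[ \{ f > s\} \cap \{ g > t\} \big] \, ds \, dt \qquad \mbox{and} \qquad \E[f]\,\E[g] = \int_0^\mu \! \int_0^\mu \Prob[f > s]\, \Prob[g > t]\, ds\, dt.
\end{equation*}

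Next I would observe that, since $f$ is $\mathcal{G}(U)$-measurable and $g$ is $\mathcal{G}(V)$-measurable, the super-level sets satisfy $\{ f > s\} \in \mathcal{G}(U)$ and $\{ g > t\} \in \mathcal{G}(V)$ for all $s, t > 0$. Because $\dist(U,V) \geq r$, the mixing hypothesis \eqref{mixing} applies with $E = \{ f > s\}$ and $F = \{ g > t\}$ and yields
\begin{equation*}
\big| \Prob\big[ \{ f > s\} \cap \{ g > t\} \big] - \Prob[f > s]\,\Prob[g > t] \big| \leq C r^{-\beta}
\end{equation*}
uniformly in $s, t > 0$. Subtracting the two double-integral identities above, bounding the integrand by this estimate, and integrating over $[0,\mu]^2$ then gives
\begin{equation*}
\big| \E[fg] - \E[f]\,\E[g] \big| \leq \int_0^\mu \! \int_0^\mu \big| \Prob\big[ \{ f > s\} \cap \{ g > t\} \big] - \Prob[f > s]\,\Prob[g > t] \big| \, ds\, dt \leq C \mu^2 r^{-\beta},
\end{equation*}
which is the claimed bound.

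There is no genuine obstacle in this argument; it is a short bookkeeping exercise. The only points that warrant even minor care are the application of Tonelli's theorem to interchange the expectation with the Lebesgue integrals over the level parameters (justified by nonnegativity and the uniform bound by $\mu$), and the elementary remark that a super-level set of a $\mathcal{G}(U)$-measurable function again belongs to $\mathcal{G}(U)$, which is what allows \eqref{mixing} to be invoked. This lemma will then be used, together with Lemma~\ref{mix-EVest}, in the proof of Lemma~\ref{potestsD} to control the correlations of the truncated variables $g_i$.
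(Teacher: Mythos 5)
Your proposal is correct and follows essentially the same route as the paper: the paper's proof is exactly the layer-cake identity $\E[fg]-\E[f]\E[g]=\int_{[0,\mu]^2}\big(\Prob[f>t,\ g>s]-\Prob[f>t]\Prob[g>s]\big)\,ds\,dt$ followed by the observation that the integrand is bounded by $Cr^{-\beta}$ via \eqref{mixing}. You merely spell out the Tonelli step and the measurability of the super-level sets, which the paper leaves implicit.
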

\begin{proof}
We have
\begin{align*}
\E[fg] - \E[f] \E[g] = \int_{[0,\mu]\times[0,\mu]} \left( \Prob\!\left[ f > t \ \mbox{and} \ g > s \right] - \Prob[f> t] \Prob[g > s]\right) \, ds\, dt.
\end{align*}
Observe that the absolute value of the integrand on the right side is at most $Cr^{-\beta}$.
\end{proof}

\begin{lem} \label{mix-EVest}
Let $h, h_1,\ldots, h_k$ be a sequence of identically distributed random variables such that, for some $a > 0$ and each $i, j\in\{ 1,\ldots, k\}$, we have
\begin{equation*}
 \E[h_ih_j] - (\E[h])^2 \leq a.
\end{equation*}
Then we have
\begin{equation} \label{Emixcontrol}
\E\left( \frac1k \sum_{i=1}^k h_i \right)^2 \leq (\E[h])^2 + \frac{1}{k} \Var(h) + \frac{k-1}{k}a ,
\end{equation}
and
\begin{equation} \label{Vmixcontrol}
\Var\left( \frac1k \sum_{i=1}^k h_i \right)^2 \leq \frac{1}{k} \Var(h) + \frac{k-1}{k}a.
\end{equation}
\end{lem}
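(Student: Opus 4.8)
The plan is to prove Lemma~\ref{mix-EVest} by a direct second-moment computation, expanding the square of the empirical average into its diagonal and off-diagonal contributions and estimating each piece separately. Writing $S_k := \sum_{i=1}^k h_i$, one has
\[
\E\Big( \tfrac1k S_k \Big)^2 = \frac{1}{k^2} \sum_{i=1}^k \sum_{j=1}^k \E[h_ih_j] = \frac{1}{k^2} \sum_{i=1}^k \E[h_i^2] + \frac{1}{k^2} \sum_{i\neq j} \E[h_ih_j].
\]
The first step is to handle the diagonal terms: since the $h_i$ are identically distributed, $\E[h_i^2] = \Var(h) + (\E[h])^2$ for every $i$, so $\frac{1}{k^2}\sum_{i=1}^k \E[h_i^2] = \tfrac1k\big(\Var(h)+(\E[h])^2\big)$. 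The second step is to bound the off-diagonal terms using the hypothesis $\E[h_ih_j] \leq (\E[h])^2 + a$, which, applied to each of the $k(k-1)$ pairs with $i \neq j$, gives
\[
\frac{1}{k^2}\sum_{i\neq j}\E[h_ih_j] \leq \frac{k(k-1)}{k^2}\big((\E[h])^2 + a\big) = \frac{k-1}{k}\big((\E[h])^2+a\big).
\]
Adding the two estimates and collecting the $(\E[h])^2$ contributions (noting $\tfrac1k + \tfrac{k-1}{k} = 1$) produces exactly \eqref{Emixcontrol}.

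For \eqref{Vmixcontrol}, I would again use that the $h_i$ are identically distributed to get $\E[\tfrac1k S_k] = \E[h]$, whence $\Var\big(\tfrac1k S_k\big) = \E\big(\tfrac1k S_k\big)^2 - (\E[h])^2$, and then substitute the bound \eqref{Emixcontrol} just obtained, so that the $(\E[h])^2$ terms cancel and only $\tfrac1k\Var(h) + \tfrac{k-1}{k}a$ remains. (The square on the left of \eqref{Vmixcontrol} is a typographical slip; it is the variance itself that is being bounded.) There is essentially no obstacle here: the only point requiring a small amount of care is to rely on \emph{identical distribution} — not independence, which is not available — both to evaluate the diagonal second moments and the mean of the average, and to keep the combinatorial factor $k(k-1)$ correct when summing the off-diagonal covariance bounds.
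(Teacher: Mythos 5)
Your proof is correct and follows essentially the same route as the paper: expand $\E\big(\tfrac1k\sum_i h_i\big)^2$ into diagonal and off-diagonal terms, use identical distribution on the diagonal and the hypothesis $\E[h_ih_j]\leq(\E h)^2+a$ on the $k(k-1)$ off-diagonal pairs, and obtain \eqref{Vmixcontrol} as a rearrangement of \eqref{Emixcontrol}. Your remark about the stray square in \eqref{Vmixcontrol} being a typographical slip is also consistent with how the paper treats it.
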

\begin{proof}
First, observe that \eqref{Vmixcontrol} is merely a rearrangement of \eqref{Emixcontrol}. To obtain the latter, we write
\begin{align*}
\E\left( \frac1k \sum_{i=1}^k h_i \right)^2 & = \frac{1}{k^2} \sum_{i,j=1}^k \E[h_ih_j] \\
& = \frac1k E[h^2] + \frac{1}{k^2} \sum_{i\neq j}  \E[h_ih_j] \\
& \leq \frac{1}{k} (\E h)^2 + \frac{1}{k} \Var(h) + \frac{k-1}{k} \left( (\E h)^2 + a \right) \\
& = (\E[h])^2 + \frac{1}{k} \Var(h) + \frac{k-1}{k}a. \qedhere
\end{align*}
\end{proof}

\begin{remark}\label{remcheb}
Recall that Chebyshev's inequality provides control of $\Prob[ f > t]$ in terms of $\E f$ and $\Var(f)$. Indeed, for any $t> \mu : = \E f$,
\begin{equation*}
\Prob[f > t] \leq (t-\mu)^{-2} \, \E (f - \mu)^2 = (t-\mu)^{-2} \Var(f).
\end{equation*}
This observation allows us to put \eqref{Emixcontrol} and \eqref{Vmixcontrol} to good use in the proof of Lemma~\ref{potests}.
\end{remark}

Arguing as in the proof of Theorem 9 in Kozlov \cite{K}, we show that a process $w$ with stationary, mean zero gradient must be strictly sublinear at infinity. This is a critical ingredient in the proof of Proposition~\ref{mainstep}.

\begin{lem} \label{koz}
Suppose that $w:\Rd\times\Omega\to \R$ and $\Phi = Dw$ in the sense of distributions, a.s. in $\omega$. Assume $\Phi$ is stationary, $\E \Phi(0,\cdot) = 0$, and $\Phi(0,\cdot) \in L^\alpha(\Omega)$ for some $\alpha > \d$. Then
\begin{equation}  \label{sublininftyA}
\lim_{|y|\to\infty} |y|^{-1}w(y,\omega) = 0 \quad \mbox{a.s. in} \ \omega.
\end{equation}
\end{lem}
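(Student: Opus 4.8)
The plan is to prove \eqref{sublininftyA} by a rescaling and compactness argument, reducing everything to the ergodic theorem (Proposition~\ref{ergthm}). First note that since $|\Phi|^\alpha$ is stationary with $\E|\Phi(0,\cdot)|^\alpha = \|\Phi(0,\cdot)\|_{L^\alpha(\Omega)}^\alpha < \infty$, we have $\Phi(\cdot,\omega) \in L^\alpha_{\mathrm{loc}}(\Rd)$ a.s., so that $w(\cdot,\omega) \in W^{1,\alpha}_{\mathrm{loc}}(\Rd)$ a.s. and, as $\alpha > \d$, we may work with its continuous representative. Introduce the rescalings $w^\ep(x,\omega) := \ep\, w(x/\ep,\omega)$, for which $Dw^\ep(x,\omega) = \Phi(x/\ep,\omega)$. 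Since $\sup_{|z| \le 1/\ep}|w(z,\omega)| = \ep^{-1}\sup_{|x|\le 1}|w^\ep(x,\omega)|$, the assertion \eqref{sublininftyA} is equivalent to: $w^\ep(\cdot,\omega) \to 0$ locally uniformly on $\Rd$ as $\ep \to 0$, a.s. in $\omega$. I would prove the latter.

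The first step is to fix, once and for all, a set $\widetilde\Omega \subseteq \Omega$ of full probability, \emph{before} choosing the scale $\ep$ or any test ball: applying Proposition~\ref{ergthm} to the stationary processes $\Phi$ and $|\Phi|^\alpha$, for every $\omega \in \widetilde\Omega$ and every bounded domain $V \subseteq \Rd$ we have $\fint_{tV}\Phi(y,\omega)\,dy \to 0$ and $\fint_{tV}|\Phi(y,\omega)|^\alpha\,dy \to \E|\Phi(0,\cdot)|^\alpha$ as $t\to\infty$, and $w(0,\omega)$ is finite. In particular $\fint_{B_\rho}|\Phi(\cdot,\omega)|^\alpha\,dy$ is bounded for $\rho \ge 1$, so a change of variables gives, for $\omega\in\widetilde\Omega$, $R \ge 1$ and $\ep \in (0,1)$,
\[
\int_{B_R}|Dw^\ep(x,\omega)|^\alpha\,dx = \ep^\d\int_{B_{R/\ep}}|\Phi(y,\omega)|^\alpha\,dy \le C(\omega)\,R^\d .
\]
Combined with $|w^\ep(0,\omega)| = \ep|w(0,\omega)| \to 0$ and Morrey's inequality (this is where $\alpha > \d$ is used), the family $\{w^\ep(\cdot,\omega)\}_{0<\ep<1}$ is bounded and equi-H\"older continuous on every ball, hence precompact in $C(\Rd)$ with the topology of local uniform convergence, by the Arzel\`a--Ascoli theorem and a diagonal argument.

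The final step identifies every subsequential limit as the zero function. If $w^{\ep_j}(\cdot,\omega) \to w_0$ locally uniformly along some $\ep_j \to 0$, then, passing to a further subsequence, $Dw^{\ep_j} \rightharpoonup Dw_0$ weakly in $L^\alpha_{\mathrm{loc}}(\Rd)$; testing this against $\mathds{1}_{B(z_0,r)}$ and changing variables, for an arbitrary ball $B(z_0,r)$,
\[
\int_{B(z_0,r)}Dw_0\,dx = \lim_{j\to\infty}|B(z_0,r)|\,\fint_{B(z_0/\ep_j,\,r/\ep_j)}\Phi(y,\omega)\,dy = 0 ,
\]
since $B(z_0/\ep_j,r/\ep_j) = (1/\ep_j)\,B(z_0,r)$ and $\omega \in \widetilde\Omega$. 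As this holds for every ball, $Dw_0 = 0$ a.e., so $w_0$ is constant, and $w_0 \equiv w_0(0) = \lim_j \ep_j w(0,\omega) = 0$. Thus every sequence $\ep_j \to 0$ has a subsequence along which $w^{\ep_j}(\cdot,\omega) \to 0$ in $C_{\mathrm{loc}}(\Rd)$, whence the whole family converges, giving \eqref{sublininftyA}. I do not expect any genuinely hard step here; the only point requiring care is the order of quantifiers — the full-probability set $\widetilde\Omega$ must be produced at the outset, using the ergodic theorem simultaneously for $\Phi$ (to annihilate the averages of the rescaled gradient over every ball) and for $|\Phi|^\alpha$ (to get the scale-uniform $W^{1,\alpha}_{\mathrm{loc}}$ bound), after which $\alpha > \d$ supplies the $C_{\mathrm{loc}}$-compactness that drives the argument.
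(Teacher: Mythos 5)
Your argument is correct and is essentially the proof the paper gives: the same rescaling $w^\ep(x)=\ep w(x/\ep,\omega)$, the ergodic theorem applied simultaneously to $\Phi$ and $|\Phi|^\alpha$ on a single full-probability set, local compactness from the $L^\alpha$ gradient bound with $\alpha>\d$ (the paper cites Sobolev embedding where you spell out Morrey and Arzel\`a--Ascoli), identification of every subsequential limit as a constant equal to $0$ via the vanishing averages of $\Phi$ and the normalization at the origin, and hence convergence of the full family. You have merely filled in details the paper leaves implicit; there is no gap and no difference in method.
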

\begin{proof}
The ergodic theorem implies that there exists a set $\widetilde \Omega \subseteq \Omega$ of full probability, such that, for every bounded $V\subseteq \Rd$ and every $\omega \in \widetilde \Omega$,
\begin{equation} \label{Phizrmn}
\lim_{\ep \to 0} \fint_{V} \Phi(\tfrac x\ep, \omega) \, dx = \E \Phi(0,\cdot) = 0,
\end{equation}
and
\begin{equation}\label{Phibdalm}
\lim_{\ep \to 0} \fint_{V} |\Phi(\tfrac x\ep, \omega)|^\alpha \, dx = \E |\Phi(0,\cdot)|^\alpha  \leq C_V.
\end{equation}

Fix $\omega \in \widetilde \Omega$ and let
\begin{equation*}
w_\ep (x) : = \ep w(\tfrac x\ep, \omega).
\end{equation*}
By the Sobolev embedding theorem, we may suppose that $w_\ep \to w_0$, as $\ep \to 0$, locally uniformly in $\Rd$, and that $Dw_\ep \rightharpoonup Dw_0$ weakly in $L^\alpha(B_R;\Rd)$, for every $R> 0$. It follows from \eqref{Phizrmn} that $Dw_0 \equiv 0$, so that $w_0$ is constant. Since $w_\ep (0) = \ep w(0) \rightarrow 0$ as $\ep \to 0$, we must have $w_0 \equiv0$. It follows that the full sequence $w_\ep$ converges locally uniformly to $0$ as $\ep \to 0$. By reverting to our original scaling, we obtain \eqref{sublininftyA} for each $\omega \in \widetilde \Omega$.
\end{proof}

We conclude with a simple measure theoretic lemma needed in the proof of Proposition~\ref{mainstep}. It is nearly the same as \cite[Lemma 1]{LS3}, and plays an identical role as in that paper.

\begin{lem} \label{meastheor}
Suppose that $(X,\mathcal{G}, \mu)$ is a finite measure space, and $\{ f_\ep \}_{\ep > 0} \subseteq L^1(X,\mu)$ is a family of $L^1(X,\mu)$ functions such that $\liminf_{\ep \to 0} f_\ep \in L^1(X,\mu)$, and
\begin{equation} \label{mtass}
f_\ep \rightharpoonup \liminf_{\ep \to 0} f_\ep \quad \mbox{weakly in} \ L^1(X,\mu) \quad \mbox{as} \ \delta \to 0.
\end{equation}
Then
\begin{equation*}
f_\ep \rightarrow \liminf_{\ep \to 0} f_\ep \quad \mbox{in} \ L^1(X,\mu) \quad \mbox{as} \ \delta \to 0.
\end{equation*}
In particular, $f_\ep \rightarrow \liminf_{\ep \to 0} f_\ep$ in measure.
\end{lem}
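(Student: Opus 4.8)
The plan is to set $f := \liminf_{\ep\to 0} f_\ep$, which is measurable and integrable by hypothesis, and to write $g_\ep := f_\ep - f$. Since $a_\ep \to 0$ as $\ep\to 0$ if and only if $a_{\ep_j}\to 0$ for every sequence $\ep_j\to 0$, it suffices to fix a sequence $\ep_j\to 0$ and show $\int_X |g_{\ep_j}|\,d\mu\to 0$. I would exploit the elementary identity $|g_\ep| = g_\ep + 2g_\ep^-$, where $g_\ep^- := \max(-g_\ep,0)$, which reduces the problem to the two claims $\int_X g_{\ep_j}\,d\mu\to 0$ and $\int_X g_{\ep_j}^-\,d\mu\to 0$.

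The first claim is immediate: since $\mu(X)<\infty$ the constant function $\mathds{1}$ lies in $L^\infty(X,\mu)=(L^1(X,\mu))^\ast$, and testing \eqref{mtass} against it gives $\int_X f_{\ep_j}\,d\mu\to\int_X f\,d\mu$, i.e. $\int_X g_{\ep_j}\,d\mu\to 0$. For the second claim, the pointwise input is that, directly from the definition of $f$, one has $\liminf_{\ep\to 0} g_\ep = 0$ $\mu$-a.e., and hence $\limsup_{\ep\to 0} g_\ep^- = 0$ $\mu$-a.e.; in particular $g_{\ep_j}^-\to 0$ $\mu$-a.e. To convert this a.e. convergence into $L^1$-convergence I would invoke uniform integrability: by the Dunford--Pettis theorem a weakly convergent sequence in $L^1$ of a finite measure space is uniformly integrable, so $\{f_{\ep_j}\}$, and therefore $\{g_{\ep_j}^-\}$ (since $0\le g_{\ep_j}^-\le |f_{\ep_j}|+|f|$, a sum of two uniformly integrable families), is uniformly integrable. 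Vitali's convergence theorem --- a.e. convergence together with uniform integrability on a finite measure space forces $L^1$-convergence --- then yields $\int_X g_{\ep_j}^-\,d\mu\to 0$. Combining the two claims gives $\int_X |g_{\ep_j}|\,d\mu\to 0$, and hence $f_\ep\to f$ in $L^1(X,\mu)$. The final assertion, convergence in measure, is then just Markov's inequality: $\mu(\{|f_\ep - f|>\lambda\})\le\lambda^{-1}\int_X |f_\ep - f|\,d\mu$ for every $\lambda>0$.

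The argument is routine --- indeed the lemma is essentially \cite[Lemma 1]{LS3} --- so there is no serious obstacle; the one point that is more than bookkeeping is the step from weak $L^1$-convergence to uniform integrability, i.e. the appeal to Dunford--Pettis, which is what makes the positive part $g_\ep^-$ behave well despite the absence of an integrable dominating function. The remaining care is purely in treating the continuous parameter $\ep$ by passing to sequences.
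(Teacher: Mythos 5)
Your proof is correct, but it follows a genuinely different route from the paper's. You split $|f_\ep-f|=(f_\ep-f)+2(f_\ep-f)^-$, kill the first term by testing the weak convergence against the constant function $\mathds{1}\in L^\infty$, note that $(f_\ep-f)^-\to 0$ a.e.\ because $f=\liminf_\ep f_\ep$ is finite a.e., and then handle this negative part by Dunford--Pettis (weak $L^1$-convergence along a sequence forces uniform integrability) together with Vitali's theorem, working sequence by sequence in $\ep$. The paper instead introduces the monotone envelope $g_\ep:=\inf_{0<\delta\le\ep}f_\delta$, observes $g_\ep\le f_\ep$ and $g_\ep\uparrow f_0:=\liminf f_\ep$, and bounds $\|f_\ep-f_0\|_{L^1}\le\int(f_\ep-f_0)\,d\mu+2\int(f_0-g_\ep)\,d\mu$, where the first integral tends to zero by the same ``test against $\mathds{1}$'' step and the second by the monotone convergence theorem; no uniform integrability, Dunford--Pettis, or Vitali is needed. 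Both arguments hinge on the same key observation that weak convergence controls the signed integral $\int(f_\ep-f_0)\,d\mu$ while the pointwise definition of the liminf controls the deficit from below; your version trades the paper's elementary but slightly delicate envelope $g_\ep$ (whose measurability and integrability over the continuous parameter require a word in general) for heavier but entirely standard functional-analytic machinery, and the reduction to sequences is handled correctly.
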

\begin{proof}
Define $g_\ep : = \inf_{0 < \delta \leq \ep} f_\delta$ and $f_0 := \liminf_{\ep \to 0}f_\ep$. Observing that $f_\ep - g_\ep \geq 0$ and $g_\ep \uparrow f_0 \in L^1(X,\mu)$, we use \eqref{mtass} and the monotone convergence theorem to obtain
\begin{align*}
\limsup_{\ep \to 0} \| f_\ep - f_0 \|_{L^1(X,\mu)} & \leq \limsup_{\ep \to 0} \| f_\ep - g_\ep \|_{L^1(X,\mu)} + \limsup_{\ep \to 0} \| g_\ep - f_0\|_{L^1(X,\mu)} \\
& = \limsup_{\ep \to 0} \int_{X} (f_\ep - g_\ep) \, d\mu + \limsup_{\ep \to 0} \int_{X} (f_0-g_\ep) \, d\mu \\
& \leq  \limsup_{\ep \to 0} \int_{X} (f_\ep - f_0) \, d\mu + 2\limsup_{\ep \to 0} \int_{X} (f_0 -g_\ep) \, d\mu \\
& = 0. \qedhere
\end{align*}
\end{proof}

\subsection*{Acknowledgements}
The first author was partially supported by NSF Grant DMS-1004645 and the second author by NSF Grant DMS-0901802.

\bibliographystyle{plain}
\bibliography{potentials}

\end{document}